\newcommand{\Agf}{{\sf A}}
\newcommand{\Bgf}{{\sf B}}
\newcommand{\Cgf}{{\sf C}}
\newcommand{\E}{{E}}
\newcommand{\Fgf}{{\sf F}}
\newcommand{\Ggf}{{\sf G}}
\newcommand{\Hgf}{{\sf H}}
\newcommand{\Igf}{{\sf I}}
\newcommand{\Lgf}{{\sf L}}
\newcommand{\Mgf}{{\sf M}}
\newcommand{\Pgf}{{\sf P}}
\newcommand{\Qgf}{{\sf Q}}
\newcommand{\Rgf}{{\sf R}}
\newcommand{\Sgf}{{\sf S}}
\newcommand{\Tgf}{{\sf T}}
\newcommand{\Ugf}{{\sf U}}
\newcommand{\Vgf}{{\sf V}}
\newcommand{\Wgf}{{\sf W}}
\newcommand{\cC}{{\mathcal C}}
\newcommand{\cQ}{{\mathcal Q}}
\newcommand{\Dnn}{{\mathcal D}}
\newcommand{\notea}[1]{\textcolor{red}{#1}}
\newtheorem{Theorem}{Theorem}[section]
\newtheorem{Lemma}[Theorem]{Lemma}
\newtheorem{Proposition}[Theorem]{Proposition}
\newtheorem{Definition}[Theorem]{Definition}
\newtheorem{Corollary}[Theorem]{Corollary}
\newtheorem{Question}{Question}
\newcommand{\beq}{\begin{equation}}
\newcommand{\eeq}{\end{equation}}
\def\emm#1,{{\em #1}}
\def\section{\@startsection{section}{1}%
 \z@{.7\linespacing\@plus\linespacing}{.5\linespacing}%
 {\normalfont\bfseries\scshape\centering}}
\def\subsection{\@startsection{subsection}{2}%
  \z@{.5\linespacing\@plus\linespacing}{.5\linespacing}%
  {\normalfont\bfseries\scshape}}
\def\subsubsection{\@startsection{subsubsection}{3}%
 \z@{.5\linespacing\@plus\linespacing}{-.5em}
 {\normalfont\bfseries}}
\renewcommand{\th}{\vartheta}
\newcommand{\om}{\omega} 
\renewcommand{\epsilon}{\varepsilon}
\def\TwoDstepset#1#2#3#4#5#6#7#8{%
	\begin{picture}(20,20)(-10,-10)
	\put(0,0){\ifx1#1\thicklines\let\x\vector\x(-1,-1){10}\fi}
	\put(0,0){\ifx1#2\thicklines\let\x\vector\x(-1,0){10}\fi}
	\put(0,0){\ifx1#3\thicklines\let\x\vector\x(-1,1){10}\fi}
	\put(0,0){\ifx1#4\thicklines\let\x\vector\x(0,-1){10}\fi}
	\put(0,0){\ifx1#5\thicklines\let\x\vector\x(0,1){10}\fi}
	\put(0,0){\ifx1#6\thicklines\let\x\vector\x(1,-1){10}\fi}
	\put(0,0){\ifx1#7\thicklines\let\x\vector\x(1,0){10}\fi}
	\put(0,0){\ifx1#8\thicklines\let\x\vector\x(1,1){10}\fi}
	\end{picture}}
\begin{document}
\title[]
{Enumeration of three quadrant walks with small steps and walks on other M-quadrant cones}
\subjclass[2010]{}
\author[]{Andrew Elvey Price}
\address[]{CNRS, Institut Denis Poisson, Universit\'e de Tours, France\\
}
\email{andrew.elvey@univ-tours.fr}

\maketitle

\begin{abstract}
We address the enumeration of walks with small steps confined to a two-dimensional cone, for example the quarter plane, three-quarter plane or the slit plane. In the quarter plane case, the solutions for unweighted step-sets are already well understood, in the sense that it is known precisely for which cases the generating function is algebraic, D-finite or D-algebraic, and exact integral expressions are known in all cases. We derive similar results in a much more general setting: we enumerate walks on an $M$-quadrant cone for any positive integer $M$, with weighted steps starting at any point. The main breakthrough in this work is the derivation of an analytic functional equation which characterises the generating function of these walks, which is analogous to one now used widely for quarter-plane walks. In the case $M=3$, which corresponds to walks avoiding a quadrant, we provide exact integral-expression solutions for walks with weighted small steps which determine the generating function $\Cgf(x,y;t)$ counting these walks. Moreover, for each step-set and starting point of the walk we determine whether the generating function $\Cgf(x,y;t)$ is algebraic, D-finite or D-algebraic as a function of $x$ and $y$. In fact we provide results of this type for any $M$-quadrant cone, showing that this nature is the same for any odd $M$. For $M$ even we find that the generating functions counting these walks are D-finite in $x$ and $y$, and algebraic if and only if the starting point of the walk is on the same axis as the boundaries of the cone.
\end{abstract}

\section{Introduction}
The systematic study of walks with small steps in the quarter plane was initiated by Bousquet-M\'elou and Mishna in 2010 \cite{bousquet2010walks}, and since then there has been great progress on the model \cite{bostan2010complete,fayolle2010holonomy,raschel2012counting,mishna2009two,
melczer2014singularity,
bernardi2017counting,kurkova2012functions,dreyfus2018nature,dreyfus2020walks}. The model is defined as follows: given a step set $S\subset\{-1,0,1\}^{2}\setminus\{(0,0)\}$, determine the generating function
\[\Qgf(x,y;t)=\sum_{n=0}^{\infty}\sum_{i,j\geq1}q(i,j;n)t^{n}x^{i}y^{j},\]
where $q(i,j;n)$ is the number of walks of length $n$, starting at $(1,1)$, and ending at $(i,j)$ using steps in $S$ and staying in the strictly positive quadrant.\footnote{In most of the literature, walks start at $(0,0)$ and stay in the non-strictly positive quadrant, for which the resulting generating function is $\frac{1}{xy}\Qgf(x,y;t)$.}

This article follows a subsequent line of research considering a natural extension of this work, namely the enumeration of walks in the three-quadrant cone 
\[\mathcal{C}=\{(i,j):i>0\text{ or }j>0\},\]
shown in Figure \ref{fig:three-quadrant_walk_example}. That is determining the generating function
\[\Cgf(x,y;t)=\sum_{n=0}^{\infty}\sum_{(i,j)\in \mathcal{C}}c(i,j;n)t^{n}x^{i}y^{j},\]
where $c(i,j;n)$ is the number of walks of length $n$, starting at $(1,1)$, and ending at $(i,j)$ using steps in $S$ and staying in $\mathcal{C}$. As we will discuss in the next subsection, several articles have considered this generating function for specific step-sets $S$ \cite{bousquet2016square,raschel2018walks,mustapha2019non,
dreyfus2020nature,bousquet_wallner,bousquet21+}, see Subsection \ref{subsec:past_research3} for more detail. The quadrant and the three-quadrant cone each play an important role in the understanding of walks in a cone $\mathcal{C}\subset\mathbb{C}$ as a typical {\em convex} cone $\mathcal{C}$ can be transformed into a quadrant via some linear transformation, while a typical {\em non-convex} cone can be similarly transformed into a three quadrant cone.

The starting point of the study of walks in the quarter plane is the functional equation
\[K(x,y;t)\Qgf(x,y;t)=xy-\Fgf_{1}(t)-\Agf_{1}(x;t)-\Bgf_{1}(y;t),\]
where $K(x,y;t)$ is known as the {\em kernel} of the model, and $A_{1}$, $B_{1}$ and $F_{1}$ are to be determined. Methods to solve such equations via reduction to boundary value problems are now somewhat standard \cite{fayolle1999random,raschel2012counting,bernardi2017counting}, the intitial idea being to consider the curve $E_{t}$ on which $K(x,y;t)=0$, as the left hand side of the equation is $0$ on this curve, as long as $\Qgf(x,y;t)$ converges. As we will discuss in Section \ref{sec:functional_equations}, one can write a similar equation characterising the generating function $\Cgf(x,y;t)$ of walks in the 3-quadrant cone:
\[K(x,y;t)\Cgf(x,y;t)=xy-\Fgf(t)-\Agf\left(\frac{1}{x};t\right)-\Bgf\left(\frac{1}{y};t\right).\]
Given the similarity, one may expect the same methods to apply, however for each method applied to walks in the quadrant, a major stumbling block has appeared. For the analytic method of \cite{fayolle1979two,fayolle1999random,raschel2012counting,bernardi2017counting}, the problem has been that the generating function $\Cgf(x,y;t)$ does not converge on a substantial section of $E_{t}$. To get around this problem, previous work on walks in the three-quadrant cone has used symmetries in certain models to relate them to quadrant models prior to further analysis, a method that only applies to a minority of models. The main breakthrough of this work is a way to bypass this issue more generally - intuitively we proceed by cutting the three-quadrant cone into its three quadrants prior to moving to the analytic world, then we reglue the quadrants in the analytic world to derive an analytic functional equation resembling that deduced for walks in the quadrant.

\begin{figure}[ht]
\centering
   \includegraphics[scale=1]{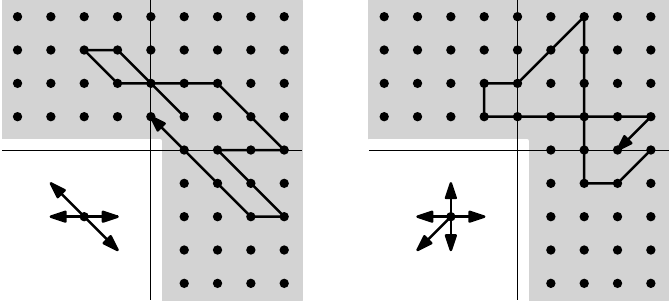}
   \caption{Walks in the three-quadrant cone $\mathcal{C}$. Left: A walk starting at the standard starting point $(1,1)$ using step set \#12 from Table \ref{table:previously_solved}. Right: A walk starting at $(4,0)$ using step set \#10. Even though the generating function for walks with step set \#10 starting at $(1,1)$ is D-transcendental, we will show that the generating function for walks starting at $(4,0)$ is D-algebraic in $x$.}
   \label{fig:three-quadrant_walk_example}
\end{figure}

In the latter half of this article we present a general approach to counting walks in cones, by considering walks in an $M$-quadrant cone, using the same ideas as for $M=3$. Walks confined to such a cone can be equivalently described as walks whose intermediate winding angles around the origin are restricted to some interval (see \cite[Section 3.3]{budd2017winding}). In each case $M=1,2,3,4$ the enumeration of walks in an $M$-quadrant cone has been previously been studied, so our analysis also serves to uniformize the approach to these models.

\subsection{Past research on walks in $M$-quadrant cones}\label{subsec:past_research3}
Much of the past research on walks in cones has used some unweighted step-set $S\subset \{-1,0,1\}^{2}\setminus\{(0,0)\}$. A priori, there are 256 distinct step sets $S$. In the quarter-plane case, after removing duplicates and cases that are equivalent to half-plane models, Bousquet-M\'elou and Mishna identified 79 non-trivial and combinatorially distinct models \cite{bousquet2010walks}. The study of these models is now in some sense complete as it is known for each $S$ where the generating function fits into the hierarchy
\[\text{Algebraic}\subset\text{D-finite}\subset\text{D-Algebraic}.\]
Recall that a generating function is called Algebraic with respect to a certain variable if it is related to that variable by a non-trivial polynomial equation with coefficients only depending on the other variables, and it is called D-finite (resp. D-algebraic) if it satisfies a linear (resp. polynomial) differential equation with respect to that variable whose coefficients are polynomial in that variable.
Of the 79 models proposed by Bousquet-M\'elou and Mishna, 4 models admit an algebraic generating function (with respect to both variables) \cite{bousquet2010walks,bostan2010complete}, 19 further models admit a D-finite generating function \cite{bousquet2010walks,fayolle2010holonomy}, 9 further models admit a D-algebraic generating function \cite{bernardi2017counting,kurkova2012functions} and the remaining 47 models admit a generating function which is not D-algebraic \cite{dreyfus2018nature,dreyfus2020walks}, in which case we say that the generating function is D-transcendental. Moreover, in the 74 cases known as {\em non-singular}, an exact integral expression is known for the generating function \cite{raschel2012counting}, while other exact expressions are known in the $5$ singular cases \cite{mishna2009two,melczer2014singularity}.

\begin{table}[h!]
\begin{tabular}[t]{|c||c|c|c|c|c|c|c|}
\hline 
Model&\#1&\#2&\#3&\#4&\#5&\#6&\#7\\ 
&\TwoDstepset01011010 &\TwoDstepset10100101 &\TwoDstepset11111111 &\TwoDstepset01010001 & \TwoDstepset10001010 &\TwoDstepset11011011 &\TwoDstepset01011011\\
\hline
Nature of $\Cgf(x,y;t)$&DF \cite{bousquet2016square}&DF \cite{bousquet2016square}&DF \cite{bousquet_wallner}& alg. \cite{bousquet21+}& alg. \cite{bousquet21+}& alg. \cite{bousquet21+}& D-alg. \cite{dreyfus2020walks}\\
\hline
\end{tabular}

\begin{tabular}{|c||c|c|c|c|c|c|c|}
\hline 
Model&\#8&\#9&\#10&\#11&\#12&\#13&\#14\\ 
&\TwoDstepset11010001 &\TwoDstepset10001011 &\TwoDstepset11011010 &\TwoDstepset11000011 &\TwoDstepset01100110 &\TwoDstepset01001100 &\TwoDstepset01111110\\
\hline
Nature of $\Cgf(x,y;t)$&D-trans \cite{dreyfus2020walks}&D-trans \cite{dreyfus2020walks}&D-trans \cite{dreyfus2020walks}&\cite{budd2017winding}&\cite{budd2017winding}&\cite{elveyprice_winding_FPSAC}&\cite{elveyprice_winding_FPSAC}\\
\hline
\end{tabular} 
\medskip
\caption{Previously solved models on the 3-quadrant cone in some cases. Models \#1-\#10 solved for walks starting at $(1,1)$. Models \#11-\#14 only solved for certain specified start and end points.}
\label{table:previously_solved}
\end{table}
For walks in the three-quadrant cone, the generating function $\Cgf(x,y;t)$ is algebraic if $S$ is singular, as then the model can be written in terms of half-plane models (see \cite[Section 2.2]{bousquet_wallner}), so only the 74 non-singular cases remain. The study of walks in the three-quadrant cone was initiated by Bousquet-M\'elou in \cite{bousquet2016square}, where she enumerated walks with two step sets (\#1 and \#2 in Table \ref{table:previously_solved}), showing that the associated generating function is D-finite.  Raschel and Trotignon \cite{raschel2018walks} then found a transformation relating walks with any of the step sets \#4-\#10 to walks in the quarter plane - allowing them to deduce integral expression solutions in these cases. Building on this work Dreyfus and Trotignon \cite{dreyfus2020nature} showed that the last three of these step sets (\#8 - \#10) admit non-D-algebraic generating functions, while step set \#7 admits a D-algebraic generating function. Subsequently, using algebraic methods, Bousquet-M\'elou and Wallner \cite{bousquet_wallner} adapted the method of \cite{bousquet2016square} to enumerate walks with step set \#3, again showing that these admit a D-finite generating function. Finally, in a recent article \cite{bousquet21+}, Bousquet-M\'elou used invariants to show that three cases known as the Kreweras cases (\#4-\#6) admit algebraic generating functions.  Together these classify 10 models into the complexity hierarchy (algebraic, D-finite, D-algebraic). Moreover, in \cite{mustapha2019non}, Mustapha used asymptotic properties to show that the series $[x^{0}y^{0}]\Cgf(x,y;t)$ counting excursions starting and ending at $(0,0)$ is not D-finite (in $t$) for any of the $51$ non-singular step sets $S$ admitting an infinite group; that is, precisely the models which are not D-finite in the quarter plane. Remarkably, in all of the cases (\#1-\#10) shown in Table \ref{table:previously_solved}, the generating function has the same nature as in the quarter plane. Dreyfus and Trotignon were the first to conjecture that this holds more generally - that the nature is the same for any of the 74 non-singular step-sets $S$ \cite{dreyfus2020walks}.

Walks with specified endpoints have been enumerated for 2 further models (\#11-\#12) by Budd \cite{budd2017winding} and 2 further models by the current author (\#13-\#14) \cite{elveyprice_winding_FPSAC}, both of these results by relating the model to walks counted by winding angle. In fact, these results apply to the $M$-quadrant cone for any integer $M>0$. We note that for models (\#10-\#13), the (single variable) generating functions considered in \cite{budd2017winding,elveyprice_winding_FPSAC} were found to be algebraic in $t$, which coincides with the general quarter-plane case for model \#10, but for models \#11-\#13, the generating function $\Qgf(x,y;t)$ is D-finite but not algebraic \cite{bousquet2010walks}. This is perhaps explained by the fact that the walks considered in \cite{budd2017winding,elveyprice_winding_FPSAC} for models \#11-\#13 start at $(1,0)$ rather than the standard starting point $(1,1)$.

Finally, we mention two other $M$-quadrant cones. The $2$-quadrant cone is simply the half plane, so walks in this space are well understood. The $4$-quadrant cone is also known as the slit plane, on which walks were studied in  \cite{bousquet2001walks}, \cite{bousquet2002walks} and \cite{rubey2004transcendence}. In particular, Bousquet-M\'elou showed that the full generating function for walks in the slit-plane starting at some point on the $x$-axis is algebraic for any finite step-set $S\subset \mathbb{Z}\times\{-1,0,1\}\setminus\{(0,0)\}$.  

\subsection{Main Results}
Thoughout this article, we fix a step-set $S\subset\{-1,0,1\}^{2}\setminus\{(0,0)\}$, a weight $w_{s}>0$ for each $s\in S$ and a starting point $(p,q)$ with $p>0$, $q\geq0$. We assume that $S$ is a non-singular step-set, that is, for any line $\ell$ through the origin, at least one element of $S$ lies on each side of $\ell$. We then consider the enumeration of walks in some cone $\mathcal{C}$ using this weighted step-set and starting point. A fundamental object of the study of walks with such a fixed weighted step-set is the Kernel $K(x,y;t)$ given by
\[K(x,y;t):=-1+t\sum_{(\alpha,\beta)\in S}w_{(\alpha,\beta)}x^{\alpha}y^{\beta}.\]  

Before we start to consider the problem of counting walks in any specific cone, we define a uniformisation $(X(z),Y(z))$ of the Kernel curve $E_{t}=\{(x,y):K(x,y;t)=0\}$. Explicit definitions of $X(z)$ and $Y(z)$ are given in Appendix \ref{ap:param_thm}, while Lemma \ref{lem:param} serves as an implicit definition. These functions, and Lemmas \ref{lem:param} and \ref{lem:Omega}, which describe their properties, are used extensively throughout the article.

Sections \ref{sec:functional_equations}, \ref{sec:integral_expression}, \ref{sec:axis_start} and \ref{sec:nature} concern the enumeration of walks in the 3-quadrant cone, that is, we consider the generating $\Cgf(x,y;t)$ counting weighted walks starting at $(p,q)$ and staying entirely inside the 3-quadrant cone. Sections \ref{sec:more_cones} and \ref{sec:bc_solutions} concern the enumeration of walks in the more general $M$-quadrant cones. The dependencies between the sections of this article are shown in Figure \ref{fig:Section_dependencies}.
\begin{figure}[ht]
\centering
\begin{picture}(229,100)
   \put(0,0){\includegraphics[scale=1]{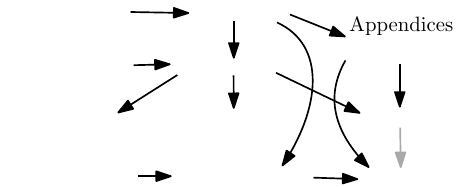}}
   
   \put(2,2){Subsection \ref{subsec:Mon4plane}}
   \put(13,30){Section \ref{sec:integral_expression}}
   \put(1,56){Subsection \ref{sec:formal_functional_equations}}
   \put(8,82){Appendix \ref{ap:param_thm}}
   
   \put(85,2){Subsection \ref{subsec:bc_analytic_functional_equations}}
   \put(96,30){Section \ref{sec:axis_start}}
   \put(84,56){Subsection \ref{sec:functions_analytic}}
   \put(95,81){Section \ref{sec:param}}
   
   \put(176,1){Section \ref{sec:bc_solutions}}
   \put(176,30){Section \ref{sec:nature}}
   \put(179,63){\ref{ap:nature_analytic},\ref{ap:group},\ref{ap:D-trans}}
      \end{picture}
   \caption{Dependencies between Sections. Section \ref{sec:bc_solutions} is largely independent of Section \ref{sec:nature}, only using results from that section which do not rely on Section \ref{sec:functional_equations}.}
   \label{fig:Section_dependencies}
\end{figure}

Our main results concerning walks in the three-quadrant cone are listed below:
\begin{itemize}
\item Theorem \ref{thm:PH_PV_characterisation}: Characterisation of $\Cgf(x,y;t)$ using analytic functional equation, 
\item Theorem \ref{thm:integral_expression}: Explicit (integral expression) form for $\Cgf(x,y;t)$,
\item Theorem \ref{thm:P-xaxis}: Explicit expression for walks starting on $x$-axis ($q=0$),
\item Nature of $\Cgf(x,y;t)$ as function of $x$ and $y$ (section \ref{sec:nature}):
\begin{itemize}
\item Theorem \ref{thm:D-finite_general_t}: $\Cgf(x,y;t)$ D-finite $\iff$ group is finite,
\item Theorem \ref{thm:algebraic_general_t}: $\Cgf(x,y;t)$ algebraic $\iff$ group is finite and orbit sum is $0$,
\item Theorem \ref{thm:D-algebraic_general_t}: if group is infinite, $\Cgf(x,y;t)$ D-algebraic $\iff$ model decouples.
\end{itemize}
\end{itemize}
Theorem \ref{thm:PH_PV_characterisation} is in some sense the main result of this article: in it we give a simple analytic functional equation characterising two functions $A(z)$, $B(z)$ and a value $F$ (all of which depend on $t$), which in turn determines $\Cgf(x,y;t)$ via
\[K(x,y;t)\Cgf(x,y;t)=-x^py^q+A(Y^{-1}(y))+B(X^{-1}(x))+F,\]
which we make more precise in Section \ref{sec:functional_equations}. This analytic characterisation of $\Cgf(x,y;t)$ is analogous to a characterisation found by Raschel \cite{raschel2012counting} of $\Qgf(x,y;t)$, which counts walks in the quadrant. Our method is also a direct generalisation of that of Raschel, which in turn is based on a method of Fayolle, Iasnogorodski and Malyshev \cite{fayolle1979two,fayolle1999random}, which they used in a probabilistic context.

In Sections \ref{sec:integral_expression}, \ref{sec:axis_start} and \ref{sec:nature} we use Theorem \ref{thm:PH_PV_characterisation} to derive several independent results listed above, in part by adapting methods that have been applied to a similar analytic characterisation for walks in the walks in the quadrant. This does not exhaust all possible utility of Theorem \ref{thm:PH_PV_characterisation}, we have simply limited ourselves to some relatively well-defined and accessible consequences.

The first such result, Theorem \ref{thm:integral_expression}, is an explicit integral expression for the functions $A(z)$ and $B(z)$, analogous to one found by Raschel for the enumeration of walks in the quadrant \cite{raschel2012counting}.

In Section \ref{sec:axis_start}, we give an integral-free expression for $A(z)$ in the case that the walk starts on an axis. In Corollary \ref{cor:combi}, we derive a remarkable equinumerosity result relating walks with different prescribed starting and ending points for which we have no combinatorial explanation.

\begin{figure}[ht]
\centering
\begin{picture}(350,200)
   \put(0,0){\includegraphics[scale=1]{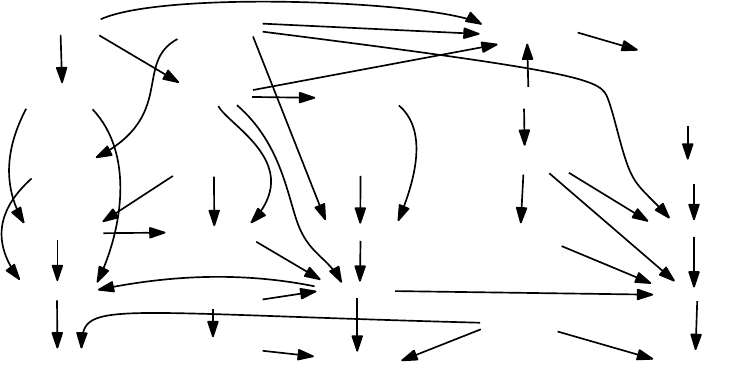}}
   \put(7,2){Thm \ref{thm:algebraic_general_t}}
   \put(7,33){Thm \ref{thm:algebraic_fixed_t}}
   \put(7,62){Thm \ref{thm:finite_group_alg_thm}}
   \put(7,93){Prop \ref{prop:orbit_sum0isdecoupling}}
   \put(0,128){Thm \ref{thm:algebraic_fixed_t} (i)-(vi)}
   \put(7,160){Prop \ref{prop:algebraicofXorY}}
   
   \put(83,9){Prop \ref{prop:finite_group_general_t}}
   \put(83,30){Prop \ref{prop:finite_group_fixed_t}}
   \put(80,62){Thm \ref{thm:finite_group_case_thm}}
   \put(80,93){Lem \ref{lem:finite_group}}
   \put(80,128){Prop \ref{prop:D-finiteofXorY}}
   \put(80,162){Prop \ref{prop:rationalofXorY}}
   
   \put(153,2){Thm \ref{thm:D-finite_general_t}}
   \put(153,33){Thm \ref{thm:D-finite_fixed_t}}
   \put(153,62){Thm \ref{thm:inf_group_non-D-finite}}
   \put(153,93){Lem \ref{lem:not-rational}}
   \put(153,128){Lem \ref{lem:D-fini_z}}
   
   \put(233,16){Lem \ref{lem:function_and_series_complexity}}
   \put(230,62){Thm \ref{thm:D-alg_proof}}
   \put(220,97){Thm \ref{thm:D-algebraic_fixed_t} (i)-(vi)}
   \put(233,128){Prop \ref{prop:D-algebraicofXorY}}
   \put(233,160){Prop \ref{prop:PandXP}}
   
   \put(315,2){Thm \ref{thm:D-algebraic_general_t}}
   \put(315,33){Thm \ref{thm:D-algebraic_fixed_t}}
   \put(315,62){Thm \ref{thm:non-D-alg_proof}}
   \put(315,93){Cor \ref{cor:D-trans}}
   \put(310,118){Prop \ref{prop:D-trans_poles}}
   \put(308,150){Prop \ref{prop:XP_closure_properties}}
   
   \end{picture}
   \caption{Dependencies between results in Section \ref{sec:nature} and Appendices \ref{ap:nature_analytic}, \ref{ap:group} and \ref{ap:D-trans}. The results of Section \ref{sec:nature} rely additionally on Sections \ref{sec:param} and \ref{sec:functional_equations}, while results in the appendices \ref{ap:nature_analytic}, \ref{ap:group} and \ref{ap:D-trans} rely on Section \ref{sec:param}.}
   \label{fig:Theorem_dependencies_sec6}
\end{figure} 

In Section \ref{sec:nature} we fully characterise the nature of $\Cgf(x,y;t)$ as a function of $x$ into the hierarchy
\[\text{Algebraic}\subset\text{D-finite}\subset\text{D-Algebraic}.\]
We show that the nature of $\Cgf(x,y;t)$ is determined by whether the model decouples (see Definition \ref{defn:decoupling}) and/or has a {\em finite group} (see Appendix \ref{ap:group}). In particular, this shows that $\Cgf(x,y;t)$ has the same nature as the analogous generating function $\Qgf(x,y;t)$ counting walks in the quarter plane (with the same step-set and starting point). Our proof of this result is broken into three theorems: Theorem \ref{thm:algebraic_general_t} characterises the cases where $\Cgf(x,y;t)$ is algebraic, Theorem \ref{thm:D-finite_general_t} characterises the cases where $\Cgf(x,y;t)$ is D-finite and Theorem \ref{thm:D-algebraic_general_t} characterises the remaining cases where $\Cgf(x,y;t)$ is D-algebraic. These breakdown further into several theorems showing some part of the characterisation, as summarised below:
\begin{itemize}
\item Theorem \ref{thm:finite_group_case_thm}: group is finite $\implies$ $\Cgf(x,y;t)$ D-finite, 
\item Theorem \ref{thm:inf_group_non-D-finite}: $\Cgf(x,y;t)$ D-finite $\implies$ group is finite,
\item Theorem \ref{thm:finite_group_alg_thm}: if group is finite, $\Cgf(x,y;t)$ algebraic $\iff$ orbit sum is $0$,
\item Theorem \ref{thm:D-alg_proof}: model decouples $\implies$ $\Cgf(x,y;t)$ D-algebraic,
\item Theorem \ref{thm:non-D-alg_proof}: group is infinite and $\Cgf(x,y;t)$ D-algebraic $\implies$ model decouples.
\end{itemize}
Our results in this section rely on Appendices \ref{ap:nature_analytic}, \ref{ap:group} and \ref{ap:D-trans}, which we will describe below. A chart showing the dependencies between the results involved in the characterisation of the nature of $\Cgf(x,y;t)$ is given in Figure \ref{fig:Theorem_dependencies_sec6}.

Note that by our definition of walks in the 3-quadrant cone, steps directly between $(1,0)$ and $(0,1)$ are allowed, whereas they are forbidden in \cite{bousquet_wallner}, for example. We do not expect this to affect the nature of the generating function, in fact in Section \ref{subsec:annoying_step_forbidding} we show that in most cases the nature is the same.

\begin{figure}[ht]
\centering
\begin{picture}(350,280)
   \put(0,0){\includegraphics[scale=1]{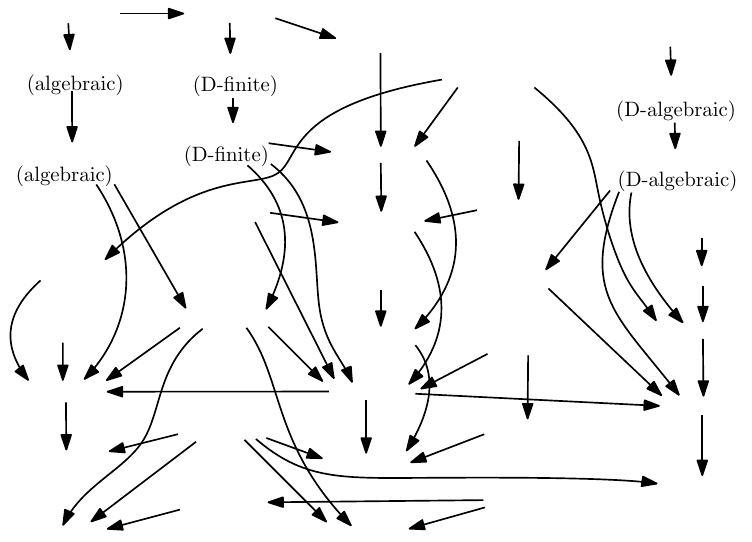}}
   \put(12,2){Thm \ref{thm:bc_even_cone_algebraic}}
   \put(12,37){Thm \ref{thm:bc_algebraic_general_t}}
   \put(12,70){Thm \ref{thm:bc_algebraic_fixed_t}}
   \put(11,97){Lem \ref{lem:bc_odd_algebraic}}
   \put(10,127){Prop \ref{prop:orbit_sum0isdecoupling}}
   \put(14,186){Lem \ref{lem:bc_QjAB_complexities_equivalence}}
   \put(3,230){Prop \ref{prop:PandXP},\ref{prop:XP_closure_properties}}
   \put(15,253){Prop \ref{prop:algebraicofXorY}}
   
   \put(87,16){Lem \ref{lem:bc_even_algebraic}}
   \put(88,50){Lem \ref{lem:function_and_series_complexity}}
   \put(87,105){Thm \ref{thm:bc_finite_group_case_thm}}
   \put(89,155){Lem \ref{lem:bc_odd_D-finite}}
   \put(91,195){Lem \ref{lem:bc_QjAB_complexities_equivalence}}
   \put(80,229){Prop \ref{prop:PandXP},\ref{prop:XP_closure_properties}}
   \put(91,253){Prop \ref{prop:D-finiteofXorY}}
   
   \put(158,1){Thm \ref{thm:bc_even_cone_D-finite}}
   \put(158,37){Thm \ref{thm:bc_D-finite_general_t}}
   \put(159,70){Thm \ref{thm:bc_D-finite_fixed_t}}
   \put(161,97){Cor \ref{cor:bc_inf_group_non-D-finite_M1}}
   \put(161,124){Lem \ref{lem:not-rationalM1}}
   \put(164,151){Cor \ref{cor:bc_inf_group_non-D-finite_Mgeq3}}
   \put(161,183){Thm \ref{thm:bc_inf_group_non-D-finite}}
   \put(164,237){Lem \ref{lem:D-fini_z}}
   
   \put(234,16){Lem \ref{lem:bc_even_D-finite}}
   \put(234,53){Prop \ref{prop:finite_group_general_t}}
   \put(234,93){Prop \ref{prop:finite_group_fixed_t}}
   \put(234,124){Thm \ref{thm:bc_D-alg_proof}}
   \put(231,157){Lem \ref{lem:not-rationalMbig}}
   \put(231,194){Lem \ref{lem:not-rational}}
   \put(215,220){Prop \ref{prop:rationalofXorY}}
   
   \put(318,24){Thm \ref{thm:bc_D-algebraic_general_t}}
   \put(319,63){Thm \ref{thm:bc_D-algebraic_fixed_t}}
   \put(316,98){Thm \ref{thm:bc_non-D-alg_proof}}
   \put(319,125){Cor \ref{cor:D-trans}}
   \put(317,150){Prop \ref{prop:D-trans_poles}}
   \put(309,182){Lem \ref{lem:bc_QjAB_complexities_equivalence}}
   \put(292,217){Prop \ref{prop:PandXP},\ref{prop:XP_closure_properties}}
   \put(300,241){Prop \ref{prop:D-algebraicofXorY}}
   
   \end{picture}
   \caption{Dependencies between results in Section \ref{sec:bc_solutions} and Appendices \ref{ap:nature_analytic}, \ref{ap:group} and \ref{ap:D-trans} as well as three results of Section \ref{sec:nature}. The results of Section \ref{sec:bc_solutions} rely additionally on Sections \ref{sec:param} and \ref{sec:more_cones}, while results in the appendices \ref{ap:nature_analytic}, \ref{ap:group} and \ref{ap:D-trans} rely on Section \ref{sec:param}.}
   \label{fig:Theorem_dependencies_sec8}
\end{figure}

In Sections \ref{sec:more_cones} and \ref{sec:bc_solutions}, we address the enumeration of walks on an $M$-quadrant cone for any positive integer $M$. In Section \ref{sec:more_cones} we define the model precisely and derive analytic functional equations characterising the series $\Qgf_{j}(x,y;t)$ involved, as in Section \ref{sec:functional_equations}. In Section \ref{sec:bc_solutions}, we use these functional equations to determine the nature of the series involved as functions of $x$. Our main results in these sections are listed Below:
\begin{itemize}
\item Theorem \ref{thm:bc_PH_PV_characterisation}: characterisation of generating functions $\Qgf_{j}(x,y;t)$ using analytic functional equation.
\item Nature of $\Qgf_{j}(x,y;t)$ (for $M$ even):
\begin{itemize}
\item Theorem \ref{thm:bc_even_cone_D-finite}: $\Qgf_{j}(x,y;t)$ always D-finite,
\item Theorem \ref{thm:bc_even_cone_algebraic}: $\Qgf_{j}(x,y;t)$ algebraic $\iff$ starting point $(p,q)$ is on certain axis, 
\end{itemize}
\item Nature of $\Qgf_{j}(x,y;t)$ (for $M$ odd):
\begin{itemize}
\item Theorem \ref{thm:bc_D-finite_general_t}: $\Qgf_{j}(x,y;t)$ D-finite $\iff$ group is finite,
\item Theorem \ref{thm:bc_algebraic_general_t}: $\Qgf_{j}(x,y;t)$ algebraic $\iff$ group is finite and orbit sum is $0$,
\item Theorem \ref{thm:bc_D-algebraic_general_t}: if group is infinite, $\Qgf_{j}(x,y;t)$ D-algebraic $\iff$ model decouples.
\end{itemize}
\end{itemize} To our knowledge, Theorem \ref{thm:bc_D-finite_general_t} involves the first proof that the generating function for quarter-plane walks cannot be D-finite in $x$ when the group is infinite, the previously missing element being Lemma \ref{lem:not-rationalM1}. A chart showing the dependicies between the results of Section \ref{sec:bc_solutions} is given in Figure \ref{fig:Theorem_dependencies_sec8}.

Finally in Section \ref{sec:conlusion_and_questions}, we pose a variety of questions left open by this work.

We have a number of appendices in which we prove technical results that we use throughout the article. In Appendix \ref{ap:param_thm}, we use results from \cite{dreyfus2019differential} to prove Lemmas \ref{lem:param} and \ref{lem:Omega} which allow us to relate the generating function $\Cgf(x,y;t)$ to the meromorphic functions ${A}(z)$ and ${B}(z)$. In Appendix \ref{ap:nature_analytic}, we describe how the nature of the generating functions such as $\Cgf(x,y;t)$ relates to the nature of the related analytic functions, such as ${A}(z)$ and ${B}(z)$. In appendix \ref{ap:group}, we define and discuss the group of the walk. Finally in Appendix \ref{ap:D-trans} we discuss results coming from the Galois theory of q-difference equations that we use in the D-transcendental cases.

\section{Parameterisation of the kernel curve}\label{sec:param}
Recall that we fix a (non-singular) step-set $S\subset\{-1,0,1\}^{2}\setminus\{(0,0)\}$, and a weight $w_{s}>0$ for each $s\in S$. A fundamental object of the study of walks with such a fixed weighted step-set is the Kernel $K(x,y;t)$ given by
\[K(x,y;t):=-1+t\Pgf(x,y)=-1+t\sum_{(\alpha,\beta)\in S}w_{(\alpha,\beta)}x^{\alpha}y^{\beta}.\]
Following the method used in the quarter plane pioneered by Fayolle, Iasnogorodski and Raschel \cite{fayolle1979two,fayolle1999random,raschel2012counting} we start by fixing $t\in\left(0,\frac{1}{\Pgf(1,1)}\right)$ and then we consider the curve $\overline{E_{t}}=\{(x,y):K(x,y;t)=0\}$.

Recall our assumption that $S$ is a non-singular step-set. Under this assumption, the curve $\overline{E_{t}}$ is known to have genus $1$, so we will be able to parameterise it using elliptic functions $X(z)$ and $Y(z)$. More precisely we have the following lemma, which we prove in Lemma \ref{lem:xwparam} in Appendix \ref{ap:param_thm},  using results from \cite{dreyfus2019differential}. The transformation that converts Lemma \ref{lem:xwparam} to Lemma \ref{lem:param} is described above Lemma \ref{lem:xwparam}.

\begin{Lemma}\label{lem:param}
There are meromorphic functions $X,Y:\mathbb{C}\to\mathbb{C}\cup\{\infty\}$ and numbers $\gamma,\tau\in i\mathbb{R}$ with $\Im(\pi\tau)>\Im(2\gamma)>0$ satisfying the following conditions
\begin{itemize}
\item $K(X(z),Y(z))=0$
\item $X(z)=X(z+\pi)=X(z+\pi\tau)=X(-\gamma-z)$
\item $Y(z)=Y(z+\pi)=Y(z+\pi\tau)=Y(\gamma-z)$
\item $|X(-\frac{\gamma}{2})|,|Y(\frac{\gamma}{2})|<1$
\item Counting with multiplicity, the functions $X(z)$ and $Y(z)$ each contain two poles and two roots in each fundamental domain $\{z_{c}+r_{1}\pi+r_{2}\pi\tau:r_{1},r_{2}\in[0,1)\}$, where $z_{c}\in\mathbb{C}$ is fixed.
\end{itemize}
Moreover, $X(z)$ and $Y(z)$ are differentially algebraic with respect to $z$ and $t$, while $\tau$ and $\gamma$ are differentially algebraic as functions of $t$.
\end{Lemma}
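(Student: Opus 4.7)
The plan is to uniformize the kernel curve $\overline{E_t}$ by the complex plane and extract the stated properties from this uniformization, largely following the approach pioneered by Fayolle and Raschel \cite{fayolle1999random,raschel2012counting}.

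First I would verify that, under the non-singularity hypothesis on $S$, the projective closure of $\overline{E_t}$ inside $\mathbb{P}^1\times\mathbb{P}^1$ is a smooth curve of bidegree $(2,2)$, hence by the adjunction formula has genus $1$. The uniformization theorem then yields a surjective meromorphic map $\phi:\mathbb{C}\to\overline{E_t}$, $\phi(z)=(X(z),Y(z))$, whose kernel is a rank-two lattice $\Lambda$, so that automatically $K(X(z),Y(z);t)=0$ and $X,Y$ are elliptic with respect to $\Lambda$. The two degree-$2$ projections $\overline{E_t}\to\mathbb{P}^1$ onto the $x$- and $y$-coordinates force $X$ and $Y$ to have exactly two poles and two zeros (with multiplicity) in every fundamental domain of $\Lambda$.

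Next I would normalize $\Lambda$ and the base point of $\phi$. For $t$ real in $(0,1/\Pgf(1,1))$ the curve is defined over $\mathbb{R}$ and carries a canonical antiholomorphic involution induced by complex conjugation; this real structure allows one to choose $\Lambda=\pi\mathbb{Z}+\pi\tau\mathbb{Z}$ with $\tau\in i\mathbb{R}$. The deck transformations of the two coordinate projections give covering involutions $\iota_x,\iota_y$ of $\overline{E_t}$, each lifting to an affine involution of $\mathbb{C}$. By shifting the base point of $\phi$ suitably, these lifts can be put simultaneously in the form $z\mapsto -\gamma-z$ and $z\mapsto \gamma-z$ for a common parameter $\gamma$, which the real structure forces to lie in $i\mathbb{R}$. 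The composition $\iota_y\circ\iota_x$ is then translation by $2\gamma$ on $\mathbb{C}/\Lambda$ and is precisely the canonical generator of the group of the walk; comparing this translation with the imaginary period yields the required inequality $\Im(\pi\tau)>\Im(2\gamma)>0$ for $t$ in the relevant range.

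The normalization inequalities $|X(-\gamma/2)|,|Y(\gamma/2)|<1$ follow because $-\gamma/2$ and $\gamma/2$ are the fixed points of $\iota_x$ and $\iota_y$, hence correspond to branch points of the coordinate projections; a direct check for $t$ close to $0$, together with continuity in $t$, places the chosen branch inside the unit disc throughout $(0,1/\Pgf(1,1))$. Differential algebraicity of $X$ and $Y$ in $z$ is automatic (elliptic functions satisfy the Weierstrass differential equation); differential algebraicity in $t$, together with that of $\tau$ and $\gamma$, is obtained by invoking \cite{dreyfus2019differential}, where $\tau$ appears as a ratio of complete elliptic integrals depending rationally on $t$ and $X,Y$ admit theta-function expressions whose parameters depend D-algebraically on $t$. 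The main obstacle I anticipate is the simultaneous normalization delivering a \emph{single} parameter $\gamma$ with the correct sign convention together with the strict inequality $\Im(\pi\tau)>\Im(2\gamma)>0$ uniformly in $t\in(0,1/\Pgf(1,1))$; this presumably explains why the author first proves an auxiliary version (Lemma \ref{lem:xwparam}) in different coordinates and then transforms to the form stated above.
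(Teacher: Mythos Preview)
Your outline captures the correct strategy---uniformize the genus-$1$ kernel curve, then normalize the lattice and the involutions---but the paper takes a more explicit route that sidesteps exactly the obstacle you flag in your last sentence.

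Rather than invoking the abstract uniformization theorem and then arguing about real structures, the paper (Appendix~\ref{ap:param_thm}) quotes the explicit Weierstrass-$\wp$ parametrization of $\overline{E_t}$ from \cite[Proposition~2.1]{dreyfus2019differential}: one has $x(\omega)$ and $y(\omega)$ as M\"obius transforms of $\wp(\omega;\omega_1,\omega_2)$ and $\wp(\omega-\omega_3/2;\omega_1,\omega_2)$, with $\omega_1\in i\mathbb{R}$ and $\omega_2,\omega_3\in\mathbb{R}$ given by \emph{explicit} elliptic integrals of $1/\sqrt{D(x)}$, where $D$ is the $y$-discriminant of the kernel. A single affine change of variable $\omega\mapsto z$ then produces $X,Y,\tau,\gamma$ in the stated form (Lemma~\ref{lem:xwparam}).

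This buys three things your abstract approach must earn by hand:
\begin{itemize}
\item The inequality $0<\Im(2\gamma)<\Im(\pi\tau)$ becomes $0<\omega_3<\omega_2$, taken directly from \cite[Lemma~2.6]{dreyfus2019differential}; you would instead have to argue that the translation by $2\gamma$ sits strictly inside the fundamental domain uniformly in $t$.
\item The conditions $|X(-\gamma/2)|<1$ and $|Y(\gamma/2)|<1$ are not obtained by continuity from $t\to 0$ but by the identifications $x(\omega_2/2)=a_1$ and $y((\omega_2+\omega_3)/2)=b_1$ from \cite[Lemma~2.3]{dreyfus2019differential}, where $a_1,b_1\in(-1,1)$ are specific real roots of the discriminants. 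Your branch-point argument is correct in spirit, but there are four fixed points of each involution on the torus, mapping to the four real branch points $a_1,\ldots,a_4$ (two of which have modulus ${>}1$); picking out the right one without the explicit formulas is exactly the normalization headache you anticipated.
\item Differential algebraicity in $t$ is imported wholesale: $\omega_1,\omega_2,\omega_3$ are even D-finite in $t$ by \cite[Lemma~6.10]{bernardi2017counting}, and $\wp$ is D-algebraic in all its arguments by \cite[Proposition~6.7]{bernardi2017counting}.
\end{itemize}
One small gap in your sketch: asserting that the real structure forces a \emph{rectangular} lattice (i.e.\ $\tau\in i\mathbb{R}$ rather than $\Re(\tau)=1/2$) requires knowing that the real locus of $\overline{E_t}$ has two components, which in turn comes from the four real branch points of $D$---again a fact the paper gets for free from the explicit setup.

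So your plan is not wrong, and would make a self-contained proof if completed; the paper simply converts each delicate normalization step into a citation of \cite{dreyfus2019differential}, and the auxiliary Lemma~\ref{lem:xwparam} you guessed at is precisely this explicit-coordinates version.
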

In fact, it follows from Proposition \ref{prop:xyom} that $(X(z),Y(z))$ parameterises $\overline{E_{t}}$, that is,
\begin{equation}
\overline{E_{t}}=\{(X(z),Y(z)):z\in\mathbb{C}\}.\label{eq:Eparam}
\end{equation}
We intend to substitute $x\to X(z)$ and $y\to Y(z)$ into \eqref{eq:three_quarter_-1}, \eqref{eq:three_quarter_0} and \eqref{eq:three_quarter_1}, however we can only do this as long as the series in these equations converge, which occurs in the situations described by the following lemma

In order to substitute $x\to X(z)$ and $y\to Y(z)$ into the series that we consider, it often suffices to understand how $|X(z)|$ and $|Y(z)|$ compare to $1$. To do this, we prove the following lemma in Appendix \ref{ap:param_thm}, using \cite[Lemma 2.9]{dreyfus2019differential}.
\begin{figure}[ht]
\centering
   \includegraphics[scale=0.5]{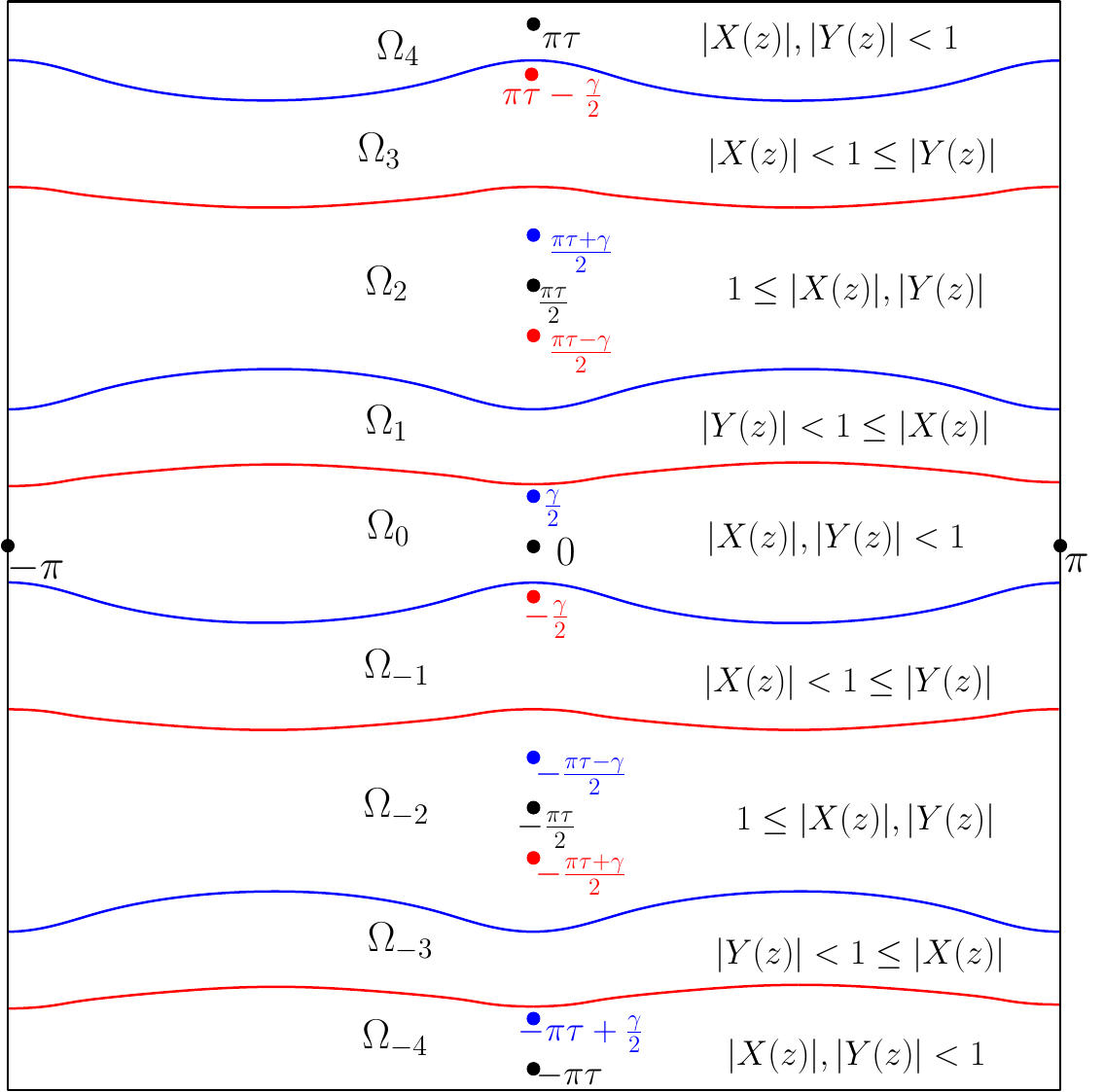}
   \caption{The complex plane partitioned into regions $\Omega_{j}$. For $z$ on the blue lines, $|Y(z)|=1$, while on the red lines $|X(z)|=1$.}
   \label{fig:Omega_chart}
\end{figure}
\begin{Lemma}\label{lem:Omega}
The complex plane can be partitioned into simply connected regions $\{\Omega_{s}\}_{s\in\mathbb{Z}}$ (see Figure \ref{fig:Omega_chart}) satisfying
\begin{align*}
\bigcup_{s\in\mathbb{Z}}\Omega_{4s}\cup\Omega_{4s+1}&=\{z\in\mathbb{C}:|Y(z)|<1\},\\
\bigcup_{s\in\mathbb{Z}}\Omega_{4s-2}\cup\Omega_{4s-1}&=\{z\in\mathbb{C}:|Y(z)|\geq1\},\\
\bigcup_{s\in\mathbb{Z}}\Omega_{4s-1}\cup\Omega_{4s}&=\{z\in\mathbb{C}:|X(z)|<1\},\\
\bigcup_{s\in\mathbb{Z}}\Omega_{4s+1}\cup\Omega_{4s+2}&=\{z\in\mathbb{C}:|X(z)|\geq1\},
\end{align*}
moreover, the equations
\begin{align*}
\pi+\Omega_{s}&=\Omega_{s},\\
s\pi\tau+\gamma-\Omega_{2s}\cup\Omega_{2s+1}&=\Omega_{2s}\cup\Omega_{2s+1}\supset \frac{s\pi\tau+\gamma}{2}+\mathbb{R},\\
s\pi\tau-\gamma-\Omega_{2s}\cup\Omega_{2s-1}&=\Omega_{2s}\cup\Omega_{2s-1}\supset\frac{s\pi\tau-\gamma}{2}+\mathbb{R},
\end{align*}
hold for each $s\in\mathbb{Z}$.
\end{Lemma}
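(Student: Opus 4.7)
The strategy is to combine the description of the level sets $\{|X(z)|=1\}$ and $\{|Y(z)|=1\}$ furnished by \cite[Lemma~2.9]{dreyfus2019differential} with the functional symmetries of Lemma~\ref{lem:param}, and then verify the claimed partition and symmetry properties by careful bookkeeping. First, I would invoke \cite[Lemma~2.9]{dreyfus2019differential} to obtain that each of $\{z:|X(z)|=1\}$ and $\{z:|Y(z)|=1\}$ is a disjoint union of smooth, simple, $\pi$-periodic curves. Since $X$ and $Y$ each have degree two on the torus $\mathbb{C}/(\pi\mathbb{Z}+\pi\tau\mathbb{Z})$ by the final bullet of Lemma~\ref{lem:param}, these level sets descend to exactly two disjoint simple closed curves per $\pi\tau$-period on the cylinder $\mathbb{C}/\pi\mathbb{Z}$, and together they cut $\mathbb{C}$ into open horizontal strips on each of which $|X|$ and $|Y|$ are consistently either less than or at least $1$.

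Second, I would exploit the symmetries of $X$ and $Y$ to locate these strips precisely. Since $X(-\gamma-z)=X(z)$ and $X$ is $\pi\tau$-periodic, the sublevel set $\{|X|<1\}$ is invariant under every involution $z\mapsto s\pi\tau-\gamma-z$, whose fixed point is $(s\pi\tau-\gamma)/2$. Combined with $|X(-\gamma/2)|<1$, this forces $\{|X|<1\}$ to decompose as a disjoint union of horizontal strips, each symmetric about a horizontal line of the form $(s\pi\tau-\gamma)/2+\mathbb{R}$ for an appropriate parity of $s$. An analogous statement holds for $\{|Y|<1\}$, with strips symmetric about $(s\pi\tau+\gamma)/2+\mathbb{R}$. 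Because $0<\Im(2\gamma)<\Im(\pi\tau)$, the two strip families are strictly interleaved within each $\pi\tau$-period: walking vertically through $\mathbb{C}$ one crosses curves of $\{|X|=1\}$ and $\{|Y|=1\}$ alternately, so the complement of their union is a sequence of open horizontal strips cycling through all four combinations of $|X|,|Y|$ relative to $1$, with vertical period $\Im(2\pi\tau)$. I label these strips from bottom to top as $\Omega_s$, $s\in\mathbb{Z}$, with the base index chosen so that the four union formulas match; each $\Omega_s$ is simply connected as an open strip bounded by smooth level curves.

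Finally, I would verify the three symmetry identities. The relation $\pi+\Omega_s=\Omega_s$ is immediate from the $\pi$-periodicity of $|X|$ and $|Y|$. For the identity $s\pi\tau+\gamma-(\Omega_{2s}\cup\Omega_{2s+1})=\Omega_{2s}\cup\Omega_{2s+1}$, the involution $z\mapsto s\pi\tau+\gamma-z$ preserves $Y$ exactly (since $Y(\gamma-z)=Y(z)$ combined with $\pi\tau$-periodicity yields $Y(s\pi\tau+\gamma-z)=Y(z)$), hence preserves $|Y|$ and permutes the $|Y|$-strips; since its fixed point $(s\pi\tau+\gamma)/2$ lies in $\Omega_{2s}\cup\Omega_{2s+1}$, which by construction is a single $|Y|$-strip, that strip is fixed setwise, and the whole line $(s\pi\tau+\gamma)/2+\mathbb{R}$ lies in it by $\pi$-periodicity. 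The third identity follows by the same argument with $X$ in place of $Y$ and $-\gamma$ in place of $\gamma$. The chief obstacle is the first step, namely extracting from \cite[Lemma~2.9]{dreyfus2019differential} the precise geometry of the level curves (smoothness, disjointness, and the correct count of components per period); once this is in hand, the remainder of the argument is a careful symmetry-tracking exercise.
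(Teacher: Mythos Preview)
Your approach is essentially the same as the paper's: both reduce the claim to the geometric description of the level sets $\{|X|=1\}$ and $\{|Y|=1\}$ coming from \cite[Lemma~2.9]{dreyfus2019differential}, and then exploit the symmetries $X(-\gamma-z)=X(z)$, $Y(\gamma-z)=Y(z)$ together with periodicity to label the resulting strips and verify the involutive identities. The paper carries this out in the $\omega$-coordinate of \cite{dreyfus2019differential} (its Lemma~A.3) and then applies the change of variables $\Omega_s=\frac{\pi}{\omega_1}(\frac{\omega_3}{4}-\mathcal D_s)$, whereas you work directly in $z$; this is only a cosmetic difference.

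One point to tighten: the interleaving of the $|X|<1$ and $|Y|<1$ strips does not follow from the inequality $0<\Im(2\gamma)<\Im(\pi\tau)$ alone. Knowing that the centres of symmetry alternate does not by itself rule out, say, one $|X|<1$ strip being so narrow that it lies entirely inside a $|Y|\ge 1$ strip. The paper closes this gap by importing from \cite[Lemma~2.8]{dreyfus2019differential} that $\mathcal D_x\cap\mathcal D_y$ is nonempty and connected (so a genuine $\Omega_0$ exists), and then argues via the ordering of the four boundary curves $\tilde\Gamma_x^{\pm},\tilde\Gamma_y^{\pm}$ that they must alternate. You correctly flag this step as the chief obstacle; just be aware that Lemma~2.9 alone is not quite enough and you will need the connectedness input as well.
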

\begin{proof}
This is equivalent to Lemma \ref{lem:appOmega}, using the transformations described in Appendix \ref{ap:param_thm} just before Lemma \ref{lem:xwparam} and just before Lemma \ref{lem:appOmega}
\end{proof}
In some sections it will be useful to parameterise the series using the Jacobi theta function
\begin{equation}\th(z,\tau):=\sum_{n=0}^{\infty}(-1)^n e^{i\pi\tau n(n+1)}\left(e^{(2n+1)iz}-e^{-(2n+1)iz}\right),\label{eq:theta_definition}\end{equation}
which is defined for all $z,\tau\in\mathbb{C}$ satisfying $\Im
(\tau)>0$. In the literature, the function $\th(z,\tau)$ is sometimes written as $\th_{11}(z,\tau)$ or $\theta_{1}(z,e^{i\pi\tau})$. 
Recall that $\tau$ is fixed in this section, so we will generally think of $\th$ as a function of $z$. Note that this function has neither $\pi$ nor $\pi\tau$ as a period, however elliptic functions with these two periods can easily be constructed using $\th$ due to the following relations
\begin{equation}\th(z+\pi,\tau)=-\th(z,\tau)~~~~~~~~\text{and}~~~~~~~~\th(z+\pi\tau,\tau)=-e^{-2iz-i\pi\tau}\th(z,\tau).\label{eq:theta_basic_relations}\end{equation}
Moreover, $\th(z,\tau)$ has no roots and its only poles are at the points $z\in\pi\mathbb{Z}+\pi\tau\mathbb{Z}$. Using these properties allows us to parameterise $X$ and $Y$ using $\th$.
\begin{Proposition}\label{prop:th_param}
There is some $\alpha\in\Omega_{0}\cup\Omega_{-1}$, $\beta\in\Omega_{0}\cup\Omega_{1}$, $\delta\in\Omega_{1}\cup\Omega_{2}$, $\epsilon\in\Omega_{-2}\cup\Omega_{-1}$ and $x_{c},y_{c}\in\mathbb{C}\setminus\{0\}$ satisfying
\begin{align*}
X(z)&=x_{c}\frac{\th(z-\alpha,\tau)\th(z+\gamma+\alpha,\tau)}{\th(z-\delta,\tau)\th(z+\gamma+\delta,\tau)}\\
Y(z)&=y_{c}\frac{\th(z-\beta,\tau)\th(z-\gamma+\beta,\tau)}{\th(z-\epsilon,\tau)\th(z-\gamma+\epsilon,\tau)}.
\end{align*}
\end{Proposition}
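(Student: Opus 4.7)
The plan is to construct, for each of $X$ and $Y$, an elliptic function expressed as a ratio of four theta functions that has the same divisor, and then invoke Liouville's theorem to identify the two up to a multiplicative constant.

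For the formula involving $X$, fix any $\alpha,\delta\in\mathbb{C}$ and set
\[
R_{X}(z):=\frac{\th(z-\alpha,\tau)\th(z+\gamma+\alpha,\tau)}{\th(z-\delta,\tau)\th(z+\gamma+\delta,\tau)}.
\]
Apply the quasi-periodicity relations \eqref{eq:theta_basic_relations} factor by factor: the four sign changes contributed by $z\mapsto z+\pi$ cancel in the ratio, and the four exponential prefactors contributed by $z\mapsto z+\pi\tau$ combine to $e^{-4iz-2i\gamma-2i\pi\tau}$ in both numerator and denominator and likewise cancel. Oddness of $\th$ (immediate from \eqref{eq:theta_definition}) then yields $R_{X}(-\gamma-z)=R_{X}(z)$. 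Thus $R_{X}$ shares the invariance properties of $X$ in Lemma \ref{lem:param}, and its divisor on the torus $\mathbb{C}/(\pi\mathbb{Z}+\pi\tau\mathbb{Z})$ consists (generically) of simple zeros at $\alpha,-\gamma-\alpha$ and simple poles at $\delta,-\gamma-\delta$.

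To pin down $\alpha$ and $\delta$, I use the divisor of $X$. By Lemma \ref{lem:param}, $X$ has two zeros and two poles per fundamental domain; the symmetry $X(z)=X(-\gamma-z)$ forces this divisor to be invariant under the involution $\iota\colon z\mapsto -\gamma-z$, so the two zeros form a single orbit $\{\alpha,-\gamma-\alpha\}$ and the two poles an orbit $\{\delta,-\gamma-\delta\}$ (with fixed points counted twice in degenerate cases). Lemma \ref{lem:Omega} places the zeros in $\{|X|<1\}$ and the poles in $\{|X|\geq 1\}$, and the third identity of Lemma \ref{lem:Omega} (with $s=0$) shows that $\Omega_{-1}\cup\Omega_{0}$ is $\iota$-invariant; together with the fact that the fundamental domain containing the distinguished point $-\gamma/2$ intersects $\{|X|<1\}$ in exactly this strip, we may choose $\alpha\in\Omega_{-1}\cup\Omega_{0}$ (with $-\gamma-\alpha$ automatically in the same strip), and similarly $\delta\in\Omega_{1}\cup\Omega_{2}$. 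Now $R_{X}$ and $X$ have the same divisor on the torus, hence $X/R_{X}$ is a holomorphic elliptic function, therefore constant by Liouville; setting $x_{c}$ equal to this constant gives the first formula.

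The proof of the formula for $Y$ is entirely analogous: use the symmetry $Y(z)=Y(\gamma-z)$, giving orbits $\{\beta,\gamma-\beta\}$ and $\{\epsilon,\gamma-\epsilon\}$; place $\beta\in\Omega_{0}\cup\Omega_{1}$ and $\epsilon\in\Omega_{-2}\cup\Omega_{-1}$ using the appropriate instances of Lemma \ref{lem:Omega} (now invoking the second identity with $s=0$ and $s=-1$ for the relevant $\iota$-invariances). The main technical point will be the careful identification of which $\Omega$-strips contain the zeros and poles: this rests on the hypothesis $\Im(\pi\tau)>\Im(2\gamma)>0$, which guarantees that the four relevant fixed-point lines of the involutions $z\mapsto \pm\gamma-z$ and $z\mapsto\pi\tau\pm\gamma-z$ sit at four distinct heights and cut the fundamental parallelogram into exactly the four horizontal strips $\Omega_{-1},\Omega_{0},\Omega_{1},\Omega_{2}$ that appear in the statement.
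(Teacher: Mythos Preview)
Your proposal is correct and follows essentially the same route as the paper: pick a zero $\alpha$ and a pole $\delta$ of $X$ in a fundamental domain, use Lemma~\ref{lem:Omega} to locate them in $\Omega_{-1}\cup\Omega_0$ and $\Omega_1\cup\Omega_2$ respectively, pair each with its image under $z\mapsto -\gamma-z$, build the theta quotient with that divisor, and apply Liouville. The only cosmetic difference is that the paper first multiplies $X$ by $\th(z-\delta)\th(z+\gamma+\delta)$ to kill the poles and then argues that the resulting quotient $\tilde X$ can have at most one simple pole per period (hence none), whereas you match the full divisors directly; the paper also spells out the degenerate case $\delta\in -\frac{\gamma}{2}+\frac{\pi}{2}\mathbb{Z}+\frac{\pi\tau}{2}\mathbb{Z}$ and the nonvanishing of $x_c$ a touch more explicitly than your parenthetical ``fixed points counted twice''.
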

\begin{proof} We will prove the result for $X(z)$ as the proof for $Y(z)$ is identical. From Lemma \ref{lem:param}, we know that $X(z)$ contains two roots and two poles in each fundamental domain. Consider the fundamental domain $F=\{z\in\Omega_{-1}\cup\Omega_{0}\cup\Omega_{1}\cup\Omega_{2}:\Re(z)\in[0,\pi)\}$ and let $\alpha\in F$ and $\delta\in F$ be a root and pole of $X(z)$, respectively. From Lemma \ref{lem:Omega}, we must have $\alpha\in \Omega_{-1}\cup\Omega_{0}$ and $\delta\in\Omega_{1}\cup\Omega_{2}$. Now, since $X(z)=X(-\gamma-z)$, the value $-\gamma-\delta$ must also be a pole of $X(z)$, so more generally, all of the points in $\delta+\pi\mathbb{Z}+\pi\tau\mathbb{Z}\cup -\gamma-\delta+\pi\mathbb{Z}+\pi\tau\mathbb{Z}$ are poles of $X$. In the case that $\delta \notin -\frac{\gamma}{2}+\frac{\pi}{2}\mathbb{Z}+\frac{\pi\tau}{2}\mathbb{Z}$, this accounts for all poles of $X(z)$, so $X(z)\th(z-\delta,\tau)\th(z+\gamma+\delta,\tau)$ has no poles. In the case that  $\delta \in -\frac{\gamma}{2}+\frac{\pi}{2}\mathbb{Z}+\frac{\pi\tau}{2}\mathbb{Z}$, we have $X(z-\delta)=X(-\gamma-\delta-z)=X(\delta-z)$, so $X$ must have a double pole at $\delta$. Then again $X(z)\th(z-\delta,\tau)\th(z+\gamma+\delta,\tau)$ has no poles. In either case this implies that the function $\tilde{X}(z)$ defined by
\[\tilde{X}(z):=X(z)\frac{\th(z-\delta,\tau)\th(z+\gamma+\delta,\tau)}{\th(z-\alpha,\tau)\th(z+\gamma+\alpha,\tau)}\]
has no poles except possibly a single pole at each $z\in -\gamma-\alpha+\pi\mathbb{Z}+\pi\tau\mathbb{Z}$. But $\tilde{X}(z)$ is an elliptic function with periods $\pi$ and $\pi\tau$, so it cannot have only a single pole in each fundamental domain \cite[Page 8]{akhiezer1990elements}. Therefore it must have no poles, and is therefore a constant function. Writing $X(z)=x_{c}$ where $x_{c}\in\mathbb{C}$ yields the desired result. Note $x_{c}\neq 0$ as this would imply that $X(z)$ was the $0$ function.
\end{proof}
\textbf{Example:} In the case of simple walks, that is $S=\{(1,0),(0,1),(-1,0),(0,-1)\}$ and $w_{(\alpha,\beta)}=1$ for $(\alpha,\beta)\in S$, the equation relating $X(z)$ and $Y(z)$ is
\[X(z)+\frac{1}{X(z)}+Y(z)+\frac{1}{Y(z)}=\frac{1}{t}.\]
One can check that if we define
\begin{align*}
X(z)&=e^{-i\gamma}\frac{\th(z,\tau)\th(z+\gamma,\tau)}{\th(z-\gamma,\tau)\th(z+2\gamma,\tau)}~~~~~\text{and}~~~~~Y(z)=e^{-i\gamma}\frac{\th(z,\tau)\th(z-\gamma,\tau)}{\th(z+\gamma,\tau)\th(z-2\gamma,\tau)},
\end{align*}
where $\gamma=\frac{\pi\tau}{4}$, then $X(z)+\frac{1}{X(z)}+Y(z)+\frac{1}{Y(z)}$ has no poles, so it must be constant. Then substituting $z=\frac{\gamma}{2}$ yields an equation relating $\tau$ and $t$, which can be written as
\[e^{-i\gamma}\frac{\th\left(\frac{\gamma}{2},\tau\right)^{2}}{\th\left(\frac{3\gamma}{2},\tau\right)^{2}}=\frac{1+2t-\sqrt{1+4t}}{2t}.\]
This allows $q=e^{\frac{i\pi\tau}{4}}=e^{i\gamma}$ to be written as a series in $t$ with initial terms
\[q=t+4t^3+34t^5+360t^7+\cdots\]

\section{Functional equations for walks in the three-quadrant cone}\label{sec:functional_equations}

In this and the following three sections, we consider the generating function $\Cgf(x,y;t)$ counting walks starting at $(p,q)$, taking steps from $S$ with all intermediate points lying in the three-quadrant cone $\mathcal{C}$ and with the weight of the walk being the product of the weights $w_s$ of the steps. Note that the standard starting point is $(p,q)=(1,1)$ and in the unweighted case $w_s=1$ for each $s\in S$. In this section we deduce several systems of functional equations characterising $\Cgf(x,y;t)$. In particular, in Subsection \ref{sec:formal_functional_equations} we discuss functional equations of formal power series, while in Section \ref{sec:functions_analytic} we convert these to analytic functional equations, culminating in Theorem \ref{thm:PH_PV_characterisation}.

\subsection{Functional equations of formal series}\label{sec:formal_functional_equations}

The following lemma results from considering the final step of a walk counted by $\Cgf(x,y;t)$:
\begin{Lemma}\label{lem:3q_eq1}
Define the single step generating function $\Pgf(x,y)$ by
\[\Pgf(x,y)=\sum_{(\alpha,\beta)\in S}w_{(\alpha,\beta)}x^{\alpha}y^{\beta}.\]
Then there are series $\Agf\left(\frac{1}{y};t\right)\in\frac{t}{y}\mathbb{R}\left[\frac{1}{y}\right][[t]]$, $\Bgf\left(\frac{1}{x};t\right)\in\frac{t}{x}\mathbb{R}\left[\frac{1}{x}\right][[t]]$ and $\Fgf(t)\in t\mathbb{R}[[t]]$ which satisfy
\begin{equation}\Cgf(x,y;t)=x^{p}y^{q}+t\Pgf(x,y)\Cgf(x,y;t)-\Fgf(t)-\Bgf\left(\frac{1}{x};t\right)-\Agf\left(\frac{1}{y};t\right).\label{eq:three_quarter_full}\end{equation}
Moreover, this equation together with the fact that $c(i,j;n)=0$ for $i,j\leq 0$, characterises the generating function
\[\Cgf(x,y;t)=\sum_{t\geq 0}\sum_{i,j\in\mathbb{R}} c(i,j;n)x^{i}y^{j}t^{n},\]
as well as the series $\Agf$, $\Bgf$ and $\Fgf$.
\end{Lemma}
\begin{proof}
We start by proving combinatorially that the equation holds for some series $\Agf$, $\Bgf$ and $\Fgf$. Note that $\Cgf(x,y;t)$ is the generating function counting walks restricted to $\cC$, so $t\Pgf(x,y)\Cgf(x,y;t)$ is the generating function counting these walks with an additional step added at the end (possibly not restricted to $\cC$). To get the generating function $\Cgf(x,y;t)$ we need to add the contribution $x^{p}y^{q}$ for the empty walk and subtract the contribution $\Hgf(x,y;t)$ to $t\Pgf(x,y)\Cgf(x,y;t)$ from walks not ending in $\cC$. So
\[\Cgf(x,y;t)=x^{p}y^{q}+t\Pgf(x,y)\Cgf(x,y;t)-\Hgf(x,y;t).\]
Since the walks counted by $\Hgf(x,y;t)$ finish outside $\cC$, and have at least one step, we must have $\Hgf(x,y;t)\in t\mathbb{R}\left[\frac{1}{x},\frac{1}{y}\right][[t]]$. Moreover, since only the final step lies outside $\cC$, the endpoint must be at some $(0,k)$ or $(k,0)$ for $k\leq0$. Hence, we can write $\Hgf$ as
\begin{equation}\Hgf(x,y;t)=\Fgf(t)+\Agf\left(\frac{1}{y};t\right)+\Bgf\left(\frac{1}{x};t\right),\label{eq:newHinFAB}\end{equation}
completing the proof that such an equation holds.

Now we will show that \eqref{eq:three_quarter_full} uniquely determines the series involved. Taking the $x^i y^j t^0$ coefficient on both sides of the equation yields the initial condition $c(i,j;0)=\delta_{(i,j),(p,q)}$, while taking the $x^i y^j t^n$ coefficient for $n\geq1$ and $(i,j)\in \cC$ yields
\[c(i,j;n)=\sum_{(\alpha,\beta)\in S}w_{(\alpha,\beta)}c(i-\alpha,j-\beta;n-1),\]
which determines every value $c(i,j;n)$ inductively. Finally the series $\Hgf$ and therefore the series $\Agf$, $\Bgf$ and $\Fgf$ are determined by
\[\Hgf(x,y;t)=x^{p}y^{q}+t\Pgf(x,y)\Cgf(x,y;t)-\Cgf(x,y;t).\]
\end{proof}
\begin{figure}[ht]
\centering
   \includegraphics[scale=1]{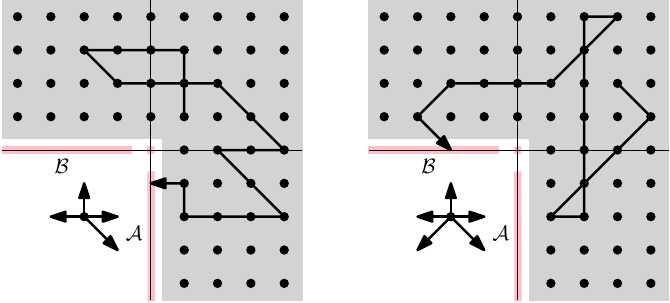}
   \caption{Left: a walk counted by $\Agf\left(\frac{1}{y};t\right)$ in the case where $(p,q)=(1,1)$ and $S$ is the step-set shown. Right: a walk counted by $\Bgf\left(\frac{1}{x};t\right)$ in the case where $(p,q)=(3,2)$ and $S$ is the step-set shown.}
   \label{fig:dead_three-quadrant_walk_examples}
\end{figure}
The equation \eqref{eq:newHinFAB} reveals a combinatorial interpretation of the series $\Agf$, $\Bgf$ and $\Fgf$: While $\Hgf$ counts walks starting at $(p,q)$ and ending just outside $\cC$ whose intermediate points all lie within $\cC$, the series $\Agf$, $\Bgf$ and $\Fgf$ each count a subset of those walks. In particular, the series $\Agf\left(\frac{1}{y};t\right)$ counts those walks ending on the negative $y$-axis $\mathcal{A}:=\{(0,j):j<0\},$ the series
$\Bgf\left(\frac{1}{x};t\right)$  counts those walks ending on the negative $x$-axis
$\mathcal{B}:=\{(i,0):i<0\},$
and $\Fgf(t)$ counts those walks ending at $(0,0)$ (see Figure \ref{fig:dead_three-quadrant_walk_examples}).
\begin{figure}[ht]
\centering
   \includegraphics[scale=1.2]{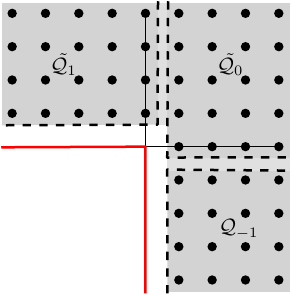}
   \caption{The three-quadrant cone $\mathcal{C}$ partitioned into three quadrants $\tilde{\mathcal{Q}_{1}}$, $\tilde{\mathcal{Q}_{0}}$ and $\mathcal{Q}_{-1}$.}
   \label{fig:three_quarter_quadrants}
\end{figure}

The unusual condition that the coefficients $c(i,j;n)$ of $\Cgf(x,y;t)$ vanish for $i,j\leq 0$ makes this equation difficult to solve directly, so we follow \cite{bousquet2016square,bousquet_wallner} and partition $\mathcal{C}$ into three quadrants $\cQ_{-1},\tilde{\cQ_{0}},\tilde{\cQ_{1}}$, defined as follows (see Figure \ref{fig:three_quarter_quadrants})
\begin{align*}
\cQ_{-1}&:=\{(i,j):i>0;j<0\},\\
\tilde{\cQ_{0}}&:=\{(i,j):i>0;j\geq0\},\\
\tilde{\cQ_{1}}&:=\{(i,j):i\leq0;j>0\}.
\end{align*}
In fact we have shifted the quadrants $\cQ_{-1},\tilde{\cQ_{0}}$ down one space compared to those considered in \cite{bousquet2016square} so that it is impossible to step directly between $\cQ_{-1}$ and $\tilde{\cQ_{1}}$ and so that our condition on the starting point $(p,q)$ is now that $(p,q)\in \tilde{\cQ_{0}}$. Note that we write $\tilde{\cQ_{j}}$ rather than $\cQ_{j}$ when the quadrant includes points on either the $x$-axis or $y$-axis.


Now, we define $\Qgf_{-1}\left(x,\frac{1}{y};t\right)$, $\Qgf_{0}(x,y;t)$ and $\Qgf_{1}\left(\frac{1}{x},y;t\right)$ to be the generating functions counting walks in $\cC$, starting at $(p,q)$ and ending in $\cQ_{-1}$, $\tilde{\cQ_{0}}$ and $\tilde{\cQ_{1}}$, respectively. So
\[\Cgf(x,y;t)=\Qgf_{-1}\left(x,\frac{1}{y};t\right)+\Qgf_{0}(x,y;t)+\Qgf_{1}\left(\frac{1}{x},y;t\right),\]
and $\Qgf_{-1}\in\frac{x}{y}\mathbb{R}\left[x,\frac{1}{y}\right][[t]]$, $\Qgf_{0}\in x\mathbb{R}\left[x,y\right][[t]]$ and $\Qgf_{1}\in y\mathbb{R}\left[\frac{1}{x},y\right][[t]]$. The following lemma rewrites \eqref{eq:three_quarter_full} as three equations characterising $\Qgf_{-1}\left(x,\frac{1}{y};t\right)$, $\Qgf_{0}(x,y;t)$ and $\Qgf_{1}\left(\frac{1}{x},y;t\right)$ using the Kernel $K(x,y;t)=t\Pgf(x,y)-1$.
\begin{Lemma}\label{lem:three3-4eqs}
There are series $\Vgf_{1}(y;t),\Vgf_{2}(y;t)\in\mathbb{R}[y][[t]]$ and $\Hgf_{1}(x;t),\Hgf_{2}(x;t)\in\mathbb{R}[x][[t]]$ satisfying the three equations
\begin{align}
K(x,y;t)\Qgf_{-1}\left(x,\frac{1}{y};t\right)&=\Agf\left(\frac{1}{y};t\right)+\Hgf_{1}(x;t)+\frac{1}{y}\Hgf_{2}(x;t)\label{eq:three_quarter_-1}\\
K(x,y;t)\Qgf_{0}(x,y;t)&=-x^{p}y^{q}+\Fgf\left(t\right)-\Vgf_{1}(y;t)-x\Vgf_{2}(y;t)-\Hgf_{1}(x;t)-\frac{1}{y}\Hgf_{2}(x;t)\label{eq:three_quarter_0}\\
K(x,y;t)\Qgf_{1}\left(\frac{1}{x},y;t\right)&=\Bgf\left(\frac{1}{x};t\right)+\Vgf_{1}(y;t)+x\Vgf_{2}(y;t)\label{eq:three_quarter_1}.
\end{align}
Moreover, these three equations characterise the series $\Qgf_{-1}\left(x,\frac{1}{y};t\right)$, $\Qgf_{0}(x,y;t)$, $\Qgf_{1}\left(\frac{1}{x},y;t\right)$, $\Vgf_{1}(y;t)$, $\Vgf_{2}(y;t)$, $\Hgf_{1}(x;t)$, $\Hgf_{2}(x;t)$, $\Agf\left(\frac{1}{y};t\right)$, $\Bgf\left(\frac{1}{x};t\right)$ and $\Fgf(t)$.
\end{Lemma}
\begin{proof}
We can rewrite \eqref{eq:three_quarter_full} as
\begin{equation}K(x,y;t)\left(\Qgf_{-1}\left(x,\frac{1}{y};t\right)+\Qgf_{0}(x,y;t)+\Qgf_{1}\left(\frac{1}{x},y;t\right)\right)=-x^p y^q+\Fgf(t)+\Agf\left(\frac{1}{y};t\right)+\Bgf\left(\frac{1}{x};t\right)\label{eq:proofeq_three_quarter},\end{equation}
which is precisely the sum of \eqref{eq:three_quarter_-1}, \eqref{eq:three_quarter_0} and \eqref{eq:three_quarter_1}. Rearranging yields
\[K(x,y;t)\Qgf_{1}\left(\frac{1}{x},y;t\right)-\Bgf\left(\frac{1}{x};t\right)=-x^{p}y^{q}+\Fgf(t)+\Agf\left(\frac{1}{y};t\right)-K(x,y;t)\left(\Qgf_{0}(x,y;t)+\Qgf_{-1}\left(x,\frac{1}{y};t\right)\right).\]
The left hand side of this equation lies in $x\mathbb{R}\left[\frac{1}{x},y\right][[t]]$ while the right hand side lies in $\mathbb{R}\left[x,y,\frac{1}{y}\right][[t]]$, so, since the sides are equal, they must lie in the intersection
\[x\mathbb{R}\left[\frac{1}{x},y\right][[t]]\cap\mathbb{R}\left[x,y,\frac{1}{y}\right][[t]]=\mathbb{R}[y][[t]]+x\mathbb{R}[y][[t]],\]
which means there are series $\Vgf_{1}(y;t),\Vgf_{2}(y;t)\in\mathbb{R}[y][[t]]$ satisfying \eqref{eq:three_quarter_1}. Similarly, in the equation
\[K(x,y;t)\left(\Qgf_{0}(x,y;t)+\Qgf_{1}\left(\frac{1}{x},y;t\right)\right)-\Bgf\left(\frac{1}{x};t\right)+x^{p}y^{q}-\Fgf(t)=\Agf\left(\frac{1}{y};t\right)-K(x,y;t)\Qgf_{-1}\left(x,\frac{1}{y};t\right),\]
the left hand side lies in $\frac{1}{y}\mathbb{R}\left[\frac{1}{x},x,y\right][[t]]$ while the right hand side lies in $\mathbb{R}\left[x,\frac{1}{y}\right][[t]]$, so they both lie in the intersection $\mathbb{R}[x][[t]]+\frac{1}{y}\mathbb{R}[x][[t]]$. Hence there are series $\Hgf_{1}(x;t),\Hgf_{2}(x;t)\in\mathbb{R}[x][[t]]$ satisfying \eqref{eq:three_quarter_-1}. Finally \eqref{eq:three_quarter_0} follows from subtracting \eqref{eq:three_quarter_-1} and \eqref{eq:three_quarter_1} from \eqref{eq:three_quarter_full}.
\end{proof}
We also note that there are combinatorial interpretations of the series $\Vgf_{1},\Vgf_{2},\Hgf_{1},\Hgf_{2}$: they each count walks starting at $(p,q)$ and ending either in $\cC$ or just outside $\cC$ with a restriction on the final step. In particular:
\begin{itemize}
\item $\Vgf_{1}(0;t)-\Vgf_{1}(y;t)$ counts walks whose final step is from $\tilde{\cQ_{0}}$ to $\tilde{\cQ_{1}}$,
\item $\Vgf_{1}(0;t)$ counts walks whose final step is from $\tilde{\cQ_{1}}$ to $(0,0)$,
\item $x\Vgf_{2}(y;t)$ counts walks whose final step is from $\tilde{\cQ_{1}}$ to $\tilde{\cQ_{0}}$ ,
\item $\Hgf_{1}(x;t)$ counts walks whose final step is from $\cQ_{-1}$ to $\tilde{\cQ_{0}}$ or $(0,0)$,
\item $-\frac{1}{y}\Hgf_{2}(x;t)$ counts walks whose final step is from $\tilde{\cQ_{0}}$ to $\cQ_{-1}$ or $(0,-1)$.
\end{itemize}
In section \ref{sec:more_cones}, we will use combinatorial interpretations of this form to generalise this section to $M$-quadrant cones for any positive integer $M$.

\subsection{Analytic reformulation of functional equations}\label{sec:functions_analytic}
In this section we will substitute $x\to X(z)$ and $y\to Y(z)$ into the equations of Lemma \ref{lem:three3-4eqs} in order to eliminate the term $K(x,y;t)$. The following lemma explains why these substitutions are possible.
\begin{Lemma}\label{lem:Q_series_convergence}
The series...
\begin{itemize}
\item $\frac{y}{x}\Qgf_{-1}\left(x,\frac{1}{y};t\right)$ converges absolutely for $x,y$ satisfying $|x|\leq 1\leq |y|\leq \infty$,
\item $\frac{1}{x}\Qgf_{0}(x,y;t)$ converges absolutely for $x,y$ satisfying $|x|,|y|\leq 1$,
\item $\frac{1}{y}\Qgf_{1}\left(\frac{1}{x},y;t\right)$ converges absolutely for $x,y$ satisfying $|y|\leq 1\leq |x|\leq \infty$.
\end{itemize}
In fact all of the series that we consider $\Qgf_{-1}$, $\Qgf_{0}$, $\Qgf_{1}$, $A$, $B$ and $\Cgf$ converge absolutely for $x,y$ satisfying
\[|x|,|y|\in \left(\sqrt{tP(1,1)},\sqrt{\frac{1}{tP(1,1)}}\right).\]
\end{Lemma}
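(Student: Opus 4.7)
The plan is a uniform dominated-series argument. The only combinatorial input I need is that, since all step weights are positive and we impose no restriction on intermediate positions, the total weighted count of walks of length $n$ starting at $(p,q)$ is exactly $\Pgf(1,1)^{n}$: this is the value at $(x,y)=(1,1)$ of the unconstrained length-$n$ generating function $(xy)^{p-q}\Pgf(x,y)^n$ (up to a monomial shift). Hence the sum of the length-$n$ coefficients of each series appearing in the lemma, namely $\Qgf_{-1},\Qgf_{0},\Qgf_{1},\Agf,\Bgf,\Fgf,\Cgf$, is bounded by $\Pgf(1,1)^{n}$, since dropping the confinement to $\cC$ only adds positive contributions.

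For each of the three bullet points I would verify that the chosen normalisation produces monomials with non-negative exponents on the stated domain. For $\tfrac{1}{x}\Qgf_{0}(x,y;t)$ the monomials are $x^{i-1}y^{j}$ with $(i,j)\in\tilde{\cQ_{0}}$, so $i-1,j\geq 0$; thus for $|x|,|y|\leq 1$ each monomial has modulus $\leq 1$ and the entire series is dominated by $\sum_{n}\Pgf(1,1)^{n}|t|^{n}$, which converges because $t\in(0,1/\Pgf(1,1))$. The other two normalisations are treated identically via the obvious substitutions $x\leftrightarrow 1/x$ or $y\leftrightarrow 1/y$, using that the walks counted by $\Qgf_{1}$ (resp. $\Qgf_{-1}$) end in a quadrant where the analogous ``normalised'' exponents are non-negative.

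For the final statement, set $r:=\sqrt{t\Pgf(1,1)}$ and $M_{x}:=\max(|x|,|x|^{-1})$, $M_{y}:=\max(|y|,|y|^{-1})$. Any walk of length $n$ from $(p,q)$ ends at $(i,j)$ with $|i-p|,|j-q|\leq n$, so $|x^{i}y^{j}|\leq |x|^{p}|y|^{q}M_{x}^{n}M_{y}^{n}$. Combining with the $\Pgf(1,1)^{n}$ bound on the length-$n$ coefficient sum, each of the series is dominated on $|x|,|y|\in(r,1/r)$ by
\[
|x|^{p}|y|^{q}\sum_{n\geq 0}\bigl(t\Pgf(1,1)\,M_{x}M_{y}\bigr)^{n}.
\]
On this domain one has $M_{x},M_{y}<1/r$ (separately in the two cases $|y|\geq 1$ and $|y|<1$, using $|y|>r$), so $t\Pgf(1,1)M_{x}M_{y}<t\Pgf(1,1)/r^{2}=1$ and the geometric series converges.

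There is no real obstacle in this proof: it is a clean term-by-term comparison with the unrestricted walk count, and the only points requiring care are tracking the sign of exponents in each bulleted normalisation and using the defining identity $r^{2}=t\Pgf(1,1)$ to force the geometric ratio strictly below $1$ on the open interval $(r,1/r)$. Note that the estimate for $\Agf$ and $\Bgf$ (which only see one of the two variables) inherits convergence from the bound above since the range $(r,1/r)$ is automatically contained in their individual domains of convergence.
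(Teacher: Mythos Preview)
Your argument is correct and follows essentially the same route as the paper's proof: both bound the sum of the length-$n$ coefficients by $\Pgf(1,1)^n$ via comparison with unrestricted walks, use non-negativity of coefficients to get the three bulleted domains, and then for the final statement bound $|x^iy^j|$ using that the exponents move by at most $1$ per step (the paper writes this as $\max\{|xy|,|x/y|,|y/x|,1/|xy|\}^n$, which is exactly your $M_x^nM_y^n$) to reduce to the same geometric series with ratio $t\Pgf(1,1)/r^2=1$.
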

\begin{proof}
Since the weighted number of walks of length $n$ in the entire plane is $P(1,1)^n$, the number of walks restricted to the three quarter plane must not be higher than this. Hence, the coefficient $[t^{n}]\Cgf(1,1;t)\leq P(1,1)^{n}$, so for fixed $t<\frac{1}{P(1,1)}$, the series $\Cgf(1,1;t)$ converges. So the series $\Qgf_{-1}(1,1;t)$, $\Qgf_{0}(1,1;t)$ and $\Qgf_{1}(1,1;t)$, whose sum is $\Cgf(1,1;t)$, also converge. Since $\frac{y}{x}\Qgf_{-1}\left(x,\frac{1}{y};t\right)\in \mathbb{R}\left[x,\frac{1}{y}\right][[t]]$, and it has only non-negative coefficients, it must converge for $|x|\leq 1\leq |y|\leq \infty$. Similarly, since $\frac{1}{y}\Qgf_{1}\left(\frac{1}{x},y;t\right)\in \mathbb{R}\left[\frac{1}{x},y\right][[t]]$, it must converge for $|y|\leq 1\leq |x|\leq \infty$. Finally, since $\frac{1}{x}\Qgf_{0}(x,y;t)\in \mathbb{R}\left[x,y\right][[t]]$, it must converge for $|x|,|y|\leq 1$.

The only remaining statement to prove is the final statement of the Lemma, which requires a bit more precision. As we discussed, the coefficient $[t^{n}]\Cgf(1,1,t)\leq P(1,1)^{n}$, moreover, the polynomial $[t^{n}]\Cgf(x,y,t)$ is a Laurent polynomial in $x$ and $y$ with degrees of $x$ and $y$ lying in $[-n,n]$. Hence for fixed $x,y\in\mathbb{C}$, the coefficient 
\begin{align*}|[t^{n}]\Cgf(x,y,t)|&\leq [t^{n}]\Cgf(1,1,t)\max\left\{|xy|^{n},\left|\frac{x}{y}\right|^{n},\left|\frac{y}{x}\right|^{n},\frac{1}{|xy|^{n}}\right\}\\
&\leq \left( \Pgf(1,1)\max\left\{|xy|,\left|\frac{x}{y}\right|,\left|\frac{y}{x}\right|,\frac{1}{|xy|}\right\}\right)^{n}.\end{align*}
So for $t\in\left(0,\frac{1}{\Pgf(1,1)}\right)$ and $|x|,|y|\in \left(\sqrt{tP(1,1)},\sqrt{\frac{1}{t\Pgf(1,1)}}\right)$, the series $\Cgf(x,y,t)$ converges, as required. We can use the same reasoning to prove the same result for all of the other series that we consider. 
\end{proof}

Using this lemma, we can substitute $x=X(z)$ and $y=Y(z)$ into \eqref{eq:three_quarter_-1}, \eqref{eq:three_quarter_0} and \eqref{eq:three_quarter_1} for $z$ in the regions $\Omega_{-1}$, $\Omega_{0}$ and $\Omega_{1}$, respectively, yielding \eqref{eq:3qinz-1}, \eqref{eq:3qinz0} and \eqref{eq:3qinz1} in the following proposition:
\begin{Proposition}\label{prop:PH_PV_prethm}
The functions
\begin{align}
\label{eq:LHdefinition}L_{H}(z)&:=\Hgf_{1}(X(z);t)+\frac{1}{Y(z)}\Hgf_{2}(X(z);t),\qquad&&\text{for $z\in\Omega_{0}\cup\Omega_{-1}$},\\
\label{eq:LVdefinition}L_{V}(z)&:=\Vgf_{1}(Y(z);t)+X(z)\Vgf_{2}(Y(z);t),\qquad&&\text{for $z\in\Omega_{0}\cup\Omega_{1}$},\\
\label{eq:PVdefinition}{A}(z)&:=\Agf\left(\frac{1}{Y(z)};t\right),\qquad&&\text{for $z\in\Omega_{-1}\cup\Omega_{-2}$},\\
\label{eq:PHdefinition}{B}(z)&:=\Bgf\left(\frac{1}{X(z)};t\right),\qquad&&\text{for $z\in\Omega_{1}\cup\Omega_{2}$}.
\end{align}
are well defined and satisfy the equations
\begin{align}
0&={A}(z)+L_{H}(z)\label{eq:3qinz-1}\qquad&&\text{for $z\in\Omega_{-1}$},\\
0&=-X(z)^{p}Y(z)^{q}+\Fgf\left(t\right)-L_{V}(z)-L_{H}(z)\label{eq:3qinz0}\qquad&&\text{for $z\in\Omega_{0}$},\\
0&={B}(z)+L_{V}(z)\label{eq:3qinz1}\qquad&&\text{for $z\in\Omega_{1}$}\\
{B}(z)&={B}(\pi\tau-\gamma-z)={B}(z+\pi)\label{eq:PH_flippy}\qquad&&\text{}\\
{A}(z)&={A}(-\pi\tau+\gamma-z)={A}(z+\pi)\label{eq:PV_flippy}\qquad&&\text{}.
\end{align}
\end{Proposition}
\begin{proof}
In the specified domain of \eqref{eq:LHdefinition}, $|X(z)|\leq1$, so the series $\Hgf_{1}(X(z);t)$ and $\Hgf_{2}(X(z);t)$ converge, which implies that $L_{H}(z)$ is well defined. Similarly the series in \eqref{eq:LVdefinition}, \eqref{eq:PVdefinition} and \eqref{eq:PHdefinition} converge, as the first parameter of each generating function has modulus at most $1$.

Now, \eqref{eq:3qinz-1}, \eqref{eq:3qinz0} and \eqref{eq:3qinz1} follow from substituting $x=X(z)$ and $y=Y(z)$ into \eqref{eq:three_quarter_-1}, \eqref{eq:three_quarter_0} and \eqref{eq:three_quarter_1}, respectively, as we always have $K(X(z),Y(z))=0$, and in each case the series $\Qgf_{j}$ converges.

Finally, for $z\in\Omega_{1}\cup\Omega_{2}$ we have $\pi\tau-\gamma-z\in \Omega_{1}\cup\Omega_{2}$, so ${B}(\pi\tau-\gamma-z)$ is well defined. Then \eqref{eq:PH_flippy} follows from $X(z)=X(\pi\tau-\gamma-z)$. Similarly, for $z\in\Omega_{-1}\cup\Omega_{-2}$, the function ${A}(-\pi\tau+\gamma-z)$ is well defined, and so \eqref{eq:PV_flippy} follows from $Y(z)=Y(-\pi\tau+\gamma-z)$.
\end{proof}

While these equations are a priori defined on different sets, by meromorphic extension we will be able to compare them directly, as we will see from the following theorem:
\begin{Theorem}\label{thm:PH_PV_characterisation}
The functions ${A}(z)$ and ${B}(z)$ extend to meromorphic functions on $\mathbb{C}$ which, along with the constant $F= \Fgf(t)$, are uniquely defined by the equations
\begin{align}X(z)^{p}Y(z)^{q}&={A}(z)+F+{B}(z)\label{eq:PH_PV_nice},\\
{B}(z)&={B}(\pi\tau-\gamma-z)\label{eq:PH_flippy_full},\\
{A}(z)&={A}(-\pi\tau+\gamma-z)\label{eq:PV_flippy_full},\\
{B}(z)&={B}(z+\pi)\label{eq:PH_pi},\\
{A}(z)&={A}(z+\pi)\label{eq:PV_pi},
\end{align}
along with the conditions
\begin{enumerate}[label={\rm(\roman*)},ref={\rm(\roman*)}]
\item ${A}(z)$ has no poles in $\Omega_{0}\cup\Omega_{-1}\cup\Omega_{-2}$,\label{Anopoles}
\item the poles of $Y(z)$ for $z\in\Omega_{-1}\cup\Omega_{-2}$ are roots of $A(z)$,\label{Aroots}
\item ${B}(z)$ has no poles in $\Omega_{0}\cup\Omega_{1}\cup\Omega_{2}$,\label{Bnopoles}
\item the poles of $X(z)$ for $z\in\Omega_{1}\cup\Omega_{2}$ are roots of ${B}(z)$.\label{Broots}
\end{enumerate}
\end{Theorem}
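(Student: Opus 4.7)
The plan is to split the proof into an existence part (constructing the meromorphic extensions of $A$ and $B$ and checking all of the listed properties) and a uniqueness part (showing any other triple satisfying the system coincides with ours). The existence half chains the three regional identities \eqref{eq:3qinz-1}--\eqref{eq:3qinz1} together to cross-extend $A$ and $B$ and then propagates by the symmetries of $X$ and $Y$; the uniqueness half reduces to a Liouville-style argument for an entire elliptic function.

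For existence I first note that on $\Omega_{-2}\cup\Omega_{-1}$ the series $\Agf(1/Y(z);t)$ converges (since $|1/Y(z)|\le 1$ there), so $A$ is analytic there. Equation \eqref{eq:3qinz-1} identifies $A=-L_H$ on $\Omega_{-1}$ and thereby extends $A$ meromorphically to $\Omega_0\cup\Omega_{-1}$; to check the extension is actually analytic on $\Omega_0$ I rearrange \eqref{eq:3qinz0} as $L_H = F - X^pY^q - L_V$ and observe that on $\Omega_0$ the series $L_V$ converges because $|Y|<1$, so $L_H$ has no poles there. A symmetric argument extends $B$ analytically to $\Omega_0\cup\Omega_1\cup\Omega_2$. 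Since $Y$ is invariant under $z\mapsto z+\pi$ and $z\mapsto -\pi\tau+\gamma-z$ (and likewise $X$ under $z\mapsto\pi\tau-\gamma-z$), the identities \eqref{eq:PH_flippy_full}--\eqref{eq:PV_pi} hold on the original domains, and iterating them extends $A$ and $B$ meromorphically to all of $\mathbb{C}$. Equation \eqref{eq:PH_PV_nice} then arises on $\Omega_0$ as the sum of \eqref{eq:3qinz-1}--\eqref{eq:3qinz1} after substituting $A=-L_H$ and $B=-L_V$, and propagates by meromorphic continuation. Condition \ref{Anopoles} is immediate from the analyticity just established; \ref{Aroots} follows because $\Agf(u;t)\in u\,\mathbb{R}[u][[t]]$ has no constant term in $u$, so $A(z)=\Agf(1/Y(z);t)$ vanishes wherever $Y(z)=\infty$. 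Conditions \ref{Bnopoles} and \ref{Broots} are symmetric.

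For uniqueness, given a second triple $(A^\sharp,B^\sharp,F^\sharp)$, write $\Delta A = A-A^\sharp$, $\Delta B = B-B^\sharp$, $\Delta F = F-F^\sharp$. Subtracting the \eqref{eq:PH_PV_nice} identities gives $\Delta A(z) = -\Delta B(z) - \Delta F$, so $\Delta A$ and $\Delta B$ share the same pole divisor. Combining this relation with the flip symmetry of $\Delta A$ yields a new symmetry $\Delta B(z) = \Delta B(-\pi\tau+\gamma-z)$; composing with \eqref{eq:PH_flippy_full} produces the translation $\Delta B(z+2(\pi\tau-\gamma)) = \Delta B(z)$, so $\Delta B$ is elliptic with periods $\pi$ and $2(\pi\tau-\gamma)$. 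Since $\Delta A$ and $\Delta B$ share their poles, conditions \ref{Anopoles} and \ref{Bnopoles} together force $\Delta B$ to be pole-free on $\Omega_{-2}\cup\Omega_{-1}\cup\Omega_0\cup\Omega_1\cup\Omega_2$; combining this pole-free region with the flip symmetries of $\Delta B$ and the period $2(\pi\tau-\gamma)$ should cover a fundamental parallelogram, forcing $\Delta B$ to be constant. Condition \ref{Broots}, applied at any pole of $X$ in $\Omega_1\cup\Omega_2$ (which exists by Lemma \ref{lem:param}), kills this constant, and then \ref{Aroots} yields $\Delta F = 0$.

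The main obstacle I expect is the last geometric step in the uniqueness argument. The pole-free region $\Omega_{-2}\cup\cdots\cup\Omega_2$ has imaginary height roughly $\Im(\pi\tau)+\Im(\gamma)$, while the period $2(\pi\tau-\gamma)$ has imaginary modulus $2\,\Im(\pi\tau)-2\,\Im(\gamma)$, and the hypothesis $\Im(\pi\tau)>\Im(2\gamma)$ of Lemma \ref{lem:param} does not by itself force the former to exceed the latter. Closing this gap requires a careful use of the involutions of $\Omega_{2s}\cup\Omega_{2s\pm1}$ recorded in Lemma \ref{lem:Omega}: reflecting the pole-free region across the centres $(s\pi\tau\pm\gamma)/2$ should tile an honest fundamental parallelogram of $\pi\mathbb{Z}+2(\pi\tau-\gamma)\mathbb{Z}$, and it is this covering verification that I expect to be the most delicate part of the proof.
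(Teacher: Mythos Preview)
Your approach is correct and essentially the same as the paper's. One small ordering issue in your existence sketch: with $A$ extended only to $\Omega_{-2}\cup\Omega_{-1}\cup\Omega_0$, iterating the single involution $z\mapsto -\pi\tau+\gamma-z$ does not reach all of $\mathbb{C}$; you first need to push $A$ across to $\Omega_1\cup\Omega_2$ via $A=X^pY^q-F-B$ (using the already-extended $B$), and only then does the composite shift $z\mapsto z+2(\pi\tau-\gamma)$ become available to propagate globally, which is exactly how the paper proceeds. The geometric step you flag as delicate is indeed glossed over in the paper's own proof of this theorem; the verification that $\Omega_{-2}\cup\cdots\cup\Omega_2\cup(\pi\tau-\gamma-\Omega_0)$ is a fundamental domain for the lattice $\pi\mathbb{Z}+2(\pi\tau-\gamma)\mathbb{Z}$ is carried out explicitly later, in the proof of Proposition~\ref{prop:omega}.
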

\begin{proof}
From \eqref{eq:3qinz-1}, we have ${A}(z)=-L_{H}(z)$ for $z\in\Omega_{-1}$. But the right hand side is meromorphic on $\Omega_{-1}\cup\Omega_{0}$, so we can use it to extend ${A}$ to a meromorphic function on $\Omega_{-2}\cup\Omega_{-1}\cup\Omega_{0}$. Similarly, \eqref{eq:3qinz0} allows us to write ${A}(z)$ as a function of $L_{V}(z)$ for $z\in\Omega_{0}$, allowing us to extend the domain of ${A}$ to include $\Omega_{1}$. Finally \eqref{eq:3qinz1} allows us to write ${A}(z)$ as a function of ${B}(z)$ on $\Omega_{1}$, allowing us to extend ${A}(z)$ to the entire domain $\Omega_{-2}\cup\Omega_{-1}\cup\Omega_{0}\cup\Omega_{1}\cup\Omega_{2}$. Similarly ${B}(z)$ extends to a meromorphic function on $\Omega_{-2}\cup\Omega_{-1}\cup\Omega_{0}\cup\Omega_{1}\cup\Omega_{2}$ as do $L_{H}(z)$ and $L_{V}(z)$. Since \eqref{eq:3qinz-1}, \eqref{eq:3qinz0} and \eqref{eq:3qinz1} hold in regions with non-empty interior, they must hold on all of $\Omega_{-2}\cup\Omega_{-1}\cup\Omega_{0}\cup\Omega_{1}\cup\Omega_{2}$. Finally, adding these three equations and multiplying by $X(z)Y(z)/t$ yields \eqref{eq:PH_PV_nice}.

To extend the functions to $\mathbb{C}$, we use \eqref{eq:PH_flippy} and \eqref{eq:PV_flippy} as follows: From Lemma \ref{lem:Omega}, we know that
\[-\frac{1}{2}(\pi\tau-\gamma)+\mathbb{R}\subset\Omega_{-2}\cup\Omega_{-1}\qquad\text{and}\qquad\frac{1}{2}(\pi\tau-\gamma)+\mathbb{R}\subset\Omega_{1}\cup\Omega_{2}.\]
Hence both of these lines, and the region delimited by these lines, is contained in $\Omega_{-2}\cup\Omega_{-1}\cup\Omega_{0}\cup\Omega_{1}\cup\Omega_{2}$. Now, since $A(z)$ satisfies \eqref{eq:PV_flippy} for $z\in \Omega_{-2}\cup\Omega_{-1}$, it extends meromorphically to the function $A(z):=A(-\pi\tau+\gamma-z)$ for $z\in -\pi\tau+\gamma-\Omega_{-2}\cup\Omega_{-1}\cup\Omega_{0}\cup\Omega_{1}\cup\Omega_{2}$, which contains the region between the lines
\[-\frac{3}{2}(\pi\tau-\gamma)+\mathbb{R}\qquad\text{and}\qquad-\frac{1}{2}(\pi\tau-\gamma)+\mathbb{R}.\]
Together, these extend the definition of $A(z)$ to the entire region containing $\Omega_{1}\cup\Omega_{2}$ and $-\pi\tau+\gamma-\Omega_{1}\cup\Omega_{2}=\Omega_{1}\cup\Omega_{2}-2(\pi\tau-\gamma)$ as well as the region between these spaces.  
Now, for $z\in\Omega_{1}\cup\Omega_{2}$ we can combine \eqref{eq:PH_flippy}, \eqref{eq:PV_flippy} and \eqref{eq:PH_PV_nice}, giving
\[A(z)=X(z)^{p}Y(z)^{p}-X(\pi\tau-\gamma-z)^{p}Y(\pi\tau-\gamma-z)^{p}+A(z-2(\pi\tau-\gamma)).\]
This recursively allows us to extend $A(z)$ meromorphically to the space between $\Omega_{1}\cup\Omega_{2}+2k(\pi\tau-\gamma)$ and $\Omega_{1}\cup\Omega_{2}+2(k+1)(\pi\tau-\gamma)$ for any integer $k$. Hence $A(z)$ is a meromorphic function on the union $\mathbb{C}$ of these spaces. The relations between $A(z)$, $L_{H}(z)$, $L_{V}(z)$ and $B(z)$ then allow these other three functions to extend meromorphically to $\mathbb{C}$. 

We will now show that conditions \ref{Anopoles}-\ref{Broots} hold. The definition \eqref{eq:PHdefinition} of ${B}(z)$ for $z\in\Omega_{1}\cup\Omega_{2}$ converges, so ${B}(z)$ has no poles in this region. Moreover, since the series has $1/|X(z)|$ as a factor, it has roots at the poles of $X(z)$ in $\Omega_{1}\cup\Omega_{2}$. In $\Omega_{0}$, we have 
\[{B}(z)=-L_{V}(z)=-V_{1}(Y(z);t)-X(z)V_{2}(Y(z);t).\]
Since $|X(z)|,|Y(z)|\leq 1$ in this region, the series converge and there are still no poles. This proves the two conditions \ref{Bnopoles} and \ref{Broots}. Similarly, the conditions \ref{Anopoles} and \ref{Aroots} hold.

Finally we need to show that these conditions uniquely define the functions ${B}(z)$, ${A}(z)$ and the constant $F$. Suppose that $\hat{B}(z)$, $\hat{A}(z)$ and $\hat{F}$ is an arbitrary triple satisfying the same conditions. Then it suffices to show that ${A}(z)=\hat{A}(z)$ and ${B}(z)=\hat{B}(z)$. From \eqref{eq:PH_PV_nice}, we can define
\[\Delta(z):={A}(z)-\hat{A}(z)=\hat{B}(z)-{B}(z)+\hat{F}-F,\]
which satisfies $\Delta(z)=\Delta(\pi\tau-\gamma-z)=\Delta(-\pi\tau+\gamma-z)=\Delta(z+\pi)$. Moreover, the four conditions on ${A}(z)$ and ${B}(z)$ imply, respectively, that
\begin{enumerate}[label={\rm(\roman*)},ref={\rm(\roman*)}]
\item $\Delta(z)$ has no poles in $\Omega_{0}\cup\Omega_{1}\cup\Omega_{2}$,
\item the poles of $X(z)$ for $z\in\Omega_{1}\cup\Omega_{2}$ are roots of $\Delta(z)$+$F-\hat{F}$,
\item $\Delta(z)$ has no poles in $\Omega_{0}\cup\Omega_{-1}\cup\Omega_{-2}$,
\item the poles of $Y(z)$ for $z\in\Omega_{-1}\cup\Omega_{-2}$ are roots of $\Delta(z)$.
\end{enumerate}
Together with $\Delta(z)=\Delta(\pi\tau-\gamma-z)=\Delta(-\pi\tau+\gamma-z)$, these imply that $\Delta(z)$ is an elliptic function with no poles, so it is constant. Moreover, the fourth condition implies that $\Delta(z)$ does have roots, so $\Delta(z)$ is the $0$ function. The second condition then implies that $F=\hat{F}$. Together with the definition of $\Delta$ we have ${A}(z)=\hat{A}(z)$ and ${B}(z)=\hat{B}(z)$. 
\end{proof}

At first glance it may seem that \eqref{eq:PH_PV_nice} follows from substituting $(x,y)=(X(z),Y(z))$ directly into \eqref{eq:three_quarter_full}, however this ignores the questions of whether the series involved converge and the fact that ${A}(z)$ and ${B}(z)$ are defined on non-intersecting domains. So in some sense the intermediate steps are just used to understand which domains should be used to define ${A}(z)$ and ${B}(z)$.

Note that combining \eqref{eq:PH_PV_nice}, \eqref{eq:PH_flippy_full}, \eqref{eq:PV_flippy_full} yields
\begin{equation}{B}(2\pi\tau-2\gamma+z)-{B}(z)=J(z),\label{eq:PH_qdiff}\end{equation}
where $J(z)$ is an elliptic function with periods $\pi$ and $\pi\tau$ given by
\begin{equation}J(z):=\left(X(z-2\gamma)^{p}-X(z)^{p}\right)Y(z)^{q}.\label{def:j}\end{equation}

An equation analogous to \eqref{eq:PH_PV_nice} was found by Raschel for walks in the quarter plane \cite{raschel2012counting}, the only difference being that the transformations $z\to \pi\tau-\gamma-z$ and $z\to -\pi\tau+\gamma-z$ here are $z\to \gamma-z$ and $z\to -\gamma-z$ in the quarter plane. Raschel used this equation to derive an integral-expression solution determining $\Qgf(x,y;t)$ (in the case where the starting point $(p,q)=(1,1)$), and the equation has since been used to determine precisely when $\Qgf(x,y;t)$ is differentially algebraic \cite{bernardi2017counting,dreyfus2018nature,hardouin2020differentially} and to determine when it is algebraic or D-finite with respect to $x$ or $y$ \cite{fayolle2010holonomy,kurkova2012functions}. Following these methods we will prove many of the same results in the three-quadrant cone.
\section{Integral expressions for walks in the three-quadrant cone}\label{sec:integral_expression}


In order to solve the functional equations in Theorem \ref{thm:PH_PV_characterisation}, we will introduce an explicit elliptic function $W(z)$ with periods $\pi$ and $2\pi\tau-2\gamma$, as this will be related to ${B}(z)$ due to \eqref{eq:PH_qdiff}. We will define $W(z)$ using the Jacobi theta function $\th(z,\tau)$ defined in \eqref{eq:theta_definition}, and we will use the parameters $x_c,\delta,\epsilon$ from Proposition \ref{prop:th_param}, which depend on $t$ but not on $z$.
\begin{Definition}\label{def:W}
We define the function $W(z)$ by\begin{equation}\label{eq:om_def}W(z):=\omega_{c}\frac{\th(z-\epsilon,2\tau-\frac{2\gamma}{\pi})\th(z-\pi\tau+\gamma+\epsilon,2\tau-\frac{2\gamma}{\pi})}{\th(z-\delta,2\tau-\frac{2\gamma}{\pi})\th(z-\pi\tau+\gamma+\delta,2\tau-\frac{2\gamma}{\pi})},\end{equation}
where $\omega_{c}$ is given by
\[\omega_{c}=x_{c}\frac{\th'(0,2\tau-\frac{2\gamma}{\pi})\th(2\delta-\pi\tau+\gamma,2\tau-\frac{2\gamma}{\pi})\th(\delta-\alpha,\tau)\th(\delta+\gamma+\alpha,\tau)}{\th(\delta-\epsilon,2\tau-\frac{2\gamma}{\pi})\th(\delta-\pi\tau+\gamma+\epsilon,2\tau-\frac{2\gamma}{\pi})\th'(0,\tau)\th(2\delta+\gamma,\tau)},\]
unless $2\delta+\gamma\in\pi\tau+\pi\mathbb{Z}$, in which case the numerator and denominator are both $0$ so we define
\[\omega_{c}=-e^{i\pi\tau}x_{c}\frac{\th'(0,2\tau-\frac{2\gamma}{\pi})^2\th(\delta-\alpha,\tau)\th(\delta+\gamma+\alpha,\tau)}{\th(\delta-\epsilon,2\tau-\frac{2\gamma}{\pi})\th(\delta-\pi\tau+\gamma+\epsilon,2\tau-\frac{2\gamma}{\pi})\th'(0,\tau)^{2}}.\]
\end{Definition}

In the following proposition we show a number of properties of $W(z)$ which will be useful in relating it to ${A}(z)$, ${B}(z)$ and $F$:

\begin{Proposition}\label{prop:omega}
The function $W(z)$ satisfies the following properties
\begin{enumerate}[label={\rm(\roman*)},ref={\rm(\roman*)}]
\item $W(z)=W(\pi\tau-\gamma-z)=W(-\pi\tau+\gamma-z)=W(z+\pi)$.
\item $1/W(z)$ and $Y(z)$ have exactly the same poles in $\Omega_{-2}\cup\Omega_{-1}$.
\item $W(z)$ and $X(z)$ have exactly the same poles in $\Omega_{1}\cup\Omega_{2}$.
\item $W(z)-X(z)$ has no poles in $\Omega_{1}\cup\Omega_{2}$.
\item the sets $W(\Omega_{-2}\cup\Omega_{-1})$, $W(\Omega_{0})$ and $W(\Omega_{1}\cup\Omega_{2})$ are disjoint and their union is $\mathbb{C}\cup\{\infty\}$. 
\end{enumerate}
\end{Proposition}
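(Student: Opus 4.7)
The plan is to verify the five properties in order, with (v) being the main obstacle.

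For (i), the three invariances follow from the standard transformation laws of the Jacobi theta function. Applied with the modulus $\tau'=2\tau-2\gamma/\pi$ (so that the second quasi-period becomes $\pi\tau'=2\pi\tau-2\gamma$), the identity $\theta(z+\pi,\tau')=-\theta(z,\tau')$ makes each of the four theta factors in $W$ flip sign under $z\mapsto z+\pi$, giving $W(z+\pi)=W(z)$ immediately. The reflection $W(z)=W(\pi\tau-\gamma-z)$ uses oddness $\theta(-z,\tau')=-\theta(z,\tau')$: the map permutes the numerator arguments $z-\epsilon$ and $z-\pi\tau+\gamma+\epsilon$ up to a sign (and likewise for the denominator), and the signs cancel in the ratio. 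The third identity $W(z)=W(-\pi\tau+\gamma-z)$ follows by composing the first reflection with the $2\pi\tau-2\gamma$ translation periodicity, which holds for $W$ because $\theta(z+\pi\tau',\tau')=-e^{-2iz-i\pi\tau'}\theta(z,\tau')$ contributes exponential prefactors that cancel between numerator and denominator.

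Parts (ii) and (iii) are direct comparisons of zero and pole sets. From \eqref{eq:om_def}, the zeros of $W$ form the cosets $\epsilon+\Lambda_W$ and $\pi\tau-\gamma-\epsilon+\Lambda_W$, and its poles form $\delta+\Lambda_W$ and $\pi\tau-\gamma-\delta+\Lambda_W$, where $\Lambda_W=\pi\mathbb{Z}+(2\pi\tau-2\gamma)\mathbb{Z}$. Proposition \ref{prop:th_param} gives the poles of $Y$ at $\epsilon+\Lambda_Y$ and $\gamma-\epsilon+\Lambda_Y$, with $\Lambda_Y=\pi\mathbb{Z}+\pi\tau\mathbb{Z}$. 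Inside the strip $\Omega_{-2}\cup\Omega_{-1}$, each class contributes a single $\pi$-translation class; shifting the second $Y$-pole class down by $\pi\tau$ to $\gamma-\epsilon-\pi\tau+\pi\mathbb{Z}$ and the second $W$-zero class up by $2\pi\tau-2\gamma$ to $-\pi\tau+\gamma-\epsilon+\pi\mathbb{Z}$ shows the two descriptions coincide, yielding (ii). An analogous computation in $\Omega_1\cup\Omega_2$ matches the poles of $W$ and $X$, yielding (iii).

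For (iv), property (iii) ensures $W-X$ has no poles outside their shared pole set in $\Omega_1\cup\Omega_2$, so it suffices to cancel principal parts. At $z=\delta$, both have (generically) simple poles, and computing the residues using $\theta(z-\delta,\cdot)\approx\theta'(0,\cdot)(z-\delta)$ and equating them recovers exactly the first formula for $\omega_c$ in Definition \ref{def:W}. At the second pole $\pi\tau-\gamma-\delta$, the residues agree automatically: $W(z)=W(\pi\tau-\gamma-z)$ gives $\mathrm{Res}_{\pi\tau-\gamma-\delta}W=-\mathrm{Res}_{\delta}W$, and combining $X(z)=X(-\gamma-z)$ with the $\pi\tau$-periodicity of $X$ yields $X(z)=X(\pi\tau-\gamma-z)$, hence $\mathrm{Res}_{\pi\tau-\gamma-\delta}X=-\mathrm{Res}_{\delta}X$. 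In the degenerate case $2\delta+\gamma\in\pi\tau\mathbb{Z}+\pi\mathbb{Z}$, $\delta$ is a fixed point of the involution, the pole becomes double, and $\omega_c$ is taken as the limit of the first formula; equating the leading coefficients of the two double principal parts produces the second formula.

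Property (v) is the main obstacle. I use that $W$ is elliptic of degree two for the lattice $\Lambda_W$, with its two poles in any fundamental domain being $\delta$ and $\pi\tau-\gamma-\delta=\iota(\delta)$, where $\iota\colon z\mapsto\pi\tau-\gamma-z$ is the involution satisfying $W\circ\iota=W$. For generic $w$, the two preimages in a fundamental domain are interchanged by $\iota$. Since $\iota$ preserves $\Omega_1\cup\Omega_2$ by Lemma \ref{lem:Omega}, the set $W(\Omega_1\cup\Omega_2)$ consists exactly of those $w$ whose preimage pair lies entirely in $\Omega_1\cup\Omega_2$; analogously for $W(\Omega_{-2}\cup\Omega_{-1})$, using the involution $z\mapsto-\pi\tau+\gamma-z$, which agrees with $\iota$ modulo the $W$-period. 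These two image sets are automatically disjoint, and disjoint from $W(\Omega_0)$ because $\iota(\Omega_0)$ lies outside the three regions, as a direct strip computation from the positions of the symmetry axes in Lemma \ref{lem:Omega} shows. For exhaustiveness I perform a height comparison: after computing the positions of the strips $\Omega_s$ from the axes in Lemma \ref{lem:Omega}, every $\iota$-orbit in a fundamental domain of $W$ must have at least one representative in $\Omega_{-2}\cup\cdots\cup\Omega_2$. The delicate points, where I expect the most care, are handling the boundaries of the $\Omega_s$ (which are the curves $|X(z)|=1$ and $|Y(z)|=1$, not straight lines) and the coincidental cases where a preimage pair lies on an involution axis.
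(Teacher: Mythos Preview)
Your treatment of (i) is correct and matches the paper. Your treatment of (iv) is correct and in fact more explicit than the paper's, which merely says the pole cancellation is ``due to the choice of $\omega_c$''.

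The genuine gap is that parts (ii), (iii) and (v) all lean on a fact you never establish: that $\Omega_{-2}\cup\Omega_{-1}\cup\Omega_0\cup\Omega_1\cup\Omega_2$, restricted to a strip of real width $\pi$, lies inside a single fundamental domain for $W$. In (ii) and (iii) you assert that ``each class contributes a single $\pi$-translation class'' inside the relevant strip; but the $W$-lattice is $\pi\mathbb{Z}+(2\pi\tau-2\gamma)\mathbb{Z}$, and nothing you have written rules out a translate such as $\epsilon+(2\pi\tau-2\gamma)$ also landing in $\Omega_{-2}\cup\Omega_{-1}$. (Note that $2\pi\tau-2\gamma$ is not a period of $Y$, so you cannot use $|Y|\geq 1$ to exclude it.) Likewise, your argument for (v) via $\iota$-orbits is only valid once you know that the five regions already sit inside one fundamental domain of $W$: otherwise a preimage pair $\{z,\iota(z)\}\subset\Omega_1\cup\Omega_2$ could still have a $\Lambda_W$-translate in $\Omega_0$, destroying disjointness. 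You acknowledge this yourself when you defer to an unperformed ``height comparison'' and flag the non-straight boundaries of the $\Omega_s$ as ``delicate''.

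The paper resolves this in one stroke by first constructing an explicit fundamental domain
\[
\mathcal{F}=\bigl(\Omega_{-2}\cup\Omega_{-1}\cup\Omega_0\cup\Omega_1\cup\Omega_2\cup(\pi\tau-\gamma-\Omega_0)\bigr)\cap\{0\le\Re(z)<\pi\},
\]
and checking, via the boundary relations in Lemma~\ref{lem:Omega}, that its upper and lower borders differ by exactly $2\pi\tau-2\gamma$. This is precisely your missing ``height comparison'', and once it is done everything else follows by the degree-two counting you outline: each value of $W$ is hit twice in $\mathcal{F}$, and the involutions pair up the preimages so that each value lands either twice in $\Omega_1\cup\Omega_2$, twice in $\Omega_{-2}\cup\Omega_{-1}$, or once in $\Omega_0$ and once in $\pi\tau-\gamma-\Omega_0$. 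The paper then proves (ii)--(iv) \emph{after} (v), using that the strips are now known to lie in a fundamental domain of $W$; your ordering hides this dependence.
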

\begin{proof}
Property (i) follows immediately from the quasi-periodicity conditions satisfied by $\th$ (see \eqref{eq:theta_basic_relations}). As a consequence of (i), the function $W(z)$ is elliptic with periods $\pi$ and $2\pi\tau-2\gamma$. Moreover $W(z)$ has two roots inside each fundamental domain, one coming from each $\th$ in the numerator of \eqref{eq:om_def}. As a consequence, $W(z)$ takes each value in $\mathbb{C}\cup\infty$ exactly twice on each fundamental domain. We will now prove (v) as this will be useful for proving (ii)-(iv).

\begin{figure}[ht]
\centering
   \includegraphics[scale=0.5]{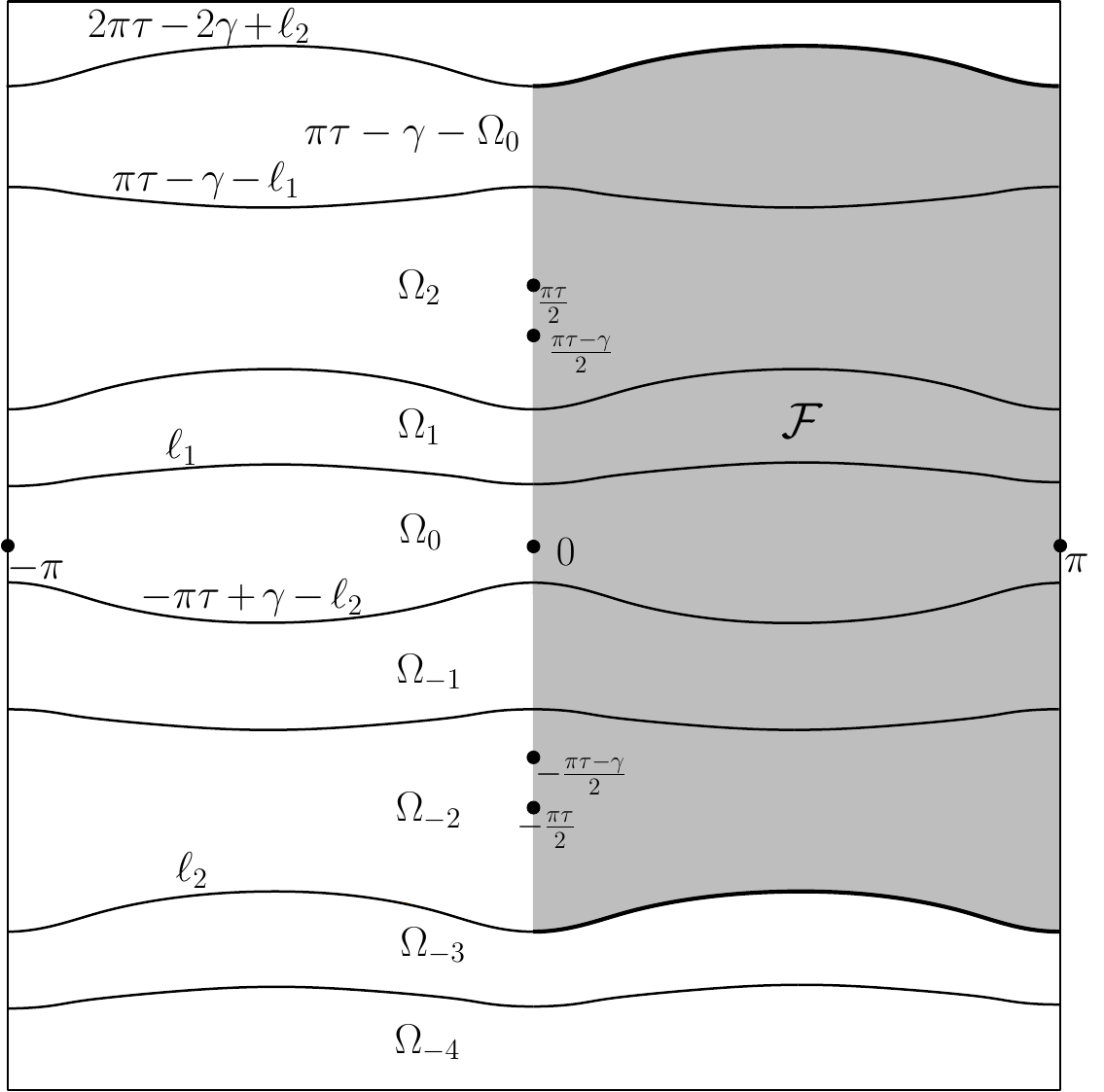}
   \caption{The space $\mathcal{F}$ is shaded. This is a fundamental domain of the function $W(z)$.}
   \label{fig:F_space}
\end{figure}

Let $\mathcal{S}$ denote the strip $\mathcal{S}=\{z\in\mathbb{C}:0\leq\Re(z)<\pi\}$. We claim that the set $\mathcal{F}$ defined by
\begin{align*}\tilde{\mathcal{F}}&=\Omega_{-2}\cup\Omega_{-1}\cup\Omega_{0}\cup\Omega_{1}\cup\Omega_{2}\cup\left(\pi\tau-\gamma-\Omega_{0}\right)\\
\mathcal{F}&=\tilde{\mathcal{F}}\cap\mathcal{S}\end{align*}
is a fundamental domain of $W$, and the sets involved in the union defining $\tilde{\mathcal{F}}$ are disjoint (see Figure \ref{fig:F_space}). Indeed the upper border $\ell_{1}$ of $\Omega_{0}$ is the lower border of $\Omega_{1}\cup\Omega_{2}$, so $\pi\tau-\gamma-\ell_{1}$ is the upper border of $\Omega_{2}$ and the lower border of $\pi\tau-\gamma-\Omega_{0}$. Hence the sets $\Omega_{-2}$, $\Omega_{-1}$, $\Omega_{0}$, $\Omega_{1}$, $\Omega_{2}$ and $\pi\tau-\gamma-\Omega_{0}$ are disjoint and their union $\tilde{\mathcal{F}}$ is a connected strip delimited by the lower border of $\Omega_{-2}$ and the upper border of $\pi\tau-\gamma-\Omega_{0}$. Now let $\ell_{2}$ be the lower border of $\Omega_{-2}$. Then the upper border of $\Omega_{-1}$, which is also the lower border of $\Omega_{0}$ is $-\pi\tau+\gamma-\ell_{2}$, so the upper border of $\pi\tau-\gamma-\Omega_{0}$ is $2\pi\tau-2\gamma+\ell_{2}$. Hence the lower and upper borders of $\tilde{\mathcal{F}}$ are $\ell_{2}$ and $2\pi\tau-2\gamma+\ell_{2}$. Moreover, only the lower border is contained in $\tilde{\mathcal{F}}$. This implies that 
\begin{align*}\mathbb{C}&=\bigcup_{n\in\mathbb{Z}}\tilde{\mathcal{F}}+(2\pi\tau-2\gamma)n\\
&=\bigcup_{n,m\in\mathbb{Z}}\mathcal{F}+(2\pi\tau-2\gamma)n+\pi m,\end{align*}where these unions are in fact {\em disjoint} unions. So $\mathcal{F}$ is a fundamental domain for $W(z)$, as claimed. Hence, counting with multiplicity, $W(z)$ takes each value in $\mathbb{C}\cup\infty$ twice on $\mathcal{F}$. Now, since $W(z)=W(\pi\tau-\gamma-z)$, each value is taken either $0$ or $2$ times for $z\in\left(\Omega_{1}\cup\Omega_{2}\right)\cap\mathcal{S}$ and each value taken by $W(z)$ for $z\in\Omega_{0}\cap\mathcal{S}$ is taken the same number of times in $\pi\tau-\gamma-\Omega_{0}\cap\mathcal{S}$. Similarly, since $W(z)=W(-\pi\tau+\gamma-z)$, each value is taken either $0$ or $2$ times in $\left(\Omega_{-2}\cup\Omega_{-1}\right)\cap\mathcal{S}$. This implies that each value in $\mathbb{C}$ is either taken twice in $\left(\Omega_{-2}\cup\Omega_{-1}\right)\cap\mathcal{S}$, once in $\Omega_{0}\cap\mathcal{S}$ or twice in $\left(\Omega_{1}\cup\Omega_{2}\right)\cap\mathcal{S}$, which proves (v).

To prove (ii), it suffices to prove that $W(z)^{-1}$ and $Y(z)$ have the same poles in $\left(\Omega_{-2}\cup\Omega_{-1}\right)\cap\mathcal{S}$. Indeed, they share the poles $\epsilon$ and $-\pi\tau+\gamma-\epsilon$ (each shifted by an integer multiple of $\pi$) in this region, and since we know that this region is a subset of a fundamental domain of $Y(z)$ and of $W(z)$, neither of these functions can have any other poles in this region.

The proof of (iii) is similar to the proof of (ii): in this case $X(z)$ and $W(z)$ share the poles $\delta$ and $\pi\tau-\gamma-\delta$ in the region $\Omega_{1}\cup\Omega_{2}$. 

Finally to prove (iv) it suffices to observe that the poles at $\epsilon$ and $\pi\tau-\gamma-\epsilon$ cancel in the difference $X(z)-W(z)$. This is due to the choice of $\omega_{c}$.
\end{proof}

We now give integral expressions analogous to those of Raschel \cite{raschel2012counting} which determine ${A}(z)$, ${B}(z)$ and $F$ exactly:
\begin{Theorem}\label{thm:integral_expression}
Let $z_{0}\in\Omega_{0}$ and let $\mathcal{L}$ be a path from $z_{0}$ to $z_{0}+\pi$ contained in the closure $\overline{\Omega_{0}}$ of $\Omega_{0}$. Then ${A}(z)$, ${B}(z)$ and $F$ are given by the integrals
\begin{align}
{B}(u)&=\frac{1}{2\pi i}\int_{\mathcal{L}}X(z)^{p}Y(z)^{q}\frac{W'(z)}{W(z)-W(u)}dz,\qquad&&\text{for $u\in\Omega_{1}\cup\Omega_{2}$,}\label{eq:PH_integral}\\
{A}(u)&=-\frac{1}{2\pi i}\int_{\mathcal{L}}X(z)^{p}Y(z)^{q}\frac{W(u)}{W(z)}\frac{W'(z)}{W(z)-W(u)}dz,\qquad&&\text{for $u\in\Omega_{-1}\cup\Omega_{-2}$,}\label{eq:PV_integral}\\
F&=-\frac{1}{2\pi i}\int_{\mathcal{L}}X(z)^{p}Y(z)^{q}\frac{W'(z)}{W(z)}dz.\qquad\label{eq:C_integral}&&
\end{align}
\end{Theorem}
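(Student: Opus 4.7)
The strategy is to verify that the three functions defined by the integrals satisfy every condition of Theorem~\ref{thm:PH_PV_characterisation}, then invoke the uniqueness statement there. Denote the three right-hand sides by $\tilde B(u)$, $\tilde A(u)$, and $\tilde F$.

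First, property (v) of Proposition~\ref{prop:omega} shows that $W(\mathcal{L})\subset W(\Omega_0)$ is disjoint from $W(\Omega_{\pm 1}\cup\Omega_{\pm 2})$, so the denominators $W(z)-W(u)$ in \eqref{eq:PH_integral}--\eqref{eq:PV_integral} do not vanish on $\mathcal{L}$ when $u$ is in the stated domains; since zeros of $W$ lie in $\Omega_{-1}\cup\Omega_{-2}$, the integrand of \eqref{eq:C_integral} is holomorphic on $\mathcal{L}$ as well. A one-line algebraic identity,
\[
\frac{W'(z)}{W(z)-W(u)}\left(1-\frac{W(u)}{W(z)}\right)-\frac{W'(z)}{W(z)}=0,
\]
shows that the three integrands sum to zero identically, so wherever all three integrals converge simultaneously one has $\tilde A(u)+\tilde F+\tilde B(u)=0$.

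Since $W$ is invariant under $z\mapsto\pi\tau-\gamma-z$, $z\mapsto -\pi\tau+\gamma-z$, and $z\mapsto z+\pi$ by Proposition~\ref{prop:omega}(i), the same symmetries hold in $u$ for $\tilde A$ and $\tilde B$, establishing \eqref{eq:PH_flippy_full}--\eqref{eq:PV_pi}. Condition~\ref{Bnopoles} follows because the integrand in \eqref{eq:PH_integral} remains holomorphic on $\mathcal{L}$ for every $u\in\Omega_0\cup\Omega_1\cup\Omega_2$; at a pole $u_0\in\Omega_1\cup\Omega_2$ of $X$, Proposition~\ref{prop:omega}(iii) gives $W(u_0)=\infty$, forcing $\tilde B(u_0)=0$ and hence~\ref{Broots}. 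Conditions~\ref{Anopoles} and~\ref{Aroots} follow symmetrically: the extra factor $W(u)/W(z)$ in \eqref{eq:PV_integral}, together with Proposition~\ref{prop:omega}(ii) (poles of $Y$ in $\Omega_{-1}\cup\Omega_{-2}$ coincide with zeros of $W$), forces $\tilde A(u_0)=0$ at any such pole.

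To upgrade the trivial vanishing identity $\tilde A+\tilde F+\tilde B=0$ to \eqref{eq:PH_PV_nice}, one must track the meromorphic continuations of $\tilde A$ and $\tilde B$ from their natural domains $\Omega_{-1}\cup\Omega_{-2}$ and $\Omega_1\cup\Omega_2$ to all of $\mathbb{C}$. As $u$ is deformed across $\mathcal{L}$, the simple pole of the integrand at $z=u$ crosses the contour, and the integral value acquires a residue contribution proportional to $X(u)^pY(u)^q$; for $\tilde A$ the extra factor $W(u)/W(z)$ evaluated at $z=u$ equals $1$, producing a residue of the opposite sign. Combining these residue jumps with the vanishing identity from the first step reproduces \eqref{eq:PH_PV_nice}, and the resulting triple $(\tilde A,\tilde B,\tilde F)$ then satisfies every hypothesis of Theorem~\ref{thm:PH_PV_characterisation}; uniqueness yields $\tilde A=A$, $\tilde B=B$, $\tilde F=F$. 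The main technical obstacle lies precisely in this final residue bookkeeping: one must carefully match signs and orientations of $\mathcal{L}$ against the normalization factor $1/(2\pi i t)$ and patch together the \emph{a priori} disjoint domains of definition of $\tilde A$ and $\tilde B$ into a single meromorphic identity on $\mathbb{C}$.
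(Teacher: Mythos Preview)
Your overall strategy coincides with the paper's: define $\tilde A,\tilde B,\tilde F$ by the integrals, verify the five equations and the four pole/root conditions of Theorem~\ref{thm:PH_PV_characterisation}, and invoke uniqueness. The algebraic identity you write down is exactly the partial-fraction decomposition the paper uses implicitly when it rewrites $F+A(u)$ as a single integral with integrand $-X(z)^pY(z)^q\frac{W'(z)}{W(z)-W(u)}$.

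There is one genuine gap. Your justification of condition~\ref{Bnopoles} reads ``the integrand in \eqref{eq:PH_integral} remains holomorphic on $\mathcal{L}$ for every $u\in\Omega_0\cup\Omega_1\cup\Omega_2$''. This is false for $u\in\Omega_0$: by Proposition~\ref{prop:omega}(v) the value $W(u)$ is attained exactly once in $\Omega_0$, at $z=u$ itself, so when $u$ lies on $\mathcal{L}$ the integrand has a pole there. More importantly, for $u\in\Omega_0\setminus\mathcal{L}$ the integral over the fixed contour $\mathcal{L}$ defines two \emph{different} holomorphic functions on the two sides of $\mathcal{L}$, differing precisely by the residue jump you describe later. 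So the integral over $\mathcal{L}$ alone does not give a function holomorphic throughout $\Omega_0$, and condition~\ref{Bnopoles} is not yet established. The same objection applies to your treatment of condition~\ref{Anopoles}.

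The paper repairs this by pushing the contour to the boundary: it replaces $\mathcal{L}$ by a path $\mathcal{L}_1$ along the lower edge of $\Omega_0$ when defining $\tilde B$, and by a path $\mathcal{L}_2$ along the upper edge when defining $\tilde A$. With these choices, $u$ never meets the contour as it ranges over $\Omega_0$, so holomorphy on $\Omega_0$ is immediate and conditions~\ref{Anopoles},\ref{Bnopoles} follow cleanly. Equation~\eqref{eq:PH_PV_nice} then drops out of the residue theorem applied to the closed contour $\mathcal{L}_1$ followed by $\mathcal{L}_2$ reversed, which encloses $u$ once. Your residue-jump paragraph is morally the same computation, but you should carry it out \emph{before} claiming holomorphy on $\Omega_0$, not after: the contour deformation is what defines the continuation, and only once that continuation is fixed can you assert that it is pole-free there.
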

\begin{proof}
We will start by showing that the integrands in the definitions above are all holomorphic in $\overline{\Omega_{0}}$, that is, they contain no poles in this region. A consequence is that the integrals do not depend on the contour $\mathcal{L}$ taken from $z_{0}$ to $z_{0}+\pi$.

Since $|X(z)|,|Y(z)|\leq 1$ in this region, $X(z)$ and $Y(z)$ have no poles in this region. Moreover, $W(z)$ has no roots or poles in this region, so the only way that a pole could occur in one of the integrands is if $W(z)=W(u)$ for some $u\in\Omega_{-2}\cup\Omega_{-1}\cup\Omega_{1}\cup\Omega_{2}$ and $z\in\Omega_{0}$, but this is impossible by Proposition \ref{prop:omega}, (v). This proves that the integrands are all holomorphic in $\overline{\Omega_{0}}$.

We will now extend the expressions \eqref{eq:PH_integral} and \eqref{eq:PV_integral} of ${B}$ and ${A}$ to definitions that hold in $\Omega_{0}$. Let $\mathcal{L}_{1}$ be the contour which goes in the negative imaginary direction from $z_0$ until reaching a point $c_{1}$ on the boundary of $\Omega_{0}$, then travels along the boundary of $\Omega_{0}$ until reaching $c_{1}+\pi$ then finally travels in the positive imaginary direction until reaching $z_0+\pi$ (see Figure \ref{fig:L_contour}). Then ${B}$ can be defined by taking the integral along $\mathcal{L}_{1}$. In fact the first and last sections of the integral cancel with each other, so this can be defined by taking the integral only using the section $\hat{\mathcal{L}}_{1}$ of $\mathcal{L}_{1}$ lying between $c_{1}$ and $c_{1}+\pi$. With this definition it is clear that ${B}(u)$ extends analytically to $\Omega_{0}$. In particular, this proves that ${B}(z)$ satisfies condition (iii) of Theorem \ref{thm:PH_PV_characterisation}. Similarly \eqref{eq:PV_integral} can be defined using the contour $\mathcal{L}_{2}$ which travels in the positive imaginary direction until reaching a point $c_{2}$ on the boundary of $\Omega_{0}$, then travels along this boundary to $c_{2}+\pi$ before travelling in the negative imaginary direction to $\pi$. In particular, this proves that ${A}(z)$ satisfies condition (i) of Theorem \ref{thm:PH_PV_characterisation}.

\begin{figure}[ht]
\centering
   \includegraphics[scale=0.5]{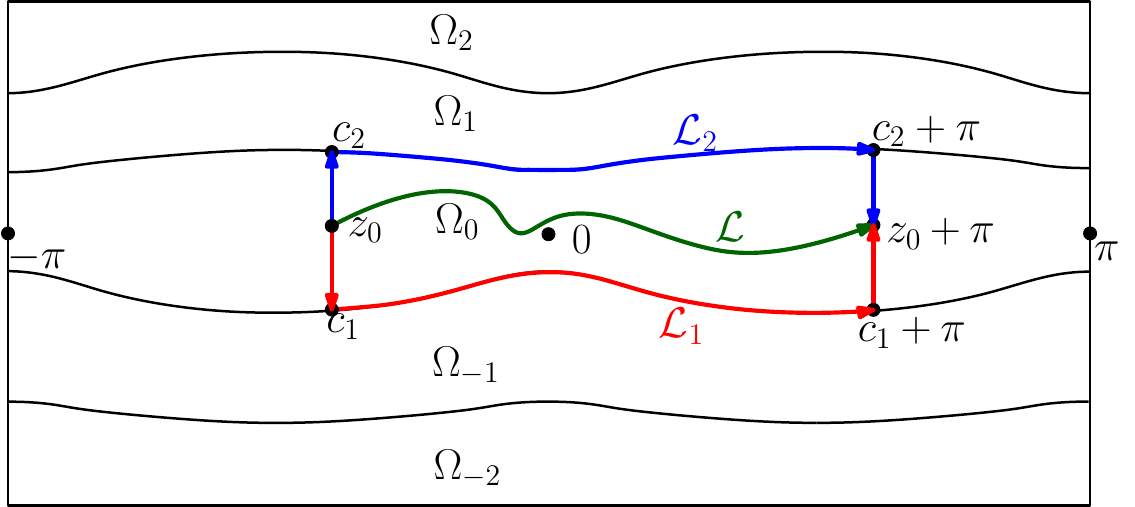}
   \caption{The contours $\mathcal{L}$, $\mathcal{L}_{1}$ and $\mathcal{L}_{2}$ from $z_{0}$ to $z_{0}+\pi$.}
   \label{fig:L_contour}
\end{figure}

We will now show that \eqref{eq:PH_PV_nice}-\eqref{eq:PV_pi} hold. In fact, \eqref{eq:PH_flippy_full}-\eqref{eq:PV_pi} follow immediately from
\[W(u)=W(u+\pi)=W(\pi\tau-\gamma-u)=W(\gamma-\pi\tau-u).\]
To show that \eqref{eq:PH_PV_nice} holds, first note that, from the definitions we have
\[F+{A}(u)=-\frac{1}{2\pi i}\int_{\mathcal{L}_{2}}X(z)^{p}Y(z)^{q}\frac{W'(z)}{W(z)-W(u)}dz.\]
So, defining $\mathcal{L}_{3}$ as the contour formed by $\mathcal{L}_{1}$ followed by $\mathcal{L}_{2}$ reversed, then we have, for $u$ inside this region (and hence inside $\Omega_{0}$),
\[F+{A}(u)+{B}(u)=\frac{1}{2\pi i}\int_{\mathcal{L}_{3}}X(z)^{p}Y(z)^{q}\frac{W'(z)}{W(z)-W(u)}dz.\]
The only pole of the integrand in the interior of this region occurs at $z=u$, and the residue at this point is $X(u)^{p}Y(u)^{q}$, so \eqref{eq:PH_PV_nice} follows from the residue theorem.

We will now show that conditions (ii) and (iv) of Theorem \ref{thm:PH_PV_characterisation} hold. Condition (iv) follows from the fact that the integrand in \eqref{eq:PH_integral} is $0$ when $W(u)=\infty$, along with Proposition \ref{prop:omega}, (iii). Similarly, condition (ii) follows from the fact that the integrand in \eqref{eq:PV_integral} is $0$ when $W(u)=0$  along with Proposition \ref{prop:omega}, (ii).

We have now shown that $F$, ${A}(u)$ and ${B}(u)$ as defined in \eqref{eq:C_integral}, \eqref{eq:PV_integral} and \eqref{eq:PH_integral} satisfy all of the conditions of Theorem \ref{thm:PH_PV_characterisation}. This completes the proof that these are indeed the functions $F$, ${A}(u)$ and ${B}(u)$ defined in the previous section.  
\end{proof}

\section{Three quadrant walks starting on an axis}\label{sec:axis_start}
We will now discuss a question suggested by Kilian Raschel where the walk starts at some point $(p,0)$ for $p>0$ (or equivalently $(0,q)$) rather than the traditional starting point $(1,1)$. Trotignon and Raschel conjectured that with this starting point all finite group models admit algebraic generating functions \cite{raschel2018walks}. This will follow from our results in the Section \ref{sec:nature}, and more precisely Theorem \ref{thm:algebraic_fixed_t} as these cases decouple for any step set (by setting $R_{1}(x)=x^{p}$ and $R_{2}(y)=0$). Moreover, using Theorem \ref{thm:D-algebraic_fixed_t}, the same argument proves that for any step set $S$, the walks starting at such a point have a generating function which is differentially algebraic in $x$. In this section we go further than these results by finding an explicit, general formula for the counting function of walks starting at a point $(p,0)$. This formula will use the function $W(z)$ defined in Definition \ref{def:W}. See also Proposition \ref{prop:omega}, which describes several properties of $W(z)$.

\begin{Theorem}\label{thm:P-xaxis}
For $p\geq 1$, let $A_{p}$, $B_{p}$ and $F_{p}$ denote the functions $A$, $B$ and $F$ arising in the case that the starting point of the walks is $(p,0)$. Then for each $p$, there is a degree $p$ polynomial $H_{p}$ satisfying
\begin{align}
A_{p}(z)&=H_{p}(W(z))-H_{p}(0),\label{eq:PV_jstart}\\
F_{p}(t)&=H_{p}(0),\label{eq:B_jstart}\\
B_{p}(z)&=X(z)^{p}-H_{p}(W(z)).\label{eq:PH_jstart}
\end{align}
Moreover, $H_{p}$ is uniquely determined by the fact that the right hand side of \eqref{eq:PH_jstart} has a root at $z=\delta$. Furthermore, the leading coefficient of $H_{p}$ is $1$.
\end{Theorem}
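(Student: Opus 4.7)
The plan is to verify that the triple $(A_p, B_p, F_p)$ given by the stated formulas satisfies every condition of Theorem \ref{thm:PH_PV_characterisation} once $H_p$ is chosen correctly, then to prove existence and uniqueness of such $H_p$ via a Laurent-series computation near $\delta$. The uniqueness clause of Theorem \ref{thm:PH_PV_characterisation} will then identify this triple with the true $(A, B, F)$ attached to the walk starting at $(p,0)$.

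First I would check the functional relations. Equation \eqref{eq:PH_PV_nice} with $q = 0$ reads $X(z)^p = A_p(z) + F_p + B_p(z)$, which is an immediate algebraic cancellation in the proposed formulas, using $F_p = H_p(0)$. The four symmetries \eqref{eq:PH_flippy_full}--\eqref{eq:PV_pi} follow from the invariance of $W$ under $z \mapsto z+\pi$, $z \mapsto \pi\tau-\gamma-z$ and $z \mapsto -\pi\tau+\gamma-z$ (Proposition \ref{prop:omega}(i)), together with $X(z) = X(z+\pi) = X(z+\pi\tau) = X(-\gamma-z)$ from Lemma \ref{lem:param}; in particular $X(\pi\tau-\gamma-z) = X(-\gamma-z) = X(z)$, so $X(z)^p$ carries all the required symmetries.

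Next I would verify the pole/root conditions \ref{Anopoles}--\ref{Broots}. For $A_p = H_p(W) - H_p(0)$, all poles of $W$ lie in $\Omega_1\cup\Omega_2$ (Proposition \ref{prop:omega}(v), noting $\infty \in W(\Omega_1\cup\Omega_2)$ by Proposition \ref{prop:omega}(iii)), giving \ref{Anopoles}; and the zeros of $W$ in $\Omega_{-2}\cup\Omega_{-1}$ coincide with the poles of $Y$ there by Proposition \ref{prop:omega}(ii), at which $A_p$ vanishes, giving \ref{Aroots}. Condition \ref{Broots} is built into the defining property $B_p(\delta) = 0$, combined with the symmetry $B_p(z) = B_p(\pi\tau-\gamma-z)$ which forces $B_p(\pi\tau-\gamma-\delta) = 0$; by $\pi$-periodicity these account for all poles of $X$ in $\Omega_1\cup\Omega_2$ described in Proposition \ref{prop:th_param}. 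Condition \ref{Bnopoles} is the substantial one: in $\Omega_0$ both $X$ and $W$ are holomorphic (by Lemma \ref{lem:Omega} and Proposition \ref{prop:omega}(v)), and in $\Omega_1\cup\Omega_2$ the only candidate poles of $B_p$ are $\delta$ and $\pi\tau-\gamma-\delta$, so by the symmetry it suffices to secure holomorphy at $\delta$.

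This reduces the theorem to proving the existence and uniqueness of a polynomial $H_p$ of degree $p$ with leading coefficient $1$ such that $X(z)^p - H_p(W(z))$ extends holomorphically to $\delta$ with value zero. To establish this, I would set $u = z-\delta$ and invoke Proposition \ref{prop:omega}(iv) (which says $X-W$ is holomorphic at $\delta$) to write the Laurent expansions
\[
W(z) = r u^{-1} + \sum_{k \geq 0} c_k u^k, \qquad X(z) = r u^{-1} + \sum_{k \geq 0} (c_k + g_k) u^k,
\]
with a common residue $r \neq 0$. Since $W(z)^j$ has Laurent expansion starting at $u^{-j}$ with leading coefficient $r^j$, demanding that the coefficients of $u^{-p}, u^{-p+1}, \ldots, u^{-1}, u^0$ in $X(z)^p - H_p(W(z))$ all vanish yields a triangular linear system in $h_p, h_{p-1}, \ldots, h_0$ with nonzero diagonal entries $r^p, r^{p-1}, \ldots, 1$. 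The top equation forces $h_p = 1$ (the stated normalization), and each subsequent equation determines the next $h_k$ uniquely. The main technical point is bookkeeping for this triangular system, but it is elementary linear algebra once the Laurent expansions are written down.
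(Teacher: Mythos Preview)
Your approach is essentially identical to the paper's: verify that the candidate triple satisfies all the hypotheses of Theorem~\ref{thm:PH_PV_characterisation}, and construct $H_p$ by a triangular Laurent-series argument at $z=\delta$. Your checks of the functional symmetries and of conditions \ref{Anopoles}--\ref{Broots} match the paper's verification almost line for line.

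There is one omission worth noting. Your Laurent expansion $W(z)=ru^{-1}+\cdots$ assumes that $X$ and $W$ have \emph{simple} poles at $\delta$. The paper treats separately the degenerate case $\delta=\pi\tau-\gamma-\delta$ (equivalently $2\delta+\gamma\in\pi\tau+\pi\mathbb{Z}$), in which both $X$ and $W$ have a \emph{double} pole at $\delta$ and are invariant under $z\mapsto 2\delta-z$; this is precisely the case flagged in Definition~\ref{def:W} where $\omega_c$ requires a limiting formula. In that situation one writes $X(z)^p-H_p(W(z))=g_p u^{-2p}+g_{p-1}u^{-2p+2}+\cdots+g_0+O(u^2)$ (only even powers survive by the symmetry), and again solves a triangular system for $h_p,\dots,h_0$. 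The fix is routine, but without it your argument is formally incomplete.
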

\begin{proof}
We will start by proving that the polynomial $H_{p}$ is uniquely defined by the fact that the right hand side of \eqref{eq:PH_jstart} has a root at $z=\delta$. In the case that $\delta\neq\pi\tau-\gamma-\delta$, both $X(z)$ and $W(z)$ have a simple pole at $z=\delta$. Taking a series expansion 
$X(z)^{p}-H_{p}(W(z))=g_{p}(x-\delta)^{-p}+g_{p-1}(x-\delta)^{-p+1}+\cdots+g_{0}+O(x-\delta)$,
around $z=\delta$, for an arbitrary polynomial $H$, the polynomial 
\[H_{p}(w)=h_{p}w^{p}+h_{p-1}w^{p-1}+\cdots+h_{1}w+h_{0}\]
is uniquely defined by the fact that $g_{p}=g_{p-1}=\cdots=g_{0}=0$, as $h_{p}$ is determined by the fact that $g_{p}=0$, then $h_{p-1}$ is determined by the fact that $g_{p-1}=0$ as so on until $h_{0}$ is determined by the fact that $g_{0}=0$. Moreover, by these definitions, the expression $X(z)^{p}-H_{p}(W(z))$ does indeed have a root at $z=\delta$. We also note that $h_p=1$ as $X(z)-W(z)$ does not have a pole at $z=\delta$, so setting $h_{p}=1$ causes $g_{p}=0$. In the remaining case, where $\delta=\pi\tau-\gamma-\delta$, both $X(z)$ and $W(z)$ have a double pole at $z=\delta$ and are fixed under the transformation $z\to 2\delta-z=\pi\tau-\gamma-z$. In this case we can write
\[X(z)^{p}-H_{p}(W(z))=g_{p}(x-\delta)^{-2p}+g_{p-1}(x-\delta)^{-2p+2}+\cdots+g_{0}+O((x-\delta)^{2}),\]
then as in the previous case each $h_{j}$, starting with $j=p$, is determined by the fact that $g_{j}=0$.

Now we have determined the unique polynomial $H_{p}$ such that the right hand side of \eqref{eq:PH_jstart} has a root at $z=\delta$. We will now show that the functions $A_{p}(z)$, $F_{p}$ and $B_{p}(z)$ thus defined are the unique functions satisfying the conditions of Theorem \ref{thm:PH_PV_characterisation}. For the equations, \eqref{eq:PH_PV_nice} holds as it is the sum of \eqref{eq:PV_jstart}, \eqref{eq:B_jstart} and \eqref{eq:PH_jstart} (since $q=0$ in this case), \eqref{eq:PH_flippy_full} and \eqref{eq:PH_pi} hold because $W(z)$ and $X(z)$ are also fixed by these transformations while \eqref{eq:PV_flippy_full} and \eqref{eq:PV_pi} hold because $W(z)$ is fixed by these transformations. For (iii) and (iv): poles of $B_{p}(z)$ could only occur at poles of either $X(z)$ or $W(z)$, and the only poles of these functions in the region $\Omega_{0}\cup\Omega_{1}\cup\Omega_{2}$ are $\delta$ and $\pi\tau-\gamma-\delta$. But we know that $\delta$ is a root of $B_{p}$, and since $B_{p}(z)=\pi\tau-\gamma-z$, this means that $\pi\tau-\gamma-\delta$ is also a root of $B_{p}$. So (iii) and (iv) hold as $B_{p}(z)$ has no poles in $\Omega_{0}\cup\Omega_{1}\cup\Omega_{2}$, and it has roots at the poles of $X(z)$ in this region. Finally (i) holds because $W(z)$ has no poles in $\Omega_{0}\cup\Omega_{-1}\cup\Omega_{-2}$, moreover, (ii) holds because the poles of $Y(z)$ in $\Omega_{-1}\cup\Omega_{-2}$ are roots of $W(z)$, and when $W(z)=0$, we clearly have $A_{p}(z)=0$. this completes the proof that the functions $B_{p}$ and $A_{p}$ determined by this theorem are the unique functions characterised by Theorem \ref{thm:PH_PV_characterisation}.
\end{proof}

For $p\geq 1$, let $\Agf_{p}(\frac{1}{y};t)$, $\Bgf_{p}(\frac{1}{x};t)$ and $\Fgf_{p}(t)$ denote the series $\Agf(\frac{1}{y};t)$, $\Bgf(\frac{1}{x};t)$ and $\Fgf(t)$ arising in the case that the starting point is $(p,0)$.
We can convert the formulae for $A_{p}$ and $B_{p}$ above to formulae for $\Agf_{p}$, $\Bgf_{p}$ using series defined by the following lemma:
\begin{Lemma}\label{lem:WA1}There are unique series $\Wgf_{B}\left(\frac{1}{x};t\right)\in x\mathbb{R}[\frac{1}{x}][[t]]$ and $\Wgf_{A}\left(\frac{1}{y};t\right)\in \frac{1}{y}\mathbb{R}[\frac{1}{y}][[t]]$ satisfying $\Wgf_{B}\left(\frac{1}{X(z)};t\right)=W(z)$
for $z\in\Omega_{1}\cup\Omega_{2}$ and $\Wgf_{A}\left(\frac{1}{Y(z)};t\right)=\Wgf(z)$ for $z\in\Omega_{-1}\cup\Omega_{-2}$. Moreover, these series are related to $\Agf_{1}$ and $\Bgf_{1}$ by $\Wgf_{A}\left(\frac{1}{y};t\right)=\Agf_{1}\left(\frac{1}{y};t\right)$ and $\Wgf_{B}\left(\frac{1}{x};t\right)=x-\Fgf_{1}(t)-\Bgf_{1}\left(\frac{1}{x};t\right)$.\end{Lemma}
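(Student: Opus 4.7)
My approach is to derive both series directly from Theorem \ref{thm:P-xaxis} specialised to $p=1$ (with $q=0$ implicit in the axis-start setting). In this case $H_1$ is a monic polynomial of degree $1$, so $H_1(w)=w+h_0$ for some constant $h_0$. The defining condition that $X(z)-H_1(W(z))=X(z)-W(z)-h_0$ vanishes at $z=\delta$ uniquely determines $h_0$, using that $X(z)-W(z)$ is already regular at $\delta$ thanks to our choice of $\omega_c$ in Definition \ref{def:W} (this is part (iv) of Proposition \ref{prop:omega}). By \eqref{eq:B_jstart} we then have $h_0=H_1(0)=\Fgf_1(t)$, and substituting back into \eqref{eq:PV_jstart} and \eqref{eq:PH_jstart} gives the clean identities
\[
A_1(z)=W(z)\qquad\text{and}\qquad B_1(z)=X(z)-W(z)-\Fgf_1(t),
\]
valid on their respective analytic domains.

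I would then define $\Wgf_A(1/y;t):=\Agf_1(1/y;t)$ and $\Wgf_B(1/x;t):=x-\Fgf_1(t)-\Bgf_1(1/x;t)$. Translating the two identities through the definitions \eqref{eq:PVdefinition} and \eqref{eq:PHdefinition} yields exactly $\Wgf_A(1/Y(z);t)=W(z)$ on $\Omega_{-1}\cup\Omega_{-2}$ and $\Wgf_B(1/X(z);t)=W(z)$ on $\Omega_1\cup\Omega_2$. Membership in the stated series spaces is immediate from Lemma \ref{lem:3q_eq1}: $\Agf_1(1/y;t)\in\frac{t}{y}\mathbb{R}[1/y][[t]]\subset\frac{1}{y}\mathbb{R}[1/y][[t]]$, while $\Bgf_1(1/x;t)\in\frac{t}{x}\mathbb{R}[1/x][[t]]$ and $\Fgf_1(t)\in t\mathbb{R}[[t]]$ combine to put $x-\Fgf_1(t)-\Bgf_1(1/x;t)$ in $x\mathbb{R}[1/x][[t]]$.

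For uniqueness, let $\tilde{\Wgf}_B\in x\mathbb{R}[1/x][[t]]$ be another candidate and set $D:=\Wgf_B-\tilde{\Wgf}_B$. Then the meromorphic function $z\mapsto D(1/X(z);t)$ vanishes on $\Omega_1\cup\Omega_2$. Extracting the coefficient of $t^n$ gives a Laurent polynomial in $x$ (of the very restricted shape $xp_n(1/x)$) which, for each $t$ in the convergence range, vanishes on the infinite set $X(\Omega_1\cup\Omega_2)$; since a nonzero Laurent polynomial has only finitely many roots, this coefficient is identically zero, hence $D=0$. The argument for $\Wgf_A$ is symmetric, replacing $X$ by $Y$ and the region by $\Omega_{-1}\cup\Omega_{-2}$. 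The main conceptual point in the whole plan is the regularity of $X-W$ at $\delta$ (and symmetrically at $\pi\tau-\gamma-\delta$), which is precisely the content of part (iv) of Proposition \ref{prop:omega}; once that is in hand the rest is bookkeeping.
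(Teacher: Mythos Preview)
Your proof is correct and follows essentially the same approach as the paper: both specialise Theorem \ref{thm:P-xaxis} to $p=1$ with $H_1(w)=w+h_0$, deduce $A_1(z)=W(z)$ and $B_1(z)=X(z)-W(z)-\Fgf_1(t)$, define $\Wgf_A,\Wgf_B$ via $\Agf_1,\Bgf_1$, and read off the required identities through \eqref{eq:PVdefinition} and \eqref{eq:PHdefinition}. Your uniqueness argument is more explicit than the paper's one-line dismissal, but otherwise the two proofs coincide.
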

\begin{proof}
We define the series $\Wgf_{B}$ and $\Wgf_{A}$ by the equations
\begin{align*}\Wgf_{A}\left(\frac{1}{y};t\right)&=\Agf_{1}\left(\frac{1}{y};t\right)\\
\Wgf_{B}\left(\frac{1}{x};t\right)&=x-\Fgf_{1}(t)-\Bgf_{1}\left(\frac{1}{x};t\right),\end{align*}
and we will prove that these are related to $W(z)$ as described. The reason for defining them in this way is that it is now clear from the definition that $\Wgf_{B}\left(\frac{1}{x};t\right)\in x\mathbb{R}[\frac{1}{x}][[t]]$ and $\Wgf_{A}\left(\frac{1}{y};t\right)\in \frac{1}{y}\mathbb{R}[\frac{1}{y}][[t]]$. For fixed $t$ and $z\in\Omega_{1}\cup\Omega_{2}$, define $W_{1}(z)=\Wgf_{B}\left(\frac{1}{X(z)};t\right)$ and for $z\in\Omega_{-1}\cup\Omega_{-2}$, define $W_{2}(z)=\Wgf_{A}\left(\frac{1}{Y(z)};t\right)$. Then from the definitions of $\Wgf_{B}$ and $\Wgf_{A}$, we have
\begin{align*}
W_{1}(z)&=X(z)-F_{1}-B_{1}\left(z\right)\\
W_{2}(z)&=A_{1}(z).
\end{align*}
Combining this with Theorem \ref{thm:P-xaxis}, using $\Hgf_{1}(w)=w+h_{0}$, where $h_{0}$ depends only on $t$, yields
\begin{align*}
W_{1}(z)&=W(z),\qquad &&\text{for $z\in\Omega_{1}\cup\Omega_{2}$,}\\
W_{2}(z)&=W(z),\qquad &&\text{for $z\in\Omega_{-1}\cup\Omega_{-2}$.}
\end{align*}
Finally the fact that the series $\Wgf_{B}$ and $\Wgf_{A}$ are unique is clear, as it would be impossible for two different series applied to $X(z)$ or $Y(z)$ in the same region to have the same result.  
\end{proof}
We can now rewrite Theorem \ref{thm:P-xaxis} as the following theorem:
\begin{Theorem}\label{thm:p0}
For each $p\geq 1$, then there is a degree $p$ polynomial $H_{p}$ (with coefficients depending on $t$) satisfying
\begin{align}
\Agf_{p}\left(\frac{1}{y}\right)&=H_{p}\left(\Wgf_{A}\left(\frac{1}{y};t\right)\right)-H_{p}(0),\label{eq:AV_jstart}\\
\Fgf(t)&=H_{p}(0),\label{eq:F_jstart}\\
\Bgf_{p}\left(\frac{1}{x}\right)&=x^{p}-H_{p}\left(\Wgf_{B}\left(\frac{1}{x};t\right)\right).\label{eq:AH_jstart}
\end{align}
Moreover, this polynomial is uniquely determined by the fact that the right hand side of \eqref{eq:AH_jstart} is a series in $\frac{1}{x}\mathbb{R}[\frac{1}{x}][[t]]$.
\end{Theorem}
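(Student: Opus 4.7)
The plan is to derive Theorem \ref{thm:p0} from the analytic Theorem \ref{thm:P-xaxis} by substituting $z$ in the appropriate regions of $\mathbb{C}$ and translating the analytic identities into series identities via Lemma \ref{lem:WA1}. Fix $t\in(0,1/\Pgf(1,1))$ and let $H_p$ be the polynomial provided by Theorem \ref{thm:P-xaxis}. For $z\in\Omega_{-1}\cup\Omega_{-2}$ we have $A_p(z)=\Agf_p(1/Y(z);t)$ by \eqref{eq:PVdefinition} and $W(z)=\Wgf_A(1/Y(z);t)$ by Lemma \ref{lem:WA1}, so substituting into \eqref{eq:PV_jstart} yields
\[\Agf_p(1/Y(z);t)=H_p(\Wgf_A(1/Y(z);t))-H_p(0).\]
Analogously, on $\Omega_{1}\cup\Omega_{2}$, substituting into \eqref{eq:PH_jstart} yields $\Bgf_p(1/X(z);t)=X(z)^p-H_p(\Wgf_B(1/X(z);t))$. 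Since $X$ and $Y$ are non-constant meromorphic functions, the images $Y(\Omega_{-1}\cup\Omega_{-2})$ and $X(\Omega_{1}\cup\Omega_{2})$ contain non-empty open subsets of $\{|w|\ge 1\}$, and accordingly $1/Y(z)$ and $1/X(z)$ sweep out non-empty open subsets of the closed unit disk as $z$ varies. Both sides of each displayed identity are given by convergent power series in the corresponding variable on such an open set, so the identities hold as equalities of power series in $1/y$ and $1/x$ for each fixed $t$. Finally \eqref{eq:F_jstart} is immediate from $\Fgf_p(t)=F_p$.

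For the uniqueness claim, I would use Lemma \ref{lem:WA1} to note that $\Wgf_B(1/x;t)=x-\Fgf_1(t)-\Bgf_1(1/x;t)$ has Laurent expansion in $x$ with leading term $x$ and all remaining coefficients in $\mathbb{R}[[t]]$. Writing $H_p(w)=\sum_{k=0}^p h_k w^k$, the coefficient of $x^j$ in $H_p(\Wgf_B(1/x;t))$ for $0\le j\le p$ is a polynomial in $h_p,\dots,h_j$ in which $h_j$ appears with coefficient exactly $1$. Imposing that the coefficients of $x^p, x^{p-1},\dots,x^0$ in $x^p - H_p(\Wgf_B(1/x;t))$ vanish then yields a triangular linear system over $\mathbb{R}[[t]]$ with unit diagonal, which recursively and uniquely determines $h_p=1,h_{p-1},\dots,h_0\in\mathbb{R}[[t]]$.

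The main obstacle is the passage from fixed-$t$ identities to identities of formal power series in $t$, which requires that the coefficients of $H_p$ themselves lie in $\mathbb{R}[[t]]$. This is handled by the triangular recursion in the previous paragraph: once $h_p,\dots,h_0\in\mathbb{R}[[t]]$ is established, both sides of each displayed identity become formal power series in $t$ that agree for every sufficiently small positive real $t$, and hence coincide as formal power series.
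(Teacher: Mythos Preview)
Your proof is correct and follows essentially the same route as the paper: take the polynomial $H_p$ from Theorem~\ref{thm:P-xaxis}, substitute $z$ in the appropriate regions $\Omega_{-1}\cup\Omega_{-2}$ and $\Omega_{1}\cup\Omega_{2}$, invoke Lemma~\ref{lem:WA1} to convert $W(z)$ into $\Wgf_A$ and $\Wgf_B$, and then pass from analytic identities to series identities. The one point of difference is the uniqueness argument: the paper simply observes that the condition ``the right-hand side of \eqref{eq:AH_jstart} lies in $\frac{1}{x}\mathbb{R}[\frac{1}{x}][[t]]$'' is equivalent, after substituting $x\to X(z)$ near the pole $\delta$, to the analytic condition ``the right-hand side of \eqref{eq:PH_jstart} has a root at $\delta$'', which was already shown in Theorem~\ref{thm:P-xaxis} to uniquely determine $H_p$. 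Your direct triangular argument on the Laurent coefficients is more self-contained and has the advantage of making it transparent that the $h_k$ lie in $\mathbb{R}[[t]]$, which you then use to justify the fixed-$t$-to-formal-series passage; the paper's version is terser but relies on the reader tracking the equivalence back to the analytic side.
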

\begin{proof}
We define $H_{p}$ to be the polynomial from Theorem \ref{thm:P-xaxis}. Substituting $y\to Y(z)$ for $z\in\Omega_{-1}\cup\Omega_{-2}$ into \eqref{eq:AV_jstart} yields \eqref{eq:PV_jstart}, so we know that \eqref{eq:AV_jstart} holds for $y=Y(z)$ where $z\in\Omega_{-1}\cup\Omega_{-2}$. This region includes poles of $Y(z)$, so \eqref{eq:PV_jstart} holds for $1/y$ in a neighbourhood of $0$, therefore it must hold as an equation of formal series of $\frac{1}{y}$ for any fixed $t\in\left(0,\frac{1}{\Pgf(1,1)}\right)$, and hence it holds as an equation of series of $t$ and $\frac{1}{y}$, as required. Similarly, substituting $x\to X(z)$ for $z\in\Omega_{1}\cup\Omega_{2}$ into \eqref{eq:AH_jstart} yields \eqref{eq:PH_jstart}, so we know that \eqref{eq:AH_jstart} holds.

Finally the fact that the right hand side of \eqref{eq:AH_jstart} lies in $\frac{1}{x}\mathbb{R}[\frac{1}{x}][[t]]$ is equivalent to the fact that the right hand side of \eqref{eq:PH_jstart} has a root at $\delta$, so it uniquely defines the polynomial $H_{p}$. 
\end{proof}
Theorem \ref{thm:p0} is a purely combinatorial statement, yet our proof involved complex analysis and elliptic functions. It would be nice to understand why this theorem is true from a purely combinatorial perspective. Indeed the following corollary is even more striking in its simplicity, given that we have no purely combinatorial proof.
\begin{figure}[ht]
\centering
   \includegraphics[scale=1]{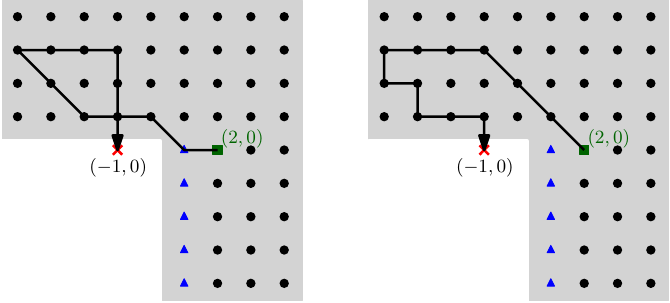}
   \caption{Two paths from $(2,0)$ to $(-1,0)$ using the same steps in a different order and only visiting $(-1,0)$ at their end-point. By Corollary \ref{cor:combi}, exactly half of all such paths pass through a blue triangle.}
   \label{fig:combi_corollary_both}
\end{figure}
   

\begin{Corollary}\label{cor:combi}
Fix a (weighted) step-set $S$. Amongst the (weighted) walks $w$ from $(2,0)$ to $(-1,0)$ of length $n$ using steps in $S$ which only touch the set $\mathcal{D}=\{(x,0):x\leq0\}\cup\{(0,y):y\leq0\}$ at their end-point, exactly half touch the ray $\mathcal{T}=\{(1,y):y\leq0\}$.
\end{Corollary}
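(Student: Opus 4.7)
My plan is to combine Theorem~\ref{thm:p0}, applied with $p=1$ and $p=2$, with a one-line shift bijection. First, from the theorem I would extract the generating-function identity $[\tfrac{1}{x}]\Bgf_2=2[\tfrac{1}{x^2}]\Bgf_1$, which says combinatorially that the weighted number of walks counted by the corollary is twice the weighted number of walks from $(1,0)$ to $(-2,0)$ staying in $\mathcal{C}$. Second, I would identify walks from $(2,0)$ to $(-1,0)$ in $\mathcal{C}$ which avoid $\mathcal{T}$ with walks from $(1,0)$ to $(-2,0)$ in $\mathcal{C}$, by a direct translation; this gives the other half of the count and forces the equal split.

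For the algebraic step, I would use the uniqueness in Theorem~\ref{thm:p0} to pin down $H_2(w)=w^2+h_1w+h_0$. Substituting $\Wgf_B=x-\Fgf_1-\Bgf_1$ and demanding that the coefficients of $x^1$ and $x^0$ in $x^2-H_2(\Wgf_B)$ vanish immediately gives $h_1=2\Fgf_1(t)$ and $h_0=\Fgf_1(t)^2+2[\tfrac{1}{x}]\Bgf_1$. Expanding and simplifying, all the $\Fgf_1$-terms cancel and one obtains
\[
\Bgf_2(\tfrac{1}{x};t) \;=\; 2x\,\Bgf_1(\tfrac{1}{x};t)\;-\;\Bgf_1(\tfrac{1}{x};t)^2\;-\;2[\tfrac{1}{x}]\Bgf_1(\tfrac{1}{x};t).
\]
Reading off the $\tfrac{1}{x}$-coefficient (the last two terms contribute nothing, since $\Bgf_1^2$ starts at order $\tfrac{1}{x^2}$ and the last term is constant in $x$) yields $[\tfrac{1}{x}]\Bgf_2=2[\tfrac{1}{x^2}]\Bgf_1$, as desired.

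For the bijective step, I would observe the elementary set identity
\[
\mathcal{C}+(1,0)\;=\;\{(i,j):i>1\text{ or }j>0\}\;=\;\mathcal{C}\setminus\mathcal{T},
\]
which is immediate from the definitions. The translation $w\mapsto w+(1,0)$ is then a weight-preserving bijection between walks of length $n$ from $(1,0)$ to $(-2,0)$ with intermediate points in $\mathcal{C}$ and walks of length $n$ from $(2,0)$ to $(-1,0)$ with intermediate points in $\mathcal{C}\setminus\mathcal{T}$. Since neither endpoint $(2,0)$ nor $(-1,0)$ lies in $\mathcal{T}$, the latter family is exactly the set of walks from $(2,0)$ to $(-1,0)$ in $\mathcal{C}$ that do not touch $\mathcal{T}$. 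Hence the weighted number of such ``non-touching'' walks of length $n$ equals $[\tfrac{1}{x^2}]\Bgf_1$.

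Combining the two steps: of the $2[\tfrac{1}{x^2}]\Bgf_1$ weighted walks from $(2,0)$ to $(-1,0)$ in $\mathcal{C}$, exactly $[\tfrac{1}{x^2}]\Bgf_1$ avoid $\mathcal{T}$, leaving $[\tfrac{1}{x^2}]\Bgf_1$ that touch it, so exactly half touch $\mathcal{T}$. The main obstacle is clearly the algebraic identity $[\tfrac{1}{x}]\Bgf_2=2[\tfrac{1}{x^2}]\Bgf_1$: it relies on the somewhat surprising cancellation of \emph{all} the $\Fgf_1$-contributions when $x^2-H_2(\Wgf_B)$ is re-expressed in terms of $\Bgf_1$, and in particular on the explicit identification $h_1=2\Fgf_1$ forced by the unicity in Theorem~\ref{thm:p0}. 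Once this identity is granted, the bijective half of the proof is essentially a one-line observation.
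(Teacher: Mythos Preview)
Your proof is correct and follows essentially the same route as the paper's own argument: the same shift bijection (translating by $(1,0)$) identifies the non-touching walks with those counted by $[x^{-2}]\Bgf_1$, and the same computation of $H_2$ via the uniqueness clause of Theorem~\ref{thm:p0} (your $H_2(w)=w^2+2\Fgf_1 w+\Fgf_1^2+2[x^{-1}]\Bgf_1$ is exactly the paper's $(w+\Fgf_1)^2+2[x^{-1}]\Bgf_1$) gives the identity $[x^{-1}]\Bgf_2=2[x^{-2}]\Bgf_1$. Your explicit verification of the set identity $\mathcal{C}+(1,0)=\mathcal{C}\setminus\mathcal{T}$ is a nice touch that the paper leaves implicit.
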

\begin{proof}
The walks from $(2,0)$ which only touch $\mathcal{D}$ at their end-point are exactly the walks counted by
\[\Bgf_{2}\left(\frac{1}{x};t\right)+\Agf_{2}\left(\frac{1}{y};t\right)+\Fgf_{2}(t),\]
so the walks from $(2,0)$ to $(-1,0)$ are counted by $[x^{-1}]\Bgf_{2}\left(\frac{1}{x};t\right)$. For the walks from $(2,0)$ to $(-1,0)$ which do not touch $\mathcal{T}$, shifting these walks to the left one space yields exactly the walks from $(1,0)$ to $(-2,0)$; that is, the walks counted by $[x^{-2}]\Bgf_{1}\left(\frac{1}{x};t\right)$. So it suffices to prove that
\[[x^{-1}]\Bgf_{2}\left(\frac{1}{x};t\right)=2[x^{-2}]\Bgf_{1}\left(\frac{1}{x};t\right).\]

From Theorem \ref{thm:p0} and Lemma \ref{lem:WA1}, we have
\begin{equation}
\Bgf_{2}\left(\frac{1}{x}\right)=x^{2}-H_{2}\left(x-\Fgf(t)-\Bgf_{1}\left(\frac{1}{x};t\right)\right),\label{eq:AH2}
\end{equation}
where $H_{2}$ is the unique polynomial of degree $2$, with coefficients depending on $t$, such that the right hand side is a series in $\frac{1}{x}\mathbb{R}[\frac{1}{x}][[t]]$. This is precisely the polynomial
\[H_{2}(w)=(w+\Fgf(t))^{2}+2[x^{-1}]\Bgf_{1}\left(\frac{1}{x};t\right),\]
so \eqref{eq:AH2} becomes
\[\Bgf_{2}\left(\frac{1}{x}\right)
=2x\Bgf_{1}\left(\frac{1}{x};t\right)-\left[x^{0}\right]2x\Bgf_{1}\left(\frac{1}{x};t\right)-\Bgf_{1}\left(\frac{1}{x};t\right)^{2}.\]
Taking the coefficient of $[x^{-1}]$ on both sides of this equation yields the desired result.
\end{proof}
We also give a Second corollary which is perhaps a more explicit version of Theorem \ref{thm:P-xaxis}:
\begin{Corollary}\label{cor:inv_axis_version}
Let $\Igf(w;t)\in w+\mathbb{Z}\left[\frac{1}{w}\right][[t]]$ be the inverse of $\Wgf_{B}(\frac{1}{x};t)=x-\Fgf(t)-\Bgf_{1}(\frac{1}{x};t)$ in the sense that
\begin{equation}\label{eq:Idefinition}\Igf\left(\Wgf_{B}\left(\frac{1}{x};t\right);t\right)=x.\end{equation}
Then
\begin{align*}\Bgf_{p}\left(\frac{1}{x};t\right)&=\left.[w^{<0}]\Igf(w;t)^{p}\right|_{w=W_{B}(\frac{1}{x};t)}\\
\Agf_{p}\left(\frac{1}{y};t\right)&=\left.[w^{>0}]\Igf(w;t)^{p}\right|_{w=\Wgf_{A}(\frac{1}{y};t)}\\
\Fgf_{p}(t)&=[w^{0}]\Igf(w;t)^{p}\end{align*}
and $\Igf(W(z))=X(z)$ for $z\sim\delta$.
\end{Corollary}
\begin{proof}Define
\[H_{p}(w;t):=[w^{\geq 0}] \Igf(w;t)^{p}.\]
We will show that the series $\Agf_{p}$, $\Bgf_{p}$ and $\Fgf_{p}$ as defined above along with $H_{p}$ satisfy the conditions of Theorem \ref{thm:p0}, and therefore are the series defined previously. Since $\Igf(w;t)\in w+\mathbb{Z}\left[\frac{1}{w}\right][[t]]$, we have $\Igf(w;t)^p\in w^p+w^{p-1}\mathbb{Z}\left[\frac{1}{w}\right][[t]]$, so $H_{p}(w;t)=[w^{\geq0}] \Igf(w;t)^{p}$ lies in $\mathbb{R}[[t]][w]$ and has degree $p$ as a polynomial in $w$. Hence 
\begin{align*}\Fgf_{p}(t)&=H_{p}(0;t)\in \mathbb{R}[[t]]&&\text{and}\\
\Agf_{p}\left(\frac{1}{y};t\right)&=H_{p}\left(\Wgf_{A}\left(\frac{1}{y};t\right);t\right)-H_{p}(0;t)\in \frac{1}{y}\mathbb{Z}\left[\frac{1}{y}\right][[t]],&&
\end{align*}
because $\Wgf_{A}\left(\frac{1}{y};t\right)\in \frac{1}{y}\mathbb{Z}\left[\frac{1}{y}\right][[t]]$.
So \eqref{eq:AV_jstart} and \eqref{eq:F_jstart} hold. Moreover, $[w^{<0}] I(w;t)^{p}\in\frac{1}{w}\mathbb{Z}\left[\frac{1}{w}\right][[t]]$, so $\Bgf_{p}\left(\frac{1}{x};t\right)\in \frac{1}{x}\mathbb{Z}\left[\frac{1}{x}\right][[t]]$. Finally, \eqref{eq:AH_jstart} holds because
\[\Bgf_{p}\left(\frac{1}{x};t\right)+H_{p}\left(\Wgf_{B}\left(\frac{1}{x};t\right);t\right)=\Igf\left(\Wgf_{B}\left(\frac{1}{x};t\right);t\right)^{p}=x^{p}.\]
This implies that $H_{p}$ is the unique polynomial defined by Theorem \ref{thm:p0} such that
\[x^{p}-H_{p}\left(\Wgf_{B}\left(\frac{1}{x};t\right)\right)\in \frac{1}{x}\mathbb{Z}\left[\frac{1}{x}\right][[t]],\]
which implies that $\Agf_{p}$, $\Bgf_{p}$ and $\Fgf_{p}$ are the desired series as well.

Finally, for $t$ fixed, if $z\in \Omega_{1}\cup\Omega_{2}$ and $\frac{1}{W(z)}$ is within the radius of convergence of $\Igf(w;t)$ as a function of $\frac{1}{w}$, then we can substitute $x\to X(z)$ into \eqref{eq:Idefinition} yielding $\Igf(W(z))=X(z)$. In particular, this will happen in a neighbourhood of $\delta$ as $\delta\in\Omega_{1}\cup\Omega_{2}$ and $\delta$ is a pole of $W(z)$ (see definition \ref{def:W}). 
\end{proof}

\section{Nature of series in the three-quadrant cone}\label{sec:nature}
In the Section \ref{sec:functional_equations}, we reduced the problem of enumerating walks in the three-quadrant cone to finding the unique meromorphic functions ${A},{B}:\mathbb{C}\to\mathbb{C}\cup\{\infty\}$ and constant $F$ characterised by Theorem \ref{thm:PH_PV_characterisation} (for each $t$). As we discussed, an equation analogous to \eqref{eq:PH_PV_nice} was found by Raschel for walks in the quarter plane \cite{raschel2012counting}, and this equation has since been used to determine precisely when $\Qgf(x,y;t)$ is differentially algebraic \cite{bernardi2017counting,dreyfus2018nature,hardouin2020differentially} and to determine in many cases whether it is algebraic or D-finite with respect to $x$ or $y$ \cite{fayolle2010holonomy,kurkova2012functions}.  

Due to the similarity between our functional equation \eqref{eq:PH_PV_nice} and the equation widely used in the quarter plane, we can apply methods that have been used on the quarter plane functional equation to our functional equation to determine the nature of $\Cgf(x,y;t)$. In particular, in this section we will show that $\Cgf(x,y;t)$ is algebraic, D-finite or D-algebraic with respect to $x$ (or $y$) in the same cases as $\Qgf(x,y;t)$. We note that Fayolle and Raschel also showed that for unweighted models, $\Qgf(x,y;t)$ is D-finite with respect to $t$ in the cases where it is D-finite with respect to $x$ \cite{fayolle2010holonomy}, however these results relied on the precise ratios $\frac{\pi\tau}{\gamma}$ that could occur in these cases, so they do not apply so readily to our equation. Nonetheless, we expect that the same result holds for $\Cgf(x,y;t)$.

There are two properties of the step-set which determine the complexity of the generating function $\Qgf(x,y;t)$. The first is the property that the walk model has a {\em finite group} - in our context this is equivalent to the ratio $\frac{\gamma}{\pi\tau}$ being a rational number independent of $t$ (see Appendix \ref{ap:group}). The second is the property that the model {\em decouples}, that is, under the assumption that $K(x,y)=0$, we can write $x^p y^q=R_{1}(x)+R_{2}(y)$ for some rational functions $R_{1}$ and $R_{2}$. Equivalently these are the cases where one can write
$X(z)^p Y(z)^q=R_{1}(X(z))+R_{2}(Y(z))$ for some rational functions $R_{1}$ and $R_{2}$. The nature of the series $\Cgf(x,y;t)$ as determined by these properties is shown in Table \ref{table:characterisation}. We make this more precise in the following Theorems which will be proved in this section.

\begin{table}[h!]
\begin{tabular}{|c|c|c|}
\hline 
&\textbf{Finite group}&\textbf{Infinite group}\\ 
\hline 
\textbf{Decoupling}&Algebraic&D-algebraic, not D-finite\\ 
\hline 
\textbf{Non-decoupling}&D-finite, not algebraic& not D-algebraic \\ 
\hline 
\end{tabular} 
\medskip
\caption{Complexity of $\Qgf(x,y;t)$ and $\Cgf(x,y;t)$ as functions of $x$ as proven more precisely by Theorems \ref{thm:D-finite_fixed_t}-\ref{thm:D-algebraic_general_t}.}
\label{table:characterisation}
\end{table}
In the following theorems we reference the {\em functions} $\Agf$, $\Bgf$ and $\Cgf$, that is, the functions defined by the series at values of $t,x,y$ where they converge absolutely, as described in Lemma \ref{lem:Q_series_convergence}. Importantly, for $t\in\left(0,\frac{1}{\Pgf(1,1)}\right)$ all of these series converge absolutely to functions of $x$ and $y$ defined for $x,y$ in an open, non-empty subset of $\mathbb{C}$, so it makes sense to discuss the nature of these functions.

\begin{Theorem}\label{thm:D-finite_fixed_t}
For fixed $t\in\left(0,\frac{1}{\Pgf(1,1)}\right)$ the following are equivalent
\begin{enumerate}[label={\rm(\roman*)},ref={\rm(\roman*)}]
\item The function $\Cgf(x,y;t)$  is D-finite in $x$, \label{D-finitex}
\item The function $\Cgf(x,y;t)$  is D-finite in $y$, \label{D-finitey}
\item The function $\Bgf(\frac{1}{x};t)$  is D-finite in $x$, \label{D-finiteAx}
\item The function $\Agf(\frac{1}{y};t)$  is D-finite in $y$, \label{D-finiteAy}
\item ${B}(z)$ satisfies a linear differential equation whose coefficients are elliptic functions with periods $\pi$ and $\pi\tau$, \label{D-finiteHz}
\item ${A}(z)$ satisfies a linear differential equation whose coefficients are elliptic functions with periods $\pi$ and $\pi\tau$, \label{D-finiteVz}
\item the ratio $\frac{\gamma}{\pi\tau}\in\mathbb{Q}$, \label{D-finiterational}
\item the orbit of each point $(x,y)\in\overline{E_{t}}$ under the group of the walk is finite. \label{D-finiteorbits}
\end{enumerate}
\end{Theorem}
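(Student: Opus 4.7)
The plan is to prove the eight equivalences in three layers: first transferring D-finiteness between the combinatorial generating functions and the analytic functions $A(z)$ and $B(z)$, then between $A, B$ and the rationality condition (\ref{D-finiterational}), and finally identifying (\ref{D-finiterational}) with (\ref{D-finiteorbits}) via the group of the walk.

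The first layer, (\ref{D-finitex}) $\Leftrightarrow$ (\ref{D-finiteAx}) and (\ref{D-finitey}) $\Leftrightarrow$ (\ref{D-finiteAy}), follows directly from Lemma \ref{lem:3q_eq1}: rearranging \eqref{eq:three_quarter_full} yields
\[
(1 - t\Pgf(x,y))\,\Cgf(x,y;t) \;=\; x^{p}y^{q} - \Fgf(t) - \Agf(1/y;t) - \Bgf(1/x;t),
\]
so for fixed $y$ and $t$ the functions $\Cgf(x,y;t)$ and $\Bgf(1/x;t)$ differ only by multiplication and addition of rational functions of $x$; hence D-finiteness in $x$ passes from one to the other. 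The symmetric manipulation in $y$ handles (\ref{D-finitey}) $\Leftrightarrow$ (\ref{D-finiteAy}).

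The second layer, (\ref{D-finiteAx}) $\Leftrightarrow$ (\ref{D-finiteHz}) and (\ref{D-finiteAy}) $\Leftrightarrow$ (\ref{D-finiteVz}), passes through the parameterisation $B(z) = \Bgf(1/X(z);t)$ valid on $\Omega_{1}\cup\Omega_{2}$. One direction is direct: if $\Bgf(1/x;t)$ is D-finite in $x$, substituting $x=X(z)$ produces a linear ODE for $B(z)$ whose coefficients are rational in $X(z)$ and $X'(z)$, and hence elliptic of periods $\pi$ and $\pi\tau$. The converse --- that such an elliptic-coefficient ODE for $B$ descends along $X$ to a linear ODE for $\Bgf(1/x;t)$ --- I would take from the general transfer results of Appendix \ref{ap:nature_analytic}. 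The equivalence (\ref{D-finiteHz}) $\Leftrightarrow$ (\ref{D-finiteVz}) is then immediate from \eqref{eq:PH_PV_nice}, since $X(z)^{p}Y(z)^{q}$ is itself elliptic of periods $\pi,\pi\tau$ and the class of functions annihilated by a linear ODE with elliptic coefficients is closed under addition with such functions.

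The heart of the proof, and the principal obstacle, is the equivalence (\ref{D-finiteHz}) $\Leftrightarrow$ (\ref{D-finiterational}), driven entirely by the $q$-difference equation \eqref{eq:PH_qdiff}, $B(z+2\pi\tau-2\gamma) - B(z) = J(z)$, with $J$ elliptic of periods $\pi$ and $\pi\tau$. For the implication (\ref{D-finiterational}) $\Rightarrow$ (\ref{D-finiteHz}), I would write $\gamma/(\pi\tau)=n/m$ in lowest terms and iterate \eqref{eq:PH_qdiff} $m$ times, obtaining
\[
B\bigl(z + 2(m-n)\pi\tau\bigr) - B(z) \;=\; \sum_{k=0}^{m-1} J\bigl(z + 2k(\pi\tau-\gamma)\bigr),
\]
whose right-hand side is again elliptic of periods $\pi,\pi\tau$; standard solution theory for such inhomogeneous difference equations then represents $B(z)$ as a combination of elliptic functions and logarithmic derivatives of theta functions (the quasi-periodic analogues of Weierstrass $\zeta$), each of which is annihilated by a linear ODE with elliptic coefficients of the required periods. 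For the converse (\ref{D-finiteHz}) $\Rightarrow$ (\ref{D-finiterational}), I would argue by contraposition using the Galois theory of $q$-difference equations recalled in Appendix \ref{ap:D-trans}: if $\gamma/(\pi\tau)$ were irrational, then the shift $2(\pi\tau-\gamma)$ would be incommensurable with the period lattice $\pi\mathbb{Z}+\pi\tau\mathbb{Z}$, and the Dreyfus--Hardouin--Roques-type criterion would force any $B$ simultaneously satisfying \eqref{eq:PH_qdiff} and a linear ODE with elliptic coefficients of periods $\pi,\pi\tau$ to be itself elliptic, contradicting that $B$ is a genuine antiderivative of the nonzero elliptic function $J$. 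Finally, (\ref{D-finiterational}) $\Leftrightarrow$ (\ref{D-finiteorbits}) is taken directly from Appendix \ref{ap:group}, where the group of the walk is shown to have finite orbits on $\overline{E_{t}}$ precisely when $\gamma/(\pi\tau) \in \mathbb{Q}$.
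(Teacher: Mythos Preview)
Your first two layers match the paper exactly: the paper deduces (i)$\Leftrightarrow$(iii), (ii)$\Leftrightarrow$(iv) from \eqref{eq:three_quarter_full}, then (iii)$\Leftrightarrow$(v), (iv)$\Leftrightarrow$(vi) from Proposition~\ref{prop:D-finiteofXorY}, and (v)$\Leftrightarrow$(vi) from \eqref{eq:PH_PV_nice}. The equivalence (vii)$\Leftrightarrow$(viii) is likewise taken from Appendix~\ref{ap:group}.

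For (vii)$\Rightarrow$(v) your iteration of \eqref{eq:PH_qdiff} is correct, but the paper does not go through a theta/$\zeta$-representation of $B$. Instead (Theorem~\ref{thm:finite_group_case_thm}) it observes that after telescoping one gets $B(z+N_1\pi\tau)-B(z)=E(z)$ with $E$ elliptic; if $E\equiv 0$ then $B$ has $N_1\pi\tau$ as a period and is $X$-algebraic, while if $E\not\equiv 0$ one differentiates $B(z)/E(z)$ to obtain a function with period $N_1\pi\tau$, exhibiting $B$ as weakly $X$-D-finite in the sense of Definition~\ref{def:X-D-finite-weak}. Your approach could presumably be completed, but it is less direct.

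For (v)$\Rightarrow$(vii) there is a genuine gap. The Galois-theoretic criterion in Appendix~\ref{ap:D-trans} concerns D-\emph{algebraicity} and yields a pole/telescoping condition on $J$, not the conclusion that $B$ is elliptic with periods $\pi,\pi\tau$. And even if you could show $B$ elliptic with those periods, that does not immediately contradict \eqref{eq:PH_qdiff}: one would only get $B(z-2\gamma)-B(z)=J(z)$, which is perfectly consistent for an elliptic $B$. The paper's argument (Theorem~\ref{thm:inf_group_non-D-finite}) is quite different: assuming $B$ is $X$-D-finite, Lemma~\ref{lem:D-fini_z} forces the poles of $B$ to lie in finitely many classes modulo $\pi\mathbb{Z}+\pi\tau\mathbb{Z}$; the function $B(z+\pi\tau)-B(z)$ is then shown to be elliptic with periods $\pi$ and $2\pi\tau-2\gamma$, and the irrationality of $\gamma/\pi\tau$ forces it to be identically zero. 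This reduces to the case where $B$ is a rational function of $X$, which is ruled out by the analytic estimate of Lemma~\ref{lem:not-rational} --- a step the paper singles out as the most delicate in the section, and which your outline bypasses entirely.
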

We give the proof of some of these equivalences immediately, namely those which are either simple to prove, or follow easily from our general results in Appendix \ref{ap:nature_analytic}:
\begin{proof}[Proof of $\ref{D-finitex}\iff\ref{D-finitey}\iff\ref{D-finiteAx}\iff\ref{D-finiteAy}\iff\ref{D-finiteHz}\iff\ref{D-finiteVz}$]
The equivalences $\ref{D-finitex}\iff\ref{D-finiteAx}$ and $\ref{D-finitey}\iff\ref{D-finiteAy}$ follow from \eqref{eq:three_quarter_full}. The equivalences $\ref{D-finiteAx}\iff\ref{D-finiteHz}$ and $\ref{D-finiteAy}\iff\ref{D-finiteVz}$ follow from Proposition \ref{prop:D-finiteofXorY} due to the definitions \eqref{eq:PVdefinition} and \eqref{eq:PHdefinition} of $A(z)$ and $B(z)$. Finally $\ref{D-finiteHz}\iff\ref{D-finiteVz}$ due to \eqref{eq:PH_PV_nice} as $X(z)^{p}Y(z)^{q}$ has $\pi\tau$ as a period.
\end{proof}
The proof of this theorem will be completed as follows: we define the group of the walk in Appendix \ref{ap:group}, and the equivalence of \ref{D-finiterational} and \ref{D-finiteorbits} is shown in Proposition \ref{prop:finite_group_fixed_t}. We show that these equivalent conditions imply the conditions \ref{algx}-\ref{algVz} in Theorem \ref{thm:finite_group_case_thm}, while we show the converse in Theorem \ref{thm:inf_group_non-D-finite}.

\begin{Theorem}\label{thm:algebraic_fixed_t}
Assume that $t\in\left(0,\frac{1}{\Pgf(1,1)}\right)$. The following are equivalent
\begin{enumerate}[label={\rm(\roman*)},ref={\rm(\roman*)}]
\item The function $\Cgf(x,y;t)$  is algebraic in $x$, \label{algx}
\item The function $\Cgf(x,y;t)$  is algebraic in $y$, \label{algy}
\item The function $\Bgf(\frac{1}{x};t)$  is algebraic in $x$, \label{algAx}
\item The function $\Agf(\frac{1}{y};t)$  is algebraic in $y$, \label{algAy}
\item ${B}(z)$ has $m\pi\tau$ as a period for some positive integer $m$, \label{algHz}
\item ${A}(z)$ has $m\pi\tau$ as a period for some positive integer $m$, \label{algVz}
\item The equivalent conditions of Theorem \ref{thm:D-finite_fixed_t} hold and there are rational functions $R_{1}$ and $R_{2}$ satisfying $x^{p}y^{q}=R_{1}(x)+R_{2}(y)$ for all $(x,y)\in\overline{E_{t}}$, \label{algdec}
\item The equivalent conditions of Theorem \ref{thm:D-finite_fixed_t} hold and there are rational functions $R_{1}$ and $R_{2}$ satisfying $X(z)^{p}Y(z)^{q}=R_{1}(X(z))+R_{2}(Y(z))$ for all $z\in\mathbb{C}$, \label{algdecz}
\item The equivalent conditions of Theorem \ref{thm:D-finite_fixed_t} hold and the orbit sum of the model is $0$ (for $(x,y)\in\overline{E_{t}}$). \label{algorbitsum}
\end{enumerate}
\end{Theorem}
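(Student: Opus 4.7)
The overall plan is to follow the skeleton of Theorem \ref{thm:D-finite_fixed_t}, with ``algebraic'' replacing ``D-finite'' and ``has $m\pi\tau$ as a period for some positive integer $m$'' replacing the elliptic-coefficient differential-equation condition, and then to glue in the decoupling/orbit-sum block (vii)-(ix). The equivalences (i)$\iff$(iii) and (ii)$\iff$(iv) will be read off from \eqref{eq:three_quarter_full} exactly as in the D-finite case, since algebraicity in $x$ (resp.\ $y$) is transported between $\Cgf$ and $\Bgf(1/x;t)$ (resp.\ $\Agf(1/y;t)$) by that rational relation. The equivalences (iii)$\iff$(v) and (iv)$\iff$(vi) require an algebraic analogue of Proposition \ref{prop:D-finiteofXorY}, which I will establish in Appendix \ref{ap:nature_analytic} by the following covering-space observation: the smallest Riemann surface on which a degree-$m$ algebraic function of $X$ becomes single-valued is the torus $\mathbb{C}/(\pi\mathbb{Z}+m\pi\tau\mathbb{Z})$, so algebraicity translates to existence of a period $m\pi\tau$. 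The equivalence (v)$\iff$(vi) then follows immediately from \eqref{eq:PH_PV_nice} together with the $\pi\tau$-periodicity of $X(z)^pY(z)^q$: if ${B}$ has period $m\pi\tau$ then so does ${A}=X^pY^q-F-{B}$. Since algebraic implies D-finite, any of (i)-(vi) forces the conditions of Theorem \ref{thm:D-finite_fixed_t}, in particular that the group of the walk is finite, so from this point on I may work inside that hypothesis.

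Within the decoupling block, (vii)$\iff$(viii) is immediate from the parametrization \eqref{eq:Eparam} of $\overline{E_t}$. The equivalence (viii)$\iff$(ix) in the finite-group case is the classical decoupling/orbit-sum dichotomy known from quarter-plane enumeration \cite{bousquet2010walks,bernardi2017counting,bousquet21+}: the forward direction uses that $R_1(x)$ and $R_2(y)$ each have vanishing alternating orbit sum (being fixed respectively by the involutions $\iota_X$ and $\iota_Y$, which reverse sign in the alternating sum), and the converse produces $R_1$ and $R_2$ by telescoping along the finite orbit. This argument depends only on the action of the walk group on $\overline{E_t}$, not on which region of the plane the walk lives in, so it transfers to our setting verbatim.

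The heart of the argument is the linkage (v)$\iff$(viii) between the analytic and combinatorial blocks. I will prove (viii)$\Rightarrow$(v) by leveraging the uniqueness in Theorem \ref{thm:PH_PV_characterisation}. Given a decoupling $X(z)^pY(z)^q = R_1(X(z))+R_2(Y(z))$, I can use the telescoping formulas from the (ix)$\Rightarrow$(viii) step to write $\Bgf(1/x;t)$ itself as a finite algebraic combination of $X, Y$ and their images under the finite group, following the kernel-method technique of \cite{bousquet2010walks,bousquet21+}; algebraicity of $\Bgf(1/x;t)$ in $x$ then follows, and Step~1 gives (v). For the reverse (v)$\Rightarrow$(viii), I will go through (ix): rewrite the alternating orbit sum $\Sigma(z):=\sum_{g\in G}\mathrm{sgn}(g)\,g^{\ast}(X(z)^pY(z)^q)$ using \eqref{eq:PH_PV_nice} as an alternating sum of shifted values of $A$ and $B$, and exploit the $m\pi\tau$-periodicity of $A,B$ together with their involutive symmetries \eqref{eq:PH_flippy_full}-\eqref{eq:PV_pi} to telescope it identically to zero. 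The main obstacle I anticipate is handling general $m>1$ cleanly, since \eqref{eq:PV_flippy_full} introduces a shift by $\pi\tau$ that commutes with the telescoping only modulo $\pi\tau$, not modulo $m\pi\tau$; my fallback plan is to average $\Sigma$ over the $m$ shifts $z\mapsto z+j\pi\tau$ in order to promote $A$ and $B$ to their $\pi\tau$-periodic averages, reducing the telescoping argument to the $m=1$ setting where the shifts align.
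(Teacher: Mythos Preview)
Your treatment of the two blocks (i)--(vi) and (vii)--(ix) matches the paper: the first block comes from \eqref{eq:three_quarter_full}, Proposition~\ref{prop:algebraicofXorY}, and \eqref{eq:PH_PV_nice}; the second from the parametrisation \eqref{eq:Eparam} and Proposition~\ref{prop:orbit_sum0isdecoupling}.

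Where you diverge is the bridge between the two blocks, and here there is a real gap on one side and unnecessary complication on the other. The paper does \emph{not} argue (viii)$\Rightarrow$(v) and (v)$\Rightarrow$(ix) separately. Instead, in Theorem~\ref{thm:finite_group_case_thm} it telescopes the $q$-difference equation \eqref{eq:PH_qdiff} over $N$ steps (where $\tfrac{2\gamma}{\pi\tau}=\tfrac{M}{N}$) to obtain the single identity
\[
B\bigl((2N-M)\pi\tau+z\bigr)-B(z)=E(z),
\]
with $E(z)$ exactly the orbit sum. Both directions then drop out: if $E\equiv 0$ this gives the period $(2N-M)\pi\tau$, and if $B$ has \emph{any} period $m\pi\tau$ then iterating the identity $m$ times shows $B(z+m(2N-M)\pi\tau)-B(z)=mE(z)$, which forces $E\equiv 0$. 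This handles the general $m>1$ case for free, with no averaging.

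Your (viii)$\Rightarrow$(v) argument is the genuine gap. Invoking ``the uniqueness in Theorem~\ref{thm:PH_PV_characterisation}'' and ``the kernel-method technique of \cite{bousquet2010walks,bousquet21+}'' does not give a concrete path: the decoupling functions $R_1,R_2$ are not the functions $A,B$, and the uniqueness theorem does not by itself produce periodic candidates. (A correct variant would be: set $T(z)=R_1(X(z))-B(z)$, observe via \eqref{eq:PH_PV_nice} that $T$ inherits both involutions \eqref{eq:PH_flippy_full}--\eqref{eq:PV_flippy_full}, hence has period $2(\pi\tau-\gamma)$; in the finite-group case $N\cdot 2(\pi\tau-\gamma)=(2N-M)\pi\tau$, so $B=R_1(X)-T$ has period $(2N-M)\pi\tau$. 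But this is not what you wrote.)

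Your (v)$\Rightarrow$(ix) averaging idea is correct: replacing $A,B$ by their $\pi\tau$-averaged versions $\bar A,\bar B$ leaves $E(z)$ unchanged (since $E$ already has period $\pi\tau$), and then $\bar A,\bar B$ become rational in $Y,X$ respectively by Proposition~\ref{prop:rationalofXorY}, after which the orbit sum telescopes exactly as in the proof of Proposition~\ref{prop:orbit_sum0isdecoupling}. But the paper's telescoped-$q$-difference route is shorter and gives both directions at once.
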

\begin{proof}[Proof of $\ref{algx}\iff\ref{algy}\iff\ref{algAx}\iff\ref{algAy}\iff\ref{algHz}\iff\ref{algVz}$ and $\ref{algdec}\iff\ref{algdecz}$]
The equivalence $\ref{algdec}\iff\ref{algdecz}$ is due to the parameterisation \eqref{eq:Eparam} of $\overline{E_{t}}$. The equivalences $\ref{algx}\iff\ref{algAx}$ and $\ref{algy}\iff\ref{algAy}$ follow from \eqref{eq:three_quarter_full}. The equivalences $\ref{algAx}\iff\ref{algHz}$ and $\ref{algAy}\iff\ref{algVz}$ follow from Proposition \ref{prop:algebraicofXorY} due to the definitions \eqref{eq:PVdefinition} and \eqref{eq:PHdefinition}. Finally $\ref{algHz}\iff\ref{algVz}$ due to \eqref{eq:PH_PV_nice} as $X(z)^{p}Y(z)^{q}$ has $m\pi\tau$ as a period.
\end{proof}
To complete the proof of this Theorem, we define the orbit sum and show that \ref{algdecz} and \ref{algorbitsum} are equivalent conditions in Proposition \ref{prop:orbit_sum0isdecoupling}. We then show that equivalent conditions \ref{algx}-\ref{algVz} are equivalent to \ref{algdec}-\ref{algorbitsum} in Theorem \ref{thm:finite_group_case_thm}.

\begin{Theorem}\label{thm:D-algebraic_fixed_t}
Fix $t\in\left(0,\frac{1}{\Pgf(1,1)}\right)$ and assume the equivalent conditions of Theorem \ref{thm:D-finite_fixed_t} do not hold. The following are equivalent
\begin{enumerate}[label={\rm(\roman*)},ref={\rm(\roman*)}]
\item The function $\Cgf(x,y;t)$ is D-algebraic in $x$, \label{D-algx}
\item The function $\Cgf(x,y;t)$  is D-algebraic in $y$, \label{D-algy}
\item The function $\Bgf(\frac{1}{x};t)$  is D-algebraic in $x$, \label{D-algAx}
\item The function $\Agf(\frac{1}{y};t)$  is D-algebraic in $y$, \label{D-algAy}
\item ${B}(z)$ is D-algebraic in $z$, \label{D-algHz}
\item ${A}(z)$ is D-algebraic in $z$, \label{D-algVz}
\item There are rational functions $R_{1}$ and $R_{2}$ satisfying $x^{p}y^{q}=R_{1}(x)+R_{2}(y)$ for all $(x,y)\in\overline{E_{t}}$, \label{D-algdec}
\item There are rational functions $R_{1}$ and $R_{2}$ satisfying $X(z)^{p}Y(z)^{q}=R_{1}(X(z))+R_{2}(Y(z))$ for all $z\in\mathbb{C}$. \label{D-algdecz}
\end{enumerate}
\end{Theorem}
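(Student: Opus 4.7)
The plan is to group the equivalences as in the proofs of Theorems \ref{thm:D-finite_fixed_t} and \ref{thm:algebraic_fixed_t}, dispatch all of the ``transfer'' equivalences among (i)--(vi) and between (vii) and (viii) in parallel, and then reduce everything to a single equivalence between D-algebraicity of $B(z)$ and the decoupling condition \ref{D-algdecz}. The equivalences $\ref{D-algx}\Leftrightarrow\ref{D-algAx}$ and $\ref{D-algy}\Leftrightarrow\ref{D-algAy}$ come from \eqref{eq:three_quarter_full}; $\ref{D-algAx}\Leftrightarrow\ref{D-algHz}$ and $\ref{D-algAy}\Leftrightarrow\ref{D-algVz}$ follow from \eqref{eq:PVdefinition}, \eqref{eq:PHdefinition} and the D-algebraic analogue of the transfer result of Appendix \ref{ap:nature_analytic} (used in the D-finite case via Proposition \ref{prop:D-finiteofXorY}), combined with the fact that $X$ and $Y$ are D-algebraic in $z$ by Lemma \ref{lem:param}; and $\ref{D-algHz}\Leftrightarrow\ref{D-algVz}$ follows from \eqref{eq:PH_PV_nice}, since $X(z)^{p}Y(z)^{q}$ is D-algebraic in $z$. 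Finally, $\ref{D-algdec}\Leftrightarrow\ref{D-algdecz}$ is the parameterization \eqref{eq:Eparam}.

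For the easy direction $\ref{D-algdecz}\Rightarrow\ref{D-algHz}$, I would use the kernel symmetries from Lemma \ref{lem:param}. Substituting $w=\gamma-z$ into the identity $X(w)^{p}Y(w)^{q}=R_{1}(X(w))+R_{2}(Y(w))$ and using $X(\gamma-z)=X(z-2\gamma)$ together with $Y(\gamma-z)=Y(z)$ yields
\begin{equation*}
X(z-2\gamma)^{p}Y(z)^{q}=R_{1}(X(z-2\gamma))+R_{2}(Y(z)).
\end{equation*}
Subtracting the identity at $w=z$ causes the $R_{2}(Y(z))$ terms to cancel, so by \eqref{def:j} we obtain $J(z)=R_{1}(X(z-2\gamma))-R_{1}(X(z))$. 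Since $X$ has $\pi\tau$ as a period, the function $\Phi(z):=R_{1}(X(z))$ satisfies $\Phi(z+2\pi\tau-2\gamma)-\Phi(z)=J(z)$, so comparing with \eqref{eq:PH_qdiff} shows that $E(z):=B(z)-\Phi(z)$ is invariant under $z\mapsto z+2\pi\tau-2\gamma$ and, by \eqref{eq:PH_pi}, also $\pi$-periodic. Hence $E(z)$ is elliptic with periods $\pi$ and $2\pi\tau-2\gamma$, and therefore D-algebraic; so is $B(z)=\Phi(z)+E(z)$.

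The hard direction $\ref{D-algHz}\Rightarrow\ref{D-algdecz}$ is where the standing hypothesis that the conditions of Theorem \ref{thm:D-finite_fixed_t} fail really bites. By $\ref{D-finiterational}\Leftrightarrow\ref{D-finiteorbits}$ of that theorem, the shift $\sigma:z\mapsto z+2\pi\tau-2\gamma$ descends to a translation of infinite order on the elliptic curve $\mathbb{C}/(\pi\mathbb{Z}+\pi\tau\mathbb{Z})$, so I intend to apply the Galois theory of $\sigma$-difference equations over elliptic function fields collected in Appendix \ref{ap:D-trans} to the inhomogeneous equation \eqref{eq:PH_qdiff}. A Hardouin--Singer style criterion should then assert that \eqref{eq:PH_qdiff} admits a D-algebraic solution only if $J(z)$ is $\sigma$-cohomologous to a function of the form $R_{1}(X(z-2\gamma))-R_{1}(X(z))$ for some rational $R_{1}$. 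Reversing the algebra of the easy direction, the elliptic difference $X(z)^{p}Y(z)^{q}-R_{1}(X(z))$ would then be forced to be $\sigma$-invariant; since $\sigma$ acts with infinite order on the torus, any $\sigma$-invariant elliptic function is constant along the $X$-fibres, hence rational in $Y(z)$ alone, producing the $R_{2}$ of \ref{D-algdecz}.

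The main obstacle lies precisely in this last step: translating the abstract $\sigma$-cohomological vanishing provided by Appendix \ref{ap:D-trans} into the concrete statement that a specific elliptic function is a rational function of $Y(z)$ only, and conversely that the $X$-piece stripped off by the Galois argument is itself rational rather than merely algebraic in $X(z)$. This requires carefully tracking the poles of the putative $R_{1}$ at the branch points of $X(z)$ on the kernel curve and using the involutive symmetries $z\mapsto -\gamma-z$ and $z\mapsto\gamma-z$ to rule out the multivalued possibilities. All of the technical work is therefore concentrated in applying Appendix \ref{ap:D-trans} and extracting from it the specifically decoupled shape demanded by \ref{D-algdecz}.
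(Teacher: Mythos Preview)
Your treatment of the transfer equivalences $\ref{D-algx}\Leftrightarrow\cdots\Leftrightarrow\ref{D-algVz}$ and $\ref{D-algdec}\Leftrightarrow\ref{D-algdecz}$ matches the paper exactly. Your proof of $\ref{D-algdecz}\Rightarrow\ref{D-algHz}$ is correct; it is a mild variant of the paper's argument (Theorem~\ref{thm:D-alg_proof}), which instead sets $T(z):=R_{1}(X(z))-B(z)=A(z)+F-R_{2}(Y(z))$ and reads off directly from \eqref{eq:PH_flippy_full}--\eqref{eq:PV_pi} that $T(z)=T(\pi\tau-\gamma-z)=T(-\pi\tau+\gamma-z)=T(z+\pi)$, hence is elliptic with periods $\pi$ and $2\pi\tau-2\gamma$. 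Your difference-equation route via \eqref{eq:PH_qdiff} reaches the same conclusion.

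For the hard direction $\ref{D-algHz}\Rightarrow\ref{D-algdecz}$, your anticipated obstacle (extracting rationality in $X$ and $Y$ from an abstract $\sigma$-cohomological statement, worrying about multivaluedness and pole-tracking) does not in fact arise, because the paper packages the Galois-theoretic input differently from what you sketch. Corollary~\ref{cor:D-trans} is not stated as ``$J$ is $\sigma$-cohomologous to a coboundary of the special form $R_{1}(X(z-2\gamma))-R_{1}(X(z))$''; rather, it takes as input the decomposition $h=f_{1}+f_{2}$ with $f_{j}(\gamma_{j}-z)=f_{j}(z)$ (here $h=X^{p}Y^{q}$, $f_{1}=A+F$, $f_{2}=B$, $\gamma_{1}=-\pi\tau+\gamma$, $\gamma_{2}=\pi\tau-\gamma$) and, assuming $f_{2}$ is D-algebraic, outputs a new decomposition $h=a_{1}+a_{2}$ where each $a_{j}$ is \emph{doubly periodic} with periods $\pi$ and $\pi\tau$ and still satisfies $a_{j}(\gamma_{j}-z)=a_{j}(z)$. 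At that point no further work is needed: Proposition~\ref{prop:rationalofXorY} says precisely that an elliptic function with periods $\pi,\pi\tau$ invariant under $z\mapsto\gamma-z$ (respectively $z\mapsto-\gamma-z$) is a rational function of $Y(z)$ (respectively $X(z)$). So $a_{1}=R_{2}(Y)$ and $a_{2}=R_{1}(X)$ automatically, and \ref{D-algdecz} follows. The rationality is built into the characterization of $X$- and $Y$-rational functions, not recovered by any pole analysis.
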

\begin{proof}[Proof of $\ref{D-algx}\iff\ref{D-algy}\iff\ref{D-algAx}\iff\ref{D-algAy}\iff\ref{D-algHz}\iff\ref{D-algVz}$ and $\ref{D-algdec}\iff\ref{D-algdecz}$]
The equivalence $\ref{D-algdec}\iff\ref{D-algdecz}$ is due to the parameterisation \eqref{eq:Eparam} of $\overline{E_{t}}$. The equivalences $\ref{D-algx}\iff\ref{D-algAx}$ and $\ref{D-algy}\iff\ref{D-algAy}$ follow from \eqref{eq:three_quarter_full}, the equivalences $\ref{D-algAx}\iff\ref{D-algHz}$ and $\ref{D-algAy}\iff\ref{D-algVz}$ follow from Proposition \ref{prop:D-algebraicofXorY} due to the definitions \eqref{eq:PVdefinition} and \eqref{eq:PHdefinition}. Finally $\ref{D-algHz}\iff\ref{D-algVz}$ due to \eqref{eq:PH_PV_nice} as $X(z)^{p}Y(z)^{q}$ is D-algebraic in $z$ (see again the proof of Proposition \ref{prop:D-algebraicofXorY}).
\end{proof}
We complete the proof of this theorem later in this section, starting with Theorem \ref{thm:D-alg_proof} which shows that the equivalent conditions $\ref{D-algdec}\iff\ref{D-algdecz}$ imply the equivalent conditions $\ref{D-algx}\iff\ref{D-algy}\iff\ref{D-algAx}\iff\ref{D-algAy}\iff\ref{D-algHz}\iff\ref{D-algVz}$, then we show the reverse implication in Theorem \ref{thm:non-D-alg_proof}.

The theorems above describe the nature of $\Cgf(x,y;t)$ as a function of $x$ and $y$ for fixed $t$. we can use this to determine its nature as series in $\mathbb{R}[x,y][[t]]$ using the following Lemma:
\begin{Lemma}\label{lem:function_and_series_complexity} Let $\Ggf(x,y;t)\in \mathbb{R}[x,\frac{1}{x},y,\frac{1}{y}][[t]]$ be a series which converges for $|x|,|y|=1$ and $t\in\left(0,\frac{1}{\Pgf(1,1)}\right)$. The series $\Ggf(x,y;t)$ is algebraic (resp. D-finite, D-algebraic) in $x$ if and only if the function $\Ggf(x,y;t)$ of $x$ and $y$ is algebraic (resp. D-finite, D-algebraic) in  $x$ for all $t\in\left(0,\frac{1}{\Pgf(1,1)}\right)$.
\end{Lemma}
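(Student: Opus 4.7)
The plan is to prove the two implications of Lemma \ref{lem:function_and_series_complexity} separately, treating the algebraic case in detail; the D-finite and D-algebraic cases follow identically after replacing the polynomial expression in $\Ggf$ by a linear, respectively polynomial, expression in the formal symbols $\partial_x^i \Ggf$.

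For the forward direction, I would start from a non-trivial identity $P(x,y,t,\Ggf)=0$ with $P\in\mathbb{R}[x,y,t,X]$ and fix $t_0\in(0,1/\Pgf(1,1))$. If $P(x,y,t_0,X)$ is non-zero in $\mathbb{R}[x,y,X]$ then substitution yields the required equation for $\Ggf(x,y;t_0)$. Otherwise every coefficient of $P$, viewed as a polynomial in $x,y,X$, is divisible by $t-t_0$, so $P=(t-t_0)P_1$ with $P_1\in\mathbb{R}[x,y,t,X]$. The factor $t-t_0$ has non-zero constant term $-t_0$ and is therefore a unit in the power-series ring $\mathbb{R}[x,1/x,y,1/y][[t]]$; cancelling it from the identity gives $P_1(x,y,t,\Ggf)=0$. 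The procedure terminates by strict $t$-degree descent, producing an equation whose specialization at $t_0$ is non-trivial.

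For the reverse direction I would first extract a uniform complexity bound via Baire category. For each integer $D\geq 1$, let $E_D\subset(0,1/\Pgf(1,1))$ denote the set of $t_0$ at which $\Ggf(x,y;t_0)$ satisfies some non-trivial polynomial relation over $\mathbb{R}[x,y]$ of multidegree at most $D$. Existence of such a relation amounts to rank-deficiency of a finite matrix whose rows are indexed by monomials $x^a y^b X^c$ with $a,b,c\leq D$, whose columns are indexed by the relevant monomials $x^i y^j$, and whose entries are the coefficients $[x^{i-a}y^{j-b}]\Ggf(x,y;t_0)^c$. By absolute convergence of $\Ggf$ on $|x|=|y|=1$ for $t_0$ in any compact subinterval, these entries are continuous in $t_0$, hence $E_D$ is closed. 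The hypothesis gives $\bigcup_D E_D=(0,1/\Pgf(1,1))$, so Baire's theorem furnishes an integer $D^*$ and an open subinterval $I\subset E_{D^*}$ on which a uniform bound holds.

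The main obstacle is the second step: converting the uniform bound on $I$ into an honest polynomial identity for the formal series. On $I$ the kernel of the relevant (fixed-size) rank-deficient matrix has locally constant dimension, so Cramer's rule applied to a non-vanishing maximal minor produces kernel coefficients that are real-analytic functions of $t$ on $I$. This provides an analytic identity $P(x,y,t,\Ggf(x,y;t))=0$ for $t\in I$; the remaining challenge is to replace the real-analytic coefficients of $P$ by polynomials in $t$. The route I would take exploits the fact that the matrix entries are themselves coefficients of power series in $t$ with polynomial dependence on $x,y$, so the Cramer output lies in the field they generate; taking minimal polynomials over $\mathbb{R}(t)$ and clearing denominators should then yield $P\in\mathbb{R}[x,y,t,X]$ with $P(x,y,t,\Ggf(x,y;t))=0$ holding analytically on $I$, and therefore as a formal power series in $t$, since a power series with Laurent-polynomial coefficients in $x,y$ that vanishes on an open subinterval of its convergence domain must vanish identically.
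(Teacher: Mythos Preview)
Your forward direction rests on the hypothesis $P\in\mathbb{R}[x,y,t,X]$, i.e.\ that the algebraic relation has coefficients \emph{polynomial} in $t$. Under the paper's definition (``coefficients only depending on the other variables''), the relation for the series $\Ggf$ has coefficients that are merely formal power series in $t$ with Laurent-polynomial coefficients in $y$; there is no degree in $t$ to descend on, and these coefficient series need not even converge at the given $t_0$, so neither your $(t-t_0)$-cancellation nor a direct specialisation goes through.

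Your reverse direction is more seriously broken. Baire is fine (although a straight pigeonhole --- uncountably many $t$, countably many bounds $D$ --- already gives a single $D$ working for uncountably many $t$), and extracting real-analytic kernel coefficients by Cramer on a generic finite set of columns is also fine. The failure is in the last step: you want to pass from real-analytic coefficients to polynomials in $t$ by ``taking minimal polynomials over $\mathbb{R}(t)$''. This is not possible in general, since the Cramer output is built from power-series values $[x^{i}y^{j}]\Ggf^{c}(t_0)$ that are typically transcendental over $\mathbb{R}(t)$. More to the point, you are aiming for a stronger conclusion than the lemma asks for: the target is a relation with coefficients in $\mathbb{C}[y,1/y][[t]]$, not in $\mathbb{R}[y,t]$.

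The idea you are missing, which the paper uses for both directions at once, is to replace ``there exist coefficients giving a relation'' by an equivalent \emph{determinantal} condition that involves only the convergent data. List the monomials $\Lambda_k=x^{j}\Ggf^{k}$; linear dependence of $1,\Lambda_0,\dots,\Lambda_n$ over functions of $(y,t)$ is equivalent to the vanishing of the $(n{+}2)\times(n{+}2)$ evaluation determinant
\[
T(y;t)=\det\bigl(\Lambda_i(x_j,y,t)\bigr)_{i,j}
\]
for every choice of points $x_0,\dots,x_{n+1}$. The point is that $T(y;t)$ is itself a series in $\mathbb{C}[y,1/y][[t]]$ built polynomially from the convergent series $\Ggf(x_j,y;t)$, so it converges on the same domain. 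Now the two directions are immediate: if the series $\Ggf$ is algebraic in $x$ then $T\equiv 0$ as a series, hence as a function for each $t$; conversely, once pigeonhole gives a uniform $n$ for uncountably many $t$, the analytic function $T(y;t)$ vanishes on a set with an accumulation point and is therefore identically zero as a series, which yields the dependence relation with coefficients in $\mathbb{C}[y,1/y][[t]]$ by linear algebra over that ring. No conversion between analytic and polynomial data is required.
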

\begin{proof}
Let $\Lambda_{1}(x,y,t),\Lambda_{2}(x,y,t),\Lambda_{3}(x,y,t),\ldots$ be an ordering of the set $\{x^{j}\Ggf(x,y;t)^k\}_{j,k\in\mathbb{N}_{0}}$. Then the function $\Ggf(x,y;t)$ is algebraic in $x$ if and only if for each $t$, there is some integer $n>0$ and functions $S_{k}(y;t)$ for $0\leq k\leq n$ and $S(y,t)$ of $y$ satisfying 
\[\sum_{k=0}^{n}S_{k}(y;t)\Lambda_{k}(x,y,t)=S(y,t),\]
with $S_{n}(y;t)\neq0$. A priori, $n$ depends on $t$, however, as there are uncountably many possible values of $t$ and only countably many for $n$, we can chose some $n=N$ for which an equation of the form above holds for uncountably many values of $t$. For a given value of $t$, this happens if and only if for every fixed $x_{0},\ldots,x_{n+1}$, the matrix
\[\begin{bmatrix}1&\Lambda_{0}(x_{0},y,t)&\Lambda_{1}(x_{0},y,t)&\cdots&\Lambda_{n}(x_{0},y,t)\\
1&\Lambda_{0}(x_{1},y,t)&\Lambda_{1}(x_{1},y,t)&\cdots&\Lambda_{n}(x_{1},y,t)\\
1&\Lambda_{0}(x_{2},y,t)&\Lambda_{1}(x_{2},y,t)&\cdots&\Lambda_{n}(x_{2},y,t)\\
\vdots&\vdots&\vdots&\ddots&\vdots\\
1&\Lambda_{0}(x_{n+1},y,t)&\Lambda_{1}(x_{n+1},y,t)&\cdots&\Lambda_{n}(x_{n+1},y,t)\\
\end{bmatrix}\]
has determinant $0$. Since each $\Ggf(x_{k},y;t)$ can be considered to be a series in $\mathbb{C}[y][[t]]$, this determinant, which we denote by $T(y;t)$ is in general a series in $\mathbb{C}[y][[t]]$. Moreover, for any value of $t$ and $y$ for which all of the series $\Lambda_{j}(x_{k},y,t)$ converge absolutely, the series $T(y;t)$ will also converge absolutely to the determinant of these values.

Now, assume that the function $\Ggf(x,y;t)$ is algebraic in $x$ for all $t$. For all sufficiently small $t$, and all $y$ satisfying $|y|=1$, the series $\Lambda_{j}(x_{k},y,t)$ converge absolutely, so there are uncountably many values $t$ such that the series $T(y;t)$ converges to $0$ when $|y|=1$. But this is only possible if $T(y;t)=0$ as a series. Now, since this determinant is $0$, an equation of the form
\[\sum_{k=0}^{n}S_{k}(y;t)\Lambda_{k}(x,y,t)=S(y,t),\]
must hold with each $S_{k}(y;t)$ and $S(y,t)$ a series in $\mathbb{C}[y][[t]]$, so the series $\Ggf(x,y;t)$ is algebraic in $x$.

For the converse, assume that the series $\Ggf(x,y;t)$ is algebraic in $x$. Then for some $n$, and any $x_{0},\ldots,x_{n+1}\in\mathbb{C}$, the determinant $T(y;t)=0$. Hence for each fixed $t$ the determinant is still $0$, so the function $\Ggf(x,y;t)$ is algebraic.

For the property D-finite, the same proof works after changing the definition of $\Lambda_{1}(x,y,t), \Lambda_{2}(x,y,t), \ldots$ to be the functions $x^{j}\left(\frac{\partial}{\partial x}\right)^{k}\Ggf(x,y;t)$ in some order. 

Similarly, for the property D-algebraic, we just have to define $\Lambda_{k}(x,y,t)$ so that the sequence
$\Lambda_{0}(x,y,t),\Lambda_{1}(x,y,t),\ldots$ contains each product $x^{j}\prod_{i=1}^{p}\left(\frac{\partial}{\partial x}\right)^{k_{i}}\Fgf(x,y;t)$ with $p\geq 0$ and $0\leq k_{1}\leq\cdots\leq k_{p}$ exactly once.
 \end{proof}
 
 We now use this lemma to rewrite the theorems above to characterise the complexity of the series $\Cgf(x,y;t)\in\mathbb{R}[x,y][[t]]$:
 
\begin{Theorem}\label{thm:D-finite_general_t}
The following are equivalent
\begin{enumerate}[label={\rm(\roman*)},ref={\rm(\roman*)}]
\item The series $\Cgf(x,y;t)\in\mathbb{R}[x,y][[t]]$  is D-finite in $x$, \label{series_D-finitex}
\item The series $\Cgf(x,y;t)\in\mathbb{R}[x,y][[t]]$  is D-finite in $y$, \label{series_D-finitey}
\item The equivalent conditions of Theorem \ref{thm:D-finite_fixed_t} hold for all $t\in\left(0,\frac{1}{\Pgf(1,1)}\right)$,\label{equivs_D-finite}
\item The group of the walk is finite.\label{finite_group}
\end{enumerate}
\end{Theorem}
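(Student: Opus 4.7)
The plan is to deduce the four equivalences chiefly from two tools already at hand: Theorem \ref{thm:D-finite_fixed_t} (which handles the ``for fixed $t$'' version) and Lemma \ref{lem:function_and_series_complexity} (which translates between ``series in $t$'' statements and ``function for each $t$'' statements). The remaining content is the bridge to the group.

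First I would dispatch \ref{series_D-finitex} $\Longleftrightarrow$ \ref{series_D-finitey} $\Longleftrightarrow$ \ref{equivs_D-finite}. Note that $\Cgf(x,y;t)\in x\mathbb{R}[x,y,\tfrac{1}{x},\tfrac{1}{y}][[t]]$ (after multiplying by $x^{-p}y^{-q}$ or simply because walks of length $n$ have coordinates bounded in absolute value by $n+p+q$) and by Lemma \ref{lem:Q_series_convergence} it converges absolutely for $|x|=|y|=1$ and $t\in(0,1/\Pgf(1,1))$. So Lemma \ref{lem:function_and_series_complexity} applies (possibly after multiplying by a suitable Laurent monomial in $x,y$, which does not affect D-finiteness in $x$ or in $y$): the series $\Cgf(x,y;t)$ is D-finite in $x$ if and only if the function $\Cgf(x,y;t)$ is D-finite in $x$ for every $t\in(0,1/\Pgf(1,1))$, and likewise for $y$. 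Theorem \ref{thm:D-finite_fixed_t} already gives, for each such $t$, that D-finiteness in $x$ and D-finiteness in $y$ of the function are equivalent, and each is equivalent to conditions \ref{D-finitex}--\ref{D-finiteorbits} of that Theorem. This yields at once the equivalences $\ref{series_D-finitex}\Longleftrightarrow\ref{series_D-finitey}\Longleftrightarrow\ref{equivs_D-finite}$.

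It remains to prove \ref{equivs_D-finite} $\Longleftrightarrow$ \ref{finite_group}. The group of the walk is defined in Appendix \ref{ap:group}, and (as recalled just before the theorems of this section) the group is finite precisely when the ratio $\gamma/(\pi\tau)$ is a rational number \emph{independent of $t$}. Condition \ref{D-finiterational} of Theorem \ref{thm:D-finite_fixed_t} is that $\gamma/(\pi\tau)\in\mathbb{Q}$ for the given value of $t$. Thus the implication \ref{finite_group} $\Longrightarrow$ \ref{equivs_D-finite} is immediate: a constant rational value lies in $\mathbb{Q}$ for every $t$.

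The only nontrivial direction is \ref{equivs_D-finite} $\Longrightarrow$ \ref{finite_group}. The key input is that $\tau$ and $\gamma$ are D-algebraic (hence in particular continuous, indeed real-analytic) functions of $t$ on the open interval $(0,1/\Pgf(1,1))$, as stated in Lemma \ref{lem:param}. Therefore the map
\[
t\;\longmapsto\;\frac{\gamma(t)}{\pi\tau(t)}
\]
is a continuous function on the connected interval $(0,1/\Pgf(1,1))$. Assuming \ref{equivs_D-finite}, this continuous function takes values in $\mathbb{Q}$ everywhere on the interval; since $\mathbb{Q}$ is totally disconnected in $\mathbb{R}$, any continuous function from a connected set into $\mathbb{Q}$ is constant. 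Hence $\gamma/(\pi\tau)$ is a fixed rational number, and by the characterization of the group recalled above, the group of the walk is finite. This gives \ref{finite_group}. The main obstacle is this continuity-plus-connectedness step; all other implications are bookkeeping that simply combines Theorem \ref{thm:D-finite_fixed_t} with Lemma \ref{lem:function_and_series_complexity}.
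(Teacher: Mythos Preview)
Your argument for \ref{series_D-finitex}$\Leftrightarrow$\ref{series_D-finitey}$\Leftrightarrow$\ref{equivs_D-finite} is exactly what the paper does: apply Lemma~\ref{lem:function_and_series_complexity} to pass between the series and the function, then invoke Theorem~\ref{thm:D-finite_fixed_t} at each fixed $t$.

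For \ref{equivs_D-finite}$\Leftrightarrow$\ref{finite_group} the paper simply cites Proposition~\ref{prop:finite_group_general_t}, whose proof of the hard direction uses a pigeonhole argument: among uncountably many $t$ with $\gamma/(\pi\tau)\in\mathbb{Q}$, infinitely many must share the same denominator $N$, and then $(\psi\circ\varphi)^N(x,y)=(x,y)$ holds on infinitely many curves $\overline{E_t}$, hence identically as rational functions. You replace pigeonhole by continuity: a continuous $\mathbb{Q}$-valued map on a connected interval is constant. This is a legitimate and arguably cleaner alternative. Two small caveats: first, ``D-algebraic, hence continuous'' is not a valid general implication; the continuity of $t\mapsto\gamma(t)/(\pi\tau(t))$ really comes from the explicit integral formulae for $\omega_1,\omega_2,\omega_3$ in Appendix~\ref{ap:param_thm}, which are manifestly analytic on $(0,1/\Pgf(1,1))$. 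Second, once you have that $\gamma/(\pi\tau)$ equals a fixed rational $m/(2n)$, you invoke the characterisation ``group finite $\Leftrightarrow$ $\gamma/(\pi\tau)$ a constant rational''. That characterisation is precisely what Appendix~\ref{ap:group} proves, and its nontrivial half (constant rational $\Rightarrow$ $(\psi\circ\varphi)^n=\mathrm{id}$ as a birational map, not just pointwise on each $\overline{E_t}$) still requires the algebraic step from the proof of Proposition~\ref{prop:finite_group_general_t}. So your continuity argument genuinely replaces the pigeonhole step but not the final algebraic one; if you want a fully self-contained alternative you should spell out that last implication using Proposition~\ref{prop:finite_group_fixed_t} together with the Zariski-density of $\bigcup_t\overline{E_t}$.
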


\begin{Theorem}\label{thm:algebraic_general_t}
The following are equivalent
\begin{enumerate}[label={\rm(\roman*)},ref={\rm(\roman*)}]
\item The series $\Cgf(x,y;t)\in\mathbb{R}[x,y][[t]]$  is algebraic in $x$, \label{series_algx}
\item The series $\Cgf(x,y;t)\in\mathbb{R}[x,y][[t]]$  is algebraic in $y$, \label{series_algy}
\item The equivalent conditions of Theorem \ref{thm:algebraic_fixed_t} hold for all $t\in\left(0,\frac{1}{\Pgf(1,1)}\right)$.
\end{enumerate}
\end{Theorem}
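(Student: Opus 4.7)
The plan is to deduce this theorem as a direct corollary of Lemma \ref{lem:function_and_series_complexity} combined with Theorem \ref{thm:algebraic_fixed_t}, so the proof is essentially bookkeeping. First I need to check that $\Cgf(x,y;t)$, viewed as a Laurent series in $x,y$ with coefficients in $\mathbb{R}[[t]]$, satisfies the hypotheses of Lemma \ref{lem:function_and_series_complexity}. By the last statement of Lemma \ref{lem:Q_series_convergence}, $\Cgf(x,y;t)$ converges absolutely for $|x|,|y|\in(\sqrt{t\Pgf(1,1)},1/\sqrt{t\Pgf(1,1)})$ when $t\in(0,1/\Pgf(1,1))$, and this open region contains the torus $|x|=|y|=1$. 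Hence the hypotheses of Lemma \ref{lem:function_and_series_complexity} apply.

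Next I would apply Lemma \ref{lem:function_and_series_complexity} to $\Ggf=\Cgf$: the series $\Cgf(x,y;t)$ is algebraic in $x$ if and only if the function $\Cgf(x,y;t)$ is algebraic in $x$ for every fixed $t\in(0,1/\Pgf(1,1))$, and analogously for $y$. By Theorem \ref{thm:algebraic_fixed_t} (the equivalence of conditions \ref{algx} and \ref{algy} in that theorem, for each fixed $t$), algebraicity of the function $\Cgf(x,y;t)$ in $x$ is equivalent to algebraicity in $y$. Chaining these equivalences yields $\ref{series_algx}\iff\ref{series_algy}$, and each is equivalent to the function $\Cgf(x,y;t)$ satisfying any (and hence all) of the conditions of Theorem \ref{thm:algebraic_fixed_t} for every $t\in(0,1/\Pgf(1,1))$, which is condition (iii) of the present theorem.

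There is no substantive obstacle here beyond the convergence check, since Lemma \ref{lem:function_and_series_complexity} is precisely the device designed to transfer between the series-level statement of algebraicity (as a formal power series in $t$ with Laurent-polynomial coefficients in $x,y$) and the function-level statements proved in the previous theorems. The only mild subtlety to flag is that $\Cgf(x,y;t)$ is a Laurent series in $x,y$ rather than a power series, but this is exactly the setting handled by Lemma \ref{lem:function_and_series_complexity}, so the argument goes through verbatim.
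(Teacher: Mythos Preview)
Your proposal is correct and matches the paper's own proof, which applies Lemma~\ref{lem:function_and_series_complexity} with $\Ggf=\Cgf$ to obtain $\ref{series_algx}\iff\text{(iii)}$ and $\ref{series_algy}\iff\text{(iii)}$ directly. The only addition in your write-up is the explicit convergence check via Lemma~\ref{lem:Q_series_convergence}, which the paper leaves implicit.
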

 
\begin{Theorem}\label{thm:D-algebraic_general_t}
Assume that the group of the walk is infinite. The following are equivalent
\begin{enumerate}[label={\rm(\roman*)},ref={\rm(\roman*)}]
\item The series $\Cgf(x,y;t)\in\mathbb{R}[x,y][[t]]$  is D-algebraic in $x$, \label{series_D-algx}
\item The series $\Cgf(x,y;t)\in\mathbb{R}[x,y][[t]]$  is D-algebraic in $y$, \label{series_D-algy}
\item The equivalent conditions of Theorem \ref{thm:D-algebraic_fixed_t} hold for all $t\in\left(0,\frac{1}{\Pgf(1,1)}\right)$.
\end{enumerate}
\end{Theorem}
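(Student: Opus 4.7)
The plan is to reduce Theorem~\ref{thm:D-algebraic_general_t} to its pointwise version Theorem~\ref{thm:D-algebraic_fixed_t}, using Lemma~\ref{lem:function_and_series_complexity} as the bridge between series-level and function-level D-algebraicity.

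By Lemma~\ref{lem:function_and_series_complexity}, the series $\Cgf(x,y;t)\in\mathbb{R}[x,y][[t]]$ is D-algebraic in $x$ (respectively $y$) if and only if the function $\Cgf(x,y;t)$ is D-algebraic in $x$ (respectively $y$) for every $t\in(0,1/\Pgf(1,1))$, so it suffices to analyze D-algebraicity pointwise in $t$. I would partition $(0,1/\Pgf(1,1))$ into the set $T_{\mathrm{fin}}$ where the equivalent conditions of Theorem~\ref{thm:D-finite_fixed_t} hold (equivalently $\gamma(t)/(\pi\tau(t))\in\mathbb{Q}$) and its complement $T_{\mathrm{inf}}$. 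For $t\in T_{\mathrm{fin}}$, Theorem~\ref{thm:D-finite_fixed_t} already gives that $\Cgf(x,y;t)$ is D-finite, hence automatically D-algebraic, in both $x$ and $y$, so these values impose no constraint. For $t\in T_{\mathrm{inf}}$, Theorem~\ref{thm:D-algebraic_fixed_t} applies directly, giving the equivalence between function-level D-algebraicity in $x$, function-level D-algebraicity in $y$, and the decoupling conditions (vii)/(viii) of that theorem. The hypothesis that the group of the walk is infinite, combined with Theorem~\ref{thm:D-finite_general_t}, forces $T_{\mathrm{inf}}$ to be nonempty, and the differential algebraicity but non-constancy of $\gamma$ and $\tau$ in $t$ makes $T_{\mathrm{inf}}$ dense, its complement being at most countable.

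Chaining these pieces gives the three equivalences. Condition~(i) is equivalent via the Lemma to function-level D-algebraicity in $x$ for every $t$; stripping off the automatic $T_{\mathrm{fin}}$ part this reduces to function-level D-algebraicity in $x$ for every $t\in T_{\mathrm{inf}}$; and by Theorem~\ref{thm:D-algebraic_fixed_t} the latter is equivalent both to the analogous assertion with $y$ in place of $x$, giving (ii), and to the conditions of Theorem~\ref{thm:D-algebraic_fixed_t} holding at every $t\in T_{\mathrm{inf}}$, giving (iii). The only delicate point is the interpretation of (iii) at exceptional values $t\in T_{\mathrm{fin}}$: read most naturally, (iii) asserts the conditions at every $t$ where Theorem~\ref{thm:D-algebraic_fixed_t} is applicable, namely $t\in T_{\mathrm{inf}}$. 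If one instead insisted on the decoupling identity on all of $(0,1/\Pgf(1,1))$, one would observe that the existence of rational functions $R_1(x),R_2(y)$ with $x^p y^q=R_1(x)+R_2(y)$ on $\overline{E_{t}}$ is a Zariski-closed condition in $t$ once the degrees of $R_1,R_2$ are bounded a priori in terms of $p,q$, so its validity on the dense set $T_{\mathrm{inf}}$ propagates to the whole interval. I expect this bookkeeping to be the only non-automatic step of the proof.
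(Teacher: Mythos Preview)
Your proposal is correct and follows the same core idea as the paper: reduce to the fixed-$t$ statement via Lemma~\ref{lem:function_and_series_complexity}. The paper's proof is far more terse --- it simply says that $\text{(i)}\Leftrightarrow\text{(iii)}$ and $\text{(ii)}\Leftrightarrow\text{(iii)}$ are direct consequences of Lemma~\ref{lem:function_and_series_complexity} with $\Ggf=\Cgf$, without splitting into $T_{\mathrm{fin}}$ and $T_{\mathrm{inf}}$ or discussing the interpretation of (iii) at exceptional values of $t$.

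The difference is essentially one of bookkeeping. The paper tacitly reads ``the equivalent conditions of Theorem~\ref{thm:D-algebraic_fixed_t}'' as meaning, in particular, condition~\ref{D-algx} of that theorem, which is literally ``the function $\Cgf(x,y;t)$ is D-algebraic in $x$''; under that reading, Lemma~\ref{lem:function_and_series_complexity} gives the equivalence immediately without any case analysis. Your more careful treatment --- observing that values $t\in T_{\mathrm{fin}}$ are automatically D-finite (hence D-algebraic) by Theorem~\ref{thm:D-finite_fixed_t}, and that the infinite-group hypothesis guarantees $T_{\mathrm{inf}}\neq\emptyset$ --- makes explicit what the paper leaves implicit, and your density/countability remark and the Zariski-closure digression, while correct, are not needed for the argument as the paper intends it.
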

\begin{proof}[Proof of Theorems \ref{thm:D-finite_general_t}-\ref{thm:D-algebraic_general_t}]
For each of these theorems, the equivalences $\ref{series_D-finitex}\iff\ref{equivs_D-finite}$ and $\ref{series_D-finitey}\iff\ref{equivs_D-finite}$ are both direct results of Lemma \ref{lem:function_and_series_complexity} with $\Fgf(x,y;t)=\Cgf(x,y;t)$. Finally, the equivalence $\ref{equivs_D-finite}\iff\ref{finite_group}$ in Theorem \ref{thm:D-finite_general_t} is due to Proposition \ref{prop:finite_group_general_t}.
\end{proof}

\subsection{Finite group cases}\label{sec:finite_group}
In this section, we consider the cases in which $\frac{\gamma}{\pi\tau}\in\mathbb{Q}$. We will show that this occurs for fixed $t$ if and only if $\Cgf(x,y;t)$ is D-finite in $x$. As explained in Appendix \ref{ap:group} this occurs for all $t$ if and only if the group of the walk is finite. This is an algebraic property of the single-step generating function $\Pgf(x,y)$, which was shown to be equivalent to D-finiteness for walks in the quarter plane. Hence in this section we are showing that if the generating function $\Qgf(x,y;t)$ is D-finite in $x$ then the generating function $\Cgf(x,y;t)$ is also D-finite in $x$. In the following section we will show that the converse of this statement also holds. Note that by symmetry between $x$ and $y$, the analogous statements with respect to $y$ also hold. We also show that $\Cgf(x,y;t)$ has the more restricted property that it is algebraic in $x$ precisely when $X(z)^{p}Y(z)^{q}$ decouples, which we make precise below. Again this coincides with the nature of $\Qgf(x,y;t)$.

\begin{Definition}\label{defn:decoupling} For fixed $t$, we say that an elliptic function $U(z)$, with periods $\pi$ and $\pi\tau$ {\em decouples} if there is a pair of rational functions $R_{1}$ and $R_{2}$ satisfying
\[U(z)=R_{1}(X(z))+R_{2}(Y(z)).\]
We say that the model decouples if $X(z)^p Y(z)^q$ decouples.
\end{Definition}

It is often alternatively stated that the algebraic cases are those in which the orbit sum is $0$, which we make precise in the following definition.

\begin{Definition}\label{defn:orbit_sum} For fixed $t$ satisfying $\frac{2\gamma}{\pi\tau}=\frac{M}{N}$, for integers $M,N>0$, the {\em orbit sum} $E(z)$ of an elliptic function $U(z)$, with periods $\pi$ and $\pi\tau$, is given by
\[E(z):=\sum_{k=0}^{N-1}U((2k+1)\gamma-z)-U(2k\gamma+z).\]
We say that the orbit sum of the model is the orbit sum of $X(z)^p Y(z)^q$.
\end{Definition}

 In the following Proposition we show that the orbit sum of a function $U(z)$ is $0$ if and only if $U(z)$ decouples. This was proven in an algebraic setting in \cite[Theorem 4.11]{bernardi2017counting}, where decoupling functions were introduced, and essentially the same proof works here:
\begin{Proposition}\label{prop:orbit_sum0isdecoupling}
Let $U(z)$ be an elliptic function with periods $\pi$ and $\pi\tau$, and assume that $\frac{2\gamma}{\pi\tau}=\frac{M}{N}$, with $M,N\in\mathbb{Z}$, $N>0$. The function $U(z)$ decouples if and only if the orbit sum $E(z)$ of $U(z)$ is equal to $0$.
\end{Proposition}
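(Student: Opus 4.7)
The plan is to handle the two implications separately. For the ``$U$ decouples $\Rightarrow E(z)=0$'' direction, I would exploit linearity of the orbit sum to reduce to showing that the orbit sum vanishes for $R_1(X(z))$ and for $R_2(Y(z))$ individually. Since $R_1(X(z))$ is invariant under $\iota_X\colon w\mapsto-\gamma-w$, I have $R_1(X((2k+1)\gamma-z))=R_1(X(-(2k+2)\gamma+z))$; and $2N\gamma=M\pi\tau$ is a period of $X$, so this equals $R_1(X(2(N-k-1)\gamma+z))$. Reindexing $j=N-k-1$ then shows $\sum_{k=0}^{N-1}R_1(X((2k+1)\gamma-z))=\sum_{j=0}^{N-1}R_1(X(2j\gamma+z))$, which cancels exactly with the other half of $E(z)$. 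The argument for $R_2(Y(z))$ is entirely symmetric via $\iota_Y\colon w\mapsto\gamma-w$.

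For the converse, I would aim to construct $R_2$ by searching for an $\iota_Y$-invariant elliptic $\tilde\sigma$ (with periods $\pi,\pi\tau$) satisfying the shift equation
\[\tilde\sigma(z)-\tilde\sigma(z+2\gamma)=G(z),\qquad\text{where }G(z):=U(z)-U(-\gamma-z).\]
If such a $\tilde\sigma$ exists, then using $\iota_Y(z+2\gamma)=\iota_X(z)$ together with $\iota_Y$-invariance of $\tilde\sigma$, a one-line calculation gives $\tilde\sigma(\iota_X z)=\tilde\sigma(z+2\gamma)$, so that $U-\tilde\sigma$ is $\iota_X$-invariant. The standard identification of $\iota_X$- and $\iota_Y$-invariant elliptic functions with rational functions of $X$ and $Y$, respectively (coming from the fact that $X$ and $Y$ each have degree two on the torus $\mathbb{C}/(\pi\mathbb{Z}+\pi\tau\mathbb{Z})$ with the corresponding involution as deck transformation), then gives the decomposition $U=R_1(X)+R_2(Y)$.

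To construct $\tilde\sigma$, I would first note that iterating the shift equation $N$ times forces the necessary condition $\sum_{k=0}^{N-1}G(z+2k\gamma)=0$, because $\tilde\sigma$ must have $2N\gamma=M\pi\tau$ as a period. A short rearrangement identifies this sum with $-E(z)$, so the hypothesis $E(z)=0$ makes it vanish. Under this hypothesis I would then write down the explicit candidate
\[\sigma(z):=-\frac{1}{N}\sum_{k=1}^{N-1}k\,G(z+2k\gamma),\]
and verify by a telescoping computation, using $G(z+2N\gamma)=G(z)$, that $\sigma(z)-\sigma(z+2\gamma)=G(z)+\tfrac{1}{N}\sum_{k=0}^{N-1}G(z+2k\gamma)=G(z)$.

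The final step, which I expect to be the most delicate, is to enforce $\iota_Y$-invariance, since the $\sigma$ above has no particular reason to satisfy this. I would symmetrise by setting $\tilde\sigma(z):=\tfrac12(\sigma(z)+\sigma(\gamma-z))$, which is $\iota_Y$-invariant by construction. To check the shift equation still holds, I would use the immediate identity $G\circ\iota_X=-G$: applying the shift equation for $\sigma$ at $w=-\gamma-z$ yields $\sigma(\gamma-z)-\sigma(-\gamma-z)=-G(-\gamma-z)=G(z)$, so the two halves of the symmetrisation contribute equally and $\tilde\sigma(z)-\tilde\sigma(z+2\gamma)=G(z)$. Ellipticity of $\tilde\sigma$ with periods $\pi,\pi\tau$ is inherited from that of $U$ and hence of $G$. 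The main source of friction in writing this up is pure bookkeeping: tracking index shifts modulo $N$ and sign conventions in the telescoping and symmetrisation arguments; the conceptual content is entirely in the compatibility condition $E=0$ and the correspondence between symmetric elliptic functions and rational functions of $X$ or $Y$.
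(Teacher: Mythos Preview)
Your proof is correct and the forward direction is essentially identical to the paper's. For the converse, the paper takes a more direct route: rather than introducing the auxiliary $G(z)=U(z)-U(-\gamma-z)$ and solving a shift equation, it writes down both invariant pieces at once as explicit weighted averages over the orbit,
\[
A_{1}(z)=\sum_{k=1}^{N}\frac{2k-1}{2N}\bigl(U(2k\gamma+z)+U((2k-1)\gamma-z)\bigr),\qquad
A_{2}(z)=-\sum_{k=0}^{N-1}\frac{k}{N}\bigl(U(2k\gamma+z)+U((2k+1)\gamma-z)\bigr),
\]
observes that $A_{1}$ is $\iota_{X}$-invariant and $A_{2}$ is $\iota_{Y}$-invariant by inspection of each summand, and then checks the single identity $A_{1}(z)+A_{2}(z)=U(z+2N\gamma)+\tfrac{1}{2N}E(z)$. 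Your route---solve a difference equation for $\sigma$, then symmetrise to restore $\iota_{Y}$-invariance---reaches the same destination and makes the role of $E(z)=0$ as a compatibility condition more transparent, at the cost of an extra symmetrisation step. The paper's version is slightly more symmetric (it treats $R_{1}$ and $R_{2}$ on equal footing from the start) and avoids introducing $G$, but the underlying averaging mechanism is the same in both.
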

\begin{proof}
In the case that $U(z)$ decouples, let $U(z)=R_{1}(X(z))+R_{2}(Y(z))$, where $R_{1}$ and $R_{2}$ are rational functions. Then we can write the orbit sum as
\begin{align*}E(z)&=\sum_{k=0}^{N-1}R_{1}(X((2k+1)\gamma-z))+R_{2}(Y((2k+1)\gamma-z))-R_{1}(X(2k\gamma+z))-R_{2}(Y(2k\gamma+z))\\
&=\sum_{k=0}^{N-1}R_{1}(X(-2(k+1)\gamma+z))+R_{2}(Y(-2k\gamma+z))-R_{1}(X(2k\gamma+z))-R_{2}(Y(2k\gamma+z)).\end{align*}
Now, the terms $R_{1}(X(2k\gamma+z))$ in the sum are a permutation of the terms $R_{1}(X(-2(k+1)\gamma+z))$, because
\[R_{1}(X(2k\gamma+z))=R_{1}(X(-2(j+1)\gamma+z)),\]
when $j=N-k-1$. Hence these terms cancel out in the sum. The same holds for the remaining terms in the sum as $R_{2}(Y(2k\gamma+z))=R_{2}(Y(-2j\gamma+z))$ for $j=N-k$ or $j=k=0$, so we have $E(z)=0$, as required.

We will now prove the converse, that is that if $E(z)=0$ then $U(z)$ decouples. Define
\begin{align*}
A_{1}(z)&:=\sum_{k=1}^{N}\frac{2k-1}{2N}\left(U(2k\gamma+z)+U((2k-1)\gamma-z)\right),\\
A_{2}(z)&:=\sum_{k=0}^{N-1}-\frac{k}{N}(U(2k\gamma+z)+U((2k+1)\gamma-z)).
\end{align*}
The summand $U(2k\gamma+z)+U((2k-1)\gamma-z)$ is fixed under the transformation $z\to -\gamma-z$ for any $k$, so we have $A_{1}(-\gamma-z)=A_{1}(z)$. Similarly we have $A_{2}(\gamma-z)=A_{2}(z)$. So by Proposition \ref{prop:rationalofXorY}, there are rational functions $R_{1}$ and $R_{2}$ satisfying $A_{1}(z)=R_{1}(X(z))$ and $A_{2}(z)=R_{2}(Y(z))$, so it suffices to show that if $E(z)=0$, then $U(z)=A_{1}(z)+A_{2}(z)$. Indeed this follows from the equation
\[
A_{1}(z)+A_{2}(z)=U(z+2N\gamma)+\frac{1}{2N}E(z),\]
which follows directly from the definitions of $A_{1}$, $A_{2}$ and $E$.\end{proof}

\begin{Lemma}\label{lem:finite_group} If $\frac{2\gamma}{\pi\tau}=\frac{M}{N}\in\mathbb{Q}$, then the {\em orbit sum}
\[E(z):=\sum_{j=0}^{N-1}X((2j+1)\gamma-z)^{p}Y((2j+1)\gamma-z)^{q}-X(2j\gamma+z)^{p}Y(2j\gamma+z)^{q}\]
is related to $B(z)$ by
\begin{equation}\label{eq:PHandE}{B}((2N-M)\pi\tau+z)-{B}(z)=E(z).\end{equation}
\end{Lemma}
\begin{proof}Assume that $2\gamma=\frac{M}{N}\pi\tau$ for some positive $M,N\in\mathbb{Z}$. Now consider \eqref{eq:PH_qdiff}:
\[{B}\left(\frac{2N-M}{N}\pi\tau+z\right)-{B}(z)=J(z).\]
 Taking a telescoping sum of $N$ copies of this equation yields
\[{B}((2N-M)\pi\tau+z)-{B}(z)=\sum_{j=0}^{N-1}J(2j\pi\tau-2j\gamma+z),\]
which we claim is equal to the orbit sum $E(z)$. Indeed, since $\pi\tau$ is a period of $J(z)$, we have $J(2j\pi\tau-2j\gamma+z)=J(-2j\gamma+z)=J(2(N-j)\gamma+z)$, so the sum rearranges to
\[\sum_{j=0}^{N-1}J(2j\gamma+z),\]
which is equal to $E(z)$. Hence
\[{B}((2N-M)\pi\tau+z)-{B}(z)=E(z),\]
as required.
\end{proof}

\begin{Theorem}\label{thm:finite_group_alg_thm} If $\frac{2\gamma}{\pi\tau}=\frac{M}{N}\in\mathbb{Q}$, then the orbit sum $E(z)$ defined in Lemma \ref{lem:finite_group} is equal to $0$ if and only if $\Cgf(x,y)$ is algebraic in $x$.\end{Theorem}
\begin{proof}If $E(z)=0$, then from \eqref{eq:PHandE}, we have
\[{B}((2N-M)\pi\tau+z)={B}(z).\]
Hence we have condition \ref{algHz} of Theorem \ref{thm:algebraic_fixed_t}, so the equivalent conditions are satisfied, including that $\Cgf(x,y)$ is algebraic in $x$.

For the other direction, assume that $\Cgf(x,y)$ is algebraic in $x$. Then again by condition \ref{algHz} of Theorem \ref{thm:algebraic_fixed_t}, there is some positive integer $m$ such that $m\pi\tau$ is a period of ${B}(z)$. It then follows from \eqref{eq:PHandE} that
\[0={B}(m(2N-M)\pi\tau+z)-{B}(z)=mE(z),\]
hence $E(z)=0$, as required.
\end{proof}

\begin{Theorem}\label{thm:finite_group_case_thm} If $\frac{2\gamma}{\pi\tau}=\frac{M}{N}\in\mathbb{Q}$, then $\Cgf(x,y)$ is D-finite in $x$.\end{Theorem}
\begin{proof}
In the case $E(z)=0$, Theorem \ref{thm:finite_group_alg_thm} implies that $\Cgf(x,y)$ is algebraic, and hence D-finite. In the case $E(z)\neq0$, we use \eqref{eq:PHandE}, which yields
\begin{equation}\label{eq:PH_holo}\frac{{B}((2N-M)\pi\tau+z)}{E((2N-M)\pi\tau+z)}-\frac{{B}(z)}{E(z)}=\frac{{B}((2N-M)\pi\tau+z)}{E(z)}-\frac{{B}(z)}{E(z)}=1,\end{equation}
so the function
\begin{equation}\label{eq:F_def}F(z):=\frac{\partial}{\partial z}\frac{{B}(z)}{E(z)}=\frac{1}{E(z)^{2}}({B}'(z)E(z)-{B}(z)E'(z))\end{equation}
satisfies
\[F((2N-M)\pi\tau+z)-F(z)=0.\]
Hence, ${B}(z)=\Bgf\left(\frac{1}{X(z)}\right)$ is weakly $X$-D-finite (see Definition \ref{def:X-D-finite-weak}), so by Proposition \ref{prop:D-finiteofXorY}, the function $\Bgf(\frac{1}{x})$ is D-finite in $x$. Therefore the generating function $\Cgf(x,y)$ is also D-finite in $x$.
\end{proof}

\textbf{Remark:} Note that this Theorem holds for any fixed $t$ for which $\frac{\gamma}{\pi\tau}\in\mathbb{Q}$, which can occur either in the finite group case, where this occurs for all $t$, or in infinite group cases for specific values of $t$. We require that the above statement hold for all $t$ to say that the function $\Cgf(x,y;t)$ is D-finite in $x$. See Appendix \ref{ap:group} for a discussion of the group of the walk.

\subsection{Infinite group cases}

In this section we consider the case $\frac{\gamma}{\pi\tau}\notin\mathbb{Q}$. The first part of this section is dedicated to showing that $\Cgf(x,y;t)$ is not D-finite in $x$ in these cases, and we will subsequently analyse the D-algebraicity of $\Cgf(x,y;t)$. We start with a lemma which essentially proves $\Bgf(\frac{1}{x};t)$ is not rational in $x$, as this turns out to be a case which needs to be treated separately. Surprisingly this seems to be the most difficult result of this section, in the sense that it is the only result for which our proof does not apply systematically to walks in an $M$-quadrant cone for any $M$. In particular, for walks in the quadrant we have only able to prove the result when $t$ is sufficiently small (See Lemma \ref{lem:not-rationalM1}).

\begin{Lemma}\label{lem:not-rational}
Assume that $\frac{\gamma}{\pi\tau}\notin\mathbb{Q}$. Then ${B}(z)$ is not a rational function of $X(z)$. 
\end{Lemma}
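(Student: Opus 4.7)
The plan is to suppose for contradiction that $B(z)=R(X(z))$ for some rational function $R$. Since $X$ has $\pi\tau$ as a period, so does $B$, and hence $B$ is elliptic on the lattice $\Lambda_1=\pi\mathbb{Z}+\pi\tau\mathbb{Z}$. Combining this $\pi\tau$-periodicity of $B$ with equation~\eqref{eq:PH_qdiff} yields the shift relation
\[
B(z-2\gamma)-B(z)=J(z)\qquad\text{for all }z\in\mathbb{C},
\]
where $J$ is the elliptic function from~\eqref{def:j}.

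I would then derive a contradiction through pole analysis. Let $P$ be the (finite) set of poles of $B$ modulo $\Lambda_1$ and $P_J$ the corresponding set for $J$. Using the product structure of $J$ together with Proposition~\ref{prop:th_param}, one checks that $P_J$ is contained in $\{\delta,-\gamma-\delta,\delta+2\gamma,\gamma-\delta,\epsilon,\gamma-\epsilon\}$ modulo $\Lambda_1$. Matching poles on both sides of the shift relation forces that for every $z_0\in P$, either $z_0+2\gamma\in P_J$, or $z_0+2\gamma\in P$ with the principal parts of $B$ at $z_0$ and $z_0+2\gamma$ cancelling in the equation; a symmetric statement holds for $z_0-2\gamma$. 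Since $\gamma/\pi\tau\notin\mathbb{Q}$, the $2\gamma$-shift orbit of any point is aperiodic modulo $\Lambda_1$, so it cannot remain inside the finite set $P$ indefinitely. It follows that for every $z_0\in P$ there exist positive integers $k_+,k_-$ and elements $w_+,w_-\in P_J$ with $z_0+2k_+\gamma\equiv w_+$ and $z_0-2k_-\gamma\equiv w_-$ modulo $\Lambda_1$.

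The main obstacle is then to rule out these ``resonance'' relations. Subtracting the two congruences yields $2K\gamma\equiv w_+-w_-\pmod{\Lambda_1}$ for some positive integer $K=k_++k_-$, placing $2K\gamma$ in the (finite) set $P_J-P_J$ modulo $\Lambda_1$. Substituting the explicit theta-function expressions for $\delta$ and $\epsilon$ from Proposition~\ref{prop:th_param}, each candidate difference $w_+-w_-$ reduces such a resonance to a $\mathbb{Z}$-linear relation between $\gamma$ and $\pi\tau$ (after exploiting the defining identities of the kernel-curve parametrization), which forces $\gamma/\pi\tau\in\mathbb{Q}$ and contradicts our standing assumption; this argument rules out even the simplest case $w_+=w_-$, which would directly give $2K\gamma\in\Lambda_1$. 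We therefore conclude that $P=\emptyset$, so that $B$ is constant and $J\equiv 0$. But $J$ is not identically zero: since $p\geq 1$ and $\gamma/\pi\tau\notin\mathbb{Q}$, we have $X(z-2\gamma)\not\equiv X(z)$, so $X(z-2\gamma)^p-X(z)^p\not\equiv 0$, while $Y(z)^q\not\equiv 0$, and indeed $J$ has a genuine pole at $\epsilon$ when $q\geq 1$ and at $\delta$ when $q=0$. This final contradiction completes the proof.
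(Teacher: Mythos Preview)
Your argument has a genuine gap at the ``resonance'' step. You correctly observe that if $B$ is rational in $X$ then $B$ has period $\pi\tau$, hence is elliptic on $\Lambda_1=\pi\mathbb{Z}+\pi\tau\mathbb{Z}$, and that the shift relation $B(z-2\gamma)-B(z)=J(z)$ forces any pole of $B$ to lie in a finite $2\gamma$-chain whose endpoints belong to the pole set $P_J$ of $J$. This gives a constraint of the form $2K\gamma\equiv w_+-w_-\pmod{\Lambda_1}$ with $w_\pm\in P_J$. The problem is your claim that every such constraint forces $\gamma/\pi\tau\in\mathbb{Q}$. The set $P_J$ involves the points $\delta$ and $\epsilon$ (the poles of $X$ and $Y$), and these are \emph{not} functions of $\gamma$ and $\tau$ alone: they depend on the weights of the step set through the kernel curve. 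Proposition~\ref{prop:th_param} only places $\delta$ in $\Omega_1\cup\Omega_2$ and $\epsilon$ in $\Omega_{-2}\cup\Omega_{-1}$; it gives no algebraic relation tying them to $\gamma$ and $\pi\tau$. Thus differences such as $\delta-\epsilon$, or $2\delta+\gamma$, can perfectly well satisfy $2K\gamma\equiv\delta-\epsilon\pmod{\Lambda_1}$ for some specific $K$ without $\gamma/\pi\tau$ being rational. The phrase ``substituting the explicit theta-function expressions for $\delta$ and $\epsilon$'' does not help, because no such expressions exist in the paper: $\delta$ and $\epsilon$ are free parameters in that proposition. So the pole analysis, as written, does not close.

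The paper's proof takes an entirely different and much shorter route, exploiting the combinatorial origin of the series rather than the pole structure. If $B$ were rational in $X$, then periodicity in $\pi\tau$ transports the identity $B(z)=\Bgf(1/X(z);t)$ from $\Omega_{1}\cup\Omega_{2}$ down to $\Omega_{-2}$, where one also has $A(z)=\Agf(1/Y(z);t)$ by definition. Evaluating the original functional equation~\eqref{eq:three_quarter_full} at $x=y=1$ and using positivity of $\Cgf(1,1;t)$ yields the uniform bound $\bigl|\Fgf(t)+\Agf(1/y;t)+\Bgf(1/x;t)\bigr|<1$ for $|x|,|y|\ge 1$. On $\Omega_{-2}$ one has $|X(z)|,|Y(z)|\ge 1$, so $|F+A(z)+B(z)|<1$; but equation~\eqref{eq:PH_PV_nice} says this quantity equals $|X(z)|^{p}|Y(z)|^{q}\ge 1$, a contradiction. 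No pole bookkeeping is needed, and the irrationality hypothesis is not even invoked in the argument.
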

\begin{proof}
Assume that ${B}(z)$ is a rational function of $X(z)$. Then for $z\in\Omega_{-2}$, we have $z+\pi\tau\in\Omega_{2}$, so, by \eqref{eq:PHdefinition},
\[{B}(z+\pi\tau)=\Bgf\left(\frac{1}{X(z+\pi\tau)};t\right).\]
Now since $\pi\tau$ is a period of both $X$ and ${B}$, this implies that
\[{B}(z)=\Bgf\left(\frac{1}{X(z)};t\right),\]
an equation that would normally only hold for $z\in\Omega_{0}\cup\Omega_{1}\cup\Omega_{2}.$
By \eqref{eq:PVdefinition}, we have
\[{A}(z)=\Agf\left(\frac{1}{Y(z)};t\right).\]
We will show that this is a contradiction as the sum of these cannot be sufficiently large in absolute value to satisfy \eqref{eq:PH_PV_nice}.

Recall that by our choice $t\in(0,\frac{1}{\Pgf(1,1)})$, the series $\Cgf(x,y;t)$, $\Bgf(\frac{1}{x};t)$ and $\Agf(\frac{1}{y};t)$ all converge when $|x|,|y|=1$. In particular, we can substitute $x=y=1$ into \eqref{eq:three_quarter_full}, which yields
\[(1-t\Pgf(1,1))\Cgf(1,1;t)=1-\Fgf(t)-\Agf(1;t)-\Bgf(1;t).\]
Since the left hand side of this equation in positive, we must have
\[\Fgf(t)+\Agf(1;t)+\Bgf(1;t)<1.\]
It follows that for $x,y$ satisfying $|x|,|y|\geq 1$ we have
\[\left|\Fgf(t)+\Agf\left(\frac{1}{y};t\right)+\Bgf\left(\frac{1}{x};t\right)\right|<1.\]
In particular, for $z\in\Omega_{-2}$, we have $|X(z)|,|Y(z)|\geq 1$, so
\[|F+{B}(z)+{A}(z)|=\left|\Fgf(t)+\Bgf\left(\frac{1}{X(z)};t\right)+\Agf\left(\frac{1}{Y(z)};t\right)\right|<1.\]
However this is a contradiction as, by \eqref{eq:PH_PV_nice}, we have
\[|F+{B}(z)+{A}(z)|=|X(z)^{p}Y(z)^{q}|=|X(z)|^{p}|Y(z)|^{q},\]
and $|X(z)|,|Y(z)|\geq 1$ because $z\in\Omega_{-2}$.
\end{proof}

Now we are ready to prove that $\Bgf(\frac{1}{x};t)$ is not D-finite in $x$ in the case that $\frac{\gamma}{\pi\tau}\notin\mathbb{Q}$. The idea of the proof is that if $\Bgf(\frac{1}{x};t)$ is D-finite in $x$, then the poles of $B(z)$ must be well-behaved, as described in Lemma \ref{lem:D-fini_z}, whereas if $\frac{\gamma}{\pi\tau}\notin\mathbb{Q}$, we can prove that its poles are not well behaved in this way.

\begin{Theorem}\label{thm:inf_group_non-D-finite}
Assume that $\frac{\gamma}{\pi\tau}\notin\mathbb{Q}$. Then $\Bgf(\frac{1}{x};t)$ is not D-finite in $x$.
\end{Theorem}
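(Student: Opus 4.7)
The strategy is to assume for contradiction that $\Bgf(\frac{1}{x};t)$ is D-finite in $x$ and derive a contradiction with Lemma \ref{lem:not-rational} using the q-difference equation \eqref{eq:PH_qdiff} together with the irrationality hypothesis $\gamma/\pi\tau \notin \mathbb{Q}$. The intermediate goal is to show that under the D-finiteness assumption $B(z)$ must be a rational function of $X(z)$, which would contradict Lemma \ref{lem:not-rational}.

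First, by Lemma \ref{lem:D-fini_z} (in Appendix \ref{ap:nature_analytic}) the assumption that $\Bgf(\frac{1}{x};t)$ is D-finite in $x$ translates into a structural constraint on the meromorphic function $B(z) = \Bgf(1/X(z);t)$: the set $S$ of poles of $B$ modulo the lattice $\Lambda := \pi\mathbb{Z} + \pi\tau\mathbb{Z}$ is finite, with local Laurent data restricted (in particular $S$ should be stable under the involution $\iota(z) = -\gamma - z$, since $X(z) = X(-\gamma - z)$ so that D-finite singularities of $\Bgf$ in $x$ pull back symmetrically through $X$).

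Second, the q-difference equation $B(z + \sigma) - B(z) = J(z)$ with $\sigma = 2\pi\tau - 2\gamma$ and $J$ elliptic with periods $\pi, \pi\tau$ yields, by comparing Laurent expansions at poles, an inclusion $S + \sigma \subseteq S \cup P_J$ on the torus $\mathbb{C}/\Lambda$, where $P_J$ is the (finite) pole set of $J$ mod $\Lambda$. The irrationality hypothesis $\gamma/\pi\tau \notin \mathbb{Q}$ ensures that translation by $\sigma$ on $\mathbb{C}/\Lambda$ has infinite order --- if $k\sigma \in \Lambda$, then $2k\gamma/\pi\tau \in \mathbb{Z}$, contradicting irrationality. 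Combining the inclusion with the finiteness of $S$ and $P_J$ and the infinite-order shift, one concludes that the poles of $B$ arrange into finite $\sigma$-chains anchored at poles of $J$; tracking Laurent principal parts along these chains, and exploiting the involutive symmetry and the $\pi$-periodicity of $B$, should force $B$ to gain $\pi\tau$ as a period --- that is, $B$ is elliptic with periods $\pi$ and $\pi\tau$.

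Once $B$ is known to be elliptic with periods $\pi$ and $\pi\tau$, the symmetry $B(z) = B(\pi\tau - \gamma - z)$ reduces modulo $\pi\tau$ to $B(z) = B(-\gamma - z)$. By Proposition \ref{prop:rationalofXorY} (as already used in Theorem \ref{thm:finite_group_case_thm}), this implies $B(z) = R(X(z))$ for some rational function $R$, directly contradicting Lemma \ref{lem:not-rational}. The main technical obstacle will be the middle step: showing that under the D-finiteness-imposed Laurent structure at poles of $B$, the only configurations consistent with both the q-difference relation and the infinite-order $\sigma$-shift are those where $B$ gains the extra period $\pi\tau$. This is a careful combinatorial-analytic bookkeeping along $\sigma$-orbits and is precisely where the irrationality assumption is fully exploited, since any rational relation $\gamma/\pi\tau \in \mathbb{Q}$ would permit non-trivial periodic pole patterns that are ruled out here.
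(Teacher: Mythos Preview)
Your overall strategy is correct and matches the paper's: assume D-finiteness, invoke Lemma~\ref{lem:D-fini_z} to get that the poles of $B(z)$ fall into finitely many classes modulo $\Lambda=\pi\mathbb{Z}+\pi\tau\mathbb{Z}$, then use the $q$-difference equation \eqref{eq:PH_qdiff} together with the irrationality of $\gamma/\pi\tau$ to force $B$ to acquire $\pi\tau$ as a period, and finally apply Proposition~\ref{prop:rationalofXorY} to conclude $B$ is rational in $X$, contradicting Lemma~\ref{lem:not-rational}.

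The gap is in your middle step. You propose to analyse the inclusion $S+\sigma\subseteq S\cup P_J$ and ``track Laurent principal parts along $\sigma$-chains,'' but you yourself flag this as the main obstacle and do not carry it out. As stated, this bookkeeping does not obviously terminate in the conclusion that $\pi\tau$ is a period of $B$: the chains are anchored at poles of $J$, but there is no evident mechanism in your outline that forces the principal parts of $B$ at $z$ and at $z+\pi\tau$ to agree.

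The paper bypasses this entirely with one clean trick: set $G(z):=B(z+\pi\tau)-B(z)$. Since $J$ has $\pi\tau$ as a period, applying \eqref{eq:PH_qdiff} at $z$ and at $z+\pi\tau$ and subtracting gives $G(z+\sigma)=G(z)$, where $\sigma=2\pi\tau-2\gamma$. Thus $G$ is elliptic with periods $\pi$ and $\sigma$. The poles of $G$ lie in finitely many $\Lambda$-classes (inherited from $B$), but if $G$ had any pole $z_0$ then all of $z_0+k\sigma$ would be poles, giving infinitely many $\Lambda$-classes since $\sigma$ has infinite order mod $\Lambda$. Hence $G$ has no poles and is a constant $c$. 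The symmetry \eqref{eq:PH_flippy_full} then gives $c=B(\pi\tau+z)-B(z)=B(-\gamma-z)-B(\pi\tau-\gamma-z)=-c$, so $c=0$. This is exactly the missing ingredient: the passage to $G$ converts the awkward chain-tracking into a straightforward ``elliptic with no poles, hence constant'' argument.
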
  
\begin{proof}
Suppose the contrary. Then $\Bgf(\frac{1}{x};t)$ is D-finite in $x$. Moreover, recall that ${B}(z)=\Bgf(\frac{1}{X(z)};t)$ for $z\in\Omega_{1}\cup\Omega_{2}$, so by Lemma \ref{lem:D-fini_z}, the poles $z_{c}$ of ${B}(z)$ fall into only finitely many classes $z_{c}+\pi\mathbb{Z}+\pi\tau\mathbb{Z}$. Hence ${B}(\pi\tau+z)-{B}(z)$ has the same property.

Now, from \eqref{eq:PH_qdiff}, we have
\[{B}(2\pi\tau-2\gamma+z)-{B}(z)=J(z)=J(z+\pi\tau)={B}(3\pi\tau-2\gamma+z)-{B}(\pi\tau+z),\]
and rearranging yields
\[{B}(3\pi\tau-2\gamma+z)-{B}(2\pi\tau-2\gamma+z)={B}(\pi\tau+z)-{B}(z).\]
This implies that ${B}(\pi\tau+z)-{B}(z)$ is an elliptic function with periods $\pi$ and $2\pi\tau-2\gamma$. If this function has a pole $z_0$, then for every $k\in\mathbb{Z}$, the value $\tilde{z}_{k}=z_{0}+k(2\pi\tau-2\gamma)$ is a pole. This is a contradiction as these points all define different classes $z_{k}+\pi\tau\mathbb{Z}+\pi\mathbb{Z}$, since $\frac{\gamma}{\pi\tau}\in\mathbb{R}\setminus\mathbb{Q}$. The only remaining case to consider is when ${B}(\pi\tau+z)-{B}(z)$ has no poles, in which case it must be constant:
\[{B}(\pi\tau+z)-{B}(z)=c.\]
In fact combining this with \eqref{eq:PH_flippy_full}, we see that $c=0$, as
\[c={B}(\pi\tau+z)-{B}(z)={B}(-\gamma-z)-{B}(\pi\tau-\gamma-z)=-c.\]
Hence by Proposition \ref{prop:rationalofXorY} the function ${B}(z)$ must be a rational function of $X(z)$ since we have ${B}(z+\pi\tau)={B}(z)$ and ${B}(z)={B}(-\pi\tau+\gamma-z)={B}(\gamma-z)$. But this contradicts Lemma \ref{lem:not-rational}.
\end{proof}



\subsubsection{Decoupling cases}
Recall from Definition \ref{defn:decoupling} that we say that $X(z)^{p}Y(z)^{q}$ is decoupling if there is a pair of rational functions $R_{1}$ and $R_{2}$ satisfying
\[X(z)^{p}Y(z)^{q}=R_{1}(X(z))+R_{2}(Y(z)).\]
As we will show in the following theorem, this implies that $\Cgf(x,y;t)$ is D-algebraic in $x$ and $y$. The analogous result was proven in the quarter plane by Bernardi, Bousquet-M\'elou and Raschel \cite{bernardi2017counting}, and more precisely they proved that $\Cgf(x,y;t)$ is D-algebraic in $t$ under the same condition. This results from the fact that all of the parameters that depend on $t$ and all of the functions involved in the solution depend on $t$ in a D-algebraic way. We believe that the same argument applies here, although a rigorous proof of this is outside the scope of this article. 
\begin{Theorem}\label{thm:D-alg_proof}
Assume that
\[X(z)^pY(z)^q=R_{1}(X(z))+R_{2}(Y(z))\]
holds for some rational functions $R_{1}$ and $R_{2}$. Then $\Cgf(x,y;t)$ is D-algebraic in $x$ and $y$. 
\end{Theorem}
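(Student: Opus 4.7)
The plan is to establish the D-algebraicity of the meromorphic function $B(z)$ of $z$; the conclusion of the theorem then follows immediately from the equivalences $\ref{D-algx}\iff\ref{D-algAx}\iff\ref{D-algHz}$ already established in the preamble of Theorem \ref{thm:D-algebraic_fixed_t} (together with the analogous statements for $A$ and $y$). Note that in the finite-group case the conclusion is immediate from Theorem \ref{thm:finite_group_case_thm}, so we may assume $\gamma/(\pi\tau)\notin\mathbb{Q}$ and apply Theorem \ref{thm:D-algebraic_fixed_t}.

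The first step is to use the decoupling hypothesis to rewrite the elliptic function $J(z)$ defined in \eqref{def:j}. Since $X(w)=X(-\gamma-w)$ and $Y(w)=Y(\gamma-w)$, the pair $(X(z-2\gamma),Y(z))$ coincides with $(X(\gamma-z),Y(\gamma-z))$ and therefore lies on the kernel curve $\overline{E_t}$. Applying the decoupling identity at both $z$ and $\gamma-z$ gives
\[
X(z-2\gamma)^p Y(z)^q = R_1(X(z-2\gamma)) + R_2(Y(z)), \qquad X(z)^p Y(z)^q = R_1(X(z)) + R_2(Y(z)),
\]
so subtracting yields $J(z)=R_1(X(z-2\gamma))-R_1(X(z))$.

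Second, I would exhibit an explicit particular solution of the $q$-difference equation \eqref{eq:PH_qdiff}. Setting $V(z):=R_1(X(z))$ and using the $\pi\tau$-periodicity of $X$ one has $V(z+2\pi\tau-2\gamma)=R_1(X(z-2\gamma))$, hence
\[
V(z+2\pi\tau-2\gamma)-V(z) = R_1(X(z-2\gamma))-R_1(X(z)) = J(z).
\]
Consequently the difference $\tilde B(z):=B(z)-R_1(X(z))$ satisfies $\tilde B(z+2\pi\tau-2\gamma)=\tilde B(z)$; combined with $\tilde B(z+\pi)=\tilde B(z)$ (which follows from \eqref{eq:PH_pi} and the $\pi$-periodicity of $X$), this shows that $\tilde B(z)$ is meromorphic on $\mathbb{C}$ and doubly periodic for the lattice $\pi\mathbb{Z}+(2\pi\tau-2\gamma)\mathbb{Z}$, so it is an elliptic function.

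Finally I would invoke the classical fact that every elliptic function is a rational function of the Weierstrass $\wp$-function and its derivative for the corresponding lattice, and therefore satisfies an algebraic differential equation. Since $X(z)$ is D-algebraic in $z$ by Lemma \ref{lem:param} and the class of D-algebraic functions is closed under rational combinations, $R_1(X(z))$ is D-algebraic, and hence so is $B(z)=\tilde B(z)+R_1(X(z))$. Via the already-established equivalence in Theorem \ref{thm:D-algebraic_fixed_t} (which appeals to Proposition \ref{prop:D-algebraicofXorY}) we deduce that $\Bgf(1/x;t)$ is D-algebraic in $x$, and then by \eqref{eq:three_quarter_full} so is $\Cgf(x,y;t)$. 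The symmetric argument for $A$ and $y$, or alternatively the identity $A(z)=X(z)^pY(z)^q-B(z)-F$ from \eqref{eq:PH_PV_nice}, handles the $y$-variable. The main obstacle is the first step: one must recognise that the decoupling identity is usable at the off-parameterisation point $(X(z-2\gamma),Y(z))$, so that the right-hand side of \eqref{eq:PH_qdiff} collapses to the telescoping form needed for step two; once this is in place the rest is routine.
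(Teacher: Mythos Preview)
Your proof is correct and is essentially the paper's argument: both show that $B(z)-R_{1}(X(z))$ (the paper's $-T(z)$) is elliptic with periods $\pi$ and $2\pi\tau-2\gamma$, hence D-algebraic, and then conclude via the already-proven equivalence \ref{D-algHz}$\iff$\ref{D-algx}. The only cosmetic difference is that the paper obtains the period $2\pi\tau-2\gamma$ directly from the two reflection identities $T(\pi\tau-\gamma-z)=T(z)$ and $T(-\pi\tau+\gamma-z)=T(z)$ (using the alternative expression $T(z)=A(z)+F-R_{2}(Y(z))$), whereas you go through the $q$-difference equation \eqref{eq:PH_qdiff} and the telescoping of $J(z)$; the initial case split on $\gamma/(\pi\tau)\in\mathbb{Q}$ is harmless but unnecessary, since the ellipticity argument and the equivalence \ref{D-algHz}$\iff$\ref{D-algx} do not use the infinite-group hypothesis.
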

\begin{proof}
Under the assumption, \eqref{eq:PH_PV_nice} can be written as
\begin{equation}\label{eq:Tequation_in_decoupling_case}T(z):=R_{1}(X(z))-{B}(z)={A}(z)+F-R_{2}(Y(z)),\end{equation}
which implies that $T(z)$ satisfies $T(z)=T(\pi\tau-\gamma-z)=T(-\pi\tau+\gamma-z)=T(z+\pi)$. Combining these shows that $T(z)$ is an elliptic function with periods $\pi$ and $2\pi\tau-2\gamma$. This means $T'(z)$ is an elliptic function with the same periods so it is related to $T(z)$ by some non-trivial algebraic equation, implying that $T(z)$ is a D-algebraic function of $z$. Indeed, any elliptic functions are D-algebraic for this reason. Now, since $X(z)$ is also D-algebraic, it follows from \eqref{eq:Tequation_in_decoupling_case} that ${B}(z)$ is also D-algebraic in $z$. This is precisely condition \ref{D-algHz} of Theorem \ref{thm:D-algebraic_fixed_t}, which we showed to be equivalent to conditions \ref{D-algx} and \ref{D-algy}, that $\Cgf(x,y;t)$ is D-algebraic in $x$ and $y$. 
\end{proof}
\textbf{Remark:} Using \eqref{eq:Tequation_in_decoupling_case} it can be proven that $T(z)$ is a rational function of $W(z)$ (see Definition \ref{def:W}) using the same idea as Proposition \ref{prop:rationalofXorY}.


\subsubsection{Non-decoupling cases}\label{subsec:inf_group_non-dec}
In this section we show that if there is no decoupling function, then the generating function is not D-algebraic in $x$. The proof works along the same lines as  \cite{dreyfus2018nature,hardouin2020differentially} for the quarter plane case, which relies on Galois theory of q-difference equations. Rather than essentially rewriting these entire proofs, in Appendix \ref{ap:D-trans} we use results from \cite{dreyfus2018nature} to deduce Corollary \ref{cor:D-trans}, which avoids Galois theory language in its statement, and can be readily applied to show the main result of this section. For the following theorem, recall that for fixed $t\in\left(0,\frac{1}{P(1,1)}\right)$, the series $\Cgf(x,y;t)$ converges for $|x|,|y|\in \left(\sqrt{tP(1,1)},\sqrt{\frac{1}{tP(1,1)}}\right)$, so we can consider it to be a function of $x$ and $y$.
\begin{Theorem}\label{thm:non-D-alg_proof}
Fix $t\in\left(0,\frac{1}{P(1,1)}\right)$. Assume that $\frac{\gamma}{\pi\tau}\notin\mathbb{Q}$ and there are no rational functions $R_{1},R_{2}\in\mathbb{C}(x)$ satisfying 
\[X(z)^{p}Y(z)^{q}=R_{1}(X(z))+R_{2}(Y(z)).\]
Then the function $\Cgf(x,y;t)$ is not D-algebraic in $x$ or $y$.
\end{Theorem}
\begin{proof}
It suffices to prove that ${B}(z)$ is not D-algebraic, as we showed below the statement of Theorem \ref{thm:D-algebraic_fixed_t} that this is equivalent to $\Cgf(x,y;t)$ being D-algebraic in $x$ or $y$. Assume for the sake of contradiction that ${B}(z)$ is D-algebraic in $z$. We will show that this implies that there are rational functions $R_{1}$ and $R_{2}$ satisfying the equation in the theorem. 

By Theorem \ref{thm:PH_PV_characterisation}, the functions $h(z):=X(z)^p Y(z)^q$, $f_{1}(z):={A}(z)+F$ and $f_{2}(z):={B}(z)$ satisfy the conditions of Corollary \ref{cor:D-trans}, with $\gamma_{1}=-\pi\tau+\gamma$ and $\gamma_{2}=\pi\tau-\gamma$. Hence, there are meromorphic functions $a_{1},a_{2}:\mathbb{C}\to\mathbb{C}\cup\{\infty\}$ satisfying 
\begin{align*}
X(z)^p Y(z)^q&=a_{1}(z)+a_{2}(z),\\
a_{1}(z)&=a_{1}(z+\pi)=a_{1}(z+\pi\tau)=a_{1}(-\pi\tau+\gamma-z)=a_{1}(\gamma-z),\\
a_{2}(z)&=a_{2}(z+\pi)=a_{2}(z+\pi\tau)=a_{2}(-\gamma-z)=a_{2}(\pi\tau-\gamma-z).
\end{align*}
Finally, by Proposition \ref{prop:rationalofXorY}, this implies that $a_{1}(z)$ is a rational function of $Y(z)$, while $a_{2}(z)$ is a rational function of $X(z)$. Hence we can write
\[X(z)^p Y(z)^q=R_{1}(X(z))+R_{2}(Y(z)),\]
as required.
\end{proof}

\subsection{Forbidding the steps between $(0,1)$ and $(1,0)$}\label{subsec:annoying_step_forbidding}
In this section we prove that in most cases the nature of the generating function $\Cgf(x,y;t)$ does not change if we forbid either or both of the steps between $(0,1)$ and $(1,0)$. To see this, let $\Lgf(x,y;t)$ be the generating function counting walks starting at the same point $(p,q)$, but for which it is forbidden to step directly from $(0,1)$ to $(1,0)$. Let $\Mgf(x,y;t)$ be the generating function for walks which are forbidden to step in either direction between $(1,0)$ and $(0,1)$. We will show that $\Cgf(x,y;t)$, $\Lgf(x,y;t)$ and $\Mgf(x,y;t)$ have the same nature as functions of $x$, except possibly in the case that $\Cgf(x,y;t)$ is D-algebraic but not D-finite, where we do not rule out the possibility that either or both of $\Lgf(x,y;t)$ and $\Mgf(x,y;t)$ are D-finite.

\begin{Proposition}Let $\tilde{\Cgf}(x,y;t)$ count walks with the same step-set starting at $(1,0)$, let $\Cgf_{1}(t)$ denote the generating function counting walks ending at $(0,1)$ and let $\tilde{\Cgf}_{1}(t)$ count walks starting at $(1,0)$ and ending at $(0,1)$. Let $\tilde{\Lgf}(x,y;t)$, $\Lgf_{1}(t)$ and $\tilde{\Lgf}_{1}(t)$ denote the analogous series counting walks where the step from $(0,1)$ to $(1,0)$ is forbidden. Then these are related by
\begin{equation}
\Lgf(x,y;t)=\Cgf(x,y;t)-\omega_{(1,-1)}t\Lgf_{1}(t)\tilde{\Cgf}(x,y;t).\label{eq:BinCCtilde}
\end{equation}
\end{Proposition}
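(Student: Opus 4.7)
The plan is a straightforward first-occurrence decomposition on the forbidden step $(1,-1)$, which goes from $(0,1)$ to $(1,0)$. Classify every walk counted by $\Cgf(x,y;t)$ according to whether it ever uses this step. Walks that avoid it altogether are exactly those counted by $\Lgf(x,y;t)$. For the remaining walks, pick out the \emph{first} time the step $(0,1)\to(1,0)$ occurs and split the walk at that moment into a prefix, the step itself, and a suffix.

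The prefix is a walk starting at $(p,q)$, staying in $\mathcal{C}$ and ending at $(0,1)$, which by minimality of the chosen occurrence never uses the step $(0,1)\to(1,0)$; thus prefixes are enumerated by $\Lgf_{1}(t)$. The step contributes a weight of $\omega_{(1,-1)}t$. The suffix is a walk starting at $(1,0)$ and staying in $\mathcal{C}$, with no further restriction on its steps, so suffixes are enumerated by $\tilde{\Cgf}(x,y;t)$. Conversely, given any triple (prefix, forbidden step, suffix) of the above form, concatenating them produces a walk in $\mathcal{C}$ from $(p,q)$ (both $(0,1)$ and $(1,0)$ lie in $\mathcal{C}$, so the concatenation stays in the cone) whose first occurrence of the forbidden step is precisely the middle step. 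This gives a weight-preserving bijection.

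Summing over all walks yields
\[
\Cgf(x,y;t) \;=\; \Lgf(x,y;t) \;+\; \omega_{(1,-1)}\,t\,\Lgf_{1}(t)\,\tilde{\Cgf}(x,y;t),
\]
which is the desired identity after rearrangement. The only points to check carefully are that the walk-in-cone condition is preserved by the decomposition (it is, since neither split point leaves $\mathcal{C}$) and that the weight of the walk factors correctly as the product of the weight of the prefix, the weight $\omega_{(1,-1)}t$ of the inserted step, and the weight of the suffix (which is immediate from the definition of the weights). There is no real obstacle; the statement is essentially a tautological first-occurrence decomposition.
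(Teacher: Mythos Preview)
Your proof is correct and follows essentially the same first-occurrence decomposition as the paper: split walks using the forbidden step at their first such step into a prefix counted by $\Lgf_{1}(t)$, the step itself, and a suffix counted by $\tilde{\Cgf}(x,y;t)$. Your write-up is in fact slightly more careful than the paper's in checking that both endpoints of the inserted step lie in $\mathcal{C}$ and that weights factor correctly.
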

\begin{proof}
Consider the walks counted by $\Cgf(x,y;t)-\Lgf(x,y;t)$, that is, walks that use the step from $(0,1)$ to $(1,0)$ at least once. The section prior to the first of these steps is any walk counted by $\Lgf_{1}(t)$, as it must end at $(0,1)$ and must not use the step from $(0,1)$ to $(1,0)$. The step itself contributes the weight $\omega_{(1,-1)}t$, then the rest of the walk can be any walk starting at $(1,0)$, which are counted by $\tilde{\Cgf}(x,y;t)$. Hence, we have
\[\Cgf(x,y;t)-\Lgf(x,y;t)=\omega_{(1,-1)}t\Lgf_{1}(t)\tilde{\Cgf}(x,y;t).\]
Rearranging this yields the desired equation.
\end{proof}

\begin{Proposition}\label{prop:CtoB_dfinite} If $\Cgf(x,y;t)$ is D-finite with respect to $x$, then $\Lgf(x,y;t)$ is also D-finite with respect to $x$. Moreover, in this case, $\Cgf(x,y;t)$ is algebraic if and only if $\Lgf(x,y;t)$ is algebraic as functions of $x$.
\end{Proposition}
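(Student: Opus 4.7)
The starting point is equation \eqref{eq:BinCCtilde}, which reads
\[\Lgf(x,y;t) = \Cgf(x,y;t) - \omega_{(1,-1)} t \Lgf_1(t) \tilde{\Cgf}(x,y;t).\]
The key observation is that $\Lgf_1(t) \in \mathbb{R}[[t]]$ is independent of $x$, so multiplication by $\omega_{(1,-1)} t \Lgf_1(t)$ preserves both D-finiteness and algebraicity with respect to $x$. Hence the nature of $\Lgf$ in $x$ will follow from the natures of $\Cgf$ and $\tilde{\Cgf}$ in $x$, via the standard closure of D-finite (resp. algebraic) functions under addition. The plan is therefore to transfer the nature of $\Cgf$ to $\tilde{\Cgf}$ using the classification theorems of this section.

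For the first assertion, suppose $\Cgf(x,y;t)$ is D-finite in $x$. By Theorem \ref{thm:D-finite_general_t}, this is equivalent to the group of the walk being finite, which is a property of the step-set $S$ alone and does not depend on the starting point. Since $\tilde{\Cgf}(x,y;t)$ counts walks with the same step-set $S$ but starting at $(1,0) \in \tilde{\mathcal{Q}}_0$, the same theorem applies and gives that $\tilde{\Cgf}(x,y;t)$ is also D-finite in $x$. Equation \eqref{eq:BinCCtilde} then exhibits $\Lgf$ as a linear combination of two D-finite series with a coefficient lying in $\mathbb{R}[[t]]$, so $\Lgf$ is D-finite in $x$.

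For the second assertion, continuing to assume $\Cgf$ is D-finite in $x$, we know the group is finite. For $\tilde{\Cgf}(x,y;t)$ the starting point is $(p,q) = (1,0)$, so the monomial $x^{p}y^{q} = x$ decouples trivially via $R_1(x) = x$ and $R_2(y) = 0$. Thus condition \ref{algdec} of Theorem \ref{thm:algebraic_fixed_t} holds for every $t \in (0,\tfrac{1}{\Pgf(1,1)})$, and Theorem \ref{thm:algebraic_general_t} implies that $\tilde{\Cgf}(x,y;t)$ is algebraic in $x$. Rewriting \eqref{eq:BinCCtilde} as
\[\Lgf(x,y;t) - \Cgf(x,y;t) = -\omega_{(1,-1)} t \Lgf_1(t) \tilde{\Cgf}(x,y;t),\]
the right-hand side is algebraic in $x$, so $\Lgf$ is algebraic in $x$ if and only if $\Cgf$ is algebraic in $x$.

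There is no substantial obstacle; the argument is essentially an exercise in exploiting the fact that D-finiteness in $x$ is a property of the step-set only, while the additional decoupling needed for algebraicity is automatic for the starting point $(1,0)$. Mild care is needed to verify that $(1,0)$ is an allowable starting point in $\tilde{\mathcal{Q}}_0$ so that Theorems \ref{thm:D-finite_general_t} and \ref{thm:algebraic_general_t} apply directly to $\tilde{\Cgf}$, but this is immediate from the definition of $\tilde{\mathcal{Q}}_0$.
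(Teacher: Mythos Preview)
Your proof is correct and follows essentially the same approach as the paper: both use \eqref{eq:BinCCtilde} together with the fact that $\tilde{\Cgf}(x,y;t)$, counting walks starting on the axis at $(1,0)$, is algebraic in $x$ whenever the group is finite (since $x^{1}y^{0}=x$ decouples trivially). The paper is slightly more economical in that it invokes algebraicity of $\tilde{\Cgf}$ directly for both parts, rather than first establishing D-finiteness separately, but the substance is identical.
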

\begin{proof}
Assuming that $\Cgf(x,y;t)$ is D-finite in $x$, we have $\frac{\gamma}{\pi\tau}\in\mathbb{Q}$. Since the walks counted by $\tilde{\Cgf}(x,y;t)$ start on an axis, $\tilde{\Cgf}(x,y;t)$ is algebraic in $x$. Hence by the \eqref{eq:BinCCtilde}, $\Lgf(x,y;t)$ is D-finite in $x$. Moreover, $\Lgf(x,y;t)$ is algebraic in $x$ if and only if $\tilde{\Cgf}(x,y;t)$ is algebraic in $x$.
\end{proof}

\begin{Proposition}\label{prop:CtoB_dalg} $\Cgf(x,y;t)$ is D-algebraic with respect to $x$ if and only if $\Lgf(x,y;t)$ is D-algebraic with respect to $x$.
\end{Proposition}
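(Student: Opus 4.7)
The plan is to leverage the identity
\[\Lgf(x,y;t)=\Cgf(x,y;t)-\omega_{(1,-1)}t\Lgf_{1}(t)\tilde{\Cgf}(x,y;t)\]
established in the previous proposition. The factor $\omega_{(1,-1)}t\Lgf_{1}(t)$ depends only on $t$, hence is constant with respect to the variable $x$, while $\tilde{\Cgf}(x,y;t)$ counts walks starting at $(1,0)$, a point on the $x$-axis. The crucial observation is that walks starting on an axis always give a generating function that is D-algebraic in $x$, regardless of the step set.

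To establish this observation, note that for $\tilde{\Cgf}$ the starting point is $(p,q)=(1,0)$, so the monomial $x^{p}y^{q}=x$ decouples trivially via $R_{1}(x)=x$ and $R_{2}(y)=0$. By Theorem \ref{thm:algebraic_general_t} in the finite-group case, or Theorem \ref{thm:D-algebraic_general_t} in the infinite-group case, this decoupling condition implies that $\tilde{\Cgf}(x,y;t)$ is D-algebraic in $x$ (in fact even algebraic in the finite-group case, as already remarked in subsection \ref{subsec:axis_start}).

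Both directions of the proposition then follow immediately. If $\Cgf$ is D-algebraic in $x$, then $\Lgf$ is obtained as a linear combination over $\mathbb{R}[[t]]$ of the two D-algebraic series $\Cgf$ and $\tilde{\Cgf}$, and is therefore itself D-algebraic in $x$. Conversely, rearranging the identity gives $\Cgf=\Lgf+\omega_{(1,-1)}t\Lgf_{1}(t)\tilde{\Cgf}$, so if $\Lgf$ is D-algebraic in $x$, then so is $\Cgf$. The only fact used beyond the previous proposition is the routine closure of the class of D-algebraic series in $x$ under addition and under multiplication by elements of $\mathbb{R}[[t]]$, which may be verified directly or deduced via Lemma \ref{lem:function_and_series_complexity} by reducing to the analogous closure statements for D-algebraic functions of a single complex variable at each fixed $t$.

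This argument presents no genuine obstacle; the entire content of the proof lies in the recognition that axis starting points always decouple, making $\tilde{\Cgf}$ intrinsically well-behaved in $x$ and allowing one to pass between $\Cgf$ and $\Lgf$ freely through the linear identity above. The asymmetry with Proposition \ref{prop:CtoB_dfinite} is therefore expected: in the D-finite setting, one cannot in principle rule out that subtracting an algebraic piece could destroy D-finiteness elsewhere, whereas in the D-algebraic setting the closure is unconditional.
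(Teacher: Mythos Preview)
Your proof is correct and follows essentially the same approach as the paper: both use the identity \eqref{eq:BinCCtilde}, observe that $\tilde{\Cgf}$ is always D-algebraic in $x$ because the starting point $(1,0)$ decouples trivially, and then invoke closure of D-algebraic functions under addition. The only cosmetic difference is that the paper first disposes of the D-finite case by citing Proposition~\ref{prop:CtoB_dfinite} and then treats the remaining (infinite-group) case, whereas you handle both cases uniformly; your presentation is arguably cleaner, though your closing remark about the asymmetry with Proposition~\ref{prop:CtoB_dfinite} slightly mislocates the issue --- the obstacle there is not that subtracting an algebraic piece might destroy D-finiteness, but that in the infinite-group case $\tilde{\Cgf}$ is only known to be D-algebraic, not D-finite.
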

\begin{proof}
The case where $\Cgf(x,y;t)$ is D-finite with respect to $x$ is covered by Proposition \ref{prop:CtoB_dfinite}.

So we are left with the case where $\Cgf(x,y;t)$ is not D-finite in $x$, that is $\frac{\gamma}{\pi\tau}\notin\mathbb{Q}$. Since the walks counted by $\tilde{\Cgf}(x,y;t)$ start on an axis, $\tilde{\Cgf}(x,y;t)$ is D-algebraic in $x$. Hence by \eqref{eq:BinCCtilde}, $\Lgf(x,y;t)$ is D-algebraic in $x$ if and only if $\tilde{\Cgf}(x,y;t)$ is D-algebraic in $x$.
\end{proof}

\begin{Proposition}\label{prop:CtoA_dfinite} If $\Cgf(x,y;t)$ is D-finite with respect to $x$, then $\Mgf(x,y;t)$ is also D-finite with respect to $x$. Moreover, in this case, $\Cgf(x,y;t)$ is algebraic in $x$ if and only if $\Mgf(x,y;t)$ is algebraic in $x$.
\end{Proposition}
\begin{proof}
Using Proposition \ref{prop:CtoB_dfinite}, we know that $\Lgf(x,y;t)$ is D-finite, and that it is algebraic if an only if $\Cgf(x,y;t)$ is algebraic. Then using the same argument starting with $\Lgf(x,y;t)$ and forbidding the step from $(1,0)$ to $(0,1)$, we can prove that $\Mgf(x,y;t)$ is D-finite and that it is algebraic if and only if $\Lgf(x,y;t)$ is algebraic. 
\end{proof}

\begin{Proposition}\label{prop:CtoA_dalg} $\Cgf(x,y;t)$ is D-algebraic with respect to $x$ if and only if $\Mgf(x,y;t)$ is D-algebraic with respect to $x$.
\end{Proposition}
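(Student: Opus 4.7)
The plan is to replicate the argument of Proposition \ref{prop:CtoB_dalg} one more time, but with the pair $(\Cgf,\Lgf)$ replaced by $(\Lgf,\Mgf)$ and the forbidden step $(0,1)\to(1,0)$ replaced by $(1,0)\to(0,1)$. First I would establish the combinatorial decomposition: every $\Lgf$-walk either avoids the step $(1,0)\to(0,1)$ (hence is an $\Mgf$-walk), or can be uniquely split at its first occurrence of that step. The prefix ends at $(1,0)$ and uses neither forbidden step, so it is an $\Mgf$-walk from $(p,q)$ to $(1,0)$; after the $(1,0)\to(0,1)$ step (of weight $\omega_{(-1,1)}t$) any $\Lgf$-walk starting at $(0,1)$ may follow. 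This yields
\[
\Lgf(x,y;t)\;=\;\Mgf(x,y;t)\;+\;\omega_{(-1,1)}\,t\,\Mgf_{(1,0)}(t)\,\tilde{\Lgf}(x,y;t),
\]
where $\Mgf_{(1,0)}(t):=[x^{1}y^{0}]\Mgf(x,y;t)$ is a series in $t$ alone and $\tilde{\Lgf}(x,y;t)$ counts $\Lgf$-walks starting at $(0,1)$ instead of $(p,q)$.

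The key intermediate step is to show that $\tilde{\Lgf}(x,y;t)$ is D-algebraic in $x$ for every step-set. Walks starting at $(0,1)$ are axis-starting in the sense of Section \ref{subsec:axis_start}: the function $X(z)^{0}Y(z)^{1}=Y(z)$ trivially decouples as $0+Y(z)$, so by Theorem \ref{thm:D-algebraic_fixed_t} the $\Cgf$-series for this starting point is D-algebraic in $x$. Proposition \ref{prop:CtoB_dalg}, applied with the starting point taken to be $(0,1)$ rather than $(p,q)$, then transfers this property to $\tilde{\Lgf}$. Note also that $\Mgf_{(1,0)}(t)$, depending only on $t$, is trivially D-algebraic in $x$.

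With the identity in hand, both directions of the proposition are immediate from the closure of the class of D-algebraic functions of $x$ under sums and products. If $\Cgf$ is D-algebraic in $x$, then Proposition \ref{prop:CtoB_dalg} gives the same for $\Lgf$, and rearranging the identity expresses $\Mgf$ as a combination of series all D-algebraic in $x$. Conversely, if $\Mgf$ is D-algebraic in $x$, then so is its coefficient $\Mgf_{(1,0)}(t)$, and the identity writes $\Lgf$ as a combination of D-algebraic series, after which Proposition \ref{prop:CtoB_dalg} gives the D-algebraicity of $\Cgf$.

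No serious obstacle arises; the proof is essentially bookkeeping. The one step requiring a touch of care is the D-algebraicity of $\tilde{\Lgf}(x,y;t)$, which does not follow from the $(0,1)$ starting point alone (since $\tilde{\Lgf}$ carries an additional forbidden-step restriction beyond $\Cgf$), but falls out cleanly by combining Section \ref{subsec:axis_start}, which handles unrestricted axis-starting walks, with Proposition \ref{prop:CtoB_dalg}, which transfers D-algebraicity through the forbidding of the step $(0,1)\to(1,0)$.
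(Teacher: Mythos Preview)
Your approach is essentially the same as the paper's: both first reduce to $\Lgf$ via Proposition~\ref{prop:CtoB_dalg}, then repeat the decomposition-at-first-forbidden-step argument with $\Lgf$ in place of $\Cgf$ and the reverse step $(1,0)\to(0,1)$ in place of $(0,1)\to(1,0)$. The paper simply writes ``using the same argument starting with $\Lgf(x,y;t)$ and forbidding the step from $(1,0)$ to $(0,1)$'', whereas you spell out the details of the identity $\Lgf=\Mgf+\omega_{(-1,1)}t\,\Mgf_{(1,0)}(t)\,\tilde{\Lgf}$ and the D-algebraicity of $\tilde{\Lgf}$.

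Two minor points of imprecision, neither fatal. First, Theorem~\ref{thm:D-algebraic_fixed_t} carries the hypothesis that the D-finite conditions of Theorem~\ref{thm:D-finite_fixed_t} fail, so strictly speaking you should either cite Theorem~\ref{thm:D-alg_proof} (which has no such hypothesis) for the implication ``decouples $\Rightarrow$ D-algebraic'', or dispose of the D-finite case separately via Proposition~\ref{prop:CtoA_dfinite}, as the paper does for Proposition~\ref{prop:CtoB_dalg}. Second, the starting point $(0,1)$ has $p=0$, formally outside the standing assumption $p>0$ of Section~\ref{sec:functional_equations}; this is handled (as the paper tacitly does) by the $x\leftrightarrow y$ symmetry of the cone, which is acknowledged at the start of Section~\ref{subsec:axis_start}.
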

\begin{proof}
Using Proposition \ref{prop:CtoB_dalg}, we know that $\Lgf(x,y;t)$ is D-algebraic if and only if $\Mgf(x,y;t)$ is D-algebraic. Then using the same argument starting with $\Lgf(x,y;t)$ and forbidding the step from $(1,0)$ to $(0,1)$, we can prove that $\Mgf(x,y;t)$ is D-algebraic if and only if $\Lgf(x,y;t)$ is D-algebraic. 
\end{proof}
 
\section{Walks in an $M$-quadrant cone}\label{sec:more_cones}
In this section we consider the enumeration of walks in more general cones formed by gluing together quarters of the plane along their boundaries. The quadrants $\cQ_{0}$ and $\tilde{\cQ}_{0}$ are defined by:
\begin{align*}
\cQ_{0}&=\{(i,j):i,j>0\},\\
\tilde{\cQ}_{0}&=\{(i,j):i>0;j\geq0\}.
\end{align*}
To define the rest of the quadrants that we will glue together, we consider the function $r:\mathbb{Z}^{2}\to\mathbb{Z}^{2}$, which rotates the plane by $\pi/2$ anticlockwise, that is $r((a,b))=(-b,a)$. This allows us to define quadrants $\cQ_{s}$ and $\tilde{\cQ}_{s}$ for all other $s\in\mathbb{Z}$ recursively using the equations:
\[
\cQ_{s+1}=r(\cQ_{s})\qquad\text{and}\qquad\tilde{\cQ}_{s+1}=r(\tilde{\cQ}_{s})\qquad\text{for $s\in\mathbb{Z}$.}
\]
So $\cQ_{s+4}=\cQ_{s}$ and $\tilde{\cQ}_{s+4}=\tilde{\cQ}_{s}$.

The $3$-quadrant cone considered in the previous section is simply the union of three quadrants $\cQ_{-1}$, $\tilde{\cQ}_{0}$ and $\tilde{\cQ}_{1}$. For $M=1$, $M=2$ and $M=4$, an $M$-quadrant cone can be similarly defined as a union of $M$ of these quadrants, a $4$-quadrant cone being alternatively called the slit plane (see Figure \ref{fig:slit_plane}).
\[\mathbb{Z}^{2}\setminus\{(n,0):n\in\mathbb{Z},~n\leq 0\}.\]

\begin{figure}[ht]
\centering
   \includegraphics[scale=1.2]{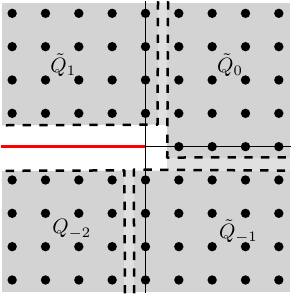}
   \caption{The slit plane $\mathbb{Z}^{2}\setminus\{(n,0):n\in\mathbb{Z},~n\leq 0\}=\tilde{Q}_{1}\cup\tilde{Q}_{0}\cup\tilde{Q}_{-1}\cup Q_{-2}$. This is equivalent to the $4$-quadrant cone $\Pi_{2,1}$.}
   \label{fig:slit_plane}
\end{figure}

For $M>4$, we have to be more careful in our definition, as the quadrants are not, a-priori, disjoint. To define an $M$-quadrant cone, we start by defining the {\em spiral quadrants}
\[\Gamma_{s}=\{(s,z):z\in \cQ_{s}\}\subset\tilde{\Gamma}_{s}=\{(s,z):z\in \tilde{\cQ}_{s}\}.\]
$\tilde{\Gamma}_{s}$ is then isomorphic to $\tilde{\cQ}_{s}$, but, importantly, it is disjoint from $\tilde{\Gamma}_{r}$ for $r\neq s$. Now, rather than considering walks in the whole plane (restricted to some subspace), we will consider walks in the {\em spiral space} $\Pi:=\cup_{s\in\mathbb{Z}}\tilde{\Gamma}_{s}$. Recall that walks in the plane are permitted to have steps coming from some fixed step-set $S$. In the spiral space $\Pi$, a step is allowed from $\pi_{1}=(s_{1},(a_{1},b_{1}))$ to $\pi_{2}=(s_{2},(a_{2},b_{2}))$ if and only if $(a_{2}-a_{1},b_{2}-b_{1})\in S$ and $|s_{1}-s_{2}|\leq 1$. In Lemma \ref{lem:spiral_walk_bij}, we will show that these naturally correspond to walks in $\mathbb{Z}^{2}\setminus\{(0,0)\}$ by projecting each point $(s,(a,b))$ onto $(a,b)$. The additional value $s$ keeps track of the {\em winding angle} of the walk around $(0,0)$.


\begin{Lemma}\label{lem:one_step_on_spiral} Assume $p'$ is adjacent to a point $p\in\tilde{\cQ}_{s}$. Then  $p'$ lies in exactly one of the sets $\tilde{\cQ}_{s-1}$, $\tilde{\cQ}_{s}$, $\tilde{\cQ}_{s+1}$ and $\{(0,0)\}$.\end{Lemma}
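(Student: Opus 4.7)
The plan is to reduce the lemma to the case $s=0$ by rotational symmetry: the map $r$ is a bijection on $\mathbb{Z}^{2}$ that sends $\tilde{\cQ}_{k}$ to $\tilde{\cQ}_{k+1}$ for every $k$ and preserves adjacency, since the set $\{-1,0,1\}^{2}\setminus\{(0,0)\}$ of possible displacements is stable under $r$. Hence, applying $r^{-s}$, I only need to prove the statement when $p\in\tilde{\cQ}_{0}$.

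The first step will be to write down the four consecutive quadrants explicitly. Starting from $\tilde{\cQ}_{0}=\{(i,j):i>0,\,j\geq 0\}$ and iterating $r(a,b)=(-b,a)$, I obtain
\[
\tilde{\cQ}_{-1}=\{(a,b):a\geq 0,\,b<0\},\quad
\tilde{\cQ}_{1}=\{(a,b):a\leq 0,\,b>0\},\quad
\tilde{\cQ}_{2}=\{(a,b):a<0,\,b\leq 0\}.
\]
A direct inspection of these inequalities shows that $\tilde{\cQ}_{-1}$, $\tilde{\cQ}_{0}$, $\tilde{\cQ}_{1}$ are pairwise disjoint, and that none of them contains $(0,0)$. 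This immediately gives the ``at most one'' half of the lemma.

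For the ``at least one'' half, I would observe that the four sets $\tilde{\cQ}_{-1}$, $\tilde{\cQ}_{0}$, $\tilde{\cQ}_{1}$, $\tilde{\cQ}_{2}$ together with $\{(0,0)\}$ partition $\mathbb{Z}^{2}$. Therefore it is enough to rule out the case $p'\in\tilde{\cQ}_{2}$. But if $p=(i,j)\in\tilde{\cQ}_{0}$ then $i\geq 1$, while any $(a,b)\in\tilde{\cQ}_{2}$ satisfies $a\leq -1$, so $|a-i|\geq 2$, contradicting adjacency.

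There is no real obstacle here: the main (very mild) care needed is to verify that the three consecutive quadrants are really disjoint in all boundary cases (e.g.\ that the convention ``strict on one axis, non-strict on the other'' is consistent after the rotation $r$), and to confirm that all the displacements used in later sections are genuinely ``small'' in the sense that their coordinates have absolute value at most one, so that the coordinate-distance argument above applies.
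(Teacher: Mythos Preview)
Your proof is correct and follows essentially the same approach as the paper: both reduce to $s=0$ via the rotation $r^{-s}$ and then use the explicit inequalities defining $\tilde{\cQ}_{-1},\tilde{\cQ}_{0},\tilde{\cQ}_{1}$. The only cosmetic difference is that the paper handles the ``at least one'' half by listing the boundary points adjacent to $\tilde{\cQ}_{0}$ (the rays $\{(i,-1):i\geq 0\}$ and $\{(0,j):j\geq 0\}$), whereas you use the partition $\mathbb{Z}^{2}=\tilde{\cQ}_{-1}\cup\tilde{\cQ}_{0}\cup\tilde{\cQ}_{1}\cup\tilde{\cQ}_{2}\cup\{(0,0)\}$ and rule out $\tilde{\cQ}_{2}$ by the distance argument $|a-i|\geq 2$; these are equivalent.
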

\begin{proof}
Applying the transformation $r^{-s}$ yields a point $r^{-s}(p')$ adjacent to $r^{-s}(p)\in\tilde{\cQ}_{0}$, so it suffices to show that $r^{-s}(p')$ lies in exactly one of the sets $\tilde{\cQ}_{-1}=\{(i,j):i\geq 0>j\}$, $\tilde{\cQ}_{0}=\{(i,j):i>0;j\geq 0\}$, $\tilde{\cQ}_{1}=\{(i,j):i\leq 0<j\}$ and $\{(0,0)\}$. These sets are disjoint, so $r^{-s}(p')$ cannot lie in more than one of these sets. Indeed, the points adjacent to $\tilde{\cQ}_{0}$ which lie outside $\tilde{\cQ}_{0}$ are on the ray $\{(i,-1):i\geq0\}$, which lies in $\tilde{\cQ}_{-1}$ or the ray $\{(0,j):j\geq0\}$ of which all but the point $(0,0)$ lies in $\tilde{\cQ}_{1}$.
\end{proof}

\begin{Lemma}\label{lem:spiral_walk_bij} If $p\in\tilde{\cQ}_{s}\subset\mathbb{Z}^{2}$, then there is a bijection between walks in $\mathbb{Z}^{2}\setminus\{(0,0)\}$ starting at $p$ and walks in the spiral space $\Pi$ starting at $(s,p)$ with the same sequence of steps. \end{Lemma}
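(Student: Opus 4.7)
The plan is to construct the bijection explicitly in both directions, using the projection $\pi:\Pi\to\mathbb{Z}^{2}$ defined by $\pi((s',q))=q$ one way, and an inductive lift using Lemma \ref{lem:one_step_on_spiral} the other way.

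First I would verify that the projection $\pi$ sends walks in $\Pi$ starting at $(s,p)$ to walks in $\mathbb{Z}^{2}\setminus\{(0,0)\}$ starting at $p$ with the same step sequence. Indeed, any point $(s',q)\in\Pi$ has $q\in\tilde{\cQ}_{s'}$, and none of the quadrants $\tilde{\cQ}_{s'}$ contain $(0,0)$, so the image of $\pi$ avoids the origin. Moreover, by the definition of allowed steps in $\Pi$, if $(s_{i},q_{i})\to(s_{i+1},q_{i+1})$ is a step of a walk in $\Pi$, then $q_{i+1}-q_{i}\in S$, so the image under $\pi$ is a walk with step set $S$ and the same ordered sequence of steps.

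Next I would construct the inverse map by induction. Given a walk $q_{0},q_{1},\ldots,q_{n}$ in $\mathbb{Z}^{2}\setminus\{(0,0)\}$ with $q_{0}=p\in\tilde{\cQ}_{s}$, I set $s_{0}=s$ and define $s_{i+1}$ inductively: assuming $q_{i}\in\tilde{\cQ}_{s_{i}}$, the point $q_{i+1}$ is adjacent to $q_{i}$ and is not $(0,0)$, so by Lemma \ref{lem:one_step_on_spiral} there is a \emph{unique} index $s_{i+1}\in\{s_{i}-1,s_{i},s_{i}+1\}$ with $q_{i+1}\in\tilde{\cQ}_{s_{i+1}}$. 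Setting $\pi_{i}:=(s_{i},q_{i})$, each $\pi_{i}$ lies in $\tilde{\Gamma}_{s_{i}}\subset\Pi$, and because $|s_{i+1}-s_{i}|\leq 1$ and $q_{i+1}-q_{i}\in S$, the sequence $\pi_{0},\pi_{1},\ldots,\pi_{n}$ is a valid walk in $\Pi$ starting at $(s,p)$, with the same step sequence as the original walk.

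Finally I would check that these two maps are mutual inverses. Composing lift-then-project clearly recovers the original walk in $\mathbb{Z}^{2}\setminus\{(0,0)\}$, while composing project-then-lift recovers the original walk in $\Pi$ because each $s_{i+1}$ is uniquely determined by $s_{i}$ and $q_{i+1}$ (by Lemma \ref{lem:one_step_on_spiral}), so the only lift of a projected walk that starts at $(s,p)$ is the original walk itself. The only real obstacle is the well-definedness and uniqueness of the lift, and this is handled entirely by Lemma \ref{lem:one_step_on_spiral}; the rest is routine bookkeeping.
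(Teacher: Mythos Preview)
Your proof is correct and takes essentially the same approach as the paper: project to $\mathbb{Z}^{2}\setminus\{(0,0)\}$ by forgetting the first coordinate, and lift back by inductively determining the unique $s_{i+1}\in\{s_i-1,s_i,s_i+1\}$ with $q_{i+1}\in\tilde{\cQ}_{s_{i+1}}$ via Lemma~\ref{lem:one_step_on_spiral}. The paper's proof is slightly terser---it phrases the bijectivity as ``there is exactly one choice of the $s_j$'' rather than explicitly writing out both maps and checking they are mutual inverses---but the content is the same.
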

\begin{proof}
for any walk $(s_{0},p_{0}),(s_{1},p_{1}),\ldots,(s_{n},p_{n})$ in $\Pi$, with $(s_{0},p_{0})=(s,p)$ there is a corresponding walk $p_{0},p_{1},\ldots,p_{n}$ in $\mathbb{Z}^{2}\setminus\{(0,0)\}$ using the same step set $S$. To show that this correspondence forms a bijection, it suffices to show that there is exactly one choice of the letters $s_{j}$ for any walk $p_{0},p_{1},\ldots,p_{n}$ with $s_{0}=s$ fixed. We will show this inductively.
Assume that there is a unique choice of $s_{k}$ and consider the possible values of $s_{k+1}$. Since $s_{k}$ has been chosen, we must have $(s_{k},p_{k})\in\Pi$, that is $p_{k}\in \tilde{\cQ}_{s_{k}}$.
Since $p_{k+1}$ is adjacent to $p_{k}$ and $p_{k+1}\neq (0,0)$, from Lemma \ref{lem:one_step_on_spiral}, we must have $p_{k+1}\in \tilde{\cQ}_{s_{k}-1}$ or $p_{k+1}\in \tilde{\cQ}_{s_{k}}$ or $p_{k+1}\in \tilde{\cQ}_{s_{k}+1}$, and $p_{k+1}$ can only be in one of these sets as they are disjoint.
Since $s_{k+1}$ must satisfy $|s_{k+1}-s_{k}|\leq 1$, this implies that there is a unique choice of $s_{k+1}$ satisfying $p_{k+1}\in \tilde{\cQ}_{s_{k+1}}$. This completes the induction. 
\end{proof}

\begin{Definition}An {\em $M$-quadrant cone} is a subspace $\Pi_{L,K}$ of $\Pi$ defined by
\[\Pi_{L,K}=\Gamma_{ -L}\cup\tilde{\Gamma}_{-L+1}\cup\tilde{\Gamma}_{-L+2}\cup\cdots\cup\tilde{\Gamma}_{K},\]
where $0\leq L,K$ and $L+K+1=M$.\end{Definition}

Note that simultaneously decreasing $L$ by $1$ and increasing $K$ by $1$ simply rotates the cone anticlockwise, so all $M$-quadrant cones are congruent. Hence, we could consider all possible walks in an $M$-quadrant cone while fixing $L=0$. We will not do this, however, as it is more convenient to allow any value of $L$, but insist that the starting point of the walk is in $\tilde{\Gamma}_{0}$. 

In Subsection \ref{subsec:Mon4plane}, we define the generating function $\Qgf_{j}(x,y;t)$ counting walks in the $M$-quadrant cone $\Pi_{L,M}$, and we derive functional equations characterising these series, then in Subsection \ref{subsec:bc_analytic_functional_equations} we use these to determine an analytic functional equation, generalising Theorem \ref{thm:PH_PV_characterisation}. Finally in Section \ref{sec:bc_solutions}, we use this analytic functional equation to determine the complexity of the generating functions $\Qgf_{j}(x,y;t)$. In a forthcoming paper we will consider walks in the complete spiral space $\Pi$ and walks restricted to the half spiral space $\displaystyle\Pi_{L}=\Gamma_{-L}\cup\bigcup_{s> -L}\tilde{\Gamma}_{s}$. 

\subsection{Functional equations for walks in an $M$-quadrant cone}
\label{subsec:Mon4plane}
In this section we consider walks restricted to the $M$-quadrant cone $\Pi_{L,M}$ starting at some point $(0,(p,q))\in\tilde{\Gamma}_{0}$ using (weighted) steps in $S$.

We will now define the generating functions that we use to count these walks. 
We define generating functions $\Qgf_{s}(x,y;t)$ for $-L\leq s\leq K$ as follows: Let $q_{s}(i,j;n)$ be the weighted number of walks of length $n$ starting at $(0,(p,q))$ and ending at the point $(s,(i,j))$. We define the generating function $\Qgf_{s}(x,y;t)$ by
\[\Qgf_{s}(x,y;t):=\sum_{n\geq 0}\sum_{i,j\in\mathbb{Z}} q_{s}(i,j;n)x^{i}y^{j}t^{n}.\]
Note that for $j\in\mathbb{Z}$ we have
\begin{itemize}
\item $\Qgf_{4j}(x,y;t)\in x\mathbb{R}[x,y][[t]],$
\item $\Qgf_{4j+1}(x,y;t)\in y\mathbb{R}[\frac{1}{x},y][[t]],$
\item $\Qgf_{4j+2}(x,y;t)\in\frac{1}{x}\mathbb{R}[\frac{1}{x},\frac{1}{y}][[t]],$
\item $\Qgf_{4j+3}(x,y;t)\in\frac{1}{y}\mathbb{R}[x,\frac{1}{y}][[t]].$
\end{itemize}
We can write this more concisely by defining $x_{s}$ and $y_{s}$ to be rotated versions of $x$ and $y$ for each $s$. precisely, for $k\in\mathbb{Z}$, we define
\begin{align*}
x_{4k}&=y_{4k-1}=x,\\
x_{4k+1}&=y_{4k\phantom{+1}}=y,\\
x_{4k+2}&=y_{4k+1}=\frac{1}{x},\\
x_{4k+3}&=y_{4k+2}=\frac{1}{y}.
\end{align*}
Using these variables, we have
\[\Qgf_{j}(x,y;t)\in x_{j}\mathbb{R}[x_{j},y_{j}][[t]].\]
Moreover, since walks counted by $\Qgf_{-L}(x,y;t)$ finish in $\Gamma_{-L}$, we must have
\[\Qgf_{-L}(x,y;t)\in x_{-L}y_{-L}\mathbb{R}[x_{-L},y_{-L}][[t]].\]

We will now derive a system of functional equations characterising the series $\Qgf_{s}(x,y;t)$. 
In order to write these functional equations we define, for $-L\leq i,j\leq K$, the series $\Sgf_{i,j}(x,y;t)$ to be the generating function for walks in the spiral plane $\Pi$, whose last step is from $\tilde{\Gamma}_{i}$ to $\tilde{\Gamma}_{j}$. So $\Sgf_{i,j}=0$ unless $|i-j|\leq 1$. Furthermore, for $-L\leq j\leq K$, we define $\Tgf_{j}(x,y;t)$ to be the generating function for weighted walks using the step-set $S$ which do not end in $\Pi_{L,K}$, but for which the removal of the final step yields a walk in $\Pi_{L,K}$ ending in $\tilde{\Gamma}_{j}$. In the following two lemmas we describe spaces in which the series $\Sgf_{i,j}(x,y;t)$ and $\Tgf_{j}(x,y;t)$ lie.
\begin{Lemma}\label{lem:S_ser_spaces}
The series $\Sgf_{i,j}(x,y;t)$ lie in:
\begin{itemize}
\item $\Sgf_{j,j+1}(x,y;t)\in y_{j}\mathbb{R}[y_{j}][[t]]$,
\item $\Sgf_{j+1,j}(x,y;t)\in x_{j}\mathbb{R}[y_{j}][[t]]$.
\end{itemize}
\end{Lemma}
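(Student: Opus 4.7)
The plan is to reduce the lemma to its $j=0$ case by exploiting the rotational symmetry $r$ used to define the quadrants $\tilde{\cQ}_s$, and then to compute directly the support of the endpoint coordinates in the $j=0$ situation. This is a purely combinatorial statement about where the final step of the walk can land, so no analytic tools will be needed.

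First I would verify the case $j=0$ by inspection. A walk counted by $\Sgf_{0,1}$ ends with a step from $\tilde{\cQ}_0=\{(i,k):i>0,\,k\ge 0\}$ to $\tilde{\cQ}_1=\{(i,k):i\le 0,\,k>0\}$. Since the step lies in $\{-1,0,1\}^{2}$, the starting point of this final step must satisfy $i=1$ (the $x$-coordinate drops from strictly positive to non-positive), and consequently the endpoint satisfies $i=0$ and $k\ge 1$. Hence the endpoint weight is $x^{0}y^{k}=y^{k}$ with $k\ge 1$, giving $\Sgf_{0,1}(x,y;t)\in y\,\mathbb{R}[y][[t]]=y_{0}\mathbb{R}[y_{0}][[t]]$. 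The same reasoning applied to a step in the opposite direction shows that the endpoint of a walk counted by $\Sgf_{1,0}$ must be of the form $(1,k)$ with $k\ge 0$, contributing $xy^{k}$, so that $\Sgf_{1,0}(x,y;t)\in x\,\mathbb{R}[y][[t]]=x_{0}\mathbb{R}[y_{0}][[t]]$.

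Next I would pass to general $j$ via the following symmetry observation. The set of possible endpoints of a walk counted by $\Sgf_{j,j+1}$ depends only on the geometric pair $(\tilde{\cQ}_{j},\tilde{\cQ}_{j+1})$ together with the rotationally invariant constraint that consecutive positions differ by a vector in $\{-1,0,1\}^{2}$. Consequently applying $r^{j}$ sends the allowable endpoint set for $\Sgf_{0,1}$ bijectively to that for $\Sgf_{j,j+1}$, and similarly for $\Sgf_{j+1,j}$. Since the definitions of $x_{j}$ and $y_{j}$ are tailored so that the weight of the rotated point $r^{j}(a,b)$ in the variables $(x,y)$ is exactly $x_{j}^{a}y_{j}^{b}$, the claimed inclusions for arbitrary $j$ follow at once from the $j=0$ inclusions proved above.

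I do not anticipate any genuine obstacle: the only point demanding care is the bookkeeping in the last step, namely checking that the $\pi/2$ rotation of lattice points corresponds correctly, under the substitution $(x,y)\mapsto(x_{j},y_{j})$, to the intended transformation of monomial weights. This is a short direct verification, since the recursion $x_{s+1}=y_s$ and $y_{s+1}=1/x_s$ built into the definitions of $x_s,y_s$ precisely mirrors the action of $r$ on monomials $x^ay^b$.
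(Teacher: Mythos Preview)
Your proposal is correct. The underlying combinatorial content is identical to the paper's proof: in both cases the point is that the endpoint of the final step lies in $\tilde{\cQ}_{j+1}$ while being adjacent to a point of $\tilde{\cQ}_{j}$ (or vice versa), which pins down its coordinates. The paper organises this slightly differently: rather than reducing to $j=0$ and invoking rotational symmetry, it works uniformly in $j$ by noting that the walk $w$ contributes to $\Qgf_{j+1}\in y_{j}\mathbb{R}[y_{j},1/x_{j}][[t]]$ while the truncated walk $w'$ contributes to $\Qgf_{j}\in x_{j}\mathbb{R}[x_{j},y_{j}][[t]]$, and since the two contributions differ by a single-step factor $x^{\alpha}y^{\beta}t$ with $|\alpha|,|\beta|\le 1$, intersecting these ring constraints forces the contribution of $w$ into $y_{j}\mathbb{R}[y_{j}][[t]]$. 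Your rotation argument is a clean alternative that makes the geometry explicit; the paper's ring-intersection argument is more uniform but relies on the previously stated ring memberships for the $\Qgf_{j}$.
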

\begin{proof}
Let $w$ be a walk counted by $\Sgf_{j,j+1}(x,y;t)$ and let $w'$ be the same walk with the final step removed. Then $w$ is also counted by $\Qgf_{j+1}(x,y;t)\in x_{j+1}\mathbb{R}[x_{j+1},y_{j+1}][[t]]=y_{j}\mathbb{R}\left[y_{j},\frac{1}{x_{j}}\right][[t]]$, while $w'$ is counted by $\Qgf_{j}(x,y;t)\in x_{j}\mathbb{R}[x_{j},y_{j}][[t]]$. Since the contributions from $w'$ and $w$ can only differ by a factor of $x^{\alpha}y^{\beta}t$, where $-1\leq\alpha,\beta\leq 1$, the contribution from $w$ must lie in $y_{j}\mathbb{R}[y_{j}][[t]]$, so the sum $\Sgf_{j,j+1}(x,y;t)$ of the contributions from all such walks also lies in $y_{j}\mathbb{R}[y_{j}][[t]]$. 

Similarly, if we let $w'$ be a walk counted by $\Sgf_{j,j+1}(x,y;t)$ and let $w$ be the same walk with the final step removed, we can again deduce that $w$ and $w'$ are also counted by $\Qgf_{j+1}(x,y;t)$ and $\Qgf_{j}(x,y;t)$, respectively. This implies that the contribution from $w'$ lies in $x_{j}\mathbb{R}[y_{j}][[t]]$, so the sum $\Sgf_{j+1,j}(x,y;t)$ of the contributions from all such walks also lies in $x_{j}\mathbb{R}[y_{j}][[t]]$.
\end{proof}
\begin{Lemma}\label{lem:T_series_cases}
The series $\Tgf_{j}(x,y;t)$ lie in
\begin{itemize}
\item $\Tgf_{0}(x,y;t)\in \mathbb{R}[x][[t]]+\mathbb{R}[y][[t]]$, if $M=1$,
\item $\Tgf_{-L}(x,y;t)\in \mathbb{R}[x_{-L}][[t]]$, if $M\geq2$,
\item $\Tgf_{K}(x,y;t)=\Tgf_{1-L}(x,y;t)\in y_{1-L}^{-1}\mathbb{R}[y_{1-L}][[t]]$, if $M=2$,
\item $\Tgf_{1-L}(x,y;t)\in y_{1-L}^{-1}\mathbb{R}[[t]]+\mathbb{R}[[t]]$, if $M\geq 3$,
\item $\Tgf_{K}(x,y;t)\in \mathbb{R}[y_{K}][[t]]$, if $M\geq 3$,
\item $\Tgf_{j}(x,y;t)\in \mathbb{R}[[t]]$, for $1-L<j<K$.
\end{itemize}
\end{Lemma}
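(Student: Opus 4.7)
The plan is, for each $j \in \{-L,\dots,K\}$, to determine all destinations of the final step of a walk counted by $\Tgf_j(x,y;t)$ and to sum their contributions. By Lemma~\ref{lem:one_step_on_spiral}, a single step from a point in $\tilde{\Gamma}_j$ can only land in $\tilde{\Gamma}_{j-1} \cup \tilde{\Gamma}_j \cup \tilde{\Gamma}_{j+1}$ or at $(0,0)$. Since
\[
\Pi \setminus \Pi_{L,M} = (\tilde{\Gamma}_{-L} \setminus \Gamma_{-L}) \cup \bigcup_{s<-L \text{ or } s>K} \tilde{\Gamma}_s,
\]
and the sheet $\tilde{\Gamma}_{-L-1}$ cannot be reached by a single small step from the strict quadrant $\Gamma_{-L}=\cQ_{-L}$ (two rotational quadrants away from $\tilde{\cQ}_{-L-1}$), the only possible exit destinations are the axis $\tilde{\Gamma}_{-L} \setminus \Gamma_{-L}$, the sheet $\tilde{\Gamma}_{K+1}$, and $(0,0)$.

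I will then establish two small-step reachability facts, each by rotating to a canonical position. First, from $\Gamma_{-L}$ the entire axis $\tilde{\Gamma}_{-L} \setminus \Gamma_{-L}$ is reachable (rotating to $L=0$: from $(i,1)\in\cQ_0$ the step $(0,-1)$ lands at $(i,0)$ for every $i \geq 1$), contributing all monomials $x_{-L}^k$ with $k \geq 1$. Second, from $\tilde{\Gamma}_{1-L}$ only one corner-adjacent point of $\tilde{\Gamma}_{-L} \setminus \Gamma_{-L}$ is reachable (rotating to $L=0$: a small step from $(i,j)\in\tilde{\cQ}_1=\{i\leq 0,j\geq 1\}$ to $(i',0)$ with $i' \geq 1$ forces $i=0,\alpha=1,i'=1$ and $j=1,\beta=-1$), contributing exactly $y_{1-L}^{-1}$. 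By the rotationally dual argument (a small step from $\tilde{\cQ}_K$ landing in $\tilde{\cQ}_{K+1}$ must fall on the positive-rotation axis of $\tilde{\cQ}_{K+1}$, any point of which is reached from a suitable starting point), the exit from $\tilde{\Gamma}_K$ to $\tilde{\Gamma}_{K+1}$ contributes every $y_K^k$ with $k \geq 1$.

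Combining these contributions case by case yields the six claimed inclusions: for $1-L<j<K$ only $(0,0)$ is accessible, giving $\mathbb{R}[[t]]$; for $j=-L$ with $M \geq 2$, the full bottom-axis contribution plus $(0,0)$ yields $\mathbb{R}[x_{-L}][[t]]$; for $j=K$ with $M\geq 3$, the full top-axis contribution plus $(0,0)$ gives $\mathbb{R}[y_K][[t]]$; for $j=1-L < K$ with $M\geq 3$, the corner-point contribution plus $(0,0)$ yields $y_{1-L}^{-1}\mathbb{R}[[t]]+\mathbb{R}[[t]]$; for $j=1-L=K$ (that is, $M=2$), adding the top-axis contribution produces $y_{1-L}^{-1}\mathbb{R}[y_{1-L}][[t]]$; and for $M=1$ (so $j=0=-L=K$), combining the full bottom and top axis contributions gives $\mathbb{R}[x][[t]]+\mathbb{R}[y][[t]]$.

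The main subtlety is the contrast between the \emph{strict} sheet $\Gamma_{-L}$, from which an entire axis ray is reachable by one small step, and the \emph{non-strict} neighbouring sheet $\tilde{\Gamma}_{1-L}$, from which only the corner-adjacent axis point is reachable; this asymmetry stems from $\cQ_{-L}$ not containing its counterclockwise boundary whereas $\tilde{\cQ}_{1-L}$ does, forcing different sign constraints on the step components and thereby the different exponent ranges appearing in the statement.
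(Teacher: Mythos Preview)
Your proposal is correct and takes essentially the same route as the paper: a case analysis on $j$, rotating to a canonical position and determining the possible endpoints from the constraints on the penultimate and terminal positions. Your framing is a bit more systematic (first identifying the three global exit destinations $\tilde{\Gamma}_{-L}\setminus\Gamma_{-L}$, $\tilde{\Gamma}_{K+1}$ and $(0,0)$, then checking reachability from each $\tilde{\Gamma}_j$), whereas the paper treats each of the six cases directly; the underlying computations coincide.

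Two small points. First, the parenthetical justification ``two rotational quadrants away'' for why $\tilde{\Gamma}_{-L-1}$ is unreachable from $\Gamma_{-L}$ is not right as stated: the quadrants $\cQ_{-L}$ and $\tilde{\cQ}_{-L-1}$ are adjacent. The correct (and simple) reason, which the paper uses implicitly, is that the \emph{strict} quadrant $\cQ_{-L}$, after rotating to $\cQ_0=\{(i,j):i,j\geq 1\}$, has every point at vertical distance at least $2$ from $\tilde{\cQ}_{-1}=\{(i,j):i\geq 0,\,j<0\}$, so a single small step cannot cross. Second, the lemma only asserts containment, so your reachability arguments (``contributing all monomials $x_{-L}^k$'', etc.) establish more than is needed; what matters for the statement is only that no other monomials can appear.
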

\begin{proof}
Let $w$ be a walk counted by $\Tgf_{j}(x,y;t)$ and let $w'$ be the same walk with the final step removed. Let $ux_{j}^{a}y_{j}^{b}t^n$ be the contribution from $w$, where $u$ is the weight of the walk (i.e., the product of the weights of the steps). then $\hat{w}:=r^{-j}(w)$ ends at $(a,b)$ while the walk $\hat{w}':=r^{-j}(w')$ ends at some point $(a-\alpha,b-\beta)$, where $\alpha,\beta\in \{-1,0,1\}$. Moreover, the endpoint $(a-\alpha,b-\beta)$ of $\hat{w}'$ lies in $r^{-j}(\tilde{\cQ}_{j})=\tilde{\cQ}_{0}$, and more precisely in $r^{L}(\cQ_{-L})=\cQ_{0}$ in the case that $j=-L$. We will now separately consider the six cases in the statement of this lemma, and show that in each case the contribution $u x_{j}^{a}y_{j}^{b}t^n$ to $\Tgf_{j}(x,y;t)$ from $w$ lies in the claimed ring. This implies the desired result as $\Tgf_{j}(x,y;t)$ is a sum of these contributions over all possible $w$, and there are only finitely many terms in this sum for each length $n$. \newline
\textbf{Case 1:} $M=1$.\newline
In this case we necessarily have $j=L=0$. Hence $(a-\alpha,b-\beta)\in\cQ_{0}=\{(k,\ell):k,\ell>1\}$, but $(a,b)\notin\cQ_{0}$. This is only possible if either $a=0$ and $b\geq 0$ or $b=0$ and $a\geq0$. Hence the contribution $u x_{j}^{a}y_{j}^{b}t^n=x^{a}y^{b}t^n$ to $\Tgf_{0}(x,y;t)$ from $w$ lies in $\mathbb{R}[x][[t]]+\mathbb{R}[y][[t]]$.\newline
\textbf{Case 2:} $M\geq 2$ and $j=-L$.\newline
In this case $(a-\alpha,b-\beta)\in\cQ_{0}=\{(k,\ell):k,\ell>1\}$, but $(a,b)\notin\cQ_{0}$ and $(a,b)\notin r^{-j}(\tilde{\cQ}_{1-L})=\tilde{\cQ_{1}}$. As in the last case $a=0$ and $b>0$ or $b=0$ and $a\geq0$, except that the case $a=0$ and $b>0$ does not occur because then $(a,b)$ would lie in $\tilde{\cQ_{1}}$. Hence the only possibility is that $b=0$ and $a\geq0$, so the contribution $u x_{-L}^{a}y_{-L}^{b}t^n$ to $\Tgf_{-L}(x,y;t)$ from $w$ lies in $\mathbb{R}[x_{-L}][[t]]$.\newline
\textbf{Case 3:} $M=2$ and $j=K=1-L$.\newline
In this case $(a-\alpha,b-\beta)\in\tilde{\cQ}_{0}=\{(k,\ell):k>1;\ell\geq 0\}$, but $(a,b)\notin\tilde{\cQ}_{0}$ and $(a,b)\notin r^{-j}(\cQ_{-L})=\cQ_{-1}$. The union $\tilde{\cQ}_{0}\cup\cQ_{-1}=\{(i,j):i>1\}$, so for $(a,b)$ to be adjacent to this set but not in this set, we must have $a=0$. Moreover, since $(a-\alpha,b-\beta)\in\tilde{\cQ}_{0}$, we must have $b\geq -1$. Hence, the contribution $u x_{1-L}^{a}y_{1-L}^{b}t^n$ to $\Tgf_{1-L}(x,y;t)$ from $w$ lies in $y_{1-L}^{-1}\mathbb{R}[y_{1-L}][[t]]$.\newline
\textbf{Case 4:} $M\geq 3$ and $j=1-L$.\newline
In this case $(a-\alpha,b-\beta)\in\tilde{\cQ}_{0}=\{(k,\ell):k>1;\ell\geq 0\}$, but $(a,b)\notin\tilde{\cQ}_{0}$ and $(a,b)\notin r^{-j}(\cQ_{-L})=\cQ_{-1}$ and $(a,b)\notin r^{-j}(\tilde{\cQ}_{2-L})=\tilde{\cQ}_{1}$. The only points adjacent to $\tilde{\cQ}_{0}$ which do not lie in $\tilde{\cQ}_{1}$ or $\cQ_{-1}$ are $(0,0)$ and $(0,-1)$, so $(a,b)$ is one of these points. Hence, the contribution $u x_{1-L}^{a}y_{1-L}^{b}t^n$ to $\Tgf_{1-L}(x,y;t)$ from $w$ lies in $y_{1-L}^{-1}\mathbb{R}[[t]]+\mathbb{R}[[t]]$.\newline
\textbf{Case 5:} $M\geq 3$ and $j=K$.\newline
In this case $(a-\alpha,b-\beta)\in\tilde{\cQ}_{0}=\{(k,\ell):k>1;\ell\geq 0\}$, but $(a,b)\notin\tilde{\cQ}_{0}$ and $(a,b)\notin r^{-j}(\tilde{\cQ}_{K-1})=\tilde{\cQ}_{-1}$. The only points adjacent to $\tilde{\cQ}_{0}$ which do not lie in $\tilde{\cQ}_{-1}$ are the points $(0,b)$ for $b\geq 0$. Hence, the contribution $u x_{K}^{a}y_{K}^{b}t^n$ to $\Tgf_{K}(x,y;t)$ from $w$ lies in $\mathbb{R}[y_{K}][[t]]$.\newline
\textbf{Case 6:} $1-L<j<K$.\newline
In this case $(a-\alpha,b-\beta)\in\tilde{\cQ}_{0}=\{(k,\ell):k>1;\ell\geq 0\}$, but $(a,b)\notin\tilde{\cQ}_{0}$ and $(a,b)\notin r^{-j}(\tilde{\cQ}_{j-1})=\tilde{\cQ}_{-1}$ and $(a,b)\notin r^{-j}(\tilde{\cQ}_{j+1})=\tilde{\cQ}_{1}$. The only point adjacent to $\tilde{\cQ}_{0}$ which does not lie in any of these sets is the point $(0,0)$. Hence, the contribution $u x_{j}^{a}y_{j}^{b}t^n$ to $\Tgf_{j}(x,y;t)$ from $w$ lies in $\mathbb{R}[[t]]$.\newline
\end{proof}

It will be convenient to write the series $\Tgf_{j}(x,y;t)$, $\Sgf_{j,j+1}(x,y;t)$ and $\Sgf_{j+1,j}(x,y;t)$ in terms of series involving only 1 or none of the variables $x$, $y$. 
To do this we define 
\begin{align}
\Fgf_{j}(t)&=[x^{0}y^{0}]T_{j}(x,y;t),&&\text{for $j\in[-L,K]$},\notag\\
\Ugf_{0}(x_{-L};t)&=[x_{-L}^{\geq 1}y_{-L}^{0}]T_{-L}(x,y;t),\notag\\
\Ugf_{1}(t)&=[x_{-L}^{1}y_{-L}^{0}]T_{1-L}(x,y;t),\notag\\
\Ugf_{2}(y_{K};t)&=[x_{K}^{0}y_{K}^{\geq 1}]T_{K}(x,y;t),\notag\\
\Rgf_{j}(x_{j};t)&=x_{j}^{-1}\Sgf_{j-1,j}(x,y;t),&&\text{for $j\in[-L+1,K]$},\label{eq:bc_Rgf_def}\\
\Lgf_{j}(y_{j};t)&=x_{j}^{-1}\Sgf_{j+1,j}(x,y;t)+\delta_{j,-L}\Ugf_{1}(t),&&\text{for $j\in[-L,K-1]$.}\label{eq:bc_Lgf_def}
\end{align}
Then from Lemma \ref{lem:T_series_cases}, we have in all cases
\begin{equation}\label{eq:Tsplitting}\Tgf_{j}(x,y;t)=\Fgf_{j}(t)+\delta_{j,-L}\Ugf_{0}(x_{-L};t)+\delta_{j,1-L}x_{-L}\Ugf_{1}(t)+\delta_{j,K}\Ugf_{2}(y_{K};t)\end{equation}
and $F_{j}(t)\in\mathbb{R}[[t]]$,  $\Ugf_{0}(x_{-L};t)\in x_{-L}\mathbb{R}[x_{-L}][[t]]$, $\Ugf_{1}(t)\in \mathbb{R}[[t]]$ and $\Ugf_{2}(y_{K};t)\in y_{K}\mathbb{R}[y_{K}][[t]]$. Moreover, $\Rgf_{j}(x_{j};t)\in\mathbb{R}[x_{j}][[t]]$ and $\Lgf_{j}(y_{j};t)\in\mathbb{R}[y_{j}][[t]]$.

For the following lemma, recall that the Kernel $K(x,y;t)$ of the model, discussed in Section \ref{sec:param}, is given by $K(x,y;t)=t\Pgf(x,y)-1$. 
\begin{Lemma}
The series $\Qgf_{j}(x,y;t)$ and $\Tgf_{j}(x,y;t)$ satisfy the equations
\begin{align}\begin{split}
K(x,y;t)\Qgf_{j}(x,y;t)&=-\delta_{0,j}x^{p}y^{q}-\Sgf_{j+1,j}(x,y;t)+\Sgf_{j,j+1}(x,y;t)\\
&~~-\Sgf_{j-1,j}(x,y;t)+\Sgf_{j,j-1}(x,y;t)+\Tgf_{j}(x,y;t),\label{eq:eq_for_q_spiral}
\end{split}\end{align}
\end{Lemma}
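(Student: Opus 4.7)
The plan is to establish the identity combinatorially by tracking what happens when a single step is appended to each walk enumerated by $\Qgf_{j}$. Writing $\Kgf(x,y;t)\Qgf_{j}(x,y;t) = t\Pgf(x,y)\Qgf_{j}(x,y;t) - \Qgf_{j}(x,y;t)$, I would interpret $t\Pgf(x,y)\Qgf_{j}(x,y;t)$ as the generating function enumerating pairs $(w,s)$, where $w$ is a walk counted by $\Qgf_{j}$ and $s \in S$ is a single additional step of weight $w_{s}$. Each such pair corresponds to a walk $w'$ of length one greater whose truncation lies in $\Pi_{L,K}$ and ends in $\tilde{\Gamma}_{j}$.

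I would then partition these extended walks $w'$ according to the location of their final vertex. By Lemma \ref{lem:one_step_on_spiral}, together with the bijection of Lemma \ref{lem:spiral_walk_bij}, this final vertex lies in exactly one of $\tilde{\Gamma}_{j-1}$, $\tilde{\Gamma}_{j}$, $\tilde{\Gamma}_{j+1}$, or is the origin $(0,0)$. Walks with final vertex in $\tilde{\Gamma}_{j+1} \cap \Pi_{L,K}$ are precisely those counted by $\Sgf_{j,j+1}$; walks with final vertex in $\tilde{\Gamma}_{j-1} \cap \Pi_{L,K}$ are counted by $\Sgf_{j,j-1}$; and walks whose final vertex lies outside $\Pi_{L,K}$ --- including the origin together with the boundary cases when $j=-L$ or $j=K$, where an adjacent spiral quadrant leaves the cone --- are exactly those counted by $\Tgf_{j}$ by definition.

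The remaining extended walks are those ending in $\tilde{\Gamma}_{j}$; these are themselves walks in $\Pi_{L,K}$ of length at least one ending in $\tilde{\Gamma}_{j}$ whose last step originates in $\tilde{\Gamma}_{j}$. Since every walk counted by $\Qgf_{j}$ is either the empty walk (contributing $\delta_{0,j}x^{p}y^{q}$) or has its last step originating in $\tilde{\Gamma}_{j-1}$, $\tilde{\Gamma}_{j}$, or $\tilde{\Gamma}_{j+1}$, this remaining class has generating function $\Qgf_{j}(x,y;t) - \delta_{0,j}x^{p}y^{q} - \Sgf_{j-1,j}(x,y;t) - \Sgf_{j+1,j}(x,y;t)$. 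Summing the four contributions, subtracting $\Qgf_{j}(x,y;t)$, and rearranging yields the claimed identity. The main subtlety is the boundary bookkeeping when $j \in \{-L, K\}$, which I would handle by adopting the convention that any $\Sgf_{i,k}$ referring to a spiral quadrant outside $\Pi_{L,K}$ vanishes, so that those walks are absorbed cleanly into $\Tgf_{j}$; beyond this I anticipate no serious obstacle.
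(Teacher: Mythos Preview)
Your proposal is correct and follows essentially the same approach as the paper: both derive the identity by writing two decompositions --- one of $\Qgf_{j}$ according to the quadrant of the penultimate vertex, and one of $t\Pgf(x,y)\Qgf_{j}$ according to the quadrant (or exterior) of the appended vertex --- and eliminating the common term $\Sgf_{j,j}$. The only cosmetic difference is that the paper names $\Sgf_{j,j}$ explicitly and writes the two equations separately before subtracting, whereas you substitute the first decomposition directly into the second.
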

\begin{proof}
All non-trivial walks ending in $\tilde{\Gamma}_{j}$ are counted by exactly one of $\Sgf_{j-1,j}(x,y;t)$, $\Sgf_{j,j}(x,y;t)$ and $\Sgf_{j+1,j}(x,y;t)$, depending on the position of the point before the final step. the contribution of the trivial walk is $x^{p}y^{q}$, and this only contributes if $j=0$. Hence we have
\[\Qgf_{j}(x,y;t)=\delta_{0,j}x^{p}y^{q}+\Sgf_{j-1,j}(x,y;t)+\Sgf_{j,j}(x,y;t)+\Sgf_{j+1,j}(x,y;t).\]
Now we consider non-trivial walks whose last step starts within $\tilde{\Gamma}_{j}$. These can be counted using the generating function $\Qgf_{j}(x,y;t)$ and multiplying by $t\Pgf(x,y)$ to account for the addition of a single arbitrary step. Alternatively we can observe that each of these walks contributes to exactly one of $\Sgf_{j,j-1}(x,y;t)$, $\Sgf_{j,j}(x,y;t)$, $\Sgf_{j,j+1}(x,y;t)$ and $T_{j}(x,y;t)$. This yields the equation
\[t\Pgf(x,y)\Qgf_{j}(x,y;t)=\Sgf_{j,j-1}(x,y;t)+\Sgf_{j,j}(x,y;t)+\Sgf_{j,j+1}(x,y;t)+\Tgf_{j}(x,y;t).\]
Taking the difference between these to cancel the term $\Sgf_{j,j}$ yields \eqref{eq:eq_for_q_spiral}.
\end{proof}

\begin{Lemma}\label{lem:VH_tilde_ser_stuff} Define the series $\Agf(x_{-L};t)$ and $\Bgf(y_{K};t)$ by
\begin{align}
\label{eq:Agfdefinition}\Agf(x_{-L};t)&:=\Ugf_{0}(x_{-L};t)+x_{-L}\Ugf_{1}(t),\\
\label{eq:BgfPVdefinition}\Bgf(y_{K};t)&:=\Ugf_{2}(y_{K};t).
\end{align}
 If $M\geq 2$ then the series $\Agf(x_{-L})$ and $\Bgf(y_{K})$ satisfy the equations
\begin{align}
\begin{split}K(x,y)\Qgf_{j}(x,y)&=-\delta_{j,0}x^{p}y^{q}-x_{j}\Lgf_{j}(y_{j})+y_{j}\Rgf_{j+1}(y_{j})\\
&\qquad-x_{j}\Rgf_{j}(x_{j})+y_{j}^{-1}\Lgf_{j-1}(x_{j})+\Fgf_{j}
\end{split}\qquad\text{for $1-L\leq j\leq K-1$},\label{eq:Vj_Hj_series}\\
K(x,y)\Qgf_{-L}(x,y)&=-\delta_{-L,0}x^{p}y^{q}-x_{-L}\Lgf_{-L}(y_{-L})+y_{-L}\Rgf_{-L+1}(y_{-L})+\Agf(x_{-L})+\Fgf_{-L},\label{eq:V-L_A_series}\\
K(x,y)\Qgf_{K}(x,y)&=-\delta_{K,0}x^{p}y^{q}-x_{K}\Rgf_{K}(x_{K})+y_{K}^{-1}\Lgf_{K-1}(x_{K})+\Bgf(y_{K})+\Fgf_{K}.\label{eq:HK_B_series}
\end{align}
If $M=1$ the series $\Agf(x_{0})$ and $\Bgf(y_{0})$ satisfy the equation
\begin{align}
K(x,y)\Qgf_{0}(x,y)&=-x^{p}y^{q}+\Agf(x_{0})+\Bgf(y_{0})+\Fgf_{0},\label{eq:AB_series_M1}
\end{align}
Moreover, in either case, $\Agf(x_{-L};t)\in\mathbb{R}[x_{-L}][[t]]$ and $\Bgf(y_{K};t)\in y_{K}\mathbb{R}[y_{K}][[t]]$.
\end{Lemma}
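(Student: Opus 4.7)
The plan is to derive each equation directly from the previously established identity \eqref{eq:eq_for_q_spiral} by substituting the definitions \eqref{eq:bc_Rgf_def}, \eqref{eq:bc_Lgf_def}, \eqref{eq:Tsplitting} of the series $\Rgf_j$, $\Lgf_j$, $\Tgf_j$ and then rewriting everything consistently in the coordinates $(x_j, y_j)$. The key algebraic input is the two identities $y_j = x_{j+1}$ and $x_{j-1} = y_j^{-1}$, both of which are immediate from the period-four cycle $x_{j+4}=x_j$ together with $x_{j+2}=1/x_j$ satisfied by the rotated variables. Using these, we can rewrite
\[
\Sgf_{j-1,j}=x_j\Rgf_j(x_j;t),\quad \Sgf_{j,j+1}=y_j\Rgf_{j+1}(y_j;t),\quad \Sgf_{j+1,j}=x_j\bigl(\Lgf_j(y_j;t)-\delta_{j,-L}\Ugf_1(t)\bigr),
\]
and $\Sgf_{j,j-1}=y_j^{-1}\bigl(\Lgf_{j-1}(x_j;t)-\delta_{j-1,-L}\Ugf_1(t)\bigr)$, where the $\delta$-terms come from the correction in the definition \eqref{eq:bc_Lgf_def} of $\Lgf_{-L}$.

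Plugging these expressions along with the splitting \eqref{eq:Tsplitting} of $\Tgf_j$ into \eqref{eq:eq_for_q_spiral} immediately produces formulas of the claimed shape, provided we check that all of the stray $\Ugf_1(t)$ contributions either cancel or combine into $\Agf(x_{-L};t)$. The three $\Ugf_1(t)$ contributions arise from the index $j=-L$ in $\Sgf_{j+1,j}$, the index $j-1=-L$ in $\Sgf_{j,j-1}$, and the term $\delta_{j,1-L}x_{-L}\Ugf_1(t)$ in \eqref{eq:Tsplitting}. I will handle the three cases $j$ interior, $j=-L$, $j=K$ (together with $M=1$) separately. In the interior case $1-L\le j\le K-1$ with $j=1-L$, the contribution $-y_j^{-1}\Ugf_1$ from $\Sgf_{j,j-1}$ exactly matches the $+x_{-L}\Ugf_1$ coming from $\Tgf_j$, because $y_{1-L}=1/x_{-L}$; all other interior $j$ have no $\Ugf_1$ contribution at all. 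This yields \eqref{eq:Vj_Hj_series} after grouping the remaining terms into $\Fgf_j(t)$. For $j=-L$ and $M\ge 2$, the terms $\Sgf_{-L-1,-L}$ and $\Sgf_{-L,-L-1}$ vanish identically, and the only surviving $\Ugf_1$-term $+x_{-L}\Ugf_1(t)$ combines with $\Ugf_0(x_{-L};t)$ coming from $\Tgf_{-L}$ to form $\Agf(x_{-L};t)$, giving \eqref{eq:V-L_A_series}. The symmetric case $j=K$ yields \eqref{eq:HK_B_series} via $\Bgf(y_K;t)=\Ugf_2(y_K;t)$; here for $M=2$ the two residual $\Ugf_1$ terms again cancel using $y_K^{-1}=x_{-L}$.

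The case $M=1$ is almost trivial: all $\Sgf_{\pm1,0}$ and $\Sgf_{0,\pm1}$ vanish because $\tilde{\Gamma}_{\pm 1}\not\subset\Pi_{0,0}$, so \eqref{eq:eq_for_q_spiral} reduces to $\Kgf(x,y;t)\Qgf_0 = -x^py^q+\Tgf_0(x,y;t)$. Since there are no cross-quadrant steps, $\Ugf_1(t)\equiv 0$, so $\Agf(x_0;t)=\Ugf_0(x;t)$ and $\Bgf(y_0;t)=\Ugf_2(y;t)$, giving \eqref{eq:AB_series_M1} after splitting $\Tgf_0$ according to Lemma \ref{lem:T_series_cases}. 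Finally, the membership statements $\Agf(x_{-L};t)\in\mathbb{R}[x_{-L}][[t]]$ and $\Bgf(y_K;t)\in y_K\mathbb{R}[y_K][[t]]$ follow directly from the definitions \eqref{eq:Agfdefinition}, \eqref{eq:BgfPVdefinition} together with the already-noted facts $\Ugf_0(x_{-L};t)\in x_{-L}\mathbb{R}[x_{-L}][[t]]$, $\Ugf_1(t)\in\mathbb{R}[[t]]$, and $\Ugf_2(y_K;t)\in y_K\mathbb{R}[y_K][[t]]$.

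The routine parts are the substitution and bookkeeping, so the only real obstacle is the $M=2$ boundary case: there $K=1-L$ forces a single equation to simultaneously contribute three different $\Ugf_1$-terms (from $\Sgf_{K-1,K}$'s $\delta_{K-1,-L}$, from $\Sgf_{K+1,K}$'s $\delta_{K,-L}$ absent, and from $\Tgf_K$'s $\delta_{K,1-L}$), and verifying that what survives is precisely $\Bgf(y_K;t)=\Ugf_2(y_K;t)$ requires the coincidence $y_K^{-1}=x_{-L}$, which I will state explicitly once and apply at both the $j=-L$ and $j=K$ boundaries.
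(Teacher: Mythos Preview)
Your proof is correct and follows the same route as the paper: both derive the equations by substituting the definitions \eqref{eq:bc_Rgf_def}, \eqref{eq:bc_Lgf_def}, and \eqref{eq:Tsplitting} into \eqref{eq:eq_for_q_spiral}, and both obtain the membership statements from the known facts about $\Ugf_0$, $\Ugf_1$, $\Ugf_2$. The paper's proof is a two-sentence ``expand and substitute'' remark, whereas you spell out the $\Ugf_1$ bookkeeping (the cancellation via $y_{1-L}^{-1}=x_{-L}$ in the interior case $j=1-L$, the absorption into $\Agf$ at $j=-L$, and the double cancellation at $j=K$ when $M=2$); this extra detail is helpful but not a different argument.
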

\begin{proof}
First, \eqref{eq:Vj_Hj_series}, \eqref{eq:V-L_A_series} and \eqref{eq:HK_B_series} result from expanding \eqref{eq:eq_for_q_spiral} for different values of $j$ using \eqref{eq:Tsplitting}, and using \eqref{eq:bc_Rgf_def} and \eqref{eq:bc_Lgf_def} to write $\Sgf$ in terms of $\Rgf$ and $\Lgf$. The equation \eqref{eq:AB_series_M1} follows from \eqref{eq:eq_for_q_spiral} at $j=0$, as in this case there is only one quadrant so the series $\Sgf_{j,k}=0$ (since $j,k$ are not both $0$) and we have $\Tgf_{0}(x,y;t)=\Agf(x_{0};t)+\Bgf(y_{0};t)+\Fgf_{0}(t)$. As stated just below \eqref{eq:Tsplitting}, we have $\Ugf_{0}(x_{-L};t)\in x_{-L}\mathbb{R}[x_{-L}][[t]]$, $\Ugf_{1}(t)\in \mathbb{R}[[t]]$ and $\Ugf_{2}(y_{K};t)\in y_{K}\mathbb{R}[y_{K}][[t]]$, hence from the definitions \eqref{eq:Agfdefinition} and \eqref{eq:BgfPVdefinition}, we have $\Agf(x_{-L};t)\in\mathbb{R}[x_{-L}][[t]]$ and $\Bgf(y_{K};t)\in y_{K}\mathbb{R}[y_{K}][[t]]$.
\end{proof}

\subsection{Analytic reformulation of functional equations}
\label{subsec:bc_analytic_functional_equations}
As for the three-quadrant cone, we will fix $t<1/|S|$, substitute $x=X(z)$, $y=Y(z)$ and rewrite the equations in terms of $z$, using results from Section \ref{sec:param} regarding $X(z)$ and $Y(z)$. We start by defining $X_{j}(z)$ and $Y_{j}(z)$ analogously to our definitions of $x_{j}$ and $y_{j}$, to be the functions such that $(x,y)=(X(z),Y(z))$ if and only if $(x_{j},y_{j})=(X_{j}(z),Y_{j}(z))$. That is
\begin{align*}
X_{4k}(z)&=Y_{4k-1}(z)=X(z)\\
X_{4k+1}(z)&=Y_{4k}(z)\quad=Y(z)\\
X_{4k+2}(z)&=Y_{4k+1}(z)=\frac{1}{X(z)}\\
X_{4k+3}(z)&=Y_{4k+2}(z)=\frac{1}{Y(z)}.
\end{align*}
Furthermore, we define similarly shifted versions of $\gamma$ and the roots $\alpha$ and $\beta$ of $X(z)$ and $Y(z)$ from Proposition \ref{prop:th_param}.
\begin{Definition}\label{def:jstuff}
For $k\in\mathbb{Z}$, define $\beta_{4k}=\beta+k\pi\tau$, $\beta_{4k+1}=\delta+k\pi\tau$, $\beta_{4k-1}=\alpha+k\pi\tau$ and $\beta_{4k-2}=\epsilon+k\pi\tau$. Moreover, define $\gamma_{2k}=\gamma+k\pi\tau$ and $\gamma_{2k-1}=-\gamma+k\pi\tau$, and defined $\alpha_{k}=\beta_{k-1}$.
\end{Definition}
The reason for these definitions is so that the following simple Proposition holds for all $j$.
\begin{Proposition}\label{prop:jstuff}
The functions $X_j(z),Y_{j}(z)$, the spaces $\Omega_{j}$ and the constants $\alpha_{j},\beta_{j},\gamma_{j}$ satisfy the following properties
\begin{enumerate}[label={\rm(\roman*)},ref={\rm(\roman*)}]
\item $|Y_{j}(z)|\leq 1$ for $z\in\Omega_{j}\cup\Omega_{j+1}$, \label{cond:Yjleq1}
\item $|X_{j}(z)|\leq 1$ for $z\in\Omega_{j}\cup\Omega_{j-1}$, \label{cond:Xjleq1}
\item $Y_{j}(\gamma_{j}-z)=Y_{j}(z)$, \label{cond:Yjflippy}
\item $\gamma_{j}-\Omega_{j}\cup\Omega_{j+1}=\Omega_{j}\cup\Omega_{j+1}$, \label{cond:Omegaj_flippy}
\item $\beta_{j}$ and $\gamma_{j}-\beta_{j}$ are the roots of $Y_{j}$ in $\Omega_{j}\cup\Omega_{j+1}$, \label{cond:Yj_roots}
\item $\alpha_{j}$ and $\gamma_{j-1}-\alpha_{j}$ are the roots of $X_{j}$ in $\Omega_{j}\cup\Omega_{j-1}$. \label{cond:Xj_roots}
\end{enumerate}
\end{Proposition}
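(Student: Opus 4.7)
My plan is to prove the six conditions via two structural reductions followed by a direct case check. First, observe that (ii) and (vi) are instances of (i) and (v). Comparing the definitions of $X_j, Y_j$ at the start of Section 5.1 yields $X_j(z) = Y_{j-1}(z)$; by Definition \ref{def:jstuff}, also $\alpha_j = \beta_{j-1}$ and $\gamma_{j-1}$ is the appropriate flip-constant paired with $X_j$. Hence (ii) at index $j$ is (i) at index $j-1$, and (vi) at $j$ is (v) at $j-1$. It therefore suffices to verify (i), (iii), (iv), (v) for all $j \in \mathbb{Z}$.

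Second, the relevant data are $\pi\tau$-periodic in the following sense: $Y_{j+4}(z) = Y_j(z)$ by the $\pi\tau$-periodicity of $X$ and $Y$ (Lemma \ref{lem:param}), while $\beta_{j+4} = \beta_j + \pi\tau$ and $\gamma_{j+4} = \gamma_j + 2\pi\tau$ by Definition \ref{def:jstuff}. Together with the auxiliary identity $\Omega_{j+4} = \Omega_j + \pi\tau$, each of (i), (iii), (iv), (v) at index $j+4$ becomes the same assertion at index $j$ after the substitution $z \mapsto z - \pi\tau$. Thus it suffices to check these four conditions for $j \in \{0, 1, 2, 3\}$, and in each sub-case the verification is a direct translation of an already established statement. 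Condition (iv) is one of the two flipping identities in Lemma \ref{lem:Omega}: for $j = 2s$ one takes $\gamma_{2s} = \gamma + s\pi\tau$ and the first identity, for $j = 2s-1$ one takes $\gamma_{2s-1} = -\gamma + s\pi\tau$ and the second. Condition (i) follows by substituting the appropriate representative of $Y_j \in \{Y, 1/X, 1/Y, X\}$ into the partition descriptions of Lemma \ref{lem:Omega}. Condition (iii) combines the symmetries $X(-\gamma - z) = X(z)$ and $Y(\gamma - z) = Y(z)$ with $\pi\tau$-periodicity. Finally, (v) uses Proposition \ref{prop:th_param}, which locates $\alpha, \beta, \delta, \epsilon$ in specific $\Omega$-regions and thereby identifies one root of $Y_j$ in $\Omega_j \cup \Omega_{j+1}$; the second root is then $\gamma_j - \beta_j$ by (iv), and Lemma \ref{lem:param} guarantees there are no others per fundamental domain.

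The main technical point is the auxiliary claim $\Omega_{j+4} = \Omega_j + \pi\tau$. This is geometrically transparent from Figure \ref{fig:Omega_chart}, since the curves $\{|X|=1\}$ and $\{|Y|=1\}$ are $\pi\tau$-periodic and a vertical shift by $\pi\tau$ crosses exactly four regions of the partition. To establish it rigorously, I would set $\tilde{\Omega}_s := \Omega_{s-4} + \pi\tau$ and verify that $\{\tilde{\Omega}_s\}_{s \in \mathbb{Z}}$ satisfies every property listed in Lemma \ref{lem:Omega}: the union characterizations are immediate from the $\pi\tau$-periodicity of $|X|$ and $|Y|$, the $\pi$-invariance is inherited, and the two flipping identities translate by the shift. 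The fact that the labeling offset is precisely $+4$ (rather than, say, $-4$) is pinned down by requiring $\beta, \delta$ to retain the $\Omega$-memberships recorded in Proposition \ref{prop:th_param} under the shifted labeling.
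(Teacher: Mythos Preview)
Your proof is correct and follows essentially the same approach as the paper: reduce (ii) and (vi) to (i) and (v) via $X_j=Y_{j-1}$ and $\alpha_j=\beta_{j-1}$, then verify (i), (iii), (iv), (v) by appealing to Lemma~\ref{lem:Omega}, Lemma~\ref{lem:param}, and Proposition~\ref{prop:th_param}. The one difference is that you insert an intermediate reduction to $j\in\{0,1,2,3\}$ via the auxiliary claim $\Omega_{j+4}=\Omega_j+\pi\tau$, whereas the paper bypasses this step entirely: the flipping identities in Lemma~\ref{lem:Omega} are already stated for all $s\in\mathbb{Z}$, so (iv) follows directly for every $j$ (taking $j=2s$ or $j=2s-1$ exactly as you describe), and likewise (i) and (iii); for (v) the paper simply writes out the four residue classes $4k,4k\pm1,4k-2$ with general $k$. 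Your auxiliary claim is true and your justification of it is fine, but it is not needed---indeed, your own case checks for (iii) and (iv) already work for arbitrary $j$, so the reduction is redundant even within your write-up.
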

\begin{proof} Since $X_{j}=Y_{j-1}$ and $\alpha_{j}=\beta_{j-1}$, Conditions \ref{cond:Yjleq1} and \ref{cond:Xjleq1} are equivalent, while Conditions \ref{cond:Yj_roots} and \ref{cond:Xj_roots} are also equivalent, so it suffices to prove \ref{cond:Yjleq1}, \ref{cond:Yjflippy}, \ref{cond:Omegaj_flippy} and \ref{cond:Yj_roots}. First, \ref{cond:Yjleq1} and \ref{cond:Omegaj_flippy} follow from Lemma \ref{lem:Omega}. For $j$ even, \ref{cond:Yjflippy} follows from
\[Y(z)=Y(\gamma-z)=Y(\gamma_{j}-z),\]
while for $j$ odd it follows from
\[X(z)=X(-\gamma-z)=X(\gamma_{j}-z).\]
Finally from Proposition \ref{prop:th_param}, we have
\begin{itemize}
\item $\alpha,-\gamma-\alpha\in\Omega_{0}\cup\Omega_{-1}$ are roots of $X(z)$, so $\beta_{4k-1},\gamma_{4k-1}-\beta_{4k-1}\in\Omega_{4k-1}\cup\Omega_{4k}$ are roots of $Y_{4k-1}(z)=X(z)$,
\item $\beta,\gamma-\beta\in\Omega_{0}\cup\Omega_{1}$ are roots of $Y(z)$, so $\beta_{4k},\gamma_{4k}-\beta_{4k}\in\Omega_{4k}\cup\Omega_{4k+1}$ are roots of $Y_{4k}(z)=Y(z)$,
\item $\delta,\pi\tau-\gamma-\delta\in\Omega_{1}\cup\Omega_{2}$ are poles of $X(z)$, so $\beta_{4k+1},\gamma_{4k+1}-\beta_{4k+1}\in\Omega_{4k+1}\cup\Omega_{4k+2}$ are roots of $Y_{4k+1}(z)=\frac{1}{X(z)}$,
\item $\epsilon,\gamma-\pi\tau-\epsilon\in\Omega_{-2}\cup\Omega_{-1}$ are poles of $Y(z)$, so $\beta_{4k-2},\gamma_{4k-2}-\beta_{4k-2}\in\Omega_{4k-2}\cup\Omega_{4k-1}$ are roots of $Y_{4k-2}(z)=\frac{1}{Y(z)}$.
\end{itemize}
\end{proof}

\begin{Proposition} Define the functions $V_{j}(z)$, $A(z)$, $B(z)$ and constants $F_{j}$ by
\begin{align}
\label{eq:Vjdefinition}V_{j}(z)&:=X_{j}(z)\Lgf_{j}(Y_{j}(z);t)-Y_{j}(z)\Rgf_{j+1}(Y_{j}(z);t),\qquad&&\text{for $z\in\Omega_{j}\cup\Omega_{j+1}$},\\
\label{eq:bc_PHdefinition}A(z)&:=\Agf(X_{-L}(z);t),\qquad&&\text{for $z\in\Omega_{-L-1}\cup\Omega_{-L}$},\\
\label{eq:bc_PVdefinition}B(z)&:=\Bgf(Y_{K}(z);t),\qquad&&\text{for $z\in\Omega_{K}\cup\Omega_{K+1}$},\\
\label{eq:Fdefinition}F_{j}&:=\Fgf_{j}(t).
\end{align}
These functions are well defined. Moreover, they satisfy the equations
\begin{align}
0&=\delta_{j,0}X(z)^{p}Y(z)^{q}+V_{j}(z)-V_{j-1}(z)-F_{j}~~&&\text{for $z\in\Omega_{j}$},~~~~\text{for $-L<j< K$},\label{eq:Vj_Hj_analytic}\\
0&=\delta_{-L,0}X(z)^{p}Y(z)^{q}+V_{-L}(z)-A(z)-F_{-L}~~&&\text{for $z\in\Omega_{-L}$ if $M\geq2$}\label{eq:V-L_A_analytic}\\
0&=\delta_{K,0}X(z)^{p}Y(z)^{q}-V_{K-1}(z)-B(z)-F_{K}~~&&\text{for $z\in\Omega_{K}$ if $M\geq2$}\label{eq:HK_B_analytic}\\
0&=X(z)^{p}Y(z)^{q}-A(z)-B(z)-F_{0}~~&&\text{for $z\in\Omega_{0}$ if $M=1$}\label{eq:B_A_analyticM1}\\
A(z)&=A(\gamma_{-L-1}-z),\label{eq:A_flippy}&&\text{for $z\in\Omega_{-L-1}\cup\Omega_{-L}$}\\
B(z)&=B(\gamma_{K}-z),\label{eq:B_flippy}&&\text{for $z\in\Omega_{K}\cup\Omega_{K+1}$}
\end{align}
\end{Proposition}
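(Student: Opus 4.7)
The proposition has three independent claims: (1) the functions $V_j, A, B$ are well-defined on the stated domains, (2) they satisfy the functional equations \eqref{eq:Vj_Hj_analytic}--\eqref{eq:B_A_analyticM1}, and (3) they satisfy the reflection symmetries \eqref{eq:A_flippy}--\eqref{eq:B_flippy}. The plan is to attack these in order, each being a straightforward consequence of earlier results in the excerpt.

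First I would establish well-definedness using the modulus bounds from Proposition \ref{prop:jstuff}. On $\Omega_j\cup\Omega_{j+1}$ we have $|Y_j(z)|\le 1$ by \ref{cond:Yjleq1}, so substituting $y_j=Y_j(z)$ into $\Lgf_j(y_j;t)$ and $\Rgf_{j+1}(y_j;t)$ yields absolutely convergent series for $t\in(0,1/\Pgf(1,1))$: these are combinatorial series in $\mathbb{R}[y_j][[t]]$ with non-negative coefficients, whose $[t^n]$-coefficients are bounded at $y_j=1$ by the total walk count $\Pgf(1,1)^n$, which is the same uniform bound used in Lemma \ref{lem:Q_series_convergence}. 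The same pattern, using $|X_{-L}(z)|\le 1$ on $\Omega_{-L-1}\cup\Omega_{-L}$ (Proposition \ref{prop:jstuff}\ref{cond:Xjleq1}) and $|Y_K(z)|\le 1$ on $\Omega_K\cup\Omega_{K+1}$ (\ref{cond:Yjleq1}), handles $A(z)$ and $B(z)$.

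Next I would derive the functional equations by substituting $(x,y)=(X(z),Y(z))$ into the corresponding identities of Lemma \ref{lem:VH_tilde_ser_stuff}. The left-hand sides vanish since $\Kgf(X(z),Y(z))=0$ by Lemma \ref{lem:param}. The pair $-x_j\Lgf_j(y_j;t)+y_j\Rgf_{j+1}(y_j;t)$ collapses to $-V_j(z)$ directly from \eqref{eq:Vjdefinition}, while the pair $-x_j\Rgf_j(x_j;t)+y_j^{-1}\Lgf_{j-1}(x_j;t)$ collapses to $+V_{j-1}(z)$; this last step requires the pointwise identities $x_j\leftrightarrow Y_{j-1}(z)$ and $y_j^{-1}\leftrightarrow X_{j-1}(z)$ under the substitution, which I would verify by inspection on each residue class of $j$ modulo $4$ from the definitions of $x_j, y_j, X_j, Y_j$. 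The substitution is legitimate on $\Omega_j$ because both $V_j$ and $V_{j-1}$ are defined there by the first step applied to $\Omega_j\cup\Omega_{j+1}$ and $\Omega_{j-1}\cup\Omega_j$ respectively. The boundary equations \eqref{eq:V-L_A_analytic}, \eqref{eq:HK_B_analytic} and \eqref{eq:B_A_analyticM1} are handled identically, using definitions \eqref{eq:bc_PHdefinition}, \eqref{eq:bc_PVdefinition}, \eqref{eq:Fdefinition} to replace the $\Agf$, $\Bgf$, $\Fgf_j$ contributions by $A(z)$, $B(z)$, $F_j$.

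The symmetries are then immediate. Since $X_{-L}(z)=Y_{-L-1}(z)$, applying Proposition \ref{prop:jstuff}\ref{cond:Yjflippy} at $j=-L-1$ gives $X_{-L}(\gamma_{-L-1}-z)=X_{-L}(z)$, and \ref{cond:Omegaj_flippy} guarantees $\gamma_{-L-1}-z\in\Omega_{-L-1}\cup\Omega_{-L}$ whenever $z$ is; combined with \eqref{eq:bc_PHdefinition} this yields \eqref{eq:A_flippy}. The identity \eqref{eq:B_flippy} follows identically via \ref{cond:Yjflippy} at $j=K$. The main obstacle is not mathematical but notational: I would need to keep the four-fold cyclic identifications of $(X_j, Y_j, x_j, y_j)$ coherent, in particular verifying the identity $Y_j(z)^{-1}=X_{j-1}(z)$ that produces the telescoping $V_j(z)-V_{j-1}(z)$ structure on the right-hand side of \eqref{eq:Vj_Hj_analytic}.
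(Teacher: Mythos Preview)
Your proposal is correct and follows essentially the same approach as the paper: establish absolute convergence via the modulus bounds of Proposition~\ref{prop:jstuff} together with the walk-counting estimate $[t^n]\le\Pgf(1,1)^n$, then substitute $(x,y)=(X(z),Y(z))$ into the identities of Lemma~\ref{lem:VH_tilde_ser_stuff} and use $\Kgf(X(z),Y(z))=0$, and finally read off the reflection symmetries from \ref{cond:Yjflippy} and \ref{cond:Omegaj_flippy}. Your treatment is in fact slightly more explicit than the paper's, which simply asserts that the equations ``follow from substituting'' without spelling out the telescoping identity $Y_j(z)^{-1}=X_{j-1}(z)$ that you correctly flag as the key bookkeeping step.
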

\begin{proof}
We start by showing that the functions are well defined and meromorphic on the domains on which they are defined. It suffices to show that the series defining the functions converge absolutely. The series $x_{j}\tilde{\Lgf}_{j}(y_{j};t)$, $x_{j}\tilde{\Rgf}_{j}(x_{j};t)$, $\Agf(x_{-L};t)$, $\Bgf(y_{K};t)$ and $\Fgf_{j}(t)$ all count some subset of (weighted) walks in the plane starting a certain point $(p,q)$. Since the total weighted number of walks of length $t$ is $P(1,1)^n$, all series must converge absolutely when $|x|=|y|=1$ and $t\in\left(0,\frac{1}{\Pgf(1,1)}\right)$. Hence the series defining \eqref{eq:Vjdefinition} and  \eqref{eq:bc_PVdefinition} converge absolutely, as $|Y_{j}(z)|\leq 1$ for $z\in\Omega_{j}\cup\Omega_{j+1}$, while the series defining \eqref{eq:bc_PHdefinition} also converges absolutely because $|X_{j}(z)|\leq 1$ for $z\in\Omega_{j}\cup\Omega_{j-1}$. Now, \eqref{eq:Vj_Hj_analytic}, \eqref{eq:V-L_A_analytic} and \eqref{eq:HK_B_analytic} follow from from substituting $(x,y)\to(X(z),Y(z))$ into \eqref{eq:Vj_Hj_series}, \eqref{eq:V-L_A_series} and \eqref{eq:HK_B_series} respectively. \eqref{eq:B_A_analyticM1} follows from from substituting $(x,y)\to(X(z),Y(z))$ into \eqref{eq:AB_series_M1}. Finally, \eqref{eq:A_flippy} holds because, by Proposition \ref{prop:jstuff}, $\gamma_{-L-1}-\left(\Omega_{-L-1}\cup\Omega_{-L}\right)=\Omega_{-L-1}\cup\Omega_{-L}$ and for $z\in \Omega_{-L-1}\cup\Omega_{-L}$, we have $X_{-L}(z)=X_{L}(\gamma_{-L-1}-z)$. Similarly, \eqref{eq:B_flippy} holds because $\gamma_{K}-(\Omega_{K}\cup\Omega_{K+1})=\Omega_{K}\cup\Omega_{K+1}$ and for $z\in \Omega_{K}\cup\Omega_{K+1}$, we have $Y_{K}(z)=Y_{K}(\gamma_{K}-z)$.
\end{proof}

\begin{Theorem}\label{thm:bc_PH_PV_characterisation}
The functions $A(z)$, $B(z)$ extend to meromorphic functions on $\mathbb{C}$ which, along with the constant
\begin{align*}
F&:=\sum_{j=-L}^{K}\Fgf_{j}(t),
\end{align*} are characterised by the equations
\begin{align}X(z)^{p}Y(z)^{q}&=F+A(z)+B(z),\label{eq:bc_PH_PV_nice}\\
A(z)&=A(\gamma_{-L-1}-z),\label{eq:bc_PH_flippy_full}\\
B(z)&=B(\gamma_{K}-z),\label{eq:bc_PV_flippy_full}\\
A(z)&=A(z+\pi),\label{eq:bc_PH_pi}\\
B(z)&=B(z+\pi),\label{eq:bc_PV_pi}
\end{align}
along with the conditions
\begin{enumerate}[label={\rm(\roman*)},ref={\rm(\roman*)}]
\item $A(z)$ has no poles in $\Omega_{j}$ for $0\geq j\geq -L-1$,
\item $A(z)$ has roots at the roots $\alpha_{-L}$ and $\gamma_{-L-1}-\alpha_{-L}$ of $X_{-L}(z)$ in $\Omega_{-L-1}\cup\Omega_{-L}$,
\item $B(z)$ has no poles in $\Omega_{j}$ for $0\leq j\leq K+1$,
\item $B(z)$ has roots at the roots $\beta_{K}$ and $\gamma_{K}-\beta_{K}$ of $Y_{K}(z)$ in $\Omega_{K}\cup\Omega_{K+1}$.
\end{enumerate}
\end{Theorem}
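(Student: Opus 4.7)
The plan is to adapt the proof of Theorem~\ref{thm:PH_PV_characterisation} to this more general setting. I would first extend $A(z)$ and $B(z)$ meromorphically to $\mathbb{C}$. Initially $A(z)$ is defined on $\Omega_{-L-1}\cup\Omega_{-L}$ by \eqref{eq:bc_PHdefinition}; using \eqref{eq:V-L_A_analytic} I can rewrite it on $\Omega_{-L}$ in terms of $V_{-L}$, which is meromorphic on $\Omega_{-L}\cup\Omega_{-L+1}$, thereby extending $A$ to $\Omega_{-L+1}$. Iterating this with \eqref{eq:Vj_Hj_analytic} walks $A$ across $\Omega_{-L+2},\ldots,\Omega_{K-1}$, and finally \eqref{eq:HK_B_analytic} together with the initial definition of $B$ extends $A$ across $\Omega_K\cup\Omega_{K+1}$; $B$ is extended symmetrically. (For $M=1$ the extension is already immediate from \eqref{eq:B_A_analyticM1}.)

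Next I would derive \eqref{eq:bc_PH_PV_nice}. For $M\geq 2$, adding \eqref{eq:V-L_A_analytic}, the sum of \eqref{eq:Vj_Hj_analytic} over $-L+1\leq j\leq K-1$, and \eqref{eq:HK_B_analytic} causes the $V_j$ to telescope completely, leaving $X(z)^p Y(z)^q = F + A(z) + B(z)$ on $\bigcup_{j=-L}^K \Omega_j$ with $F=\sum_j F_j$; for $M=1$ this is precisely \eqref{eq:B_A_analyticM1}. Combined with the reflection symmetries \eqref{eq:bc_PH_flippy_full} and \eqref{eq:bc_PV_flippy_full}, equation \eqref{eq:bc_PH_PV_nice} then lets me iteratively extend $A$ and $B$ to all of $\mathbb{C}$: composing the two reflections yields a translation identity by $\gamma_K-\gamma_{-L-1}$ which transports $A$ across successive strips, exactly mirroring the three-quadrant argument at the end of the proof of Theorem~\ref{thm:PH_PV_characterisation}. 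Conditions (i)--(iv) are then inherited from absolute convergence of the defining series on each $\Omega_j$ (using Proposition~\ref{prop:jstuff}\ref{cond:Xjleq1} and \ref{cond:Yjleq1}) together with the fact, recorded in Lemma~\ref{lem:VH_tilde_ser_stuff}, that $\Agf(0;t)=0$ and $\Bgf(0;t)=0$; this makes $A$ vanish at the two roots of $X_{-L}$ listed in Proposition~\ref{prop:jstuff}\ref{cond:Xj_roots} and $B$ at the roots of $Y_K$.

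For uniqueness, given another triple $(\hat A,\hat B,\hat F)$ satisfying the same conditions, set $\Delta(z):=A(z)-\hat A(z)=\hat B(z)-B(z)+\hat F-F$. Equations \eqref{eq:bc_PH_PV_nice}--\eqref{eq:bc_PV_pi} force $\Delta(z+\pi)=\Delta(\gamma_{-L-1}-z)=\Delta(\gamma_K-z)=\Delta(z)$; composing the two reflections shows $\Delta$ is elliptic with periods $\pi$ and $\gamma_K-\gamma_{-L-1}$, which are $\mathbb{R}$-linearly independent (one real, the other a nonzero purely imaginary $\mathbb{Z}$-combination of $\pi\tau$ and $\gamma$, computable from Definition~\ref{def:jstuff}). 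Conditions (i) and (iii) applied to both triples show that $\Delta$ has no poles on $\bigcup_{j=-L-1}^{K+1}\Omega_j$, which contains a full fundamental domain of this lattice, so $\Delta$ is constant. Condition (ii) then forces $\Delta(\alpha_{-L})=0$, hence $\Delta\equiv 0$ and $A=\hat A$; finally condition (iv) applied to both $B$ and $\hat B$ forces the residual constant $\hat F-F$ to vanish, giving $\hat F=F$ and $\hat B=B$.

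The main obstacle I anticipate is the bookkeeping needed to confirm that the successive meromorphic extensions patch together single-valuedly across all the $\Omega_j$ and that the iterated translations by $\gamma_K-\gamma_{-L-1}$ really do tile $\mathbb{C}$ for every admissible pair $(L,K)$. The precise form of this translation depends on the parity of $M$ --- it is a multiple of $\pi\tau$ when $M$ is even, and takes the form $\pm 2\gamma + k\pi\tau$ when $M$ is odd --- so I would want to verify uniformly that this period is always nonzero and independent of $\pi$ over $\mathbb{R}$. The three-quadrant case $M=3$, for which $\gamma_K-\gamma_{-L-1}=2\pi\tau-2\gamma$, recovers the argument already carried out for Theorem~\ref{thm:PH_PV_characterisation}.
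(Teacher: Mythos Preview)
Your proposal is correct and takes essentially the same approach as the paper. The paper packages the iterative extension as the explicit formula $A(z):=V_{j}(z)+\sum_{k=-L}^{j}\bigl(\delta_{k,0}X(z)^{p}Y(z)^{q}-F_{k}\bigr)$ on each $\Omega_j\cup\Omega_{j+1}$ and checks consistency region by region, and its uniqueness step uses $\Delta:=B-\hat B$ rather than $A-\hat A$, but these are purely cosmetic differences from what you describe.
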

\begin{proof}
We start by defining $A(z)$ at all points $z\in\Omega_{j}$ for $-L<j\leq K+1$. 
\begin{align}A(z)&:=V_{j}(z)+\sum_{k=-L}^{j}\left(\delta_{k,0}X(z)^{p}Y(z)^{q}-F_{k}\right),&&\text{for $z\in\Omega_{j}\cup\Omega_{j+1}$,}&&\text{for $-L\leq j< K$},\label{eq:AdefinVj} \\
A(z)&:=-B(z)+X(z)^{p}Y(z)^{q}-F,&&\text{for $z\in\Omega_{K}\cup\Omega_{K+1}$.}&&\label{eq:AdefinB}
\end{align}
We now have two definitions of $A(z)$ in each $\Omega_{j}$ for $-L\leq j\leq K$, noting that $A(z)$ was previously define only on $\Omega_{-L-1}\cup\Omega_{-L}$, so we need to show that these definitions are equivalent in each case. For $z\in\Omega_{-L}$ and $M\geq2$, the definition \eqref{eq:AdefinVj} (for $j=-L$) coincides with $A(z)$ (as previously defined) due to \eqref{eq:V-L_A_analytic}. For $z\in\Omega_{s}$, where $-L<s<K$, and $M\geq2$, the function $A(z)$ is defined by \eqref{eq:AdefinVj} for $j=s$ and $j=s-1$, and these definitions are equivalent due to \eqref{eq:Vj_Hj_analytic} at $j=s$. For $z\in\Omega_{K}$ and $M\geq2$ the definitions \eqref{eq:AdefinVj} (for $j=K-1$) and \eqref{eq:AdefinB} are equivalent due to \eqref{eq:HK_B_analytic}. Finally, for $M=1$, the only case to check is $z\in\Omega_{0}$, where \eqref{eq:AdefinB} holds due to \eqref{eq:B_A_analyticM1}. Hence $A(z)$ is well defined for $z\in\Omega_{-L-1}\cup\Omega_{-L}\cup\cdots\cup\Omega_{K}\cup\Omega_{K+1}$. Moreover, each expression defining $A(z)$ on a set $\Omega_{j}\cup\Omega_{j+1}$ is meromorphic, so $A(z)$ is meromorphic on $\Omega_{-L-1}\cup\Omega_{-L}\cup\cdots\cup\Omega_{K}\cup\Omega_{K+1}$. We can then define $A(z)$ on $\gamma_{-L-1}-\Omega_{-L-1}\cup\Omega_{-L}\cup\cdots\cup\Omega_{K}\cup\Omega_{K+1}$ using $A(z):=A(\gamma_{-L-1}-z)$, as this is consistent on the overlapping region $\Omega_{-L-1}\cup\Omega_{-L}=\gamma_{-L-1}-\Omega_{-L-1}\cup\Omega_{-L}$ by \eqref{eq:A_flippy}. Now, for $z\in \Omega_{K}\cup\Omega_{K+1}$, using \eqref{eq:B_flippy} yields
\[-A(z)+X(z)^{p}Y(z)^{q}=-A(\gamma_{K}-z)+X(\gamma_{K}-z)^{p}Y(\gamma_{K}-z)^{q},\]
so by our extended definition,
\[A(\gamma_{-L-1}-\gamma_{K}+z)=A(z)-X(z)^{p}Y(z)^{q}+X(\gamma_{K}-z)^{p}Y(\gamma_{K}-z)^{q},\]
and we can use this to extend our definition of $A$ to all of $\mathbb{C}$.

We then define $B(z)$ on $\mathbb{C}$ by $B(z)=-A(z)+X(z)^{p}Y(z)^{q}-F$, as, by \eqref{eq:AdefinB}, this is consistent with the value of $B(z)$ in the region $\Omega_{K}\cup\Omega_{K+1}$ where it was already defined. From this definition of $B(z)$, we have immediately \eqref{eq:bc_PH_PV_nice}. Moreover, we know that \eqref{eq:bc_PH_flippy_full}, \eqref{eq:bc_PV_flippy_full}, \eqref{eq:bc_PH_pi} and \eqref{eq:bc_PV_pi} hold on the subsets where $A(z)$ and $B(z)$ were originally defined, so they must hold on all of $\mathbb{C}$ due to the meromorphic extension.

Now we will show that conditions (i)-(iv) hold. First, consider the definition \eqref{eq:Vjdefinition} for $j\in[-L,K-1]$. We know that for $z\in\Omega_{j}\cup\Omega_{j+1}$, the series $\Lgf_{j}$ and $\Rgf_{j+1}$ defining $V_{j}$ converge, so $V_{j}$ can only have a pole in this region at poles of $X_{j}(z)$ and $Y_{j}(z)$. In particular, $V_{j}(z)$ has no poles in $\Omega_{j}$ because $|X_{j}(z)|,|Y_{j}(z)|\leq1$ in this region. Now, to prove (i), assume $j\in [-L+1,0]$ is an integer and recall that for $z\in\Omega_{j}$, the function $A(z)$ satisfies \eqref{eq:AdefinVj}. Since $j\leq 0$, we can rewrite this as
\[A(z)=V_{j}(z)+\delta_{j,0}X(z)^{p}Y(z)^{q}-\sum_{k=-L}^{j}F_{k}.\]
Now, as discussed, $V_{j}(z)$ does not have a pole here. Moreover, $\delta_{j,0}X(z)^{p}Y(z)^{q}$ cannot have a pole either, as either $j\neq 0$ and so this term vanishes or $j=0$ and neither $X(z)$ nor $Y(z)$ has a pole in $\Omega_{j}=\Omega_{0}$. Hence $A(z)$ does not a any poles in $\Omega_{j}$ for $j\in [-L+1,0]$. If $j\in \{-L-1,-L\},$ then $A(z)$ is defined by \eqref{eq:bc_PHdefinition}, which converges for $z\in\Omega_{-L}\cup\Omega_{-L-1}$, so $A(z)$ has no poles in this region either. In fact, by Lemma \ref{lem:VH_tilde_ser_stuff}, the series $\Agf$ has no constant term, so any root of $X_{-L}(z)$ in $\Omega_{-L}\cup\Omega_{-L-1}$ must be a root of $A(z)$, proving (ii).

Conditions (iii) and (iv) can be proven similarly. For (iii), we consider $j\in[0,K-1]$ and compare  \eqref{eq:AdefinVj} to \eqref{eq:bc_PH_PV_nice}, which yields
\[F+B(z)=-V_{j}(z)+\sum_{k=-L}^{j}F_{k},\]
so the fact that $V_{j}(z)$ has no poles in $\Omega_{j}$ implies that $B(z)$ also has no poles in $\Omega_{j}$. For $z\in \Omega_{K}\cup\Omega_{K+1},$ the function $B(z)$ is defined by \eqref{eq:bc_PVdefinition}, which converges for $z\in\Omega_{K}\cup\Omega_{K+1}$, so $B(z)$ has no poles in this region either, proving (iii). In fact, by Lemma \ref{lem:VH_tilde_ser_stuff}, the series $\Bgf$ has no constant term, so any root of $Y_{K}(z)$ in $\Omega_{K}\cup\Omega_{K+1}$ must be a root of $B(z)$, proving (iv).

Finally it remains to show that these conditions uniquely define the functions $A(z)$, $B(z)$ and the constant $F$. Suppose that $\hat{A}(z)$, $\hat{B}(z)$ and $\hat{F}$ is an arbitrary triple satisfying the same conditions. Then it suffices to show that $A(z)=\hat{A}(z)$ and $B(z)=\hat{B}(z)$. Then using \eqref{eq:bc_PH_PV_nice}, \eqref{eq:PH_flippy}, \eqref{eq:PV_flippy} and \eqref{eq:PH_pi}, we see that that the difference
\[\Delta(z):=B(z)-\hat{B}(z)=\hat{A}(z)-A(z)+\hat{F}-F,\]
satisfies $\Delta(z)=\Delta(\gamma_K-z)=\Delta(\gamma_{-L-1}-z)=\Delta(z+\pi)$. Moreover, the four conditions on $A(z)$ and $B(z)$ imply, respectively, that
\begin{enumerate}[label={\rm(\roman*)},ref={\rm(\roman*)}]
\item $\Delta(z)$ has no poles in $\Omega_{j}$ for $0\geq j\geq -L-1$,
\item The values $\alpha_{-L},\gamma_{-L-1}-\alpha_{-L}\in\Omega_{-L-1}\cup\Omega_{-L}$ are roots of $\Delta(z)+F-\hat{F}$,
\item $\Delta(z)$ has no poles in $\Omega_{j}$ for $0\leq j\leq K+1$,
\item The values $\beta_{K},\gamma_{K}-\beta_{K}\Omega_{K}\cup\Omega_{K+1}$ are roots of $\Delta(z)$.
\end{enumerate}
Together with $\Delta(z)=\Delta(\pi\tau-\gamma-z)=\Delta(-\pi\tau+\gamma-z)$, the conditions (i) and (iii) imply that $\Delta(z)$ is an elliptic function with no poles, so it is constant. Moreover, condition (iv) implies that $\Delta(z)$ does have roots, so $\Delta(z)$ is the $0$ function. Condition (ii) then implies that $F=\hat{F}$. Together with the definition of $\Delta$ we have $B(z)=\hat{B}(z)$ and $A(z)=\hat{A}(z)$, as required. 
\end{proof}

For convenience we will define $\hat{\tau}=\frac{1}{\pi}(\gamma_{K}-\gamma_{-L-1})$. Note that combining \eqref{eq:bc_PH_PV_nice}, \eqref{eq:bc_PH_flippy_full}, \eqref{eq:bc_PV_flippy_full} yields
\begin{equation}B(\pi\hat{\tau}+z)-B(z)=\tilde{J}(z),\label{eq:bc_PH_qdiff}\end{equation}
where $\tilde{J}(z)$ is an elliptic function with periods $\pi$ and $\pi\tau$ given by
\begin{equation}\tilde{J}(z):=X(\gamma_{-L-1}-z)^{p}Y(\gamma_{-L-1}-z)^{q}-X(z)^{p}Y(z)^{q}.\label{def:tildej}\end{equation}

To complete this section, we show that the series $\Lgf_{j}(y_{j};t)$ and $\Rgf_{j}(x_{j};t)$ are uniquely determined by $A(z)$ and $B(z)$. Then the generating functions $\Qgf_{j}(x,y;t)$ are determined by \eqref{eq:Vj_Hj_series}, \eqref{eq:V-L_A_series} and \eqref{eq:HK_B_series}. This justifies our claim that Theorem \ref{thm:bc_PH_PV_characterisation} is a restatement of the problem, and it will allow us to determine the nature of the series $\Qgf_{j}(x,y;t)$ in $x$ and $y$ in each case.

\begin{Proposition}Define $s_{j}=1$ for $j\geq0$ and $s_{j}=0$ for $j<0$. The series $\Lgf_{j}(y_{j})$ and $\Rgf_{j+1}(y_{j})\in\mathbb{R}[[y_{j}]]$ are determined from $A(z)$ using the following equations
\begin{align}
V_{j}(z)&=A(z)-s_{j}X(z)^{p}Y(z)^{q}+c_{j}\label{eq:Vj_in_Aandc}\\
\Lgf_{j}(Y_{j}(z))&=\frac{V_{j}(z)-V_{j}(\gamma_{j}-z)}{X_{j}(z)-X_{j}(\gamma_{j}-z)},\qquad&&\text{for $z\in\Omega_{j}\cup\Omega_{j+1}$},\label{eq:MinVj}\\
\Rgf_{j+1}(Y_{j}(z))&=\frac{X_{j}(\gamma_{j}-z)V_{j}(z)-X_{j}(z)V_{j}(\gamma_{j}-z)}{Y_{j}(z)(X_{j}(\gamma_{j}-z)-X_{j}(z))},\qquad&&\text{for $z\in\Omega_{j}\cup\Omega_{j+1}$},\label{eq:NinVj}
\end{align}
where
\begin{equation}\label{eq:cj_as_sum}c_{j}=\sum_{k=-L}^{j}F_{k}\end{equation}
is an explicit constant.
\end{Proposition}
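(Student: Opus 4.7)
The plan has two essentially independent steps. First I establish the affine formula \eqref{eq:Vj_in_Aandc} for $V_j(z)$, and then I invert the defining relation \eqref{eq:Vjdefinition} of $V_j$ using the reflection symmetry $Y_j(\gamma_j - z) = Y_j(z)$ to separately solve for $\Lgf_j(Y_j(z))$ and $\Rgf_{j+1}(Y_j(z))$.

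For the affine formula, I telescope the already-established analytic equations. The base case $j = -L$ is simply a rearrangement of \eqref{eq:V-L_A_analytic}: $V_{-L}(z) = A(z) + F_{-L} - \delta_{-L,0} X(z)^p Y(z)^q$. The inductive step uses \eqref{eq:Vj_Hj_analytic} in the form $V_j(z) - V_{j-1}(z) = F_j - \delta_{j,0} X(z)^p Y(z)^q$; summing from $-L$ telescopes to
\[V_j(z) = A(z) + \sum_{k=-L}^{j} F_k - \Bigl(\sum_{k=-L}^{j} \delta_{k,0}\Bigr) X(z)^p Y(z)^q,\]
which is \eqref{eq:Vj_in_Aandc} after identifying the first sum with $c_j$ and the indicator $\sum_{k=-L}^{j} \delta_{k,0}$ with $s_j$ (noting that since $-L \leq 0$, the indicator vanishes for $j < 0$ and equals $1$ for $j \geq 0$, matching the convention of the statement).

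For the inversion, the key input is Proposition \ref{prop:jstuff}(iii), which says that $z \mapsto \gamma_j - z$ fixes $Y_j$ but moves $X_j$. Substituting $z \mapsto \gamma_j - z$ in \eqref{eq:Vjdefinition} produces the companion relation
\[V_j(\gamma_j - z) = X_j(\gamma_j - z)\,\Lgf_j(Y_j(z)) - Y_j(z)\,\Rgf_{j+1}(Y_j(z)),\]
so that \eqref{eq:Vjdefinition} and this companion form a $2 \times 2$ linear system in $\Lgf_j(Y_j(z))$ and $\Rgf_{j+1}(Y_j(z))$ with nonzero determinant $Y_j(z)\bigl(X_j(\gamma_j - z) - X_j(z)\bigr)$. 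Elimination of $\Rgf_{j+1}$ (by subtracting the two equations) yields \eqref{eq:MinVj}, while forming the linear combination that eliminates $\Lgf_j$ (multiplying the first by $X_j(\gamma_j - z)$, the second by $X_j(z)$, and subtracting) yields \eqref{eq:NinVj}.

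The last point to address is that \eqref{eq:MinVj}--\eqref{eq:NinVj} at first only determine $\Lgf_j(Y_j(z))$ and $\Rgf_{j+1}(Y_j(z))$ as meromorphic functions of $z$, not as formal series. However, $Y_j$ maps $\Omega_j \cup \Omega_{j+1}$ onto an open subset of $\mathbb{C}$ containing a neighbourhood of $0$, and the series in $\mathbb{R}[y_j][[t]]$ are determined coefficient-by-coefficient in $t$ by their values on any such neighbourhood. The only minor technical subtlety is that the right-hand sides appear singular at the fixed points of $z \mapsto \gamma_j - z$ and at zeros of $Y_j$; these singularities are removable because the left-hand sides $\Lgf_j(Y_j(z))$ and $\Rgf_{j+1}(Y_j(z))$ are finite wherever the underlying series converge, and this is the main (modest) obstacle to the proof.
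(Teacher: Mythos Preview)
Your argument is correct and follows the paper's approach: telescoping \eqref{eq:V-L_A_analytic} with \eqref{eq:Vj_Hj_analytic} gives \eqref{eq:Vj_in_Aandc}, and then solving the $2\times 2$ system produced by the reflection $z\mapsto\gamma_j-z$ yields \eqref{eq:MinVj}--\eqref{eq:NinVj}. Two small remarks. First, your telescoping gives $s_j=1$ for $j\geq 0$ (since $-L\leq 0$ always lies in the summation range once $j\geq 0$), not the ``$s_j=0$ for $j\leq 0$'' stated; this is a typo in the statement, and your computation is the right one. Second, the paper's proof contains one extra ingredient you omit: to make the phrase ``determined from $A(z)$'' literal, it expresses $c_j$ purely in terms of $A$ by imposing that the numerator of \eqref{eq:NinVj} vanish at the root $\beta_j$ of $Y_j$, obtaining an explicit evaluation of $c_j$ from $A(\beta_j)$ and $A(\gamma_j-\beta_j)$. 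Your version instead leaves $c_j=\sum_{k=-L}^{j}F_k$, which is fine if the $F_k$ are taken as known data, but does not by itself recover $c_j$ from $A$ alone.
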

\begin{proof}
Combining \eqref{eq:Vj_Hj_analytic} and \eqref{eq:V-L_A_analytic} yields \eqref{eq:Vj_in_Aandc}. Furthermore,
we know that
\[V_{j}(z)=X_{j}(z)\Lgf_{j}(Y_{j}(z))+Y_{j}(z)\Rgf_{j+1}(Y_{j}(z))\qquad\text{for $z\in\Omega_{j}\cup\Omega_{j+1}$}.\]
Now, from Proposition \ref{prop:jstuff}, we have $\Omega_{j}\cup\Omega_{j+1}=\gamma_{j}-\Omega_{j}\cup\Omega_{j+1}$ and $Y_{j}(\gamma_{j}-z)=Y_{j}(z)$, so substituting $z\to\gamma_{j}-z$ into the equation above yields
\[V_{j}(\gamma_{j}-z)=X_{j}(\gamma_{j}-z)\Lgf_{j}(Y_{j}(z))+Y_{j}(z)\Rgf_{j+1}(Y_{j}(z))\qquad\text{for $z\in\Omega_{j}\cup\Omega_{j+1}$}.\]
Combining these two equations to solve for $\Lgf_{j}(Y_{j}(z))$ and $\Rgf_{j+1}(Y_{j}(z))$ yields \eqref{eq:MinVj} and \eqref{eq:NinVj}.

Finally, it remains to determine the constant $c_{j}$. We note that $\Rgf_{j+1}(Y_{j}(z))$ must converge for $z\in\Omega_{j}\cup\Omega_{j+1}$, in particular at $z=\beta_{j}$. But $Y_{j}(\beta_{j})=0$, so the numerator of \eqref{eq:NinVj} must also be $0$, that is,
\[X_{j}(\gamma_{j}-\beta_{j})V_{j}(\beta_{j})-X_{j}(\beta_{j})V_{j}(\gamma_{j}-\beta_{j})=0,\]
so
\[c_{j}=\frac{X_{j}(\gamma_{j}-\beta_{j})\left(A(\beta_{j})-s_{j}X(\beta_{j})^{p}Y(\beta_{j})^{q}\right)-X_{j}(\beta_{j})\left(A(\gamma_{j}-\beta_{j})-s_{j}X(\gamma_{j}-\beta_{j})^{p}Y(\gamma_{j}-\beta_{j})^{q}\right)}{X_{j}(\beta_{j})-X_{j}(\gamma_{j}-\beta_{j})}.\]
This formula determines the constant for $\beta_{j}\neq \frac{\gamma_{j}}{2}$. In the case $\beta_{j}= \frac{\gamma_{j}}{2}$, the denominator of \eqref{eq:NinVj} has a double root at $z=\beta_{j}=\frac{\gamma_{j}}{2}$, so the numerator of \eqref{eq:NinVj} must also have a double root at this point. Hence, we have
\[0=2X_{j}(\beta_{j})V_{j}'(\beta_{j})-2X_{j}'(\beta_{j})V_{j}(\beta_{j}),\]
so
\[c_{j}=\frac{X_{j}(\beta_{j})}{X_{j}'(\beta_{j})}(A'(\beta_{j})-\left.(s_{j}X(z)^pY(z)^q)'\right|_{z=\beta_{j}})+s_{j}X(\beta_{j})^pY(\beta_{j})^q-A(\beta_{j}).\]
\end{proof}
\section{Nature of series in an $M$-quadrant cone}
\label{sec:bc_solutions}
In this section we use the same methods as we used in Section \ref{sec:nature} to analyse the generating function of walks in the 3-quadrant cone. In particular we characterise the nature of the series $\Qgf_{j}(x,y;t)$. As we will show, for $M$ odd this nature is exactly the same as the nature of $\Cgf(x,y;t)$ which counts walks in the three-quadrant cone. For $M$ even the situation is different: the generating function is always D-finite, and it is algebraic if and only if $q=0$ and $L$ is even or, if we allow starting points outside the first quadrant, $p=0$ and $L$ is odd. These results are summarised in Figure \ref{fig:Characterisation_flowchart}.

In the previous section we reduced the problem to finding the unique meromorphic functions $A,B:\mathbb{C}\to\mathbb{C}\cup\{\infty\}$ and constant $\Fgf(t)$ characterised by Theorem \ref{thm:bc_PH_PV_characterisation} (for each $t$). This is a generalisation of Theorem \ref{thm:PH_PV_characterisation} as well as an analogous Theorem found by Raschel for walks in the quarter plane \cite{raschel2012counting}, which corresponds to $M=1$. 


\begin{figure}[ht]
\centering
   \includegraphics[scale=0.9]{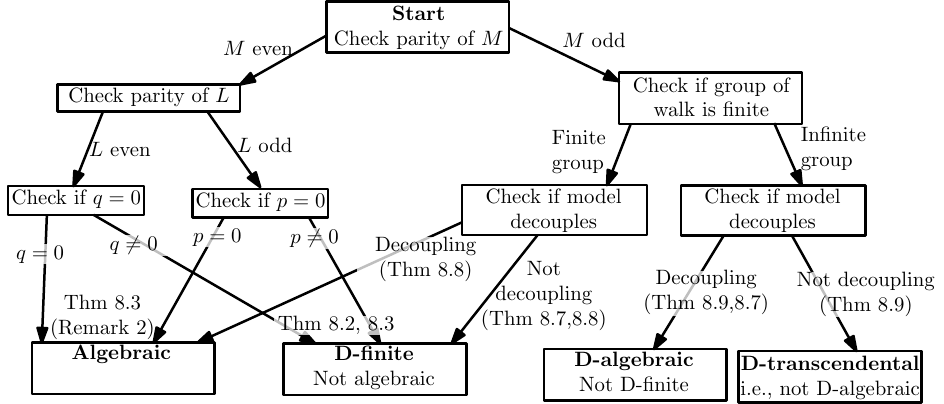}
   \caption{A chart showing how to determine the complexity of the series $\Qgf_{j}(x,y;t)$ counting walks starting at $(p,q)$ confined to an $M$-quadrant cone as a series in $x$ (or $y$) as proven by Theorems \ref{thm:bc_even_cone_D-finite}-\ref{thm:bc_D-algebraic_general_t}. Recall that $L$ determines which $M$ quadrants are used to define the $M$-quadrant cone.}
   \label{fig:Characterisation_flowchart}
\end{figure}

A first general result relates the complexity of $\Qgf_{j}(x,y;t)$ to that of the function $A(z)$ and $B(z)$ using the properties $X$-algebraic, $X$-D-finite and $X$-D-algebraic defined in appendix \ref{ap:nature_analytic}:
\begin{Lemma}\label{lem:bc_QjAB_complexities_equivalence}
Assume $\mathcal{P}$ is one of the properties {\em algebraic}, {\em D-finite} or {\em D-algebraic} and $t\in\left(0,\frac{1}{\Pgf(1,1)}\right)$ is fixed. The following are equivalent:
\begin{enumerate}[label={\rm(\roman*)},ref={\rm(\roman*)}]
\item The function $\Qgf_{j}(x,y;t)$  has property $\mathcal{P}$ as a function of $x$, \label{bc_propertyx}
\item The function $\Qgf_{j}(x,y;t)$  has property $\mathcal{P}$ as a function of $y$, \label{bc_propertyy}
\item $A(z)$ has property $X$-$\mathcal{P}$ (see Appendix \ref{ap:nature_analytic}), \label{bc_propertyHz}
\item $B(z)$ has property $X$-$\mathcal{P}$.\label{bc_propertyVz}
\end{enumerate}
\end{Lemma}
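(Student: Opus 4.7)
The plan is to extend the equivalences already established for the three-quadrant model in Theorems \ref{thm:D-finite_fixed_t}, \ref{thm:algebraic_fixed_t} and \ref{thm:D-algebraic_fixed_t} to the general $M$-quadrant setting, treating all three choices $\mathcal{P}\in\{\text{algebraic, D-finite, D-algebraic}\}$ uniformly. I would prove the cycle \ref{bc_propertyx} $\Longleftrightarrow$ \ref{bc_propertyHz} $\Longleftrightarrow$ \ref{bc_propertyVz} $\Longleftrightarrow$ \ref{bc_propertyy}.

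First I would handle \ref{bc_propertyHz} $\Longleftrightarrow$ \ref{bc_propertyVz}. This is immediate from the master equation \eqref{eq:bc_PH_PV_nice}, which reads
\[
A(z)+B(z)=X(z)^{p}Y(z)^{q}-F.
\]
Since $(X(z),Y(z))$ lies on the kernel curve $K(X,Y;t)=0$, the function $Y(z)$ is algebraic over $X(z)$ and hence $X(z)^{p}Y(z)^{q}$ is $X$-algebraic, and in particular $X$-$\mathcal{P}$ for each choice of $\mathcal{P}$. Using the closure of the class of $X$-$\mathcal{P}$ functions under addition of an $X$-algebraic function, which is part of the formalism developed in Appendix \ref{ap:nature_analytic}, the function $A$ is $X$-$\mathcal{P}$ if and only if $B$ is.

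Next I would prove \ref{bc_propertyx} $\Longleftrightarrow$ \ref{bc_propertyHz}; the equivalence \ref{bc_propertyy} $\Longleftrightarrow$ \ref{bc_propertyVz} is symmetric with the roles of $A$ and $B$ swapped. The defining relation $A(z)=\Agf(X_{-L}(z);t)$ from \eqref{eq:bc_PHdefinition}, together with the appropriate one of Propositions \ref{prop:D-finiteofXorY}, \ref{prop:algebraicofXorY} or \ref{prop:D-algebraicofXorY}, yields that $A$ is $X$-$\mathcal{P}$ if and only if $\Agf(x_{-L};t)$ is $\mathcal{P}$ in $x_{-L}$, equivalently in $x$. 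By the equivalence \ref{bc_propertyHz} $\Longleftrightarrow$ \ref{bc_propertyVz} just established, and the analogous relation $B(z)=\Bgf(Y_{K}(z);t)$, the series $\Bgf(y_{K};t)$ is then also $\mathcal{P}$ in $y_{K}$, hence in $y$. The explicit reconstruction formulas \eqref{eq:MinVj} and \eqref{eq:NinVj} express each auxiliary series $\Lgf_{k}$ and $\Rgf_{k}$ rationally in terms of $A$ (and symmetrically $B$); pulling back through the parameterization, the corresponding series $\Lgf_{k}(y_{k};t)$ and $\Rgf_{k}(x_{k};t)$ inherit the same $\mathcal{P}$ status in their respective variables. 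Finally, the functional equations \eqref{eq:Vj_Hj_series}, \eqref{eq:V-L_A_series}, \eqref{eq:HK_B_series} (or \eqref{eq:AB_series_M1} for $M=1$) express $K(x,y;t)\,\Qgf_{j}(x,y;t)$ as an explicit polynomial combination of $\Agf$, $\Bgf$, the $\Lgf_{k}$ and the $\Rgf_{k}$; since $K(x,y;t)$ itself is a polynomial, dividing gives that $\Qgf_{j}$ is $\mathcal{P}$ in $x$. For the reverse implication, $\Agf(x_{-L};t)$ is recovered as an explicit coefficient extraction from $\Qgf_{-L}(x,y;t)$ through Lemma \ref{lem:VH_tilde_ser_stuff} and the definitions \eqref{eq:Agfdefinition}--\eqref{eq:BgfPVdefinition}, so its $\mathcal{P}$ status in $x$ is in turn inherited from that of $\Qgf_{-L}$.

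The main obstacle is the careful verification that each of the passages $\Qgf_{j}\leftrightarrow \Agf,\Bgf,\Lgf_{k},\Rgf_{k}\leftrightarrow A(z),B(z)$ preserves property $\mathcal{P}$ in the appropriate variable, and in particular that composing with and extracting from the elliptic parameterizations $X_{j}(z), Y_{j}(z)$ behaves well with respect to the class $\mathcal{P}$. This is exactly what Appendix \ref{ap:nature_analytic} is designed to accomplish, so the bulk of the technical work is isolated there, playing the same role as in the analogous partial equivalences proven immediately after the statements of Theorems \ref{thm:D-finite_fixed_t}--\ref{thm:D-algebraic_fixed_t} for the $M=3$ case.
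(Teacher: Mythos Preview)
Your forward direction \ref{bc_propertyHz}$\Rightarrow$\ref{bc_propertyx} is essentially sound, but the reverse implication has a real gap. You write that ``$\Agf(x_{-L};t)$ is recovered as an explicit coefficient extraction from $\Qgf_{-L}(x,y;t)$ \ldots\ so its $\mathcal{P}$ status in $x$ is inherited from that of $\Qgf_{-L}$.'' This only treats the case $j=-L$. The lemma asserts, for \emph{each} fixed $j\in[-L,K]$, that $\Qgf_{j}$ having property $\mathcal{P}$ in $x$ already forces $A(z)$ to be $X$-$\mathcal{P}$; you have not shown that $\Qgf_{j}$ being $\mathcal{P}$ in $x$ for one $j$ implies it for $j=-L$. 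Moreover, even at $j=-L$ your extraction argument depends on $x_{-L}$ being $x^{\pm1}$: when $L$ is odd one has $x_{-L}\in\{y,1/y\}$, so $\Agf(x_{-L})$ is a function of $y$, and knowing $\Qgf_{-L}$ is $\mathcal{P}$ in $x$ tells you nothing about $\Agf$.

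The paper's proof bypasses both issues by introducing the rotated conditions (i$'$) and (ii$'$) --- ``$\Qgf_{j}$ is $\mathcal{P}$ in $x_{j}$'' and ``$\Qgf_{j}$ is $\mathcal{P}$ in $y_{j}$'' --- which are equivalent to (i) and (ii) by the definition of $x_{j},y_{j}$. It then proves (ii$'$)$\Leftrightarrow$(iv) \emph{for each $j$ separately}: in \eqref{eq:Vj_Hj_series} (or \eqref{eq:V-L_A_series}, \eqref{eq:HK_B_series}) the only terms genuinely depending on $y_{j}$ are $-x_{j}\Lgf_{j}(y_{j})+y_{j}\Rgf_{j+1}(y_{j})$, the others being at worst linear in $y_{j}^{\pm1}$. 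Thus $\Qgf_{j}$ is $\mathcal{P}$ in $y_{j}$ iff both $\Lgf_{j}$ and $\Rgf_{j+1}$ are, and via \eqref{eq:MinVj}--\eqref{eq:NinVj} together with Proposition~\ref{prop:XP_closure_properties} this is equivalent to $V_{j}(z)$ being $X$-$\mathcal{P}$, which by \eqref{eq:Vj_in_Aandc} is equivalent to $A(z)$ being $X$-$\mathcal{P}$. The point is that the route goes through $V_{j}$ --- the analytic object tied to that particular quadrant --- rather than through the fixed boundary series $\Agf$.
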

\begin{proof}
The equivalence $\ref{bc_propertyHz}\iff\ref{bc_propertyVz}$ follows from \eqref{eq:bc_PH_PV_nice}, since by Proposition \ref{prop:XP_closure_properties}, the property $X$-$\mathcal{P}$ is closed under addition. To show that these are equivalent to $\ref{bc_propertyx}$ and $\ref{bc_propertyy}$, we will start by considering two further equivalent properties, thinking of $\Qgf_{j}(x,y;t)$ as a function of $x_{j}$ and $y_{j}$ rather than $x$ and $y$:
\begin{enumerate}[label={\rm(\roman*')},ref={\rm(\roman*')}]
\item The function $\Qgf_{j}(x,y;t)$  has property $\mathcal{P}$ as a function of $x_{j}$, \label{bc_propertyxj}
\item The function $\Qgf_{j}(x,y;t)$  has property $\mathcal{P}$ as a function of $y_{j}$. \label{bc_propertyyj}
\end{enumerate}
Indeed if $j$ is even then $x_{j}\in\{x,\frac{1}{x}\}$ and $y_{j}\in\{y,\frac{1}{y}\}$ so \ref{bc_propertyxj}$\iff$\ref{bc_propertyx} and \ref{bc_propertyyj}$\iff$\ref{bc_propertyy}. If $j$ is odd then $x_{j}\in\{y,\frac{1}{y}\}$ and $y_{j}\in\{x,\frac{1}{x}\}$ so \ref{bc_propertyxj}$\iff$\ref{bc_propertyy} and \ref{bc_propertyyj}$\iff$\ref{bc_propertyx}. In either case it suffices to prove that the equivalent properties $\ref{bc_propertyHz}$ and $\ref{bc_propertyVz}$ are also equivalent to \ref{bc_propertyxj} and \ref{bc_propertyyj}. In fact we will just prove that \ref{bc_propertyyj} is equivalent to \ref{bc_propertyVz} as the proof that \ref{bc_propertyxj} is equivalent to \ref{bc_propertyHz} is essentially the same.

For $j<K$, it follows from \eqref{eq:Vj_Hj_series} and \eqref{eq:V-L_A_series} that \ref{bc_propertyyj} holds if and only if $x_{j}L_{j}(y_{j})-y_{j}R_{j+1}(y_{j})$ has property $\mathcal{P}$ as a function of $y_{j}$ since all other terms are linear in $y_{j}$. Moreover, this occurs if and only if $L_{j}(y_{j})$ and $R_{j+1}(y_{j})$ both have property $\mathcal{P}$ in $y_{j}$. If $y_{j}\in\{x,\frac{1}{x}\}$, then they have this property with respect to $x$, while if $y_{j}\in\{y,\frac{1}{y}\}$, then they have this property with respect to $y$.
Either way, by Proposition \ref{prop:PandXP}, this occurs if and only if the expressions \eqref{eq:MinVj} and \eqref{eq:NinVj} determining $L_{j}(Y_{j}(z))$ and $R_{j+1}(Y_{j}(z))$ each have property $X$-$\mathcal{P}$.
From these equations, combined with Proposition \ref{prop:XP_closure_properties}, this occurs if and only if $V_{j}(z)$ has this property.
Finally, by \eqref{eq:Vj_in_Aandc}, $V_{j}(z)$ has property $X$-$\mathcal{P}$ if and only if $A(z)$ has this property, which is equivalent to \ref{bc_propertyHz}.
So we have proven that for $j<K$, $\ref{bc_propertyyj}\iff\ref{bc_propertyHz}$. For $j=K$, \eqref{eq:HK_B_series} shows that \ref{bc_propertyyj} holds if and only if $\Bgf(y_{K};t)$ has property $\mathcal{P}$ in $y_{K}$. By Proposition  
\ref{prop:PandXP}, this is equivalent to \ref{bc_propertyVz}, so $\ref{bc_propertyyj}\iff\ref{bc_propertyVz}$. Hence, for all $j$, we have $\ref{bc_propertyyj}\iff\ref{bc_propertyHz}\iff\ref{bc_propertyVz}$. Finally, proving that \ref{bc_propertyxj} is equivalent to $\ref{bc_propertyHz}$ and $\ref{bc_propertyVz}$ is similar (noting that $X$-$\mathcal{P}$ and $Y$-$\mathcal{P}$ are equivalent).
\end{proof}

For $M$ even the situation is relatively simple, as described by the following theorems. In this case the $M$-quadrant cone on which the walks occur could alternatively be understood as a cone constructed from $M/2$ half planes, perhaps partially explaining why the complexity should match the complexity of half-plane walks.
\begin{Theorem}\label{thm:bc_even_cone_D-finite}
Assume $M\geq 2$ is even. For any integer $j\in[-L,K]$, the series $\Qgf_{j}(x,y;t)$ is D-finite in $x$ and $y$. 
\end{Theorem}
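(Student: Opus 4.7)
My plan is to mimic the D-finite portion of the three-quadrant argument (Theorem \ref{thm:finite_group_case_thm}), exploiting the fact that the shift $\hat{\tau}=\gamma_{K}-\gamma_{-L-1}$ appearing in the $q$-difference relation \eqref{eq:bc_PH_qdiff} becomes an integer multiple of $\pi\tau$ precisely when $M$ is even.

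First I would fix $t\in(0,1/\Pgf(1,1))$ and work at the level of the analytic functions $A(z),B(z)$ from Theorem \ref{thm:bc_PH_PV_characterisation}. Using Definition \ref{def:jstuff} and the fact that $K-(-L-1)=M$ is even, the integers $K$ and $-L-1$ have the same parity, so the $\pm\gamma$ contributions to $\gamma_{K}$ and $\gamma_{-L-1}$ cancel and a direct computation gives $\hat{\tau}=\frac{M}{2}\pi\tau$. Hence \eqref{eq:bc_PH_qdiff} reads
\[
B\!\left(z+\frac{M}{2}\pi\tau\right)-B(z)\;=\;\tilde{J}(z),
\]
with $\tilde{J}$ elliptic of periods $\pi$ and $\pi\tau$, and therefore also of period $\hat{\tau}$.

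Next I would split into two cases mirroring the closing argument of Theorem \ref{thm:finite_group_case_thm}. If $\tilde{J}\equiv 0$, then $B$ itself has period $\hat{\tau}=\frac{M}{2}\pi\tau$, which is a positive integer multiple of $\pi\tau$; this is condition \ref{algHz} of the $M$-quadrant analog of Theorem \ref{thm:algebraic_fixed_t}, so $\Qgf_{j}$ is in fact algebraic, hence D-finite, in $x$ and $y$. If $\tilde{J}\not\equiv 0$, I would set $G(z):=\partial_{z}\bigl(B(z)/\tilde{J}(z)\bigr)$; the same computation as in \eqref{eq:PH_holo}--\eqref{eq:F_def} gives $G(z+\hat{\tau})=G(z)$, and combined with the period $\pi$ shared by $B$ and $\tilde{J}$, $G$ is elliptic on the lattice $\pi\mathbb{Z}+\hat{\tau}\mathbb{Z}$. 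This is exactly the weakly-$X$-D-finite structure used in the three-quadrant proof, so Proposition \ref{prop:D-finiteofXorY} yields that $\Bgf(y_{K};t)$ is D-finite in $y_{K}$. Lemma \ref{lem:bc_QjAB_complexities_equivalence} (applied with $\mathcal{P}=$ D-finite) then upgrades this to D-finiteness of $\Qgf_{j}(x,y;t)$ as a function of both $x$ and $y$, for every fixed $t$.

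Finally, Lemma \ref{lem:function_and_series_complexity} promotes the pointwise-in-$t$ D-finiteness to D-finiteness of the series $\Qgf_{j}(x,y;t)\in\mathbb{R}[x,y][[t]]$. The main obstacle is essentially administrative: one must check that the weakly-$X$-D-finite framework of Appendix \ref{ap:nature_analytic} was formulated to accept any positive integer multiple of $\pi\tau$ as the relevant period, not only $\pi\tau$ itself. This is already the situation in Theorem \ref{thm:finite_group_case_thm}, where the period $(2N-M)\pi\tau$ is an arbitrary positive multiple of $\pi\tau$, so the replacement by $\hat{\tau}=\frac{M}{2}\pi\tau$ is immediate.
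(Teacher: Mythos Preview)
Your proposal is correct and follows essentially the same route as the paper. The paper packages the argument as Theorem~\ref{thm:bc_finite_group_case_thm} (D-finiteness whenever $\hat{\tau}/\tau\in\mathbb{Q}$) combined with Lemma~\ref{lem:bc_even_D-finite} ($\hat{\tau}/\tau=M/2\in\mathbb{Z}$ for $M$ even), then Lemma~\ref{lem:function_and_series_complexity}; you have simply inlined the special case $N_{2}=1$, $\tilde{E}=\tilde{J}$ of that general theorem, which is why no telescoping is needed. One cosmetic point: in the paper $\hat{\tau}:=(\gamma_{K}-\gamma_{-L-1})/\pi$, so what you call $\hat{\tau}$ is the paper's $\pi\hat{\tau}$, but your displayed equation $B(z+\tfrac{M}{2}\pi\tau)-B(z)=\tilde{J}(z)$ is the correct one either way.
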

For fixed $t\in\left(0,\frac{1}{\Pgf(1,1)}\right)$, this theorem is proven in Theorem \ref{thm:bc_finite_group_case_thm} and Lemma \ref{lem:bc_even_D-finite}. Lemma \ref{lem:function_and_series_complexity} then implies that the series $\Qgf_{j}(x,y;t)\in\mathbb{R}[x,\frac{1}{x},y,\frac{1}{y}][[t]]$ is D-finite in $x$ and $y$. 

\begin{Theorem}\label{thm:bc_even_cone_algebraic}
Assume $M\geq 2$ is even. For any integer $j\in[-L,K]$ and fixed $t_{c}\in\left(0,\frac{1}{\Pgf(1,1)}\right)$, the following are equivalent:
\begin{enumerate}[label={\rm(\roman*)},ref={\rm(\roman*)}]
\item The series $\Qgf_{j}(x,y;t)$ is algebraic in $x$, \label{bc_even_algx_ser}
\item The series $\Qgf_{j}(x,y;t)$ is algebraic in $y$, \label{bc_even_algy_ser}
\item The function $\Qgf_{j}(x,y;t_{c})$ is algebraic in $x$, \label{bc_even_algx_func}
\item The function $\Qgf_{j}(x,y;t_{c})$ is algebraic in $y$, \label{bc_even_algy_func}
\item $L$ is even (so $K$ is odd) and $q=0$, that is, the walks start on the $x$-axis. \label{bc_Leven_q0}
\end{enumerate}
\end{Theorem}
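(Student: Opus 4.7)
The plan is to mirror the finite-group/orbit-sum analysis of Theorem \ref{thm:finite_group_case_thm}, now applied to the $q$-difference equation \eqref{eq:bc_PH_qdiff} for $B(z)$ in the $M$-even setting. First, by Lemma \ref{lem:bc_QjAB_complexities_equivalence}, conditions (iii) and (iv) (at fixed $t_c$) are each equivalent to $B(z)$ being $X$-algebraic. By the natural analog of Proposition \ref{prop:algebraicofXorY} (which was invoked to prove $\ref{algAx}\iff\ref{algHz}$ in Theorem \ref{thm:algebraic_fixed_t}) applied with the quasi-periodicity relations \eqref{eq:bc_PV_flippy_full} and \eqref{eq:bc_PV_pi}, $X$-algebraicity of $B(z)$ is equivalent to $B$ admitting $m\pi\tau$ as a period for some positive integer $m$. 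Lemma \ref{lem:function_and_series_complexity} then transfers these function-level statements to the series-level statements (i) and (ii), noting that condition (v) is $t$-independent. So the task reduces to showing: $B$ has some $m\pi\tau$-period if and only if $L$ is even and $q=0$.

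The core computation is to evaluate $\hat\tau=\gamma_K-\gamma_{-L-1}$ and $\tilde J(z)$ explicitly. Since $M$ is even, exactly one of $L,K$ is even; in both subcases a direct unwinding of Definition \ref{def:jstuff} gives $\hat\tau=(M/2)\pi\tau$, so \eqref{eq:bc_PH_qdiff} reads
\begin{equation*}
B\bigl(z+\tfrac{M}{2}\pi\tau\bigr)-B(z)=\tilde J(z),
\end{equation*}
with $\tilde J$ elliptic of periods $\pi,\pi\tau$. Using the $\pi\tau$-periodicity of $X,Y$ together with $X(z)=X(-\gamma-z)$ and $Y(z)=Y(\gamma-z)$, a short calculation yields
\begin{equation*}
\tilde J(z)=\begin{cases} X(z)^p\bigl(Y(z+2\gamma)^q-Y(z)^q\bigr), & L\text{ even},\\[2pt] Y(z)^q\bigl(X(z-2\gamma)^p-X(z)^p\bigr), & L\text{ odd}.\end{cases}
\end{equation*}
In the $L$-even case, $\tilde J\equiv 0$ iff $q=0$: otherwise $Y(z+2\gamma)^q\equiv Y(z)^q$ forces $Y(z+2\gamma)=\zeta Y(z)$ for a $q$-th root of unity $\zeta$, and non-constancy of $Y$ forces $\zeta=1$, i.e.\ $2\gamma\in\pi\mathbb Z+\pi\tau\mathbb Z$, contradicting $0<\Im(2\gamma)<\Im(\pi\tau)$ from Lemma \ref{lem:param}. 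In the $L$-odd case the same argument rules out $\tilde J\equiv 0$ whenever $p\geq 1$, which is our standing hypothesis on the starting point. Hence $\tilde J\equiv 0$ precisely when (v) holds.

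For the final equivalence: if (v) holds then $\tilde J\equiv 0$, so $(M/2)\pi\tau$ is a period of $B$, establishing (iii) and (iv). Conversely, suppose $B$ admits $m\pi\tau$ as a period and set $n=\mathrm{lcm}(m,M/2)$, $k=2n/M$; then $n\pi\tau=k\cdot(M/2)\pi\tau$ and a telescoping sum gives
\begin{equation*}
0=B(z+n\pi\tau)-B(z)=\sum_{i=0}^{k-1}\tilde J\bigl(z+i\tfrac{M}{2}\pi\tau\bigr)=k\,\tilde J(z),
\end{equation*}
where the last step uses that $(M/2)\pi\tau$ is a multiple of the period $\pi\tau$ of $\tilde J$. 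Thus $\tilde J\equiv 0$, which by the previous paragraph yields (v). The main obstacle I foresee is a clean statement and proof of the $M$-general analog of Proposition \ref{prop:algebraicofXorY}, which underpins the passage between $X$-algebraicity of $B(z)$ and the existence of an $m\pi\tau$-period; this should be routine but requires the parameterisation of $X,Y$ by theta functions (Proposition \ref{prop:th_param}) plus the quasi-periodicity in \eqref{eq:bc_PV_flippy_full}, and is of the same flavour as the analogous step in the three-quadrant case.
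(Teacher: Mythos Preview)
Your overall strategy is the same as the paper's: reduce algebraicity of $\Qgf_{j}$ (at fixed $t_c$) to $B(z)$ having some period $m\pi\tau$, then use the $q$-difference equation $B(z+\tfrac{M}{2}\pi\tau)-B(z)=\tilde J(z)$ to identify this with $\tilde J\equiv 0$, and finally characterise when $\tilde J\equiv 0$. The telescoping argument and the computation of $\hat\tau=\tfrac{M}{2}\pi\tau$ are fine, and your worry about an ``$M$-general analog of Proposition~\ref{prop:algebraicofXorY}'' is unfounded: $X$-algebraicity is \emph{defined} as having an $m\pi\tau$-period (Definition~\ref{def:X-alg}), and Lemma~\ref{lem:bc_QjAB_complexities_equivalence} already supplies the bridge to $\Qgf_j$.

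There is, however, a genuine gap in your proof that $\tilde J\not\equiv 0$ when $q\ge 1$ (resp.\ $p\ge 1$). From $Y(z+2\gamma)^q=Y(z)^q$ you correctly get $Y(z+2\gamma)=\zeta Y(z)$ for a constant $q$-th root of unity $\zeta$, but the assertion ``non-constancy of $Y$ forces $\zeta=1$'' is false: a non-constant elliptic function can satisfy $Y(z+c)=-Y(z)$ with $2c$ a period. Concretely, writing the relation via the involution as $Y(-\gamma-z)=\zeta^{-1}Y(z)$ and substituting $z\mapsto -\gamma-z$ gives only $\zeta^2=1$. The case $\zeta=1$ is impossible since $0<\Im(2\gamma)<\Im(\pi\tau)$ prevents $2\gamma$ from lying in the period lattice, but $\zeta=-1$ must be excluded by a different argument. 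The paper (Lemma~\ref{lem:bc_even_algebraic}) does this using the kernel: since $Y(z)$ and $Y(-\gamma-z)$ are the two roots of the quadratic $y\Pgf(X(z),y)-y/t=0$, their sum is $\bigl(\tfrac{1}{t}-A_0(X(z))\bigr)/A_1(X(z))$; if $\zeta=-1$ this sum is identically zero, forcing $A_0(x)=1/t$ for all $x$, which is impossible. The $L$-odd case needs the symmetric argument with $X$ in place of $Y$. Once you patch this step, your proof coincides with the paper's.
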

\begin{proof}The equivalences $\ref{bc_even_algx_func}\iff\ref{bc_Leven_q0}$ and $\ref{bc_even_algy_func}\iff\ref{bc_Leven_q0}$ follow from Theorem \ref{thm:bc_finite_group_case_thm} and Lemma \ref{lem:bc_even_algebraic}. So  $\ref{bc_even_algx_func}$ and $\ref{bc_even_algy_func}$ each hold for some $t_{c}$ is and only if they hold for all $t_{c}$. Finally by Lemma \ref{lem:function_and_series_complexity}, this is equivalent to $\ref{bc_even_algx_ser}$ and $\ref{bc_even_algy_ser}$.\end{proof}

\textbf{Remark 1:} One could alternatively consider the nature of the complete generating function $\sum_{j=-L}^{K}\Qgf_{j}(x,y;t)$ rather than the individual generating functions $\Qgf_{j}$, although for $M>4$ this introduces some ambiguity as to how many paths end at each point in the cone. While it is clear that if each $\Qgf_{j}(x,y;t)$ is algebraic then the complete generating function is algebraic, the converse does not hold, as our result implies that even for $M=2$ the generating functions $\Qgf_{j}$ are not algebraic, even though the complete generating function in this case is a generating function for half-plane walks, which is therefore algebraic.

\textbf{Remark 2:} This theorem does not include the possibility that $p=0$ because we assumed that the walks start in the quadrant $\tilde{\cQ}_{0}$, where $p>0$. More generally, if the walk is allowed to start at any point $(s,(p,q))\in\Pi_{L,K}$, we can deduce that the generating function is algebraic if and only if $(p,q)$ lies on the same axis as the two boundaries of the cone. The $s=0$ case is precisely Theorem \ref{thm:bc_even_cone_algebraic}. Moreover, this statement is equivalent for any $s$ as the cone and starting point can simply be rotated sending each point $(k,(a,b))\mapsto (k+1,(-b,a))$ (or the inverse of this transformation)  until the starting point lies in $\Gamma_{0}$. Hence, allowing any starting point $(s,(p,q))$, the generating function $\Qgf_{j}(x,y;t)$ is algebraic if and only if $(p,q)$ lies on the same axis as the two boundaries of the cone, that is, if and only if either $q$ is $0$ and $L$ is even or $p$ is $0$ and $L$ is odd.

\begin{Theorem}\label{thm:bc_D-finite_fixed_t}
Assume $M\geq 3$ is odd. For fixed $t\in\left(0,\frac{1}{\Pgf(1,1)}\right)$ and any integer $j\in[-L,K]$, the following are equivalent
\begin{enumerate}[label={\rm(\roman*)},ref={\rm(\roman*)}]
\item The function $\Qgf_{j}(x,y;t)$  is D-finite in $x$, \label{bc_D-finitex}
\item The function $\Qgf_{j}(x,y;t)$  is D-finite in $y$, \label{bc_D-finitey}
\item $A(z)$ satisfies a linear differential equation whose coefficients are elliptic functions with periods $\pi$ and $\pi\tau$, \label{bc_D-finiteHz}
\item $B(z)$ satisfies a linear differential equation whose coefficients are elliptic functions with periods $\pi$ and $\pi\tau$, \label{bc_D-finiteVz}
\item the ratio $\frac{\gamma}{\pi\tau}\in\mathbb{Q}$, \label{bc_D-finiterational}
\item the orbit of each point $(x,y)\in\overline{E_{t}}$ under the group of the walk is finite. \label{bc_D-finiteorbits}
\end{enumerate}
\end{Theorem}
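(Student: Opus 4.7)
The plan is to follow the three-quadrant template provided by Theorems \ref{thm:D-finite_fixed_t}, \ref{thm:finite_group_case_thm} and \ref{thm:inf_group_non-D-finite}, using the analytic framework of Theorem \ref{thm:bc_PH_PV_characterisation} and its $q$-difference equation \eqref{eq:bc_PH_qdiff}. The equivalences $\ref{bc_D-finitex}\iff\ref{bc_D-finitey}\iff\ref{bc_D-finiteHz}\iff\ref{bc_D-finiteVz}$ will be immediate from Lemma \ref{lem:bc_QjAB_complexities_equivalence} applied to D-finiteness, together with Proposition \ref{prop:D-finiteofXorY} of Appendix \ref{ap:nature_analytic}, which identifies the property ``$X$-D-finite'' with the existence of a linear ODE whose coefficients are elliptic functions for the lattice $\pi\mathbb{Z}+\pi\tau\mathbb{Z}$. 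The equivalence $\ref{bc_D-finiterational}\iff\ref{bc_D-finiteorbits}$ is the purely group-theoretic Proposition \ref{prop:finite_group_fixed_t} of Appendix \ref{ap:group}.

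For $\ref{bc_D-finiterational}\Rightarrow\ref{bc_D-finiteVz}$, I would use a telescoping argument on \eqref{eq:bc_PH_qdiff}. A short case analysis from Definition \ref{def:jstuff}, splitting on the parities of $L$ and $K$ (which coincide since $L+K=M-1$ is even for odd $M$), shows that the shift $\hat\tau=\gamma_K-\gamma_{-L-1}$ satisfies $\hat\tau\equiv\pm 2\gamma\pmod{\pi\tau\mathbb{Z}}$, whence $\gamma/(\pi\tau)\in\mathbb{Q}$ if and only if $\hat\tau$ is a rational multiple of $\pi\tau$. Writing $n\hat\tau=m\pi\tau$ with $\gcd(m,n)=1$ and iterating \eqref{eq:bc_PH_qdiff} $n$ times yields
\begin{equation*}
B(z+m\pi\tau)-B(z)\;=\;\sum_{k=0}^{n-1}\tilde J(z+k\hat\tau)\;=:\;\tilde E(z),
\end{equation*}
an elliptic function with periods $\pi$ and $\pi\tau$. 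If $\tilde E\equiv 0$ then $B$ is itself elliptic with periods $\pi$ and $m\pi\tau$, hence algebraic over the field $\mathcal{M}$ of $(\pi,\pi\tau)$-elliptic functions; if $\tilde E\not\equiv 0$ then $(B/\tilde E)(z+m\pi\tau)-(B/\tilde E)(z)=1$, so $(B/\tilde E)'\in\mathcal{M}$, yielding a second-order linear ODE for $B$ over $\mathcal{M}$. This reproduces the argument of Theorem \ref{thm:finite_group_case_thm}.

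The principal obstacle is the converse $\ref{bc_D-finiteVz}\Rightarrow\ref{bc_D-finiterational}$, which I would approach as in Theorem \ref{thm:inf_group_non-D-finite}. Assume $\gamma/(\pi\tau)\notin\mathbb{Q}$. The $(\pi,\pi\tau)$-periodicity of $\tilde J$ applied to \eqref{eq:bc_PH_qdiff} forces $\Delta(z):=B(z+\pi\tau)-B(z)$ to have period $\hat\tau$; combined with \eqref{eq:bc_PV_pi} this makes $\Delta$ elliptic with periods $\pi$ and $\hat\tau$. Since $\gamma/(\pi\tau)$ is irrational, $\pi\tau$ and $\hat\tau$ are incommensurable, so any pole of $\Delta$ would produce infinitely many pole classes of $B$ modulo $\pi\mathbb{Z}+\pi\tau\mathbb{Z}$, contradicting the pole-class finiteness forced on the $X$-D-finite $B$ by Lemma \ref{lem:D-fini_z}. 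Hence $\Delta$ is constant, and the flip relation \eqref{eq:bc_PV_flippy_full} forces $\Delta\equiv 0$; together with $B(z+\pi)=B(z)$ and Proposition \ref{prop:rationalofXorY}, $B$ must be a rational function of $X$.

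The final step is to rule out this rational case via an $M$-quadrant analogue of Lemma \ref{lem:not-rational}. Here I would adapt the 3-quadrant argument: using $\pi\tau$-periodicity of both $B$ and $X$, the identity $B(z)=\Bgf(Y_K(z);t)$ extends meromorphically to a $\pi\tau\mathbb{Z}$-translate of $\Omega_{-2}$, where $|X(z)|,|Y(z)|\geq 1$ hold simultaneously by Lemma \ref{lem:Omega}. After tracking which of $\{X^{\pm 1},Y^{\pm 1}\}$ each of $X_{-L}$ and $Y_K$ is, the analogous extension applies to $A(z)=\Agf(X_{-L}(z);t)$. At such a point, \eqref{eq:bc_PH_PV_nice} gives $|F+A(z)+B(z)|=|X(z)|^p|Y(z)|^q\geq 1$, while substituting $x=y=1$ into the $M$-quadrant analogue of \eqref{eq:three_quarter_full} produces the positivity bound $|\Fgf(t)+\Agf(x_{-L};t)+\Bgf(y_K;t)|<1$ whenever $|x_{-L}|,|y_K|\leq 1$, contradicting the above. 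The author's remark preceding Lemma \ref{lem:not-rational} indicates that this is precisely the step where the quarter-plane ($M=1$) case fails to go through uniformly in $t$, but poses no obstruction for odd $M\geq 3$.
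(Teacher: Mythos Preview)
Your overall architecture matches the paper's: the equivalences $\ref{bc_D-finitex}\!\iff\!\ref{bc_D-finitey}\!\iff\!\ref{bc_D-finiteHz}\!\iff\!\ref{bc_D-finiteVz}$ via Lemma~\ref{lem:bc_QjAB_complexities_equivalence}, the group-theoretic $\ref{bc_D-finiterational}\!\iff\!\ref{bc_D-finiteorbits}$, the telescoping of \eqref{eq:bc_PH_qdiff} to get $\ref{bc_D-finiterational}\Rightarrow\ref{bc_D-finiteVz}$, and the pole-class argument reducing $\neg\ref{bc_D-finiterational}\Rightarrow\neg\ref{bc_D-finiteVz}$ to showing that $\pi\tau$ is not a period of $B$---all of this is exactly what the paper does (Theorem~\ref{thm:bc_finite_group_case_thm}, Lemma~\ref{lem:bc_odd_D-finite}, Theorem~\ref{thm:bc_inf_group_non-D-finite}, Corollary~\ref{cor:bc_inf_group_non-D-finite_Mgeq3}).

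There is, however, a genuine gap in your last paragraph. You propose to rule out $\pi\tau$-periodicity of $B$ by a direct $M$-quadrant analogue of Lemma~\ref{lem:not-rational}: extend both identities $B(z)=\Bgf(Y_K(z);t)$ and $A(z)=\Agf(X_{-L}(z);t)$ by $\pi\tau$-translation to a common region congruent to $\Omega_{-2}$, where $|X|,|Y|\geq 1$, and contradict the bound $|F+\Agf+\Bgf|<1$. But the $\pi\tau$-translates of the domain of the $B$-identity are the $\Omega_s$ with $s\equiv K,K{+}1\pmod 4$, and those of the $A$-identity are the $\Omega_s$ with $s\equiv -L{-}1,-L\pmod 4$. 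For odd $M$ these two families always intersect in a single residue class mod $4$, but that class is $s\equiv 2$ only when $K\equiv L\equiv 1$ or $K\equiv L\equiv 2\pmod 4$. In the remaining parities---for instance $M=5$ with $(K,L)=(4,0)$, where $Y_K=Y$ and $X_{-L}=X$---the common region is $s\equiv 0$, on which $|X|,|Y|<1$, so $|X|^p|Y|^q<1$ and no contradiction arises.

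The paper sidesteps this with Lemma~\ref{lem:not-rationalMbig}, whose proof is \emph{not} a generalisation of Lemma~\ref{lem:not-rational}. If $L\geq 2$ (resp.\ $K\geq 2$), Theorem~\ref{thm:bc_PH_PV_characterisation} already gives that $A$ (resp.\ $B$) is pole-free on $\Omega_{-L-1}\cup\cdots\cup\Omega_{0}$, a region covering a full $\pi\tau$-period; combined with the hypothetical $\pi\tau$-periodicity this forces $A$ (resp.\ $B$) to be constant, and then a short argument on $X(z)^pY(z)^q$ being fixed by $z\mapsto\gamma_K-z$ yields a contradiction. The only residual case is $K=L=1$, i.e.\ $M=3$, where Lemma~\ref{lem:not-rational} itself applies. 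So your sketch needs this constancy argument for $K\geq 2$ or $L\geq 2$; the direct positivity-bound approach only finishes the $M=3$ case.
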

The equivalence $\ref{bc_D-finitex}\iff\ref{bc_D-finitey}\iff\ref{bc_D-finiteHz}\iff\ref{bc_D-finiteVz}$ is precisely the statement of Lemma \ref{lem:bc_QjAB_complexities_equivalence} where $\mathcal{P}$ is the property {\em D-finite}.
Later in this section we complete the proof of the theorem by showing that the first $4$ conditions are equivalent to \ref{bc_D-finiterational} and \ref{bc_D-finiteorbits}. We define the group of the walk in Appendix \ref{ap:group}, the equivalence of \ref{bc_D-finiterational} and \ref{bc_D-finiteorbits} is shown there in Proposition \ref{prop:finite_group_fixed_t}. We show that these equivalent conditions imply the conditions \ref{bc_D-finitex}-\ref{bc_D-finiteVz} in a combination of Theorem \ref{thm:bc_finite_group_case_thm} and Lemma \ref{lem:bc_odd_D-finite}, while we show the converse in Corollary \ref{cor:bc_inf_group_non-D-finite_Mgeq3}.

We have not been able to show the theorem above for $M=1$, meaning our classification of the complexity of $\Qgf_{j}(x,y;t)$ for $t$ fixed is incomplete. Nonetheless, in Theorem \ref{thm:bc_D-finite_general_t}, we are able to fully classify the complexity of the series $\Qgf_{j}(x,y;t)\in\mathbb{R}[x_{j},y_{j}][[t]]$.

\begin{Theorem}\label{thm:bc_algebraic_fixed_t}
Assume that $M\geq 1$ is odd and $\frac{\gamma}{\pi\tau}\in\mathbb{Q}$. For fixed $t\in\left(0,\frac{1}{\Pgf(1,1)}\right)$ and any integer $j\in[-L,K]$, the following are equivalent
\begin{enumerate}[label={\rm(\roman*)},ref={\rm(\roman*)}]
\item The function $\Qgf_{j}(x,y;t)$ is algebraic in $x$, \label{bc_algx}
\item The function $\Qgf_{j}(x,y;t)$ is algebraic in $y$, \label{bc_algy}
\item $A(z)$ has $m\pi\tau$ as a period for some positive integer $m$, \label{bc_algHz}
\item $B(z)$ has $m\pi\tau$ as a period for some positive integer $m$, \label{bc_algVz}
\item There are rational functions $R_{1}$ and $R_{2}$ satisfying $x^{p}y^{q}=R_{1}(x)+R_{2}(y)$ for all $(x,y)\in\overline{E_{t}}$, \label{bc_algdec}
\item There are rational functions $R_{1}$ and $R_{2}$ satisfying $X(z)^{p}Y(z)^{q}=R_{1}(X(z))+R_{2}(Y(z))$ for all $z\in\mathbb{C}$, \label{bc_algdecz}
\item The orbit sum of the model is $0$. \label{bc_algorbitsum}
\end{enumerate}
\end{Theorem}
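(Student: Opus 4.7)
The plan is to follow the structure of the analogous Theorem~\ref{thm:algebraic_fixed_t} for the three-quadrant cone, replacing the $3$-specific constants by their $M$-odd generalisations. Several equivalences follow immediately from the general tools already developed. Specifically, Lemma~\ref{lem:bc_QjAB_complexities_equivalence} (applied with $\mathcal{P}$ equal to \emph{algebraic}) gives at once $\ref{bc_algx}\iff\ref{bc_algy}$ together with the equivalence of both to $A(z)$ and $B(z)$ being $X$-algebraic. By Proposition~\ref{prop:algebraicofXorY}, a meromorphic function of the form $\Phi\circ X$ is $X$-algebraic precisely when it acquires an additional period of the form $m\pi\tau$, so this reduces to $\ref{bc_algHz}$ and $\ref{bc_algVz}$; alternatively $\ref{bc_algHz}\iff\ref{bc_algVz}$ follows directly from \eqref{eq:bc_PH_PV_nice}, since $X(z)^{p}Y(z)^{q}$ is already elliptic with periods $\pi$ and $\pi\tau$. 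Finally, $\ref{bc_algdec}\iff\ref{bc_algdecz}$ comes from the parameterisation \eqref{eq:Eparam}, and $\ref{bc_algdecz}\iff\ref{bc_algorbitsum}$ is exactly Proposition~\ref{prop:orbit_sum0isdecoupling}.

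The substantive content is therefore the bridge $\ref{bc_algVz}\iff\ref{bc_algorbitsum}$, and the plan is to derive it from the $q$-difference equation \eqref{eq:bc_PH_qdiff}, in direct analogy with the proof of Theorem~\ref{thm:finite_group_case_thm}. First I would compute $\hat\tau=\gamma_{K}-\gamma_{-L-1}$ explicitly using Definition~\ref{def:jstuff}. Because $M=K+L+1$ is odd, $K$ and $-L-1$ have opposite parities, so one of $\gamma_{K},\gamma_{-L-1}$ lies in $\gamma+\pi\tau\mathbb{Z}$ and the other in $-\gamma+\pi\tau\mathbb{Z}$. Consequently $\hat\tau=\sigma\cdot 2\gamma+n\pi\tau$ for some $\sigma\in\{\pm 1\}$ and $n\in\mathbb{Z}$. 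Writing $\tfrac{2\gamma}{\pi\tau}=\tfrac{M'}{N}$ in lowest terms (as permitted by the assumption $\tfrac{\gamma}{\pi\tau}\in\mathbb{Q}$), it follows that $N\hat\tau$ is an integer multiple of $\pi\tau$, say $N\hat\tau=m_{0}\pi\tau$.

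Next I would iterate $B(\hat\tau+z)-B(z)=\tilde J(z)$ exactly $N$ times to obtain the telescoping identity
\[
B(z+m_{0}\pi\tau)-B(z)=\sum_{k=0}^{N-1}\tilde J(z+k\hat\tau).
\]
Since $\tilde J$ is elliptic with periods $\pi$ and $\pi\tau$, each summand equals $\tilde J(z+2\sigma k\gamma)$, and using the invariance properties $X(z)=X(-\gamma-z)$ and $Y(z)=Y(\gamma-z)$ this sum can be recognised, up to a global sign and an overall additive shift, as the orbit sum $E(z)$ from Definition~\ref{defn:orbit_sum}. The main obstacle will be precisely this combinatorial rearrangement: $\tilde J$ involves the reflection $z\mapsto\gamma_{-L-1}-z$ rather than the pure inversion $z\mapsto-z$ used in the orbit sum, and one must carefully use the reflection symmetries of $X$ and $Y$ (iterated $2k$ times) to convert the summands into the alternating pattern $U((2k+1)\gamma-z)-U(2k\gamma+z)$.

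Once this identification is complete, the two directions follow exactly as in Theorem~\ref{thm:finite_group_case_thm}. If $E(z)\equiv 0$, the right-hand side vanishes and so $m_{0}\pi\tau$ is a period of $B(z)$, proving \ref{bc_algVz}. Conversely, if some $m\pi\tau$ is a period of $B$, then applying the iterated identity with $mN$ copies shows $mE(z)=0$ as a meromorphic function, whence $E(z)\equiv 0$, proving \ref{bc_algorbitsum}. This closes the cycle. Finally, I would invoke Lemma~\ref{lem:function_and_series_complexity} to transfer the statements between the function $\Qgf_{j}(x,y;t)$ at fixed $t$ and the corresponding series in $\mathbb{R}[x_{j},y_{j}][[t]]$, should a series-level formulation be desired in parallel with Theorems~\ref{thm:algebraic_general_t} and \ref{thm:bc_even_cone_algebraic}.
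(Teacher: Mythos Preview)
Your proposal is correct and follows essentially the same route as the paper. The paper invokes Lemma~\ref{lem:bc_QjAB_complexities_equivalence} for $\ref{bc_algx}\Leftrightarrow\ref{bc_algy}\Leftrightarrow\ref{bc_algHz}\Leftrightarrow\ref{bc_algVz}$, cites Theorem~\ref{thm:algebraic_fixed_t} (which in turn uses Proposition~\ref{prop:orbit_sum0isdecoupling} and the parameterisation~\eqref{eq:Eparam}) for $\ref{bc_algdec}\Leftrightarrow\ref{bc_algdecz}\Leftrightarrow\ref{bc_algorbitsum}$, and closes the loop via Theorem~\ref{thm:bc_finite_group_case_thm} together with Lemmas~\ref{lem:bc_odd_D-finite} and~\ref{lem:bc_odd_algebraic}; these three results carry out exactly the telescoping of \eqref{eq:bc_PH_qdiff} and the identification of $\tilde E(z)$ with the orbit sum $E(z)$ that you describe, the only difference being that the paper first proves Theorem~\ref{thm:bc_finite_group_case_thm} for general $M$ (in terms of $\tilde E$) and then specialises to odd $M$, whereas you do the odd case in one shot. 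Your final remark about Lemma~\ref{lem:function_and_series_complexity} is unnecessary here since the statement is for fixed $t$.
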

The equivalence $\ref{bc_algx}\iff\ref{bc_algy}\iff\ref{bc_algHz}\iff\ref{bc_algVz}$ is precisely the statement of Lemma \ref{lem:bc_QjAB_complexities_equivalence} where $\mathcal{P}$ is the property {\em algebraic}.
The equivalence $\ref{bc_algdec}\iff\ref{bc_algdecz}$ is due to the parameterisation \eqref{eq:Eparam} of $\overline{E_{t}}$, while the equivalence $\ref{bc_algdecz}\iff\ref{bc_algorbitsum}$ follows from Proposition \ref{prop:orbit_sum0isdecoupling}.
 To complete the proof of this Theorem, we show that equivalent conditions $\ref{bc_algx}-\ref{bc_algVz}$ are equivalent to $\ref{bc_algdec}-\ref{bc_algorbitsum}$ in Theorem \ref{thm:bc_finite_group_case_thm}.

\begin{Theorem}\label{thm:bc_D-algebraic_fixed_t}
Assume that $M\geq 1$ is odd. Fix $t\in\left(0,\frac{1}{\Pgf(1,1)}\right)$ and assume that $\frac{\gamma}{\pi\tau}\notin\mathbb{Q}$. For any integer $j\in[-L,K]$, the following are equivalent
\begin{enumerate}[label={\rm(\roman*)},ref={\rm(\roman*)}]
\item The function $\Qgf_{j}(x,y;t)$ is D-algebraic in $x$, \label{bc_D-algx}
\item The function $\Qgf_{j}(x,y;t)$  is D-algebraic in $y$, \label{bc_D-algy}
\item $A(z)$ is D-algebraic in $z$, \label{bc_D-algHz}
\item $B(z)$ is D-algebraic in $z$, \label{bc_D-algVz}
\item There are rational functions $R_{1}$ and $R_{2}$ satisfying $x^{p}y^{q}=R_{1}(x)+R_{2}(y)$ for all $(x,y)\in\overline{E_{t}}$, \label{bc_D-algdec}
\item There are rational functions $R_{1}$ and $R_{2}$ satisfying $X(z)^{p}Y(z)^{q}=R_{1}(X(z))+R_{2}(Y(z))$ for all $z\in\mathbb{C}$. \label{bc_D-algdecz}
\end{enumerate}
\end{Theorem}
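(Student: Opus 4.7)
The plan is to reduce the theorem to the single equivalence $\ref{bc_D-algVz}\iff\ref{bc_D-algdecz}$: the four conditions $\ref{bc_D-algx}$--$\ref{bc_D-algVz}$ are equivalent by Lemma~\ref{lem:bc_QjAB_complexities_equivalence} applied with $\mathcal{P}$ the property \emph{D-algebraic}, and $\ref{bc_D-algdec}\iff\ref{bc_D-algdecz}$ follows from the parameterisation \eqref{eq:Eparam} of $\overline{E_t}$ exactly as in Theorem~\ref{thm:D-algebraic_fixed_t}.

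For $\ref{bc_D-algdecz}\Rightarrow\ref{bc_D-algVz}$ I would generalise the proof of Theorem~\ref{thm:D-alg_proof} by exploiting the $\hat\tau$-difference equation \eqref{eq:bc_PH_qdiff},
\[
B(z+\hat\tau)-B(z)=\tilde{J}(z)=X(\gamma_{-L-1}-z)^{p}Y(\gamma_{-L-1}-z)^{q}-X(z)^{p}Y(z)^{q}.
\]
Since $M$ is odd, $L$ and $K$ share the same parity. Using Definition~\ref{def:jstuff} together with the symmetries $X(z)=X(-\gamma-z)$ and $Y(z)=Y(\gamma-z)$ from Lemma~\ref{lem:param}, a direct calculation shows that when $L,K$ are even we have $X(\gamma_{-L-1}-z)=X(z)$, so substituting the decoupling $X^{p}Y^{q}=R_{1}(X)+R_{2}(Y)$ at both $z$ and $\gamma_{-L-1}-z$ cancels the $R_{1}$ contributions and collapses $\tilde{J}(z)$ to $R_{2}(Y(z+\hat\tau))-R_{2}(Y(z))$; symmetrically, when $L,K$ are odd one gets $Y(\gamma_{-L-1}-z)=Y(z)$ and $\tilde{J}(z)=R_{1}(X(z+\hat\tau))-R_{1}(X(z))$. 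In either case, letting $R(z)\in\{R_{2}(Y(z)),R_{1}(X(z))\}$ denote the relevant expression, the function $B(z)-R(z)$ is $\hat\tau$-periodic, and combined with the $\pi$-periodicity \eqref{eq:bc_PV_pi} it is elliptic with periods $\pi$ and $\hat\tau$. Since elliptic functions satisfy non-trivial algebraic ODEs and $X(z),Y(z)$ are themselves D-algebraic by Lemma~\ref{lem:param}, it follows that $B(z)$ is D-algebraic in $z$, which is \ref{bc_D-algVz}.

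For the converse $\ref{bc_D-algVz}\Rightarrow\ref{bc_D-algdecz}$ the plan is to follow Theorem~\ref{thm:non-D-alg_proof} essentially verbatim. By Theorem~\ref{thm:bc_PH_PV_characterisation}, the functions $h(z):=X(z)^{p}Y(z)^{q}$, $f_{1}(z):=A(z)+F$ and $f_{2}(z):=B(z)$ fulfil the hypotheses of Corollary~\ref{cor:D-trans} with $\gamma_{1}=\gamma_{-L-1}$ and $\gamma_{2}=\gamma_{K}$; the assumption $\gamma/\pi\tau\notin\mathbb{Q}$ guarantees $\hat\tau/\pi\tau\notin\mathbb{Q}$, which is the non-resonance hypothesis required by that corollary. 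Under the standing assumption that $A$ and $B$ are D-algebraic, the corollary produces meromorphic $a_{1},a_{2}:\mathbb{C}\to\mathbb{C}\cup\{\infty\}$ with $X^{p}Y^{q}=a_{1}+a_{2}$, where $a_{1}$ is invariant under $z\mapsto z+\pi$, $z+\pi\tau$, $\gamma_{-L-1}-z$ and $\gamma-z$, and $a_{2}$ is invariant under $z\mapsto z+\pi$, $z+\pi\tau$, $\gamma_{K}-z$ and $-\gamma-z$. Proposition~\ref{prop:rationalofXorY} then writes $a_{1}=R_{2}(Y(z))$ and $a_{2}=R_{1}(X(z))$ for some rational $R_{1},R_{2}$, establishing \ref{bc_D-algdecz}.

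The main obstacle I anticipate lies in the parity case analysis of the forward direction. The compact argument of Theorem~\ref{thm:D-alg_proof}, in which $T(z):=R_{1}(X(z))-B(z)$ is directly seen to be invariant under both involutions $z\mapsto\gamma_{K}-z$ and $z\mapsto\gamma_{-L-1}-z$, uses crucially that for the three-quadrant cone $L=K=1$ are both odd. When $L$ and $K$ are both even (in particular the quarter-plane case $M=1$), the analogous invariances of $T$ fail, and one must instead bootstrap the $\hat\tau$-periodicity of $B(z)-R(z)$ directly from \eqref{eq:bc_PH_qdiff} and the decoupling identity, which is what the case split above achieves and which unifies both parities into a single proof.
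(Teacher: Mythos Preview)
Your plan is correct and the two directions you outline both go through, but the comparison with the paper's argument is slightly off in one respect.

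\textbf{Forward direction $\ref{bc_D-algdecz}\Rightarrow\ref{bc_D-algVz}$.} Your difference-equation argument is valid (modulo writing $\hat\tau$ where \eqref{eq:bc_PH_qdiff} has $\pi\hat\tau$): the parity split correctly identifies $\tilde{J}(z)$ as a difference $R(z+\pi\hat\tau)-R(z)$ with $R\in\{R_{1}(X),R_{2}(Y)\}$, whence $B-R$ is elliptic with periods $\pi$ and $\pi\hat\tau$. However, your final paragraph misdiagnoses the obstacle. The paper's Theorem~\ref{thm:bc_D-alg_proof} does \emph{not} avoid the compact two-involution argument in the even-parity case; rather it rewrites $R_{1}(X)+R_{2}(Y)=\hat R_{1}(X_{-L})+\hat R_{2}(Y_{K})$ (possible because for $M$ odd, $X_{-L},Y_{K}$ are each one of $X^{\pm1},Y^{\pm1}$), and then $T(z):=\hat R_{2}(Y_{K}(z))-B(z)=A(z)+F-\hat R_{1}(X_{-L}(z))$ is invariant under both $z\mapsto\gamma_{K}-z$ and $z\mapsto\gamma_{-L-1}-z$ directly from Proposition~\ref{prop:jstuff} and Theorem~\ref{thm:bc_PH_PV_characterisation}, regardless of parity. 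So the ``obstacle'' you identify is handled by relabelling the decoupling pair rather than by switching to the difference equation. Your route and the paper's end up constructing the same elliptic function, just reached from different sides.

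\textbf{Reverse direction $\ref{bc_D-algVz}\Rightarrow\ref{bc_D-algdecz}$.} This matches the paper's Theorem~\ref{thm:bc_non-D-alg_proof}. One small correction: Corollary~\ref{cor:D-trans} only yields $a_{1}(\gamma_{-L-1}-z)=a_{1}(z)$ and $a_{2}(\gamma_{K}-z)=a_{2}(z)$, not invariance under $z\mapsto\gamma-z$ and $z\mapsto-\gamma-z$ as you wrote. Combined with $\pi\tau$-periodicity, this gives $a_{1}(-\gamma-z)=a_{1}(z)$, $a_{2}(\gamma-z)=a_{2}(z)$ when $L,K$ are even and the reversed assignment when $L,K$ are odd; in both cases Proposition~\ref{prop:rationalofXorY} produces the desired $R_{1},R_{2}$. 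Your writeup states only one of these two cases; the paper spells out both.
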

The equivalence $\ref{bc_algx}\iff\ref{bc_algy}\iff\ref{bc_algHz}\iff\ref{bc_algVz}$ is precisely the statement of Lemma \ref{lem:bc_QjAB_complexities_equivalence} where $\mathcal{P}$ is the property {\em D-algebraic}.
The equivalence $\ref{bc_D-algdec}\iff\ref{bc_D-algdecz}$ is due to the parameterisation \eqref{eq:Eparam} of $\overline{E_{t}}$.
We complete the proof of this theorem later in this section, starting with Theorem \ref{thm:bc_D-alg_proof} which shows that the equivalent conditions $\ref{bc_D-algdec}\iff\ref{bc_D-algdecz}$ imply the equivalent conditions $\ref{bc_D-algx}$-$\ref{bc_D-algVz}$, then we show the reverse implication in Theorem \ref{thm:bc_non-D-alg_proof}.

Finally, the following three Theorems characterise the nature of the {\em series} $\Qgf(x,y;t)$ for $M$ odd. 

\begin{Theorem}\label{thm:bc_D-finite_general_t}
Assume that $M\geq 1$ is odd. For any integer $j\in[-L,K]$, the following are equivalent
\begin{enumerate}[label={\rm(\roman*)},ref={\rm(\roman*)}]
\item The series $\Qgf_{j}(x,y;t)\in\mathbb{R}[x,y][[t]]$  is D-finite in $x$, \label{bc_series_D-finitex}
\item The series $\Qgf_{j}(x,y;t)\in\mathbb{R}[x,y][[t]]$  is D-finite in $y$, \label{bc_series_D-finitey}
\item Each of the conditions of Theorem \ref{thm:bc_D-finite_fixed_t} hold for all $t\in\left(0,\frac{1}{\Pgf(1,1)}\right)$,\label{bc_all_D-finite}
\item The group of the walk is finite.\label{bc_finite_group}
\end{enumerate}
\end{Theorem}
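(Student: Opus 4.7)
The plan is to mirror the proof of Theorem \ref{thm:D-finite_general_t} for the three-quadrant cone. Three equivalences must be established: \ref{bc_series_D-finitex}$\iff$\ref{bc_all_D-finite}, \ref{bc_series_D-finitey}$\iff$\ref{bc_all_D-finite}, and \ref{bc_all_D-finite}$\iff$\ref{bc_finite_group}. The last of these is exactly the content of Proposition \ref{prop:finite_group_general_t} from Appendix \ref{ap:group}, so it requires no further argument.

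For the first two equivalences I would use Lemma \ref{lem:function_and_series_complexity} as the bridge from the series level to the function level. First I would verify that $\Qgf_{j}(x,y;t)$ satisfies the convergence hypothesis of that lemma: since $\Qgf_{j}$ enumerates a subset of the weighted walks of length $n$ in the whole plane, its $t^{n}$-coefficient is dominated by $\Pgf(1,1)^{n}$ at $|x|=|y|=1$, so the series converges absolutely on this torus for $t\in(0,1/\Pgf(1,1))$, exactly as in Lemma \ref{lem:Q_series_convergence}. Applying Lemma \ref{lem:function_and_series_complexity} with $\Ggf=\Qgf_{j}$ and property $\mathcal{P}$ equal to D-finiteness then translates \ref{bc_series_D-finitex} (respectively \ref{bc_series_D-finitey}) into the statement that the \emph{function} $\Qgf_{j}(x,y;t)$ is D-finite in $x$ (respectively $y$) for every $t\in(0,1/\Pgf(1,1))$. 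For $M\geq 3$ odd, Theorem \ref{thm:bc_D-finite_fixed_t} identifies this function-level property with the rationality condition $\gamma/(\pi\tau)\in\mathbb{Q}$; demanding it for every $t$ is then condition \ref{bc_all_D-finite}.

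The quarter-plane case $M=1$ is not covered by Theorem \ref{thm:bc_D-finite_fixed_t} and must be treated separately. Here, however, the series-level equivalence between D-finiteness in $x$ (or $y$) of $\Qgf_{0}(x,y;t)$ and finiteness of the group of the walk is classical: the D-finite direction is due to Bousquet-M\'elou--Mishna \cite{bousquet2010walks}, Bostan--Kauers \cite{bostan2010complete} and Fayolle--Raschel \cite{fayolle2010holonomy}, while the non-D-finite direction in the infinite-group case follows from Kurkova--Raschel \cite{kurkova2012functions} together with the later work of Dreyfus--Hardouin--Roques--Singer \cite{dreyfus2018nature,dreyfus2020walks}. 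Invoking these results directly closes the $M=1$ case.

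The main obstacle I expect is precisely this $M=1$ case, since the fixed-$t$ machinery developed in Theorem \ref{thm:bc_D-finite_fixed_t} is unavailable there and one is forced to rely on pre-existing quarter-plane results rather than on a uniform argument; for $M\geq 3$ odd the proof is essentially a line-by-line transcription of the proof of Theorem \ref{thm:D-finite_general_t}, with $\Cgf(x,y;t)$ replaced by $\Qgf_{j}(x,y;t)$ and the three-quadrant fixed-$t$ classification replaced by its $M$-quadrant counterpart.
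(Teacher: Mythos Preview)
For $M\geq 3$ odd your argument coincides with the paper's: Lemma~\ref{lem:function_and_series_complexity} bridges series and function level, Theorem~\ref{thm:bc_D-finite_fixed_t} supplies the fixed-$t$ equivalences, and Proposition~\ref{prop:finite_group_general_t} yields \ref{bc_all_D-finite}$\iff$\ref{bc_finite_group}.

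The divergence is at $M=1$, and here your proposal has a genuine gap. You invoke the classical quarter-plane literature (Bousquet-M\'elou--Mishna, Bostan--Kauers, Kurkova--Raschel, Dreyfus--Hardouin--Roques--Singer), but those results are established for \emph{unweighted} step-sets with the standard starting point. The present paper works in strictly greater generality---weighted steps and an arbitrary starting point $(p,q)$---and the authors explicitly remark (end of the outline in the introduction) that to their knowledge the classification ``was not previously completed even in the quarter plane case in the full generality we consider.'' So appealing to the literature does not close the $M=1$ case as stated.

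The paper instead handles $M=1$ internally. The forward direction (finite group $\Rightarrow$ D-finite in $x$) still comes from Theorem~\ref{thm:bc_finite_group_case_thm} together with Lemma~\ref{lem:bc_odd_D-finite}. For the converse, the paper cannot use Theorem~\ref{thm:bc_D-finite_fixed_t} (which is only stated for $M\geq 3$), so it proves a weaker fixed-$t$ statement that nonetheless suffices at the series level: Lemma~\ref{lem:not-rationalM1} shows that for all \emph{sufficiently small} $t$ with $\gamma/(\pi\tau)\notin\mathbb{Q}$ the function $B(z)$ does not have $\pi\tau$ as a period, and then Theorem~\ref{thm:bc_inf_group_non-D-finite} gives non-D-finiteness in $x$ for those $t$. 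This is packaged as Corollary~\ref{cor:bc_inf_group_non-D-finite_M1}, and a single such $t$ is enough, via Lemma~\ref{lem:function_and_series_complexity}, to conclude that the series is not D-finite in $x$. That internal route is what you would need to replace your literature appeal with.
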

\begin{proof}
We start with condition \ref{bc_all_D-finite}, that any given condition of Theorem \ref{thm:bc_D-finite_fixed_t} holds for all $t$ if and only if any other given condition of Theorem \ref{thm:bc_D-finite_fixed_t} holds for all $t$. For $M\geq3$ this is true because Theorem \ref{thm:bc_D-finite_fixed_t} itself holds. For $M=1$, conditions \ref{bc_D-finitex}-\ref{bc_D-finiteVz} of Theorem \ref{thm:bc_D-finite_fixed_t} are equivalent and conditions \ref{bc_D-finiterational} and \ref{bc_D-finiteorbits} are equivalent and imply conditions \ref{bc_D-finitex}-\ref{bc_D-finiteVz} for the same reasons as in the proof of Theorem \ref{thm:bc_D-finite_fixed_t}. Moreover, if the equivalent conditions 
\ref{bc_D-finitex}-\ref{bc_D-finiteVz} of Theorem \ref{thm:bc_D-finite_fixed_t} hold for all $t$, then by Corollary \ref{cor:bc_inf_group_non-D-finite_M1}, the group of the walk is finite, so the equivalent conditions \ref{bc_D-finiterational} and \ref{bc_D-finiteorbits} hold for all $t$. Hence condition \ref{bc_all_D-finite} is well defined.

By Lemma \ref{lem:function_and_series_complexity}, Condition \ref{bc_D-finitex} of Theorem \ref{thm:bc_D-finite_fixed_t} holds for all $t$ if and only if \ref{bc_series_D-finitex} holds, that is $\ref{bc_series_D-finitex}\iff\ref{bc_all_D-finite}$. Similarly, $\ref{bc_series_D-finitey}\iff\ref{bc_all_D-finite}$. Finally, by Proposition \ref{prop:finite_group_general_t}, Condition \ref{bc_D-finiteorbits} of Theorem \ref{thm:bc_D-finite_fixed_t} holds for all $t$ if and only if \ref{bc_finite_group} holds, that is $\ref{bc_all_D-finite}\iff\ref{bc_finite_group}$.
\end{proof}

\begin{Theorem}\label{thm:bc_algebraic_general_t}
Assume that $M\geq 1$ is odd. For any integer $j\in[-L,K]$, the following are equivalent
\begin{enumerate}[label={\rm(\roman*)},ref={\rm(\roman*)}]
\item The series $\Qgf_{j}(x,y;t)\in\mathbb{R}[x,y][[t]]$  is algebraic in $x$, \label{bc_series_algx}
\item The series $\Qgf_{j}(x,y;t)\in\mathbb{R}[x,y][[t]]$  is algebraic in $y$, \label{bc_series_algy}
\item The equivalent conditions of Theorem \ref{thm:bc_algebraic_fixed_t} hold for all $t\in\left(0,\frac{1}{\Pgf(1,1)}\right)$.
\end{enumerate}
\end{Theorem}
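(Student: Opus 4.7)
The plan is to prove this theorem in essentially the same manner as Theorem \ref{thm:algebraic_general_t}, via a direct application of Lemma \ref{lem:function_and_series_complexity}, using Theorem \ref{thm:bc_algebraic_fixed_t} to pass from algebraicity in $x_j$ (or $y_j$) to algebraicity in $x$ (or $y$).

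First I would check that $\Qgf_{j}(x,y;t)$ satisfies the hypotheses of Lemma \ref{lem:function_and_series_complexity}: since $\Qgf_{j}(x,y;t)\in x_{j}\mathbb{R}[x_{j},y_{j}][[t]]$ and $\{x_{j},y_{j}\}\subset\{x,x^{-1},y,y^{-1}\}$, one has $\Qgf_{j}(x,y;t)\in\mathbb{R}[x,x^{-1},y,y^{-1}][[t]]$. The coefficient $[t^{n}]\Qgf_{j}(1,1;t)$ is bounded by the total weighted number of unrestricted walks of length $n$, which is $\Pgf(1,1)^{n}$, so as in the proof of Lemma \ref{lem:Q_series_convergence}, the series converges absolutely for $|x|,|y|=1$ and $t\in\left(0,\frac{1}{\Pgf(1,1)}\right)$. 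This is precisely the hypothesis needed.

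Next I would apply Lemma \ref{lem:function_and_series_complexity} with $\Ggf=\Qgf_{j}$ and property $\mathcal{P}$ equal to \emph{algebraic}. This immediately yields that the series $\Qgf_{j}(x,y;t)\in\mathbb{R}[x,y][[t]]$ is algebraic in $x$ if and only if, for every $t\in\left(0,\frac{1}{\Pgf(1,1)}\right)$, the function $\Qgf_{j}(x,y;t)$ is algebraic in $x$. Since algebraicity with respect to a variable coincides with algebraicity with respect to its multiplicative inverse, being algebraic in $x$ is the same as being algebraic in $x_{j}$ when $j$ is even, and the same as being algebraic in $y_{j}$ when $j$ is odd; either way this is one of the equivalent conditions listed in Theorem \ref{thm:bc_algebraic_fixed_t}. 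The analogous statement holds for $y$.

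Finally I would invoke Theorem \ref{thm:bc_algebraic_fixed_t}: at each fixed $t$ satisfying the running assumption, all seven conditions in that theorem are equivalent. Consequently, algebraicity of the function $\Qgf_{j}(x,y;t)$ in $x$ (resp.\ in $y$) for every $t$ is equivalent to the condition that any one (hence all) of the conditions of Theorem \ref{thm:bc_algebraic_fixed_t} holds for every $t\in\left(0,\frac{1}{\Pgf(1,1)}\right)$, which is exactly statement (iii). This yields the equivalences $\text{(i)}\iff\text{(iii)}$ and $\text{(ii)}\iff\text{(iii)}$. I expect no serious obstacle: the work has all been done in Lemma \ref{lem:function_and_series_complexity} and Theorem \ref{thm:bc_algebraic_fixed_t}, and the proof is a bookkeeping argument, with the only subtle point being the harmless translation between the variables $\{x,y\}$ and $\{x_{j},y_{j}\}$.
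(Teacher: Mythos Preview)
Your proposal is correct and follows exactly the paper's approach: the paper's proof is the single line ``the equivalences $\text{(i)}\iff\text{(iii)}$ and $\text{(ii)}\iff\text{(iii)}$ are both due to Lemma~\ref{lem:function_and_series_complexity}''. Your write-up is actually more careful than the paper's, since you explicitly verify the convergence hypothesis of Lemma~\ref{lem:function_and_series_complexity} and handle the translation between the variables $x,y$ and $x_j,y_j$ (noting that algebraicity in $x$ corresponds to condition~(i) or~(ii) of Theorem~\ref{thm:bc_algebraic_fixed_t} depending on the parity of $j$), a point the paper leaves implicit.

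One small remark: your sentence ``at each fixed $t$ satisfying the running assumption, all seven conditions in that theorem are equivalent'' slightly overstates what is proved, since Theorem~\ref{thm:bc_algebraic_fixed_t} carries the extra hypothesis $\frac{\gamma}{\pi\tau}\in\mathbb{Q}$. However, this does not affect your argument: the conditions you actually use, (i) and (ii) of Theorem~\ref{thm:bc_algebraic_fixed_t}, are equivalent for \emph{every} $t$ by Lemma~\ref{lem:bc_QjAB_complexities_equivalence}, without any assumption on $\gamma/\pi\tau$. The paper's own formulation of condition~(iii) in Theorem~\ref{thm:bc_algebraic_general_t} is equally informal on this point.
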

 
\begin{Theorem}\label{thm:bc_D-algebraic_general_t}
Assume that $M\geq 1$ is and the group of the walks is infinite. For any integer $j\in[-L,K]$, the following are equivalent
\begin{enumerate}[label={\rm(\roman*)},ref={\rm(\roman*)}]
\item The series $\Qgf_{j}(x,y;t)\in\mathbb{R}[x,y][[t]]$  is D-algebraic in $x$, \label{bc_series_D-algx}
\item The series $\Qgf_{j}(x,y;t)\in\mathbb{R}[x,y][[t]]$  is D-algebraic in $y$, \label{bc_series_D-algy}
\item The equivalent conditions of Theorem \ref{thm:bc_D-algebraic_fixed_t} hold for all $t\in\left(0,\frac{1}{\Pgf(1,1)}\right)$.
\end{enumerate}
\end{Theorem}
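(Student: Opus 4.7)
The plan is to deduce Theorem \ref{thm:bc_D-algebraic_general_t} from its pointwise analogue, Theorem \ref{thm:bc_D-algebraic_fixed_t}, by means of the transfer principle Lemma \ref{lem:function_and_series_complexity}. This exactly mirrors the proof of Theorems \ref{thm:D-finite_general_t}--\ref{thm:D-algebraic_general_t} in the three-quadrant case, so no genuinely new ingredient should be needed — the heavy lifting has already been done.

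The first step is to verify that the series $\Qgf_j(x,y;t) \in \mathbb{R}[x,1/x,y,1/y][[t]]$ satisfies the convergence hypothesis of Lemma \ref{lem:function_and_series_complexity}. Since the coefficient of $t^n$ in $\Qgf_j$ counts a weighted subset of length-$n$ walks in the plane (starting at a fixed point), it is bounded above by $\Pgf(1,1)^n$, and its support in the exponents of $x$ and $y$ lies in a box of side $2n+1$. As in the proof of Lemma \ref{lem:Q_series_convergence}, this yields absolute convergence for $|x|=|y|=1$ and $t \in (0,1/\Pgf(1,1))$. Applying Lemma \ref{lem:function_and_series_complexity} with $\Fgf = \Qgf_j$ and the property ``D-algebraic'' then gives: the series $\Qgf_j$ is D-algebraic in $x$ if and only if, for every such $t$, the function $\Qgf_j(\cdot,\cdot;t)$ is D-algebraic in $x$, and analogously for $y$.

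The second step is to match variables. Theorem \ref{thm:bc_D-algebraic_fixed_t} is stated in terms of D-algebraicity with respect to $x_j$ and $y_j$, but $\{x_j,y_j\} \subset \{x,1/x,y,1/y\}$, and a straightforward chain-rule argument shows that being D-algebraic in $u$ is equivalent to being D-algebraic in $1/u$. Hence conditions \ref{bc_D-algx} and \ref{bc_D-algy} of Theorem \ref{thm:bc_D-algebraic_fixed_t} amount respectively to ``$\Qgf_j(\cdot,\cdot;t)$ is D-algebraic in one of $x$ or $y$'' (depending on the parity of $j$); under Theorem \ref{thm:bc_D-algebraic_fixed_t} these are pointwise equivalent to all the other conditions of that theorem. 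Under the standing infinite-group hypothesis, Proposition \ref{prop:finite_group_general_t} ensures that $\gamma/\pi\tau \notin \mathbb{Q}$ for every $t \in (0,1/\Pgf(1,1))$, so Theorem \ref{thm:bc_D-algebraic_fixed_t} does apply at every such $t$. Chaining these equivalences with the transfer lemma from the first step closes the argument.

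There is no real obstacle here, only a bookkeeping task: the substantive content sits in Theorem \ref{thm:bc_D-algebraic_fixed_t} (which rests on the $q$-difference Galois theory assembled in Appendix \ref{ap:D-trans}) and in Lemma \ref{lem:function_and_series_complexity}. The only points that demand care are the variable identification $x_j \leftrightarrow x$ (up to inversion, handled by the chain rule) and ensuring the infinite-group hypothesis translates into the pointwise hypothesis $\gamma/\pi\tau \notin \mathbb{Q}$, which is precisely the content of Proposition \ref{prop:finite_group_general_t}.
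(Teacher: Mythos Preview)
Your overall approach matches the paper's: both proofs reduce to Lemma~\ref{lem:function_and_series_complexity} applied to $\Qgf_j$. However, there is one genuine error in your argument.

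You write that ``Proposition~\ref{prop:finite_group_general_t} ensures that $\gamma/\pi\tau \notin \mathbb{Q}$ for every $t \in (0,1/\Pgf(1,1))$''. This is false. Proposition~\ref{prop:finite_group_general_t} states that the group is finite if and only if $\gamma/\pi\tau \in \mathbb{Q}$ for \emph{all} $t$; its contrapositive says only that when the group is infinite, there \emph{exists} some $t$ with $\gamma/\pi\tau \notin \mathbb{Q}$. The paper explicitly confirms the distinction in the Remark following Theorem~\ref{thm:finite_group_case_thm}: even in infinite-group cases, $\gamma/\pi\tau \in \mathbb{Q}$ can hold for specific values of $t$. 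Consequently you cannot invoke Theorem~\ref{thm:bc_D-algebraic_fixed_t} at every $t$ as you do.

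The repair is simple and is essentially what the paper does implicitly. The pointwise equivalence of ``$\Qgf_j(\cdot,\cdot;t)$ is D-algebraic in $x$'' and ``\ldots in $y$'' is Lemma~\ref{lem:bc_QjAB_complexities_equivalence}, which carries no hypothesis on $\gamma/\pi\tau$. Condition~(iii) should be read as ``condition~\ref{bc_D-algx} (equivalently~\ref{bc_D-algy}) of Theorem~\ref{thm:bc_D-algebraic_fixed_t} holds for all $t$''; these particular conditions make sense and remain equivalent at every $t$. With that reading, Lemma~\ref{lem:function_and_series_complexity} gives $\text{(i)}\Leftrightarrow\text{(iii)}$ and $\text{(ii)}\Leftrightarrow\text{(iii)}$ directly, and no appeal to the irrationality of $\gamma/\pi\tau$ is needed. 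Your discussion of variable matching $x_j\leftrightarrow x$ and the convergence check are fine; only the misuse of Proposition~\ref{prop:finite_group_general_t} needs to be removed.
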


For each of these theorems, the equivalences $\text{(i)}\iff\text{(iii)}$ and $\text{(ii)}\iff\text{(iii)}$ are both due to Lemma \ref{lem:function_and_series_complexity}.

\subsection{Finite group cases}\label{sec:bc_finite_group}
Define $\hat{\tau}:=\frac{\gamma_{K}-\gamma_{-L-1}}{\pi}$. In this section, we consider the cases in which $\frac{\hat{\tau}}{\tau}\in\mathbb{Q}$. We will show that this occurs for fixed $t$ if and only if $\Qgf_{j}(x,y;t)$ is D-finite in $x$. We will also show that the nature of this restriction depends on the parity of $M$: If $M$ is even then we always have $\frac{\hat{\tau}}{\tau}\in\mathbb{Q}$, while if $M$ is odd then $\frac{\hat{\tau}}{\tau}\in\mathbb{Q}$ if and only if $\frac{\gamma}{\pi\tau}\in\mathbb{Q}$. We also describe precisely in which cases $\Qgf_{j}(x,y;t)$ is algebraic in $x$.

\begin{Theorem}\label{thm:bc_finite_group_case_thm} If $\frac{\hat{\tau}}{\tau}=\frac{N_{1}}{N_{2}}\in\mathbb{Q}$, then $\Qgf_{j}(x,y)$ is D-finite in $x$ and $y$. Moreover, under this assumption $\Qgf_{j}(x,y)$ is algebraic in $x$ if and only if the function
\[\tilde{E}(z):=\sum_{j=0}^{N_{2}-1}\tilde{J}(j\pi\hat{\tau}+z)\]
is equal to $0$. Similarly $\Qgf_{j}(x,y)$ is algebraic in $y$ if and only if $\tilde{E}(z)=0$.\end{Theorem}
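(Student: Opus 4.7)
I would mirror the structure of the proof of Theorem \ref{thm:finite_group_case_thm}, replacing the particular $q$-difference equation \eqref{eq:PH_qdiff} by its generalisation \eqref{eq:bc_PH_qdiff}, namely $B(\pi\hat{\tau}+z)-B(z)=\tilde{J}(z)$. The first step is to telescope this relation $N_2$ times: since $\tilde{J}$ is elliptic with periods $\pi$ and $\pi\tau$, induction yields
\[
B(N_2\pi\hat{\tau}+z)-B(z)=\sum_{j=0}^{N_2-1}\tilde{J}(j\pi\hat{\tau}+z)=\tilde{E}(z).
\]
Because $N_2\pi\hat{\tau}=N_1\pi\tau$, this becomes
\[
B(z+N_1\pi\tau)-B(z)=\tilde{E}(z),
\]
and noting that $\tilde{E}$ inherits the periods $\pi$ and $\pi\tau$ from $\tilde{J}$, a further iteration gives $B(z+mN_1\pi\tau)-B(z)=m\tilde{E}(z)$ for every positive integer $m$.

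Now I would split into two cases. If $\tilde{E}\equiv 0$, then $N_1\pi\tau$ is a period of $B$; combined with $B(z+\pi)=B(z)$ from Theorem \ref{thm:bc_PH_PV_characterisation}, this makes $B$ elliptic with periods $\pi$ and $N_1\pi\tau$. Any such function is algebraic over the field of elliptic functions with periods $\pi,\pi\tau$, and hence over $\mathbb{C}(X(z))$, so $B$ has the property $X$-algebraic of Appendix \ref{ap:nature_analytic}; Lemma \ref{lem:bc_QjAB_complexities_equivalence} then gives $\Qgf_j(x,y;t)$ algebraic, in particular D-finite, in $x$. If instead $\tilde{E}\not\equiv 0$, the iterated identity immediately rules out any period of the form $m\pi\tau$ for $B$ (such a period would force $m\tilde{E}(z)=0$), so Lemma \ref{lem:bc_QjAB_complexities_equivalence} shows that $\Qgf_j(x,y;t)$ is \emph{not} algebraic in $x$. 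For D-finiteness in this case, I would divide the telescoped identity by $\tilde{E}(z)=\tilde{E}(z+N_1\pi\tau)$ and differentiate: the function $F(z):=\frac{d}{dz}(B(z)/\tilde{E}(z))$ is $N_1\pi\tau$-periodic, hence elliptic with periods $\pi$ and $N_1\pi\tau$, and therefore algebraic over $\mathbb{C}(X(z))$. Integrating back, $B(z)/\tilde{E}(z)$ satisfies a second-order linear ODE over $\mathbb{C}(X(z))$, and multiplying by the algebraic factor $\tilde{E}$ shows $B$ to be $X$-D-finite. A final appeal to Lemma \ref{lem:bc_QjAB_complexities_equivalence} then delivers D-finiteness of $\Qgf_j(x,y;t)$ in $x$.

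The main point requiring care is the clean translation between the enlarged periods $\pi,N_1\pi\tau$ and the appendix notions of \emph{$X$-algebraic} and \emph{$X$-D-finite}: one must verify that an elliptic function with these enlarged periods is indeed algebraic over $\mathbb{C}(X(z))$, and that a primitive of such a function is D-finite in the sense used in Appendix \ref{ap:nature_analytic}. This is the natural generalisation of the $N_1=1$ situation used in the proof of Theorem \ref{thm:finite_group_case_thm}, and should follow from the same appendix tools (Propositions analogous to \ref{prop:algebraicofXorY} and \ref{prop:D-finiteofXorY}) once the field extension $\mathbb{C}(X(z),Y(z))\subset\{\text{elliptic of periods }\pi,N_1\pi\tau\}$ is identified as finite algebraic.
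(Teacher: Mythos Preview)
Your proposal is correct and follows essentially the same route as the paper: telescope \eqref{eq:bc_PH_qdiff} $N_2$ times to get $B(z+N_1\pi\tau)-B(z)=\tilde{E}(z)$, then split on whether $\tilde{E}\equiv 0$; in the zero case $B$ picks up the period $N_1\pi\tau$, and in the nonzero case differentiate $B/\tilde{E}$ to land in an elliptic function with periods $\pi,N_1\pi\tau$, while the iterated identity forbids any period $m\pi\tau$ for $B$.

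Two small remarks. First, your closing worry about translating ``elliptic with periods $\pi,N_1\pi\tau$'' into $X$-algebraic and $X$-D-finite is already handled directly by the paper's definitions: Definition~\ref{def:X-alg} \emph{defines} $X$-algebraic as having some $m\pi\tau$ as a period, and Definition~\ref{def:X-D-finite-weak} together with Proposition~\ref{prop:D-finiteofXorY} says that a linear ODE for $B$ with coefficients elliptic of periods $\pi,m\pi\tau$ is exactly (weak) $X$-D-finiteness. So no separate field-extension argument is needed; the paper simply writes out $F(z)=\frac{d}{dz}(B/\tilde{E})=\tilde{E}^{-2}(B'\tilde{E}-B\tilde{E}')$, notes $F$ has period $N_1\pi\tau$, and reads off the linear equation $B'\tilde{E}-B\tilde{E}'-\tilde{E}^2 F=0$ as a witness of weak $X$-D-finiteness. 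Second, the ODE you obtain for $B/\tilde{E}$ is first-order, not second-order.
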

\begin{proof}Assume that $\frac{\hat{\tau}}{\tau}=\frac{N_{1}}{N_{2}}$ for some positive $N_{1},N_{2}\in\mathbb{Z}$. Now consider \eqref{eq:bc_PH_qdiff}:
\[B\left(\frac{N_{1}}{N_{2}}\pi\tau+z\right)-B(z)=\tilde{J}(z).\]
 Taking a telescoping sum of $N_{2}$ copies of this equation yields
\begin{equation}\label{eq:bc_PHandE}B(\pi\tau N_{1}+z)-B(z)=\sum_{j=0}^{N_{2}-1}\tilde{J}\left(j\frac{N_{1}}{N_{2}}\pi\tau+z\right)=\tilde{E}(z).\end{equation}

We will now consider the cases $\tilde{E}(z)=0$ and $\tilde{E}(z)\neq0$ separately. In the case that $\tilde{E}(z)=0$, we have
\[B(\pi\tau N_{1}+z)=B(z),\]
so $B(z)$ is $X$-algebraic (see Definition \ref{def:X-alg}). Hence by Lemma \ref{lem:bc_QjAB_complexities_equivalence}, $\Qgf_{j}(x,y)$ is algebraic in $x$ and $y$.

Finally we consider the case $\tilde{E}(z)\neq0$. Then from \eqref{eq:bc_PHandE}, we have
\begin{equation}\label{eq:bc_PH_holo}\frac{B(\pi\tau N_{1}+z)}{\tilde{E}(\pi\tau N_{1}+z)}-\frac{B(z)}{\tilde{E}(z)}=\frac{B(2\pi\tau N_{1}+z)}{\tilde{E}(z)}-\frac{B(z)}{\tilde{E}(z)}=1,\end{equation}
so the function
\begin{equation}\label{eq:bc_F_def}F(z):=\frac{\partial}{\partial z}\frac{B(z)}{\tilde{E}(z)}=\frac{1}{\tilde{E}(z)^{2}}(B'(z)\tilde{E}(z)-B(z)\tilde{E}'(z))\end{equation}
satisfies
\[F(2\pi\tau N_{1}+z)-F(z)=0.\]
Hence, $B(z)$ is weakly $X$-D-finite (see Definition \ref{def:X-D-finite-weak}), so by proposition \ref{prop:D-finiteofXorY}, it is $X$-D-finite. Therefore, by Lemma \ref{lem:bc_QjAB_complexities_equivalence}, the generating function $\Qgf_{j}(x,y)$ is D-finite in $x$ and $y$.

Finally we show that in this $\tilde{E}(z)\neq 0$ case, $\Qgf_{j}(x,y)$ is not algebraic in $x$ or $y$. Suppose the contrary, then by Lemma \ref{lem:bc_QjAB_complexities_equivalence}, $B(z)$ is $X$-algebraic, that is $m\pi\tau$ is a period of $B(z)$ for some positive integer $m$. But this is impossible as by \eqref{eq:bc_PH_holo},
\[\frac{B(m\pi\tau N_{1}+z)-B(z)}{\tilde{E}(z)}=\frac{B(m\pi\tau N_{1}+z)}{\tilde{E}(m\pi\tau N_{1}+z)}-\frac{B(z)}{\tilde{E}(z)}=m\neq0.\]
\end{proof}

To finish the proof of Theorem \ref{thm:bc_D-finite_fixed_t}, we need the following Lemma, which specifically relates to the case when $M$ is odd. We note that the Lemma below implies that for $M$ odd, the value of $N_{2}$ in this section is the same as the value $N$ used in Section \ref{sec:finite_group}.

\begin{Lemma}\label{lem:bc_odd_D-finite}
If $M\geq 1$ is odd, then $\frac{\hat{\tau}}{\tau}\in\mathbb{Q}$ if and only if $\frac{\gamma}{\pi\tau}\in\mathbb{Q}$. Moreover, for an integer $N_{2}$, we have $N_{2}\frac{\hat{\tau}}{\tau}\in\mathbb{Z}$ if and only if $N_{2}\frac{2\gamma}{\pi\tau}\in\mathbb{Z}$.
\end{Lemma}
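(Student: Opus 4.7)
The plan is to compute $\gamma_{K}-\gamma_{-L-1}$ explicitly using the definitions from Definition \ref{def:jstuff}, split into cases by the parity of $L$ and $K$, and show that in each case the quantity has the form $\pm 2\gamma + n\pi\tau$ for some integer $n$. Dividing by $\pi\tau$ then reduces both claimed equivalences to a transparent statement about $\frac{2\gamma}{\pi\tau}$.

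First, I would recall from Definition \ref{def:jstuff} that $\gamma_{2k}=\gamma+k\pi\tau$ and $\gamma_{2k-1}=-\gamma+k\pi\tau$, so to evaluate $\gamma_{K}$ and $\gamma_{-L-1}$ one only needs the parities of $K$ and $-L-1$. Because $M=L+K+1$ is odd, $L$ and $K$ must have the same parity; hence exactly two subcases occur.

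In the first subcase ($L,K$ both even) I write $K=2k_{1}$ and $-L-1=2k_{2}-1$ with $k_{1}=K/2$, $k_{2}=-L/2$, which gives
\[\gamma_{K}-\gamma_{-L-1}=\bigl(\gamma+\tfrac{K}{2}\pi\tau\bigr)-\bigl(-\gamma-\tfrac{L}{2}\pi\tau\bigr)=2\gamma+\tfrac{M-1}{2}\pi\tau.\]
In the second subcase ($L,K$ both odd) I write $K=2k_{1}-1$ and $-L-1=2k_{2}$ with $k_{1}=(K+1)/2$, $k_{2}=-(L+1)/2$, which gives
\[\gamma_{K}-\gamma_{-L-1}=\bigl(-\gamma+\tfrac{K+1}{2}\pi\tau\bigr)-\bigl(\gamma-\tfrac{L+1}{2}\pi\tau\bigr)=-2\gamma+\tfrac{M+1}{2}\pi\tau.\]
In both subcases, $\gamma_{K}-\gamma_{-L-1}=\varepsilon\cdot 2\gamma+n\pi\tau$ with $\varepsilon\in\{+1,-1\}$ and $n\in\mathbb{Z}$, so by definition $\hat{\tau}=\varepsilon\cdot\tfrac{2\gamma}{\pi}+n\tau$ and therefore $\tfrac{\hat{\tau}}{\tau}=\varepsilon\cdot\tfrac{2\gamma}{\pi\tau}+n$.

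From this last identity both claims of the lemma fall out: $\tfrac{\hat{\tau}}{\tau}\in\mathbb{Q}$ iff $\tfrac{2\gamma}{\pi\tau}\in\mathbb{Q}$ iff $\tfrac{\gamma}{\pi\tau}\in\mathbb{Q}$, and for any integer $N_{2}$, $N_{2}\tfrac{\hat{\tau}}{\tau}=\varepsilon N_{2}\tfrac{2\gamma}{\pi\tau}+N_{2}n$ lies in $\mathbb{Z}$ iff $N_{2}\tfrac{2\gamma}{\pi\tau}\in\mathbb{Z}$. There is no real obstacle here, the only thing to get right is the bookkeeping of parities and signs, and in particular to verify that $L+K$ even forces $L$ and $K$ to share parity (which is immediate) so that no mixed-parity subcase can arise. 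I would keep both computations side by side in the final write-up to make it visually clear that the sign $\varepsilon$ differs between the two subcases but the conclusion is insensitive to it.
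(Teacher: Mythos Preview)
Your proof is correct and follows essentially the same approach as the paper: both split into the two subcases according to the common parity of $L$ and $K$, compute $\gamma_{K}-\gamma_{-L-1}$ directly from Definition~\ref{def:jstuff}, and observe that the result differs from $\pm 2\gamma$ by an integer multiple of $\pi\tau$. Your unification of the two subcases via $\varepsilon\in\{+1,-1\}$ is a slight cosmetic improvement over the paper's write-up, which treats the final deduction separately in each case.
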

\begin{proof}
As we will show, this follows easily from the definition of $\gamma_{j}$ (see Definition \ref{def:jstuff}). Assuming $M=K+L+1$ is odd, $K$ and $L$ must have the same parity. If they are both even, then $\gamma_{-L-1}=-\gamma-\pi\tau\frac{L}{2}$ and $\gamma_{K}=\gamma+\pi\tau\frac{K}{2}$, so
\[\frac{\hat{\tau}}{\tau}=\frac{\gamma_{K}-\gamma_{-L-1}}{\pi\tau}=\frac{2\gamma}{\pi\tau}+\frac{K+L}{2},\]
so $\frac{\hat{\tau}}{\tau}\in\mathbb{Q}$ if and only if $\frac{\gamma}{\pi\tau}\in\mathbb{Q}$ and $N_{2}\frac{\hat{\tau}}{\tau}\in\mathbb{Z}$ if and only if $N_{2}\frac{2\gamma}{\pi\tau}\in\mathbb{Z}$.

If $L$ and $K$ are both odd then $\gamma_{-L-1}=\gamma-\pi\tau\frac{L+1}{2}$ and $\gamma_{K}=-\gamma+\pi\tau\frac{K+1}{2}$, so
\[\frac{\hat{\tau}}{\tau}=\frac{\gamma_{K}-\gamma_{-L-1}}{\pi\tau}=-\frac{2\gamma}{\pi\tau}+\frac{K+L+2}{2},\]
so $\frac{\hat{\tau}}{\tau}\in\mathbb{Q}$ if and only if $\frac{\gamma}{\pi\tau}\in\mathbb{Q}$ and $N_{2}\frac{\hat{\tau}}{\tau}\in\mathbb{Z}$ if and only if $N_{2}\frac{2\gamma}{\pi\tau}\in\mathbb{Z}$.
\end{proof}
To finish the proof of Theorem \ref{thm:bc_even_cone_D-finite} we use the following Lemma
\begin{Lemma}\label{lem:bc_even_D-finite}
If $M\geq 2$ is even then $\frac{\hat{\tau}}{\tau}=\frac{M}{2}\in\mathbb{Z}$.
\end{Lemma}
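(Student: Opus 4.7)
The plan is to compute $\gamma_K - \gamma_{-L-1}$ directly from Definition \ref{def:jstuff} and observe that the $\pm\gamma$ contributions cancel, leaving a clean integer multiple of $\pi\tau$. Since $M = K + L + 1$ is even, the integers $K$ and $L$ must have opposite parities, which splits the computation into two symmetric cases. In both cases I expect the answer $\hat\tau = \frac{M}{2}\tau$ to drop out immediately.

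First I would consider the case $K$ even, $L$ odd. Writing $K = 2k_1$ we get $\gamma_K = \gamma_{2k_1} = \gamma + k_1\pi\tau = \gamma + \tfrac{K}{2}\pi\tau$. Since $L$ is odd, $-L-1$ is even, so writing $-L-1 = 2k_2$ (with $k_2 = -\tfrac{L+1}{2}$) gives $\gamma_{-L-1} = \gamma + k_2\pi\tau = \gamma - \tfrac{L+1}{2}\pi\tau$. Subtracting, the $\gamma$ terms cancel and we obtain
\begin{equation*}
\gamma_K - \gamma_{-L-1} = \tfrac{K}{2}\pi\tau + \tfrac{L+1}{2}\pi\tau = \tfrac{M}{2}\pi\tau,
\end{equation*}
so $\hat\tau = \tfrac{M}{2}\tau$, as required.

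Next I would handle the case $K$ odd, $L$ even. Writing $K = 2k_1 - 1$ gives $\gamma_K = \gamma_{2k_1-1} = -\gamma + k_1\pi\tau = -\gamma + \tfrac{K+1}{2}\pi\tau$. Since $L$ is even, $-L-1$ is odd, so writing $-L-1 = 2k_2 - 1$ (with $k_2 = -\tfrac{L}{2}$) gives $\gamma_{-L-1} = -\gamma + k_2\pi\tau = -\gamma - \tfrac{L}{2}\pi\tau$. Again the $-\gamma$ terms cancel, yielding $\gamma_K - \gamma_{-L-1} = \tfrac{K+1}{2}\pi\tau + \tfrac{L}{2}\pi\tau = \tfrac{M}{2}\pi\tau$, hence $\hat\tau = \tfrac{M}{2}\tau$.

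There is no real obstacle here: the statement is a purely bookkeeping consequence of the definition of $\gamma_j$, with the key observation being that the parity of $M$ forces $K$ and $-L-1$ to share the same parity, which in turn forces $\gamma_K$ and $\gamma_{-L-1}$ to use the same sign of $\gamma$, so that the $\pm\gamma$ term drops out of the difference. Since $M$ is even, $\tfrac{M}{2}$ is an integer, completing the proof.
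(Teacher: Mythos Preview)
Your proof is correct and follows essentially the same approach as the paper: both compute $\gamma_K - \gamma_{-L-1}$ directly from Definition~\ref{def:jstuff}, using that $K$ and $-L-1$ have the same parity when $M$ is even so that the $\pm\gamma$ terms cancel. The paper compresses your two cases into the single observation that $\gamma_{j+2}=\gamma_j+\pi\tau$ for all $j$, then iterates $M/2$ times, but the content is identical.
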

\begin{proof}
From the definition of $\gamma_{j}$ (Definition \ref{def:jstuff}), we always have $\gamma_{j+2}=\gamma_{j}+\frac{\pi\tau}{2}$. Hence, if $M=K+L+1$ is even, then $\gamma_{K}=\gamma_{-L-1}+\frac{M}{2}\pi\tau$. Hence
$\frac{\hat{\tau}}{\tau}=\frac{\gamma_{K}-\gamma_{-L-1}}{\pi\tau}=\frac{M}{2},$
as required.
\end{proof}

To finish the proof of Theorem \ref{thm:bc_algebraic_fixed_t} we use the following lemma
\begin{Lemma}\label{lem:bc_odd_algebraic}
Define $F(z):=X(z)^{p}Y(z)^{q}$. If $M\geq 1$ is odd, then $\tilde{E}(z)$ is equal to the orbit sum
\[E(z):=\sum_{j=0}^{N_{2}-1}F((2j+1)\gamma-z)-F(2j\gamma+z)\]
from Definition \ref{defn:orbit_sum} and used throughout Subsection \ref{sec:finite_group}.
\end{Lemma}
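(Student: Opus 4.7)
The proof will be a direct computation, the heart of which is reducing $\pi\hat{\tau}$ and $\gamma_{-L-1}$ modulo $\pi\tau$ (which is a period of $F(z) := X(z)^p Y(z)^q$) so that $\tilde{E}(z)$ visibly becomes the standard orbit sum $E(z)$. First I would invoke Lemma \ref{lem:bc_odd_D-finite}: since $N_2\frac{\hat{\tau}}{\tau}\in\mathbb{Z}$ iff $N_2\frac{2\gamma}{\pi\tau}\in\mathbb{Z}$, the integer $N_2$ arising here coincides with the integer $N$ from Section \ref{sec:solving}.

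Next I would split on the parity of $L$ and $K$. Since $M=K+L+1$ is odd, $L$ and $K$ have the same parity. Using Definition \ref{def:jstuff}, in the case $L,K$ both even I compute
\[
\gamma_{-L-1}=-\gamma-\tfrac{L}{2}\pi\tau,\qquad \gamma_K=\gamma+\tfrac{K}{2}\pi\tau,\qquad \pi\hat{\tau}=2\gamma+\tfrac{K+L}{2}\pi\tau,
\]
and in the case $L,K$ both odd
\[
\gamma_{-L-1}=\gamma-\tfrac{L+1}{2}\pi\tau,\qquad \gamma_K=-\gamma+\tfrac{K+1}{2}\pi\tau,\qquad \pi\hat{\tau}=-2\gamma+\tfrac{K+L+2}{2}\pi\tau.
\]
Since $F$ has $\pi\tau$ as a period (because $X$ and $Y$ do), we can drop all $\pi\tau$-multiples. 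Substituting into the definition $\tilde{J}(z)=F(\gamma_{-L-1}-z)-F(z)$ yields, in the even case,
\[
\tilde{E}(z)=\sum_{j=0}^{N-1}\bigl[F(-(2j+1)\gamma-z)-F(2j\gamma+z)\bigr],
\]
and in the odd case,
\[
\tilde{E}(z)=\sum_{j=0}^{N-1}\bigl[F((2j+1)\gamma-z)-F(-2j\gamma+z)\bigr].
\]

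Finally I would use that $2N\gamma\equiv 0\pmod{\pi\tau}$ (which is equivalent to $\frac{2\gamma}{\pi\tau}=\frac{M}{N}\in\mathbb{Q}$ in lowest terms, so $2N\gamma$ is an integer multiple of $\pi\tau$) together with the $\pi\tau$-periodicity of $F$, to reindex the sums. In the even case, substituting $j\mapsto N-1-j$ in the first sum turns $-(2j+1)\gamma$ into $(2j+1)\gamma-2N\gamma$, which agrees with $(2j+1)\gamma$ modulo $\pi\tau$, and the second sum already matches; this recovers $E(z)$. In the odd case, the first sum already matches $E(z)$, and substituting $j\mapsto N-j$ (with $j=0$ fixed) in the second sum identifies $\{-2j\gamma\bmod\pi\tau\}$ with $\{2k\gamma\bmod\pi\tau\}$. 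In both cases $\tilde{E}(z)=E(z)$.

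The proof is really just bookkeeping; there is no serious obstacle, though one must be careful that $\gcd(M,N)=1$ so that $2N\gamma$ is exactly an integer multiple of $\pi\tau$ and the reindexing bijection is valid, and that the same $N$ appears on both sides, which is guaranteed by Lemma \ref{lem:bc_odd_D-finite}.
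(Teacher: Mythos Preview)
Your proposal is correct and follows essentially the same route as the paper: split on the common parity of $L$ and $K$, reduce $\gamma_{-L-1}$ and $\pi\hat{\tau}$ modulo $\pi\tau$ using Definition~\ref{def:jstuff}, and then reindex the resulting sums using $2N\gamma\in\pi\tau\mathbb{Z}$. The only difference is presentational: you invoke Lemma~\ref{lem:bc_odd_D-finite} explicitly to justify $N_2=N$ and $2N\gamma\in\pi\tau\mathbb{Z}$, whereas the paper records the equality $N_2=N$ in a remark just before that lemma and uses it tacitly; your remark about $\gcd(M,N)=1$ is a slight red herring, since all you actually need (and what Lemma~\ref{lem:bc_odd_D-finite} gives) is $N\cdot\frac{2\gamma}{\pi\tau}\in\mathbb{Z}$.
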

\begin{proof}Recall from \eqref{def:tildej} that $\tilde{J}(z)=F(\gamma_{-L-1}-z)-F(z)$. Hence $\tilde{E}(z)$ can be written in terms of $F(z)$ as
\begin{align*}
\tilde{E}(z)&=\sum_{j=0}^{N_{2}-1}F((j+1)\gamma_{-L-1}-j\gamma_{K}-z)-F(j(\gamma_{K}-\gamma_{-L-1})+z).
\end{align*}
We will now consider two cases.\newline
\textbf{Case 1:} $L$ is even.\newline
In this case $K=M-L-1$ is also even, so by Definition \ref{def:jstuff}, $\gamma_{K}=\gamma+\frac{K}{2}\pi\tau$, while $\gamma_{-L-1}=-\gamma-\frac{L}{2}\pi\tau$. Since $\pi\tau$ is a period of $F(z)$, this implies that
\[\tilde{E}(z)=\sum_{j=0}^{N_{2}-1}F(-(2j+1)\gamma-z)-F(2j\gamma+z)=E(z).\]
\textbf{Case 2:} $L$ is odd.\newline
In this case $K=M-L-1$ is also odd, so by Definition \ref{def:jstuff}, $\gamma_{K}=-\gamma+\frac{K+1}{2}\pi\tau$, while $\gamma_{-L-1}=\gamma-\frac{L+1}{2}\pi\tau$. Since $\pi\tau$ is a period of $F(z)$, this implies that
\[\tilde{E}(z)=\sum_{j=0}^{N_{2}-1}F((2j+1)\gamma-z)-F(-2j\gamma+z).\]
Now, since $2\gamma N_{2}\in\pi\tau\mathbb{Z}$, we can rewrite this as
\[\tilde{E}(z)=\left(\sum_{j=0}^{N_{2}-1}F((2(j-N_{2})+1)\gamma-z)\right)-\left(\sum_{j=0}^{N_{2}-1}F(2(N_{2}-j)\gamma+z)\right).\]
Replacing $j$ by $N_{2}-1-j$ in the first sum and $N_{2}-j$ in the second sum yields
\[\tilde{E}(z)=\left(\sum_{j=0}^{N_{2}-1}F((-2j-1)\gamma-z)\right)-\left(\sum_{j=1}^{N_{2}}F(2j\gamma+z)\right)=E(z).\]
\end{proof}

\begin{Lemma}\label{lem:bc_even_algebraic}
If $M\geq 2$ is even, then $\tilde{E}(z)$ is the zero function if and only if $q=0$ and $L$ is even.
\end{Lemma}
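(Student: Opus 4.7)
The plan is to reduce the identity $\tilde E(z)\equiv 0$ to a statement about $\tilde J(z)$ alone and then exploit the symmetries from Lemma \ref{lem:param} and the theta-function divisor data from Proposition \ref{prop:th_param}.

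First I would invoke Lemma \ref{lem:bc_even_D-finite}: since $M$ is even, $\hat{\tau}/\tau = M/2\in\mathbb{Z}$, so one may take $N_{2}=1$ in Theorem \ref{thm:bc_finite_group_case_thm} (any larger choice of $N_{2}$ telescopes to a multiple of the $N_{2}=1$ case). Hence
\[
\tilde E(z)=\tilde J(z)=X(\gamma_{-L-1}-z)^{p}Y(\gamma_{-L-1}-z)^{q}-X(z)^{p}Y(z)^{q}.
\]
Since $M=K+L+1$ is even, $K$ and $L$ have opposite parity, and I would split on the parity of $L$.

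If $L$ is even, Definition \ref{def:jstuff} gives $\gamma_{-L-1}=-\gamma-(L/2)\pi\tau$. Using $\pi\tau$-periodicity of $X,Y$ together with $X(-\gamma-z)=X(z)$ and $Y(-\gamma-z)=Y(\gamma-(-\gamma-z))=Y(z+2\gamma)$ from Lemma \ref{lem:param}, this collapses to
\[
\tilde J(z)=X(z)^{p}\bigl[Y(z+2\gamma)^{q}-Y(z)^{q}\bigr].
\]
The ``if'' direction ($q=0$, $L$ even) is then immediate, since the bracket vanishes. For the converse with $q\geq 1$, I would argue that $\tilde J\equiv 0$ forces $Y(z+2\gamma)/Y(z)$ to equal a constant $q$-th root of unity $c$ (a meromorphic function that takes only finitely many values is constant), so that $Y(z+2\gamma)=c\,Y(z)$ identically. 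Comparing the theta parameterization of $Y$ given by Proposition \ref{prop:th_param}, the zero multiset $\{\beta,\gamma-\beta\}$ and pole multiset $\{\epsilon,\gamma-\epsilon\}$ of $Y$ must each be preserved modulo $\pi\mathbb{Z}+\pi\tau\mathbb{Z}$ by the shift $z\mapsto z-2\gamma$. A finite enumeration of the four possible bijective matchings shows that each either forces $2\gamma\in\pi\mathbb{Z}+\pi\tau\mathbb{Z}$ (impossible since $\gamma\in i\mathbb{R}\setminus\{0\}$ and $|2\gamma|<|\pi\tau|$ by Lemma \ref{lem:param}), or identifies a zero of $Y$ with a pole (contradicting that the kernel parameterization is non-trivial), or yields the joint constraints $4\gamma\equiv 0$ and $2\beta\equiv 2\epsilon\equiv -\gamma \pmod{\pi\mathbb{Z}+\pi\tau\mathbb{Z}}$, from which one derives a further degeneracy of $Y$ via the kernel equation $\Kgf(X,Y;t)=0$.

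If $L$ is odd, the parallel computation gives $\gamma_{-L-1}=\gamma-((L+1)/2)\pi\tau$ and, via $Y(\gamma-z)=Y(z)$ and $X(\gamma-z)=X(z-2\gamma)$ (obtained from $X(-\gamma-z)=X(z)$ by $z\mapsto z-2\gamma$),
\[
\tilde J(z)=Y(z)^{q}\bigl[X(z-2\gamma)^{p}-X(z)^{p}\bigr].
\]
Since the starting point lies in $\tilde{\cQ}_{0}$, we have $p\geq 1$, and the same divisor-matching argument applied to $X$ (with roots $\{\alpha,-\gamma-\alpha\}$ and poles $\{\delta,-\gamma-\delta\}$ via Proposition \ref{prop:th_param}) rules out $X(z-2\gamma)=c\,X(z)$, so $\tilde J\not\equiv 0$.

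The main obstacle will be the edge case $4\gamma\in\pi\mathbb{Z}+\pi\tau\mathbb{Z}$, which by the bound $|2\gamma|<|\pi\tau|$ of Lemma \ref{lem:param} reduces to the isolated value $\gamma=\pi\tau/4$. This value does occur in practice (e.g.\ for the simple walks discussed just after Proposition \ref{prop:th_param}), so in that matching the contradiction must be extracted from the remaining half-period constraints on $\beta,\epsilon$ (respectively $\alpha,\delta$) together with the kernel identity $\Kgf(X(z),Y(z);t)=0$, which links the divisor of $Y$ to that of $X$.
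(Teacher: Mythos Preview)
Your reduction to $\tilde E=\tilde J$ and the parity split on $L$ are correct and match the paper. The gap is precisely the part you flag yourself as ``the main obstacle'': you do not close the case $c=-1$ (equivalently $4\gamma\equiv 0$). Your divisor-matching argument only narrows things down to this case and then defers to an unspecified use of the kernel identity; as written that is not a proof, and the divisor-matching detour is also more work than needed (your ``identifies a zero with a pole'' branch does not actually arise, since $Y(z+2\gamma)=cY(z)$ with $c\neq 0,\infty$ sends zeros to zeros and poles to poles).

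The paper avoids the whole divisor analysis. From $Y(-\gamma-z)^{q}=Y(z)^{q}$ one gets $Y(-\gamma-z)=cY(z)$ with $c$ a $q$-th root of unity, and then substituting $z\mapsto -\gamma-z$ once more yields $c^{2}=1$ immediately (this uses only $Y(\gamma-z)=Y(z)$, not any period information). The case $c=1$ makes $2\gamma$ a period of $Y$, impossible since $0<\Im(2\gamma)<\Im(\pi\tau)$. For $c=-1$ the paper uses Vieta on the quadratic $y\Pgf(X(z),y)-y/t$, whose two roots are $Y(z)$ and $Y(-\gamma-z)$: their sum equals $-\bigl(A_{0}(X(z))-1/t\bigr)/A_{1}(X(z))$, so $Y(z)+Y(-\gamma-z)\equiv 0$ would force $A_{0}(x)\equiv 1/t$ identically in $x$, which is absurd since $A_{0}$ is a fixed Laurent polynomial independent of $t$. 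The odd-$L$ case is handled by the symmetric argument on $X$, using that $p\geq 1$ and that the linear coefficient of $x\Pgf(x,Y(z))-x/t$ cannot vanish identically. This Vieta step is exactly the ``kernel identity'' ingredient you gesture at; once you spell it out, the proof is complete and considerably shorter than the divisor route.
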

\begin{proof}
Assume $M$ is even. From Lemma \ref{lem:bc_even_D-finite}, we know that $\frac{\hat{\tau}}{\tau}=\frac{M}{2}\in\mathbb{Z}$, so
\[\tilde{E}(z):=\sum_{j=0}^{N_{2}-1}\tilde{J}(j\pi\hat{\tau}+z)=\sum_{j=0}^{N_{2}-1}\tilde{J}(z)=N_{2}\tilde{J}(z),\]
so $\tilde{E}(z)=0$ if and only if $\tilde{J}(z)=0$. Now recall the definition \eqref{def:tildej} of $\tilde{J}(z)$: 
\[\tilde{J}(z)=X(\gamma_{-L-1}-z)^{p}Y(\gamma_{-L-1}-z)^{q}-X(z)^{p}Y(z)^{q}.\]
In the case that $L$ is even, we have $\gamma_{-L-1}=-\gamma-\frac{L}{2}\pi\tau$, so
\[\tilde{J}(z)=X(-\gamma-z)^{p}Y(-\gamma-z)^{q}-X(z)^{p}Y(z)^{q}=X(z)^{p}\left(Y(-\gamma-z)^{q}-Y(z)^{q}\right).\]
So we see immediately that if $q=0$, then $\tilde{J}(z)=0$. It remains to prove the converse. So, assume for the sake of contradiction that $q\neq 0$ but $\tilde{J}(z)=0$ for all $z$. then we have
\[Y(-\gamma-z)^{q}=Y(z)^{q}\]
for all $z$. Hence $\frac{Y(z)}{Y(-\gamma-z)}$ is a $q$th root of unity for each $z$, so in fact it must be a constant $c$. So $Y(z)=cY(-\gamma-z)$ for all $z$ and substituting $z\to -\gamma-z$ yields $Y(-\gamma-z)=cY(z)$, so $c^2=1$, that is either $Y(z)=Y(-\gamma-z)=Y(2\gamma+z)$ or $Y(z)=-Y(-\gamma-z)=-Y(2\gamma+z)$. In the first case $2\gamma$ is a period of $Y$ which is impossible because $0<\Im(2\gamma)<\Im(\pi\tau)$ by Lemma \ref{lem:param}. So we are left with the case $Y(z)=-Y(-\gamma-z)$. Now, to compare $Y(z)$ and $Y(-\gamma-z)$ in a different way, we consider the equation $y\Pgf(X(z),y)-\frac{y}{t}=0$, which is a quadratic equation in $y$ with solutions $y=Y(z)$ and $y=Y(-\gamma-z)$. Now define
\[A_{-1}(x)+yA_{0}(x)+y^{2}A_{1}(x):=\Pgf(x,y),\]
that is $A_{j}(x)=w_{(-1,j)}x^{-1}+w_{(0,j)}+w_{(1,j)}x$. Then
we have
\[y^{-1}A_{-1}(X(z))+\left(A_{0}(X(z))-\frac{1}{t}\right)+yA_{1}(X(z))=y\Pgf(X(z),y)-\frac{y}{t}=A_{1}(X(z))(y-Y(z))(y-Y(-\gamma-z)).\]
Hence
\begin{align*}
A_{0}(X(z))-\frac{1}{t}&=-A_{1}(X(z))(Y(z)+Y(-\gamma-z)),\\
A_{-1}(X(z))&=A_{1}(X(z))Y(z)Y(-\gamma-z).
\end{align*}
But in this case $Y(z)=-Y(-\gamma-z)$, so $A_{0}(X(z))-\frac{1}{t}=0$ for all $z$, or equivalently $A_{0}(x)-\frac{1}{t}=0$, which is impossible because $[x^{0}]A_{0}(x)=w_{(0,0)}=0$ as $(0,0)\notin S$.

Finally we consider the remaining case where $L$ is odd, so $\gamma_{-L-1}=\gamma-\frac{L+1}{2}\pi\tau$. It remains to prove that $\tilde{J}(z)$ is not the zero function in this case. We have
\[\tilde{J}(z)=X(\gamma_{-L-1}-z)^{p}Y(\gamma_{-L-1}-z)^{q}-X(z)^{p}Y(z)^{q}=Y(z)^{q}(X(\gamma-z)^{p}-X(z)^{p}),\]
so $\tilde{J}(z)=0$ if and only if $X(\gamma-z)^{p}=X(z)^{p}$. Now, recall that in our model the starting point $(p,q)$ must satisfy $p>0$, so similarly to the previous case for $Y(z)$, it follows that $X(\gamma-z)=X(z)$ or $X(\gamma-z)=-X(z)$. In the former case $2\gamma$ is a period of $X$, which is a contradiction since $0<2\gamma<\pi\tau$. and in the latter case we get a similar contradiction to before:
\[x\Pgf(x,Y(z))-\frac{x}{t}=B_{1}(z)(x-X(z))(x-X(\gamma-z))=B_{1}(z)(x-X(z))(x+X(z))=B_{1}(z)(x^2-X(z)^2),\]
where $B_{1}(z)=w_{(1,1)}Y(z)+w_{(0,1)}+w_{(-1,1)}Y(z)^{-1}$. This is a contradiction because $x\Pgf(x,Y(z))-\frac{x}{t}$ has a linear term equal to $w_{(0,1)}Y(z)+w_{(0,-1)}Y(z)^{-1}-\frac{1}{t}$, which cannot be identically 0, whereas the right hand side has no linear term.
\end{proof}

\subsection{Infinite group cases}

In this entire section we assume that $M$ is odd, as the cases for $M$ even were all handled in the previous section.

In this section we consider the case $\frac{\gamma}{\pi\tau}\notin\mathbb{Q}$. the first part of this section is dedicated to showing that $\Qgf_{j}(x,y;t)$ is not D-finite in $x$ in these cases, and we will subsequently analyse the D-algebraicity of $\Qgf_{j}(x,y;t)$. We start with a lemma which essentially proves that $A_{H}(\frac{1}{x};t)$ is not rational in $x$, as this turns out to be a case which needs to be treated separately. Surprising this seems to be the most difficult result of this section, in the sense that it is the only result for which our proof does not apply systematically to walks in an $M$-quadrant cone for any $M$. In particular, for walks in the quadrant we have only been able to prove the result when $t$ is sufficiently small (See Lemma \ref{lem:not-rationalM1}).

\begin{Lemma}\label{lem:not-rationalMbig}
Assume that $\frac{\gamma}{\pi\tau}\notin\mathbb{Q}$ and $M\geq3$ is odd. Then $\pi\tau$ is not a period of $A(z)$ or $B(z)$. 
\end{Lemma}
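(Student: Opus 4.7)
The plan is to adapt the proof of Lemma \ref{lem:not-rational} to the general $M$-quadrant setting. Suppose, for contradiction, that $\pi\tau$ is a period of $B(z)$. Equation \eqref{eq:bc_PH_PV_nice} together with the $\pi\tau$-periodicity of $X(z)$ and $Y(z)$ then forces $\pi\tau$ to also be a period of $A(z)$. Since $Y_K(z)$ is $\pi\tau$-periodic, $\Omega_{j+4} = \Omega_j + \pi\tau$ by Lemma \ref{lem:Omega}, and the defining identity $B(z) = \Bgf(Y_K(z); t)$ holds on $\Omega_K \cup \Omega_{K+1}$, the assumed $\pi\tau$-periodicity of $B$ propagates this identity by shifting indices by $4$, extending it to the full set $\{z : |Y_K(z)| \leq 1\}$. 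The analogous extension gives $A(z) = \Agf(X_{-L}(z); t)$ on $\{z : |X_{-L}(z)| \leq 1\}$.

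Next I would exhibit a point $z_0$ satisfying
\[
|X_{-L}(z_0)| \leq 1, \qquad |Y_K(z_0)| \leq 1, \qquad |X(z_0)|^p |Y(z_0)|^q > 1.
\]
At such a $z_0$, substituting into \eqref{eq:bc_PH_PV_nice} and using that the coefficients of $\Agf$ and $\Bgf$ are non-negative yields
\[
1 < |X(z_0)^p Y(z_0)^q| = |F + A(z_0) + B(z_0)| \leq F + \Agf(1;t) + \Bgf(1;t),
\]
which contradicts the inequality $F + \Agf(1;t) + \Bgf(1;t) < 1$. This latter strict inequality is obtained by evaluating at $x=y=1$ the sum of \eqref{eq:V-L_A_series}, the equations \eqref{eq:Vj_Hj_series} summed over $-L < j < K$ (whose $\Lgf_j, \Rgf_j$ terms telescope), and \eqref{eq:HK_B_series}, and noting that the resulting left-hand side $(t\Pgf(1,1)-1)\sum_j \Qgf_j(1,1;t)$ is strictly negative for $t \in (0,1/\Pgf(1,1))$.

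The main obstacle is producing the point $z_0$, which will require case analysis on $L$ and $K$ modulo $4$. In the most symmetric cases (typified by $L \equiv K \equiv 2 \pmod{4}$, where $X_{-L} = 1/X$ and $Y_K = 1/Y$), any $z_0 \in \Omega_{-2}$ with $|X(z_0)|, |Y(z_0)| > 1$ suffices, directly generalizing the proof of Lemma \ref{lem:not-rational}. In the remaining sub-cases $z_0$ must instead be chosen on a boundary curve where one of $|X(z_0)| = 1$ or $|Y(z_0)| = 1$ holds while the other variable retains strict modulus greater than $1$; such points exist because the curves $\{|X|=1\}$ and $\{|Y|=1\}$ bound the regions $\Omega_j$ and each passes through $\Omega_j$'s in which the companion variable is strictly $>1$ in absolute value. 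The hypothesis $M \geq 3$ is exactly what guarantees that such a region is available: for $M = 1$ one would instead be forced to impose $|X(z_0)| = |Y(z_0)| = 1$ simultaneously, which is impossible on $\overline{E_t}$ since $|t\Pgf(X,Y)| \leq t\Pgf(1,1) < 1$, and this is precisely why the quarter-plane case must be handled separately in Lemma \ref{lem:not-rationalM1}. Finally, the symmetric argument with the roles of $A$ and $B$ interchanged rules out $\pi\tau$ as a period of $A(z)$.
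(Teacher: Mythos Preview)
Your approach has a genuine gap in the cases where $L\equiv 0\pmod 4$ and $K\equiv 0\pmod 4$, for instance $M=5$ with $L=0$, $K=4$. There $X_{-L}=X$ and $Y_K=Y$, so your constraints $|X_{-L}(z_0)|\le 1$ and $|Y_K(z_0)|\le 1$ become $|X(z_0)|\le 1$ and $|Y(z_0)|\le 1$, while the contradiction also requires $|X(z_0)|^p|Y(z_0)|^q\ge 1$. This forces $|X(z_0)|=|Y(z_0)|=1$, which you yourself observe is impossible on $\overline{E_t}$. Hence no admissible $z_0$ exists at all in these cases, and neither interior nor boundary points rescue the argument. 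The hypothesis $M\ge 3$ does not exclude such configurations.

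The paper proceeds differently and avoids this obstruction. Whenever $L\ge 2$, condition (i) of Theorem~\ref{thm:bc_PH_PV_characterisation} says $A(z)$ is pole-free on $\Omega_{-L-1}\cup\cdots\cup\Omega_0$, a strip containing at least four consecutive $\Omega_j$'s. If $A$ is also $\pi\tau$-periodic it is therefore entire, hence constant; then \eqref{eq:bc_PH_PV_nice} and \eqref{eq:bc_PV_flippy_full} force $X(z)^pY(z)^q$ to be invariant under $z\mapsto\gamma_K-z$, which is ruled out by a short argument comparing the loci $|X|>1$ across the boundary of $\Omega_1\cup\Omega_2$. The symmetric argument handles $K\ge 2$. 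Since $K+L+1=M\ge 3$, the only remaining case with $K,L\le 1$ is $K=L=1$, $M=3$, where the paper simply invokes Lemma~\ref{lem:not-rational}. Thus your bound argument is used only for $M=3$; for all larger odd $M$ the paper instead exploits the enlarged pole-free strip coming from Theorem~\ref{thm:bc_PH_PV_characterisation}.
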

\begin{proof}
By \eqref{eq:bc_PH_PV_nice}, if either $A(z)$ or $B(z)$ has $\pi\tau$ as a period, then the other does as well. If $L\geq 2$, then by Theorem \ref{thm:bc_PH_PV_characterisation}, $A(z)$ has no poles in $\Omega_{-3}\cup\Omega_{-2}\cup\Omega_{-1}\cup\Omega_{0}$. Together with the fact that $\pi\tau$ is a period of $A(z)$, this implies that $A$ has no poles at all, so it is a constant function, which will quickly lead to a contradiction. Indeed, using Theorem \ref{thm:bc_PH_PV_characterisation}, this implies that $X(z)^{p}Y(z)^{q}=X(\gamma_{K}-z)^{p}Y(\gamma_{K}-z)^{q}$, and this expression has no poles in $\Omega_{j}$ for $0\leq j\leq K+1$. If $K$ is even then this implies $X(z)^{p}Y(z)^{q}=X(\gamma-z)^{p}Y(\gamma-z)^{q}$, so, $X(z)^{p}=X(\gamma-z)^{p}$. This is impossible as when $z$ is on the border between $\Omega_{1}$ and $\Omega_{2}$, $\gamma-z$ is on the border between $\Omega_{-1}$ and $\Omega_{0}$, so $|X(z)|>1>|X(\gamma-z)|$. If $K$ is odd then $X(z)^{p}Y(z)^{q}=X(-\gamma-z)^{p}Y(-\gamma-z)^{q}$, leading to a similar contradiction if $q\neq 0$. For the case where $K$ is odd and $q=0$, Theorem \ref{thm:bc_PH_PV_characterisation} implies that $X(z)^{p}=F+A(z)+B(z)$ has no poles in $\Omega_{j}$ for $0\leq j\leq K+1$, which is impossible as we know that $X(z)$ has poles in $\Omega_{1}\cup\Omega_{2}$. This completes the case $L\geq 2$. Similarly, if $K\geq 2$ then $B(z)$ has no poles in $\Omega_{0}\cup\Omega_{1}\cup\Omega_{2}\cup\Omega_{3}$, so it is constant. Theorem \ref{thm:bc_PH_PV_characterisation} then implies that $X(z)^{p}Y(z)^{q}=X(\gamma_{-L-1}-z)^{p}Y(\gamma_{-L-1}-z)^{q}$. When $L$ is odd this implies $X(z)^{p}=X(\gamma-z)^{p}$, which leads to a contradiction as in the previous case. When $q\neq 0$ and $L$ is even we get a similar contradiction. In the case $q=0$, and $L$ even, we must have $L>0$, for $(p,0)$ to be in the cone, but then $L\geq 2$, which we already showed to be impossible. 

Finally we are left with the cases where $K\leq 1$ and $L\leq 1$. Since $K+L+1=M\geq 3$, the only remaining case is $K=L=1$ and $M=3$. This is exactly the three-quadrant cone case covered by Lemma \ref{lem:not-rational}, so this completes the proof.\end{proof}

In the following lemma we prove a slightly weaker result for $M=1$. We suspect that the more general result above applies for $M=1$, but we have not been able to prove it. 

\begin{Lemma}\label{lem:not-rationalM1}
Assume that the group is not finite and that $M=1$ (the quadrant case). For all sufficiently small $t$ satisfying $\frac{\gamma}{\pi\tau}\notin\mathbb{Q}$, the function $A(z)$ does not have $\pi\tau$ as a period. 
\end{Lemma}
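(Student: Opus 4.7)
My plan is to mirror the strategy of Lemma \ref{lem:not-rational}, which handled the analogous three-quadrant case. I would begin by assuming for contradiction that $\pi\tau$ is a period of $A(z)$. Applied to \eqref{eq:bc_PH_PV_nice} (in its $M=1$ form) together with the $\pi\tau$-periodicity of $X(z)^p Y(z)^q$, this forces $\pi\tau$ to also be a period of $B(z)$. Combined with the symmetries $A(z)=A(-\gamma-z)=A(z+\pi)$ and $B(z)=B(\gamma-z)=B(z+\pi)$ coming from Theorem \ref{thm:bc_PH_PV_characterisation}, Proposition \ref{prop:rationalofXorY} would then yield that $A(z)$ is a rational function of $X(z)$ and $B(z)$ is a rational function of $Y(z)$. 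Since the identity $A(z)=\Agf(X(z);t)$ holds on the open region $\Omega_{-1}\cup\Omega_0$ where $X$ takes values filling the closed unit disk, the rational function representing $A$ must coincide with $\Agf(\,\cdot\,;t)$ as analytic functions of $x$ on this disk; meromorphic continuation then extends $A(z)=\Agf(X(z);t)$ (and, analogously, $B(z)=\Bgf(Y(z);t)$) to all of $\mathbb{C}$, with $\Agf$ and $\Bgf$ genuinely rational in their variables.

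Substituting these global identities into \eqref{eq:bc_PH_PV_nice} and using the parameterization \eqref{eq:Eparam} of the spectral curve $\overline{E_t}$ produces the decoupling identity
\[x^p y^q = F(t)+\Agf(x;t)+\Bgf(y;t) \qquad \text{for all } (x,y)\in\overline{E_t}.\]
From here the contradiction is extracted by exploiting that the kernel is quadratic in $y$, with two branches $y_\pm(x,t)$ both satisfying the identity. Subtracting and dividing by $y_+-y_-$ yields $x^p\,\sigma_q(y_+,y_-) = (\Bgf(y_+;t)-\Bgf(y_-;t))/(y_+-y_-)$, where $\sigma_q(u,v)=(u^q-v^q)/(u-v)$. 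Both sides are symmetric in $(y_+,y_-)$ and hence polynomial/rational in the symmetric quantities $S=y_++y_-=(1-tA_0(x))/(tA_1(x))$ and $s=y_+y_-=A_{-1}(x)/A_1(x)$, with $\sigma_q$ being of exact top $S$-degree $q-1$ and leading coefficient $1$. Comparing the highest power of $S$ on both sides forces the leading coefficient of $\Bgf$ in $y$ (which depends only on $t$) to equal $x^p$, which is impossible for $p\geq 1$.

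The main obstacle I anticipate is making this last step rigorous: a priori $\Bgf(y;t)\in y\,\mathbb{R}[y][[t]]$ is only a formal power series whose polynomial-in-$y$ truncations have growing degree, so the notion of ``leading $y$-coefficient'' requires that one first replace $\Bgf$ by the rational extension obtained in the first paragraph. The ``sufficiently small $t$'' hypothesis enters precisely here: for small $t$ the large branch $y_+\sim 1/(tA_1(x))$ is genuinely large, so the top-degree behavior of the rational $\Bgf$ at $y\to\infty$ is the one that dominates in the comparison, and the meromorphic extension has no spurious poles interfering with the expansion. An alternative, perhaps cleaner route would be to evaluate leading orders of the decoupling identity near the two poles $\epsilon$ and $\gamma-\pi\tau-\epsilon$ of $Y$ in a fundamental domain; at these points $X$ takes the two roots of $A_1(x)$, and matching the leading $Y^q$-order forces the leading $y$-coefficient of $\Bgf$ to simultaneously equal $X(\epsilon)^p$ and $X(\gamma-\pi\tau-\epsilon)^p$, producing the contradiction for small $t$ under the non-singularity of the step-set.
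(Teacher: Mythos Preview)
Your reduction is sound: assuming $\pi\tau$ is a period of $A$, you correctly get that both $A(z)=\Agf(X(z);t)$ and $B(z)=\Bgf(Y(z);t)$ hold globally with $\Agf,\Bgf$ rational, yielding the identity $x^py^q=F+\Agf(x;t)+\Bgf(y;t)$ on $\overline{E_t}$. This matches the paper's starting point. The gap is in how you extract the contradiction.

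Your main route (the symmetric-function comparison) implicitly treats $\Bgf(\,\cdot\,;t)$ as a polynomial in $y$ so that a ``top $S$-degree'' makes sense, but summing a series in $y\,\mathbb{R}[y][[t]]$ at fixed $t$ can produce any rational function analytic near $y=0$, not just a polynomial. Worse, the argument as written uses nothing about $\Agf,\Bgf$ beyond the decoupling identity itself; but that identity \emph{does} admit rational solutions $R_1,R_2$ in the infinite-group D-algebraic cases (Theorem~\ref{thm:bc_D-algebraic_fixed_t}(vi)), so an argument that would exclude all such pairs cannot be correct. You must exploit analytic features specific to the combinatorial $\Agf,\Bgf$.

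Your alternative route is the right instinct and is close to what the paper actually does, but it is far from complete. Matching leading $Y^q$-orders at the two poles of $Y$ gives $X(\epsilon)^p=X(\gamma-\epsilon)^p$; however $\{X(\epsilon),X(\gamma-\epsilon)\}$ are the roots of $xA_1(x)$, which are \emph{independent of $t$} (this is Claim~2 in the paper's proof), so ``sufficiently small $t$'' does not separate them, and there are non-singular step sets where $x_0^p=x_1^p$ (take $w_{(-1,1)}=w_{(1,1)}$, $w_{(0,1)}=0$, $p$ even). Further degeneracies arise when $X$ and $Y$ share a pole ($w_{(1,1)}=0$) or when one of the $x_i$ is $0$ or $\infty$. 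The paper instead works at the poles $\delta$ of $X$ via the difference equation $X(z)^p(Y(z)^q-Y(-\gamma-z)^q)=B(z)-B(-\gamma-z)$, and uses small $t$ only to place the fixed values $x_0,x_1,y_0,y_1$ inside the disk $\mathcal{D}_t$ on which $\Agf,\Bgf$ are known to converge (so that $A,B$ are provably pole-free there). The contradiction then comes from a rather delicate case analysis on which of $w_{(1,1)},w_{(1,0)},w_{(0,1)}$ vanish and on the relative sizes of $p$ and $q$, forcing $Y(z)^q-Y(-\gamma-z)^q$ to vanish at $\delta$ to an impossibly high order.
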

\begin{proof}
Assume for the sake of contradiction that $A(z)$ has $\pi\tau$ as a period. Then by \eqref{eq:bc_PH_PV_nice}, $B(z)$ also has $\pi\tau$ as a period. Moreover, since we are in the case $M=1$, there is only one quadrant which does not include the axes, so the starting point $(p,q)$ of the walk cannot be on either axis. Hence $p,q\geq1$. Moreover, $K=L=0$, so from Theorem \ref{thm:bc_PH_PV_characterisation}, $A(z)=A(-\gamma-z)$ and $B(z)=B(\gamma-z)$.

\textbf{Claim 1:} Let $\mathcal{D}_{t}=\left\{x\in\mathbb{C}:|x|<\sqrt{\frac{1}{t\Pgf(1,1)}}\right\}$. If $X(z_{0})\in\mathcal{D}_{t}$ then $A(z_{0})=\Agf(X(z_{0});t)$. In particular $A(z)$ does not have a pole at $z=z_{0}$.\newline
\textbf{Proof of Claim 1:} For $z_{0}\in\Omega_{-1}\cup\Omega_{0}$, we know that this result holds as it is the definition of $A(z_{0})$ in this region. Moreover, this extends analytically to the connected component of $X^{-1}(\mathcal{D}_{t})$ containing $\Omega_{0}\cup\Omega_{-1}$ as we know from Lemma \ref{lem:Q_series_convergence} that $\Agf(X(z_{0});t)$ converges for $X(z_{0})\in\mathcal{D}_{t}$. If $z_{0}$ is in the connected component of $X^{-1}(\mathcal{D}_{t})$ containing $\Omega_{4m}\cup\Omega_{4m-1}$, then we have
\[A(z_{0})=A(z_{0}-m\pi\tau)=\Agf(X(z_{0}-m\pi\tau);t)=\Agf(X(z_{0});t),\] since $z_{0}-m\pi\tau$ is in the connected component of $X^{-1}(\mathcal{D}_{t})$ containing $\Omega_{0}\cup\Omega_{-1}$. Hence to complete the proof of claim 1 it suffices to prove that every connected component $\Gamma$ of $X^{-1}(\mathcal{D}_{t})$ contains one of the sets $\Omega_{4m}\cup\Omega_{4m-1}$. Assume $\Gamma$ is such a connected component. Then, since $\mathcal{D}_{t}$ is an open set, $\Gamma$ must also be an open set, so by the open mapping theorem $X(\Gamma)$ is an open set, so the boundary of $X(\Gamma)$ is $X(\partial \Gamma)$, where $\partial \Gamma$ is the boundary of $\Gamma$. Moreover, since $\Gamma$ is a connected component of $X^{-1}(\mathcal{D}_{t})$, we have $X(\partial \Gamma)\subset \partial \mathcal{D}_{t}$. So $X(\Gamma)$ is a non-empty, open subset of the connected set $\mathcal{D}_{t}$ and its boundary is a subset of the boundary of $\mathcal{D}_{t}$, which is only possible if $X(\Gamma)=\mathcal{D}_{t}$. In particular, there is some $z_{0}\in\Gamma$ satisfying $X(z_{0})=0$, so $z_{0}\in\Omega_{4m}\cup\Omega_{4m-1}$ for some $m$, which implies that $\Gamma$ contains $\Omega_{4m}\cup\Omega_{4m-1}$. This completes the proof of Claim 1.

We will reach a contradiction using this claim along with the equation 
\begin{equation}X(z)^{p}Y(z)^{q}=F+A(z)+B(z),\label{eq:simpleABXY}\end{equation}
and the related equation
\begin{equation}X(z)^{p}(Y(z)^{q}-Y(-\gamma-z)^{q})=B(z)-B(-\gamma-z),\label{eq:simple_diff_BXY}\end{equation}
by comparing the possible positions of poles of $X$, $Y$, $A$ and $B$ for varying $t$. To analyse the relationship between $X(z)$, $Y(z)$ and $t$, we need to use the definition of $\Pgf(x,y)$ and define
\[A_{\beta}(x):=\sum_{\alpha=-1}^{1}w_{(\alpha,\beta)}x^{\alpha},\]
for $-1\leq\beta\leq 1$, so that
\begin{equation}\label{eq:P_splitting}\Pgf(x,y)=y^{-1}A_{-1}(x)+A_{0}(x)+yA_{1}(x).\end{equation}
Now, for general $z\in\mathbb{C}$, we have $\Pgf(X(z),Y(z))-\frac{1}{t}=0$ so the polynomial
$y\Pgf(X(z),y)-\frac{y}{t}$ has roots at $y=Y(z)$ and $y=Y(-\gamma-z)$. Hence it's coefficients are related by
\begin{align}
A_{0}(X(z))-\frac{1}{t}&=-A_{1}(X(z))(Y(z)+Y(-\gamma-z))\label{eq:Ycoef_sum},\\
A_{-1}(X(z))&=A_{1}(X(z))Y(z)Y(-\gamma-z)\label{eq:Ycoef_prod}.
\end{align}
The second consequence of \eqref{eq:P_splitting} is the following claim:\newline
\textbf{Claim 2:} There are values $x_{0},x_{1},y_{0},y_{1}\in\mathbb{C}\cup\{\infty\}$ which do not depend on $t$ such that, for any $t$, if $\epsilon$ is a pole of $Y(z)$, then $\{X(\epsilon),X(-\gamma-\epsilon)\}=\{x_{0},x_{1}\}$ and if $\delta$ is a pole of $X(z)$, then $\{Y(\delta),Y(\gamma-\delta)\}=\{y_{0},y_{1}\}$.\newline
\textbf{Proof of Claim 2:} We will prove the claim for $x_{0}$ and $x_{1}$ as the other part of this claim is equivalent. In general, $X(z)$ and $X(-\gamma-z)$ are the two roots for $x$ of the quadratic polynomial  $\frac{x}{Y(z)}\Pgf(x,Y(z))-\frac{x}{tY(z)}$. As $z\to \epsilon$, this converges to $xA_{1}(x)$, so we can simply define $\{x_{0},x_{1}\}$ to be the roots of $xA_{1}(x)$, which do not depend on $t$. In the case the $w_{(1,1)}=0$ but $w_{(0,1)}\neq 0$, the polynomial $xA_{1}(x)$ is linear, so it only has one root, however one of the roots of $\frac{x}{Y(z)}\Pgf(x,Y(z))-\frac{x}{tY(z)}$ converges to $\infty$ as $z\to\delta$, so we can let $x_{0}=\infty$ and $x_{1}$ be the unique root of $xA_{1}(x)$. Finally if $w_{(1,1)}=w_{(0,1)}=0$, we can set $x_{0}=x_{1}=\infty$. This completes the proof of Claim 2.

Now, assume that $t$ is sufficiently small that the values $x_{0},x_{1},y_{0},y_{1}$ that are not $\infty$ lie in $\mathcal{D}_{t}$.\newline
\textbf{Claim 3:} If $\delta$ is a pole of $X(z)$ but not of $Y(z)$, then it is not a pole of $B(z)$. Similarly, if $\epsilon$ is a pole of $Y(z)$ but not of $X(z)$, then it is not a pole of $A(z)$.\newline
\textbf{Proof of Claim 3:} Assume $\epsilon$ is a pole of $Y(z)$ but not of $X(z)$. Then by our assumption, $X(\epsilon)=x_{0}\in\mathcal{D}_{t}$ or $X(\epsilon)=x_{1}\in\mathcal{D}_{t}$. Hence by Claim 1, $A(z)$ does not have a pole at $\epsilon$. The other part of the claim is equivalent, so this completes the proof of the claim.

We will now consider a number of cases separately regarding the nature of $X$ and $Y$ around a pole $\delta$ of $X(z)$.\newline
\textbf{Case 1:} $w_{(1,1)}=w_{(1,0)}=0$, so $\delta$ is a double pole of $X(z)$ and a simple pole of $Y(z)$.\newline
In this case the second pole $-\gamma-\delta$ of $X(z)$ must in fact be equal to (a possibly shifted version of) $\delta$, that is, $-\gamma-\delta\in\delta+\pi\mathbb{Z}+\pi\tau\mathbb{Z}$. Hence, $X(z)=X(-\gamma-z)=X(2\delta-z)$ and $Y(z)=Y(\gamma-z)=Y(-2\delta-z)$. Now, since $Y(z)$ has a pole at $\delta$, it must have a pole at $\gamma-\delta$, but $X(z)$ cannot have a pole at this point, so by Claim 3, $A(z)$ does not have a pole at $z=\gamma-\delta$. Moreover, since $A(z)=A(-\gamma-z)$, this means that $A(z)$ does not have a pole at $z=-2\gamma+\delta$ either. Now taking successive differences of \eqref{eq:simpleABXY} yields
\[X(-2\delta-z)^{p}Y(-2\delta-z)^{q}-X(z)^{p}Y(z)^{q}+X(2\delta-z)^{p}Y(2\delta-z)^{q}-X(-4\delta+z)^{p}Y(-4\delta+z)^{q}=A(\gamma-z)-A(2\gamma+z),\]
and as we have shown the right hand side of this equation does not a have a pole at $z=\delta$. Now we will reach a contradiction by analysing the left hand side near $z=\delta$. Since $Y(z)$ has a simple pole at this point, while $X(z)$ has a double pole and $X(\delta-z)=X(\delta+z)$, we may write $X(z)=c_{1}(z-\delta)^{-2}+O(1)$ and $Y(z)=c_{2}(z-\delta)^{-1}+c_{3}+O(z-\delta)$, where $c_{1},c_{2}\neq0$. Then the left hand side of the equation above behaves as
\[-c_{1}^{p}c_{2}^{q}(1-(-1)^q)(z-\delta)^{-2p-q}-qc_{1}^{p}c_{2}^{q-1}c_{3}(1+(-1)^q)(x-\delta)^{-2p-q+1}+O\left((x-\delta)^{-2p-q+2}\right).\]
Since we know that the expression must not have a pole at $\delta$, the two leading terms must be $0$, which is only possible if $q$ is even and $c_{3}=0$. But then $Y(z)+Y(-\gamma-z)\to 2c_{3}= 0$ as $z\to\delta$. Comparing to \eqref{eq:Ycoef_sum}, this implies that
\[\frac{\frac{1}{t}-A_{0}(x)}{A_{1}(x)}\to 0\qquad\text{as }x\to\infty.\]
But since $w_{(1,1)}=w_{(1,0)}=0$, we must have
\[\frac{\frac{1}{t}-A_{0}(x)}{A_{1}(x)}\to \frac{1}{t w_{(0,1)}}\neq 0\qquad\text{as }x\to\infty,\]
a contradiction.\newline
\textbf{Case 2:} $w_{(1,1)}=w_{(0,1)}=0$.\newline
This is symmetric to the previous case so the same proof applies.

For the remaining cases we assume that $p\geq q$, as the $q\geq p$ case is equivalent.\newline
\textbf{Case 3:} $w_{(1,1)}=0\neq w_{(0,1)},w_{(1,0)}$, so $X(z)$ and $Y(z)$ share a simple pole $\delta\in\Omega_{1}\cup\Omega_{2}$.\newline
In this case $\gamma-\delta$ is a pole of $Y(z)$ but not $X(z)$ while $-\gamma-\delta$ is a pole of $X(z)$ but not $Y(z)$. Using \eqref{eq:simpleABXY}, we have
\[X(z)^{p}Y(z)^{q}-X(\gamma-z)^{p}Y(\gamma-z)^{q}-X(-\gamma-z)^{p}Y(-\gamma-z)^{q}=-F-A(\gamma-z)-B(-\gamma-z).\]
Analysing this around $z=\delta$ yields a contradiction: The right hand side has no pole at $z=\delta$, while the terms $X(z)^{p}Y(z)^{q}$, $X(\gamma-z)^{p}Y(\gamma-z)^{q}$ and $X(-\gamma-z)^{p}Y(-\gamma-z)^{q}$ have poles of orders $p+q$, $q$ and $p$, respectively, so the left hand side has a pole of order $p+q$.\newline
\textbf{Case 4:} $w_{(1,1)}\neq 0$, so $X(z)$ and $Y(z)$ do not share any poles.\newline
Both $\delta$ and $-\gamma-\delta$ are poles of $X(z)$, so they are not poles of $Y(z)$. Hence by  Claim 3, these must also not be poles of $B(z)$. In particular, this implies that the right hand side of \eqref{eq:simple_diff_BXY} does not have a pole at $z=\delta$, so the left hand side 
\[X(z)^{p}(Y(z)^{q}-Y(-\gamma-z)^{q})\]
is also analytic at $z=\delta$. Hence, $(Y(z)^{q}-Y(-\gamma-z)^{q})$ must have a root of order at least $p$ at $z=\delta$. Moreover, in the case that $X(z)$ has a double pole at $z=\delta$, the function $(Y(z)^{q}-Y(-\gamma-z)^{q})$ must have a root of order at least $2p$ at $z=\delta$. 
We will now consider different values of $q$ in separate cases.\newline
\textbf{Case 4a:} $q=1$\newline
In this case $Y(z)-Y(-\gamma-z)$ has a root of order at least $p\geq1$ at $z=\delta$, so, in particular $Y(\delta)=Y(-\gamma-\delta)$, which implies that $-\gamma-\delta\in \{\delta,\gamma-\delta\}+\pi\mathbb{Z}+\pi\tau\mathbb{Z}$. But since $\frac{\gamma}{\pi\tau}\in\mathbb{R}\setminus\mathbb{Q}$, we do not have $-2\gamma\in \pi\mathbb{Z}+\pi\tau\mathbb{Z}$, so we are left with the case $-\gamma-\delta\in \delta+\pi\mathbb{Z}+\pi\tau\mathbb{Z}$. this implies that $X(\delta+z)=X(-\gamma-\delta-z)=X(\delta-z)$, so $X(z)$ must have a double pole at $z=\delta$. The function $Y(z)-Y(-\gamma-z)$ has at most 4 poles in each fundamental domain (counting with multiplicity), so it must have at most 4 roots (counting with multiplicity). Since $Y(z)$ has both $\pi$ and $\pi\tau$ as periods, it is clear that $\frac{-\gamma}{2}$, $\frac{-\gamma+\pi}{2}$, $\frac{-\gamma+\pi\tau}{2}$, $\frac{-\gamma+\pi+\pi\tau}{2}$ are all distinct roots of $Y(z)-Y(-\gamma-z)$, so they must be the only roots and they must all be simple roots. In particular, this implies that $\delta$ cannot be a double root of $Y(z)-Y(-\gamma-z)$, so it must be a pole of $X(z)(Y(z)-Y(-\gamma-z))$, and hence $X(z)^{p}(Y(z)^{q}-Y(-\gamma-z)^{q})$, a contradiction.\newline
\textbf{Case 4b:} $q\geq 2$ and $Y(\delta)=0$.\newline
By our assumption that $p\geq q$, we also have $p\geq2$. We will use the product
\[Y(z)^{q}-Y(-\gamma-z)^{q}=\prod_{m=0}^{q-1}(Y(z)-e^{\frac{2m i\pi}{q}}Y(-\gamma-z)).\]
Since this has a root at $z=\delta$ we must have $Y(-\gamma-\delta)=0$. As in the previous case $-\gamma-\delta\in \delta+\pi\mathbb{Z}+\pi\tau\mathbb{Z}$ and so $X$ has a double pole at $\delta$. If $Y(z)$ has a double root at $\delta$ then it must be the only root of $Y(z)$, so $\gamma-\delta\in\delta+\pi\tau\mathbb{Z}+\pi\mathbb{Z}$, which is impossible since $-\gamma-\delta\in \delta+\pi\mathbb{Z}+\pi\tau\mathbb{Z}$ but $\frac{\gamma}{\pi\tau}\notin\mathbb{Q}$. So $Y(z)$ does not have a double pole at $\delta$. 
Now, each function $Y(z)-e^{\frac{2m i\pi}{q}}Y(-\gamma-z)$ has a pole at $z=\delta$, since $Y(\delta)=Y(-\gamma-\delta)=0$. However, the derivative of this function at $z=\delta$ is $Y'(\delta)+e^{\frac{2m i\pi}{q}}Y'(-\gamma-\delta)=Y'(\delta)\left(1+e^{\frac{2m i\pi}{q}}\right)$, so the function has a double pole if and only if $e^{\frac{2m i\pi}{q}}=-1$ i.e., $m=\frac{q}{2}$. Moreover, from \eqref{eq:Ycoef_sum}, the function
\[Y(z)+Y(-\gamma-z)=\frac{\frac{1}{t}-A_{0}(X(z))}{A_{1}(X(z))}\sim \frac{1}{t w_{(1,1)}X(z)},\]
which has a double root but not a triple root at $z=\delta$. Note that the expression above uses the fact that $w_{(1,0)}=0$ - if this was not the case then $Y(z)+Y(-\gamma-z)$ would not have a root at $z=\delta$. 
So finally, the function $Y(z)-e^{\frac{2m i\pi}{q}}Y(-\gamma-z)$ has a simple root at $z=\delta$ if $m\neq\frac{q}{2}$, while it has a double root if $m=\frac{q}{2}$. This implies that in the case that $q$ is odd, $Y(z)^{q}-Y(-\gamma-z)^{q}$ has a root of order exactly $q$ at $z=\delta$, while in the case that $q$ is even $Y(z)^{q}-Y(-\gamma-z)^{q}$ has a root of order exactly $q+1$ at this point. In either case, $X(z)^{p}(Y(z)^{q}-Y(-\gamma-z)^{q})$ has a pole of order at least $2p-q-1\geq p-1>0$, a contradiction.\newline
\textbf{Case 4c:} $q\geq 2$ and $Y(\delta)\neq 0$.\newline
In this case at most one function $Y(z)-e^{\frac{2m i\pi}{q}}Y(-\gamma-z)$ in the product
\[\prod_{m=0}^{q-1}(Y(z)-e^{\frac{2m i\pi}{q}}Y(-\gamma-z))\]
can have a root at $z=\delta$. Since we know that the product has a root at $\delta$, there must be some such value of $m$, so we define $u:=e^{\frac{2m i\pi}{q}}$ for this $m$. Then $X(z)^{p}(Y(z)-u Y(-\gamma-z))$ has no pole at $z=\delta$. For $m=0$ (i.e., $u=1$), this is equivalent to the $q=0$ case, which we showed was impossible, so we are left with the case $m\neq 0$. 
In the case that $u=-1$, using \eqref{eq:Ycoef_sum}, we have
\[Y(z)-u Y(-\gamma-z)=-\frac{A_{0}(X(z))-\frac{1}{t}}{A_{1}(X(z))}.\]
For sufficiently small $t$, the numerator lies in $\Omega(1)$ while the denominator is in $O((z-\delta)^{-1})$, so $Y(z)-u Y(-\gamma-z)=\Omega(z-\delta)$ around $z=\delta$. This implies that $X(z)^{p}(Y(z)-u Y(-\gamma-z))$ has a pole of order at least $p-1>0$, a contradiction.

If $u\neq -1$, then we must have $q\geq 3$, as for $q=2$ the only possibilities are $u=1$ and $u=-1$. The function $H(z)$ defined by
\[H(z)=(Y(z)-u Y(-\gamma-z))(u Y(z)-Y(-\gamma-z))\]
can be written as
\[H(z)=\frac{u(A_{0}(X(z))-\frac{1}{t})^{2}-(u+1)^{2}A_{-1}(X(z))A_{1}(X(z))}{A_{1}(X(z))^{2}}.\]
The numerator of this equation lies in $\Omega(1)$ (for sufficiently small $t$), while the denominator is in $O((z-\delta)^{-2})$, So $H(z)=\Omega((z-\delta)^{2})$ and so $Y(z)-u Y(-\gamma-z)=\Omega((z-\delta)^{2})$. This implies that $X(z)^{p}(Y(z)-u Y(-\gamma-z))$ has a pole of order at least $p-2\geq q-2>0$, a contradiction.
\end{proof}


\begin{Theorem}\label{thm:bc_inf_group_non-D-finite}
Assume that $\frac{\hat{\tau}}{\tau}\notin\mathbb{Q}$ and $B(z)$ does not have $\pi\tau$ as a period. Then $\Qgf_{j}(x,y;t)$ is not D-finite in $x$ or $y$.
\end{Theorem}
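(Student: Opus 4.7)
The plan is to mimic the proof of Theorem~\ref{thm:inf_group_non-D-finite} very closely, using the $q$-difference equation~\eqref{eq:bc_PH_qdiff} with shift $\pi\hat\tau$ playing the role that $2\pi\tau-2\gamma$ played for the three-quadrant cone. By Lemma~\ref{lem:bc_QjAB_complexities_equivalence} (applied with $\mathcal P$ equal to the property ``D-finite''), the statement to prove is equivalent to the assertion that $B(z)$ is not $X$-D-finite, so I would argue by contradiction and assume that it is.

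Under this assumption, the appropriate lemma in Appendix~\ref{ap:nature_analytic} (the analogue of Lemma~\ref{lem:D-fini_z} used in the $M=3$ proof) tells us that the poles of $B(z)$ fall into only finitely many classes modulo $\pi\mathbb{Z}+\pi\tau\mathbb{Z}$, and the same is therefore true of the difference
\[D(z):=B(z+\pi\tau)-B(z).\]
To exploit this, I would combine \eqref{eq:bc_PH_qdiff}, namely $B(z+\pi\hat\tau)-B(z)=\tilde J(z)$, with the fact that $\tilde J$ is elliptic of periods $\pi$ and $\pi\tau$ (noted just after \eqref{def:tildej}). Replacing $z$ by $z+\pi\tau$ and subtracting gives $D(z+\pi\hat\tau)=D(z)$, so $D$ is an elliptic function with periods $\pi$ and $\pi\hat\tau$. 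The irrationality hypothesis now kicks in: any pole $z_0$ of $D$ would produce an infinite orbit $\{z_0+k\pi\hat\tau:k\in\mathbb{Z}\}$ of poles, yielding infinitely many distinct classes modulo $\pi\mathbb{Z}+\pi\tau\mathbb{Z}$ because $\hat\tau/\tau\notin\mathbb{Q}$, contradicting the finite-class property of $D$. Hence $D$ has no poles and must be a constant $c$.

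To finish, I would use the flip symmetry $B(z)=B(\gamma_K-z)$ from \eqref{eq:bc_PV_flippy_full}: substituting $z\mapsto\gamma_K-z$ in $B(z+\pi\tau)-B(z)=c$ rewrites the left-hand side as $B(z-\pi\tau)-B(z)=-c$, so $c=-c$ and $c=0$. But then $\pi\tau$ is a period of $B(z)$, directly contradicting our hypothesis. The main obstacle I anticipate is the first reduction step, namely confirming via the Appendix~\ref{ap:nature_analytic} machinery that $X$-D-finiteness of $B$ forces its poles into finitely many classes modulo $\pi\mathbb{Z}+\pi\tau\mathbb{Z}$; the rest is a clean transport of the $M=3$ argument, and crucially no separate ``$B$ is not rational'' lemma is needed here (cf.\ Lemma~\ref{lem:not-rational}) because the non-periodicity of $B$ is assumed as a hypothesis.
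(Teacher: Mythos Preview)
Your proposal is correct and follows essentially the same argument as the paper: reduce via Lemma~\ref{lem:bc_QjAB_complexities_equivalence} to $B(z)$ being $X$-D-finite, invoke Lemma~\ref{lem:D-fini_z} to get the finite-class pole condition, show $D(z)=B(z+\pi\tau)-B(z)$ is elliptic with periods $\pi$ and $\pi\hat\tau$ via \eqref{eq:bc_PH_qdiff}, force $D$ to be a constant using the irrationality of $\hat\tau/\tau$, and conclude $c=0$ from the flip symmetry \eqref{eq:bc_PV_flippy_full}. One small clarification: there is no separate analogue of Lemma~\ref{lem:D-fini_z} in the appendix; that lemma itself is general enough to apply directly here, and the paper cites it verbatim.
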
  
\begin{proof}
Suppose the contrary, that $\Qgf_{j}(x,y;t)$ is D-finite in $x$ or $y$. Then by Lemma \ref{lem:bc_QjAB_complexities_equivalence}, $B(z)$ is $X$-D-finite in $z$ (see Definition \ref{def:X-D-finite}). Then by Lemma \ref{lem:D-fini_z}, the poles $z_{c}$ of $B(z)$ fall into only finitely many classes $z_{c}+\pi\mathbb{Z}+\pi\tau\mathbb{Z}$. Hence $B(\pi\tau+z)-B(z)$ has the same property.

Now, from \eqref{eq:bc_PH_qdiff}, we have
\[B(\pi\hat{\tau}+z)-B(z)=\tilde{J}(z)=\tilde{J}(z+\pi\tau)=B(\pi\tau+\pi\hat{\tau}+z)-B(\pi\tau+z),\]
and rearranging yields
\[B(\pi\tau+\pi\hat{\tau}+z)-B(\pi\hat{\tau}+z)=B(\pi\tau+z)-B(z).\]
This implies that $B(\pi\tau+z)-B(z)$ is an elliptic function with periods $\pi$ and $\pi\hat{\tau}$. If this function has a pole $z_0$, then for every $k\in\mathbb{Z}$, the value $\tilde{z}_{k}=z_{0}+k\pi\hat{\tau}$ is a pole. This is a contradiction as these points all define different classes $z_{k}+\pi\tau\mathbb{Z}+\pi\mathbb{Z}$, since $\frac{\hat{\tau}}{\tau}\in\mathbb{R}\setminus\mathbb{Q}$. The only remaining case to consider is when $B(\pi\tau+z)-B(z)$ has no poles, in which case it must be constant:
\[B(\pi\tau+z)-B(z)=c.\]
In fact combining this with \eqref{eq:bc_PH_flippy_full}, we see that $c=0$, as
\[c=B(\pi\tau+z)-B(z)=B(-\gamma-z)-B(\pi\tau-\gamma-z)=-c.\]
 But this contradicts the assumption that $\pi\tau$ is not a period of $B(z)$.
\end{proof}

\begin{Corollary}\label{cor:bc_inf_group_non-D-finite_Mgeq3} If $M\geq 3$ is odd and $\frac{\gamma}{\pi\tau}\notin\mathbb{Q}$, then $\Qgf_{j}(x,y;t)$ is not D-finite in $x$.
\end{Corollary}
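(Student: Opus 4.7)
The plan is simply to combine three results already established in the paper. Assume $M \geq 3$ is odd and $\frac{\gamma}{\pi\tau} \notin \mathbb{Q}$. First, I would invoke Lemma \ref{lem:bc_odd_D-finite}, which for odd $M$ asserts the equivalence $\frac{\hat{\tau}}{\tau} \in \mathbb{Q} \iff \frac{\gamma}{\pi\tau} \in \mathbb{Q}$; hence our hypothesis gives $\frac{\hat{\tau}}{\tau} \notin \mathbb{Q}$.

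Next, since $M \geq 3$ is odd and $\frac{\gamma}{\pi\tau} \notin \mathbb{Q}$, the hypotheses of Lemma \ref{lem:not-rationalMbig} are satisfied, so $\pi\tau$ is not a period of $B(z)$ (nor of $A(z)$, though only the statement for $B$ is needed here).

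Having verified both hypotheses of Theorem \ref{thm:bc_inf_group_non-D-finite} ($\frac{\hat{\tau}}{\tau}\notin\mathbb{Q}$ and $\pi\tau$ not a period of $B$), the theorem yields immediately that $\Qgf_{j}(x,y;t)$ is not D-finite in $x$ (nor in $y$), which is the desired conclusion. There is no genuine obstacle here: the real work has been done in proving Lemma \ref{lem:not-rationalMbig} (ruling out the degenerate case where $B$ has $\pi\tau$ as a period via the $\Omega_j$-structure of poles for $M\geq 3$) and in proving Theorem \ref{thm:bc_inf_group_non-D-finite} (using Lemma \ref{lem:D-fini_z} to control the classes of poles of $B$ modulo $\pi\mathbb{Z}+\pi\tau\mathbb{Z}$, then exploiting the two incommensurable periods $\pi\tau$ and $\pi\hat{\tau}$ of $B(\pi\tau+z)-B(z)$). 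The corollary is essentially a packaging lemma that specialises Theorem \ref{thm:bc_inf_group_non-D-finite} to the case $M\geq 3$ odd, where Lemma \ref{lem:not-rationalMbig} lets us drop the awkward auxiliary hypothesis about $\pi\tau$-periodicity of $B$, replacing it with the clean condition $\frac{\gamma}{\pi\tau}\notin\mathbb{Q}$.
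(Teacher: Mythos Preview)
Your proposal is correct and matches the paper's own proof essentially line for line: invoke Lemma~\ref{lem:bc_odd_D-finite} to convert $\frac{\gamma}{\pi\tau}\notin\mathbb{Q}$ into $\frac{\hat{\tau}}{\tau}\notin\mathbb{Q}$, use Lemma~\ref{lem:not-rationalMbig} to rule out $\pi\tau$ being a period of $B(z)$, and then apply Theorem~\ref{thm:bc_inf_group_non-D-finite}. The only cosmetic difference is the order in which the two lemmas are cited.
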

\begin{proof}
By Lemma \ref{lem:not-rationalMbig}, $\pi\tau$ cannot be a period of $B(z)$ for $M\geq3$. Moreover, if $\frac{\gamma}{\pi\tau}\notin\mathbb{Q}$, then by Lemma \ref{lem:bc_odd_D-finite}, $\frac{\hat{\tau}}{\tau}\notin{\mathbb{Q}}$ so by Theorem \ref{thm:bc_inf_group_non-D-finite}, $\Qgf_{j}(x,y;t)$ is not D-finite in $x$.
\end{proof}

\begin{Corollary}\label{cor:bc_inf_group_non-D-finite_M1} If $M=1$, and the group of the walk is not finite, then there are values of $t$ for which $\Qgf_{j}(x,y;t)$ is not D-finite in $x$.
\end{Corollary}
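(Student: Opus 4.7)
The plan is to combine Theorem~\ref{thm:bc_inf_group_non-D-finite} with Lemma~\ref{lem:not-rationalM1}, using only the fact that the hypothesis $\frac{\hat\tau}{\tau}\notin\mathbb{Q}$ reduces in the $M=1$ case to the condition $\frac{\gamma}{\pi\tau}\notin\mathbb{Q}$, which in turn can be guaranteed for a dense set of $t$ whenever the group is infinite.

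First, I would compute $\hat\tau$ explicitly when $M=1$: in this case $K=L=0$, so by Definition~\ref{def:jstuff} we have $\gamma_{0}=\gamma$ and $\gamma_{-1}=-\gamma$, hence $\pi\hat\tau=\gamma_{K}-\gamma_{-L-1}=2\gamma$, i.e.\ $\frac{\hat\tau}{\tau}=\frac{2\gamma}{\pi\tau}$. So the hypothesis $\frac{\hat\tau}{\tau}\notin\mathbb{Q}$ used in Theorem~\ref{thm:bc_inf_group_non-D-finite} is precisely the same as the hypothesis $\frac{\gamma}{\pi\tau}\notin\mathbb{Q}$ used in Lemma~\ref{lem:not-rationalM1}.

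Second, I would argue that such values of $t$ exist, and in fact arbitrarily small ones. By Lemma~\ref{lem:param}, both $\gamma$ and $\tau$ are differentially algebraic (hence real-analytic) functions of $t$ on $\left(0,\tfrac{1}{\Pgf(1,1)}\right)$, so $t\mapsto \frac{\gamma(t)}{\pi\tau(t)}$ is a real-analytic function of $t$ on this interval. By Proposition~\ref{prop:finite_group_general_t} (cf.\ Appendix~\ref{ap:group}), the group of the walk is finite exactly when this function is identically a constant rational number. Since by hypothesis the group is infinite, the function is not identically equal to any rational constant, hence (being real-analytic) attains each rational value on at most a countable subset of any open interval. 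In particular, for every $t_{0}>0$ there exist $t\in(0,t_{0})$ with $\frac{\gamma(t)}{\pi\tau(t)}\notin\mathbb{Q}$.

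Finally, I would pick $t$ small enough for Lemma~\ref{lem:not-rationalM1} to apply and such that $\frac{\gamma(t)}{\pi\tau(t)}\notin\mathbb{Q}$. Lemma~\ref{lem:not-rationalM1} then gives that $A(z)$ does not admit $\pi\tau$ as a period. Using \eqref{eq:bc_PH_PV_nice}, namely $B(z)=X(z)^{p}Y(z)^{q}-A(z)-F$, and the fact that $X(z)$ and $Y(z)$ both have $\pi\tau$ as a period, it follows that $B(z)$ also does not have $\pi\tau$ as a period. Since moreover $\frac{\hat\tau}{\tau}=\frac{2\gamma}{\pi\tau}\notin\mathbb{Q}$, all hypotheses of Theorem~\ref{thm:bc_inf_group_non-D-finite} are met, and we conclude that $\Qgf_{j}(x,y;t)$ is not D-finite in $x$ for this $t$. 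The main obstacle in this argument is not the core deduction, which is a short chain of implications, but rather the subtlety that Lemma~\ref{lem:not-rationalM1} is only available for sufficiently small $t$ in the $M=1$ setting; this is precisely why the statement of the corollary only asserts the existence of some $t$, rather than holding for every $t$ with $\gamma/(\pi\tau)\notin\mathbb{Q}$ as in the $M\geq 3$ case.
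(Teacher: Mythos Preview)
Your argument follows the same route as the paper's proof: compute $\hat\tau=2\gamma/\pi$ when $M=1$, find arbitrarily small $t$ with $\gamma/(\pi\tau)\notin\mathbb{Q}$, apply Lemma~\ref{lem:not-rationalM1} to get that $A(z)$ (hence $B(z)$ via \eqref{eq:bc_PH_PV_nice}) lacks period $\pi\tau$, and conclude by Theorem~\ref{thm:bc_inf_group_non-D-finite}. You are in fact more explicit than the paper on two points: the computation of $\hat\tau$ and the passage from $A$ to $B$ via \eqref{eq:bc_PH_PV_nice}.

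One genuine slip: the inference ``differentially algebraic (hence real-analytic)'' is not valid in general; D-algebraic functions need not be analytic. The conclusion you want is nevertheless true, since $\gamma$ and $\tau$ are given by explicit integrals (Appendix~\ref{ap:param_thm}) and are therefore analytic in $t$. Alternatively, you can avoid analyticity entirely: the proof of Proposition~\ref{prop:finite_group_general_t} shows that if the group is infinite then for each fixed $N$ only finitely many $t$ satisfy $2N\gamma/(\pi\tau)\in\mathbb{Z}$, so the set $\{t:\gamma/(\pi\tau)\in\mathbb{Q}\}$ is countable and its complement meets every interval $(0,t_0)$. This is the route the paper implicitly takes. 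Also, your claim that Proposition~\ref{prop:finite_group_general_t} says the group is finite exactly when $\gamma/(\pi\tau)$ is a \emph{constant} rational is a slight overreading: the proposition only says it is rational for all $t$; constancy then follows from continuity, which you should mention if you keep that formulation.
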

\begin{proof}
Since the group of the walk is not finite, there are arbitrarily small values of $t$ satisfying $\frac{\gamma}{\pi\tau}\notin\mathbb{Q}$. Hence, for sufficiently small such $t$, Lemma \ref{lem:not-rationalM1} implies that $\pi\tau$ is not a period of $B(z)$. Therefore, by Theorem \ref{thm:bc_inf_group_non-D-finite}, these are values of $t$ for which $\Qgf_{j}(x,y;t)$ is not D-finite in $x$.
\end{proof}



\subsubsection{Decoupling cases}
Recall from Definition \ref{defn:decoupling} that we say that $X(z)^{p}Y(z)^{q}$ is decoupling if there is a pair of rational functions $R_{1}$ and $R_{2}$ satisfying
\[X(z)^{p}Y(z)^{q}=R_{1}(X(z))+R_{2}(Y(z)).\]
As we will show in the following theorem, this implies that $\Qgf_{j}(x,y;t)$ is D-algebraic in $x$ and $y$.
\begin{Theorem}\label{thm:bc_D-alg_proof}
Assume that
\[X(z)^{p}Y(z)^{q}=R_{1}(X(z))+R_{2}(Y(z))\]
holds for some rational functions $R_{1}$ and $R_{2}$. Then $\Qgf_{j}(x,y;t)$ is D-algebraic in $x$ and $y$. 
\end{Theorem}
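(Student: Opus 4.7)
My plan is to handle the $M$ even case via Theorem \ref{thm:bc_even_cone_D-finite} (which already gives D-finiteness, a strictly stronger property), and to mirror the proof of Theorem \ref{thm:D-alg_proof} when $M$ is odd. In the latter case $K$ and $L$ have the same parity, and I will produce a meromorphic function $T(z)$ closely related to $B(z)$ that is elliptic (hence D-algebraic), before concluding via Lemma \ref{lem:bc_QjAB_complexities_equivalence}.

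First, I would apply the decoupling identity at $z \mapsto \gamma_{-L-1} - z$ and subtract the identity at $z$ itself in order to rewrite $\tilde{J}(z)$ from \eqref{def:tildej} as
\[
\tilde{J}(z) = \bigl[R_{1}(X(\gamma_{-L-1}-z)) - R_{1}(X(z))\bigr] + \bigl[R_{2}(Y(\gamma_{-L-1}-z)) - R_{2}(Y(z))\bigr].
\]
A short computation from Definition \ref{def:jstuff} shows $\gamma_{-L-1} \equiv \gamma \pmod{\pi\tau\mathbb{Z}}$ when $L$ is odd and $\gamma_{-L-1} \equiv -\gamma \pmod{\pi\tau\mathbb{Z}}$ when $L$ is even; combined with the symmetries $X(z)=X(-\gamma-z)$, $Y(z)=Y(\gamma-z)$ and the $\pi\tau$-periodicity of $X,Y$, this collapses exactly one of the two brackets to zero. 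Moreover, the same parity computation yields $\pi\hat{\tau} \equiv \mp 2\gamma \pmod{\pi\tau\mathbb{Z}}$, so the surviving bracket equals $R_{1}(X(z+\pi\hat{\tau})) - R_{1}(X(z))$ when $L$ is odd and $R_{2}(Y(z+\pi\hat{\tau})) - R_{2}(Y(z))$ when $L$ is even.

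Substituting into the additive $q$-difference equation \eqref{eq:bc_PH_qdiff}, $B(z+\pi\hat{\tau}) - B(z) = \tilde{J}(z)$, and rearranging would then show that $T(z) := B(z) - R_{1}(X(z))$ in the first case (respectively $T(z) := B(z) - R_{2}(Y(z))$ in the second) has $\pi\hat{\tau}$ as a period. Since $T$ also inherits $\pi$-periodicity from $B$ and from $R_{1}(X(z)), R_{2}(Y(z))$, and since $\pi\hat{\tau}\in i\mathbb{R}\setminus\{0\}$ so the two periods have non-real ratio, $T$ is a genuine elliptic function and therefore D-algebraic in $z$. The functions $R_{1}(X(z))$ and $R_{2}(Y(z))$ are themselves elliptic with periods $\pi,\pi\tau$ and so D-algebraic, and rewriting $B$ as the sum of $T$ and the appropriate one of them shows $B(z)$ is D-algebraic in $z$. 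Applying Lemma \ref{lem:bc_QjAB_complexities_equivalence} with $\mathcal{P}$ equal to D-algebraic then delivers the conclusion that $\Qgf_{j}(x,y;t)$ is D-algebraic in $x$ and $y$.

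The hard part will be the parity-dependent bookkeeping: the decoupling forces the correct "corrected" version of $B$ to be $B - R_{1}(X)$ when $L$ is odd and $B - R_{2}(Y)$ when $L$ is even, and mis-matching these would yield a $T$ that is only $\pi$-periodic and thus not obviously D-algebraic. Beyond this case split, the argument is structurally identical to the three-quadrant proof of Theorem \ref{thm:D-alg_proof}.
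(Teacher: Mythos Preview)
Your proposal is correct and arrives at the same auxiliary function $T(z)=B(z)-R_{i}(\cdot)$ as the paper (up to sign), but you verify its ellipticity via the $q$-difference equation \eqref{eq:bc_PH_qdiff} and an explicit parity computation of $\gamma_{-L-1}$ and $\pi\hat{\tau}$, whereas the paper does it by rewriting \eqref{eq:bc_PH_PV_nice} as $T(z)=\hat{R}_{2}(Y_{K}(z))-B(z)=A(z)+F-\hat{R}_{1}(X_{-L}(z))$ and reading off both reflections $T(z)=T(\gamma_{K}-z)=T(\gamma_{-L-1}-z)$ directly. The paper's route avoids the case analysis on $\pi\hat{\tau}\bmod\pi\tau$ by packaging the parity into the functions $X_{-L},Y_{K}$; your route makes the mechanism behind that packaging explicit. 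Both arguments are short and yield the same $T$, so this is a stylistic rather than a substantive difference. Your handling of $M$ even is unnecessary here since the ambient subsection already assumes $M$ odd, but it does no harm.
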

\begin{proof}Since $M=L+K+1$ is odd, $L$ and $K$ have the same parity. If they are both even then $X_{-L}(z)\in\{X(z),\frac{1}{X(z)}\}$ and $Y_{K}(z)\in\{Y(z),\frac{1}{Y(z)}\}$, while if they are both odd then $X_{-L}(z)\in\{Y(z),\frac{1}{Y(z)}\}$ and $Y_{K}(z)\in\{Y(z),\frac{1}{Y(z)}\}$, (see Subsection \ref{subsec:bc_analytic_functional_equations}) so either way we can write
\[R_{1}(X(z))+R_{2}(Y(z))=\hat{R}_{1}(X_{-L}(z))+\hat{R}_{2}(Y_{K}(z)),\]
where $\hat{R}_{1}$ and $\hat{R}_{2}$ are rational functions.
Under the assumption, \eqref{eq:bc_PH_PV_nice} can be written as
\begin{equation}\label{eq:bc_Tequation_in_decoupling_case}T(z):=\hat{R}_{2}(Y_{K}(z))-B(z)=A(z)+F-\hat{R}_{1}(X_{-L}(z)),\end{equation}
which implies that $T(z)$ satisfies $T(z)=T(\gamma_{K}-z)=T(\gamma_{-L-1}-z)=T(z+\pi)$. Combining these shows that $T(z)$ is an elliptic function with periods $\pi$ and $\pi\hat{\tau}=\gamma_{K}-\gamma_{-L-1}$, so it must be D-algebraic (since $T'(z)$ and $T(z)$ must be related by a non-trivial polynomial equation). Now, since $X(z)$ is also D-algebraic, it follows from \eqref{eq:bc_Tequation_in_decoupling_case} that $B(z)$ is also D-algebraic in $z$, which is the same as being $X$-D-algebraic (see Definition \ref{def:X-D-alg}). It then follows from Lemma \ref{lem:bc_QjAB_complexities_equivalence} that $\Qgf_{j}(x,y;t)$ is D-algebraic in $x$ and $y$. 
\end{proof}


\subsubsection{Non-decoupling cases}
In this section we show that if there is no decoupling function, then the generating function is not D-algebraic in $x$, as in Subsection \ref{subsec:inf_group_non-dec} for the 3-quadrant cone. The proof works along the same lines as \cite{dreyfus2018nature,hardouin2020differentially} for the quarter plane case, which relies on Galois theory of q-difference equations. Rather than essentially rewriting these entire proofs, in Appendix \ref{ap:D-trans} we use results from \cite{dreyfus2018nature} to deduce Corollary \ref{cor:D-trans}, which avoids Galois theory language in its statement, and can be readily applied to show the main result of this section.
\begin{Theorem}\label{thm:bc_non-D-alg_proof}
Fix $t\in\left(0,\frac{1}{P(1,1)}\right)$. Assume that there are no rational functions $R_{1},R_{2}\in\mathbb{C}(x)$ satisfying 
\[X(z)^{p}Y(z)^{q}=R_{1}(X(z))+R_{2}(Y(z)).\]
Then the function $\Qgf_{j}(x,y;t)$ is not D-algebraic in $x$ or $y$.
\end{Theorem}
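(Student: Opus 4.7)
The plan is to argue by contradiction, following exactly the same blueprint as the proof of Theorem \ref{thm:non-D-alg_proof} for the three-quadrant cone, with the shifts $\pm(\pi\tau-\gamma)$ replaced by the general shifts $\gamma_{-L-1}$ and $\gamma_K$ that appear in Theorem \ref{thm:bc_PH_PV_characterisation}. First, by Lemma \ref{lem:bc_QjAB_complexities_equivalence}, the function $\Qgf_j(x,y;t)$ is D-algebraic in $x$ (or $y$) if and only if $B(z)$ is $X$-D-algebraic, which in particular forces $B(z)$ to be D-algebraic in $z$; so it suffices to derive a contradiction assuming $B(z)$ is D-algebraic. Since we are in the infinite-group (non-decoupling) regime with $M$ odd, we may invoke Lemma \ref{lem:bc_odd_D-finite} to conclude that $\hat\tau/\tau = (\gamma_K-\gamma_{-L-1})/\pi\tau \notin \mathbb{Q}$, which is the irrationality the q-difference Galois machinery will need.

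Next, I would apply Corollary \ref{cor:D-trans} of Appendix \ref{ap:D-trans} with
\[
h(z) := X(z)^p Y(z)^q,\qquad f_1(z) := A(z)+F,\qquad f_2(z) := B(z),
\]
and shifts $\gamma_1 := \gamma_{-L-1}$, $\gamma_2 := \gamma_K$. All hypotheses are supplied directly by Theorem \ref{thm:bc_PH_PV_characterisation}: the functional equation $h = f_1 + f_2$ is \eqref{eq:bc_PH_PV_nice}; the identities $f_1(z) = f_1(\gamma_1-z) = f_1(z+\pi)$ and $f_2(z) = f_2(\gamma_2-z) = f_2(z+\pi)$ are \eqref{eq:bc_PH_flippy_full}--\eqref{eq:bc_PV_pi}; and $h$ is elliptic with periods $\pi$ and $\pi\tau$. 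The conclusion of the corollary is the existence of meromorphic $a_1,a_2:\mathbb{C}\to\mathbb{C}\cup\{\infty\}$ satisfying $X(z)^p Y(z)^q = a_1(z)+a_2(z)$, with $a_1$ invariant under $z\mapsto z+\pi$, $z\mapsto z+\pi\tau$, $z\mapsto \gamma_{-L-1}-z$, and $a_2$ invariant under the analogous shifts centred on $\gamma_K$.

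Finally, by Proposition \ref{prop:rationalofXorY}, these symmetries force $a_1$ to be a rational function of $X_{-L}(z)$ and $a_2$ to be a rational function of $Y_K(z)$. Since $X_{-L},Y_K\in\{X,1/X,Y,1/Y\}$ (depending on the parities of $L$ and $K$), and since $R(1/u)$ is rational in $u$ whenever $R$ is rational, this immediately produces rational functions $R_1,R_2$ with $X(z)^p Y(z)^q = R_1(X(z))+R_2(Y(z))$, contradicting the hypothesis of the theorem.

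The hard part of this argument is entirely encapsulated in the appendix Corollary \ref{cor:D-trans}; at the level of the present section there is no new difficulty, only the bookkeeping check that the shifts $\gamma_{-L-1}, \gamma_K$ behave identically to the $M=3$ shifts $\pm(\pi\tau-\gamma)$ (they do, because the appendix result only uses the irrationality $\hat\tau/\tau \notin \mathbb{Q}$) and the minor translation from $X_{-L},Y_K$ back to $X,Y$, which is harmless since rationality is preserved under $u\mapsto 1/u$.
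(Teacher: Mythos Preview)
Your proposal is correct and follows essentially the same approach as the paper: reduce to D-algebraicity of $B(z)$ via Lemma~\ref{lem:bc_QjAB_complexities_equivalence}, apply Corollary~\ref{cor:D-trans} with $h=X^pY^q$, $f_1=A+F$, $f_2=B$, $\gamma_1=\gamma_{-L-1}$, $\gamma_2=\gamma_K$, and then use Proposition~\ref{prop:rationalofXorY} to convert the resulting elliptic decomposition into a rational decoupling, contradicting the hypothesis.

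The only presentational difference is in the last step. You phrase the conclusion as ``$a_1$ is rational in $X_{-L}(z)$ and $a_2$ is rational in $Y_K(z)$'' and then translate back via $X_{-L},Y_K\in\{X,1/X,Y,1/Y\}$. The paper instead reduces $\gamma_{-L-1}$ and $\gamma_K$ modulo $\pi\tau$ first (splitting on whether $K,L$ are both even or both odd), obtaining $a_1(z)=a_1(\mp\gamma-z)$ and $a_2(z)=a_2(\pm\gamma-z)$ directly, and then applies Proposition~\ref{prop:rationalofXorY} as stated for $X$ and $Y$. Your route is equivalent but, strictly speaking, Proposition~\ref{prop:rationalofXorY} is stated only for $X$ and $Y$ with their specific reflections $-\gamma$ and $\gamma$; to invoke it for $X_{-L}$ with reflection $\gamma_{-L-1}$ you are implicitly doing exactly the mod-$\pi\tau$ reduction the paper makes explicit. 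Your extra remark that the linear-independence hypothesis of Corollary~\ref{cor:D-trans} follows from $\hat\tau/\tau\notin\mathbb{Q}$ via Lemma~\ref{lem:bc_odd_D-finite} is a point the paper leaves implicit, so you are slightly more careful there.
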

\begin{proof}
It suffices to prove that $B(z)$ is not D-algebraic, as we showed in Lemma \ref{lem:bc_QjAB_complexities_equivalence} that this is equivalent to $\Qgf_{j}(x,y;t)$ being D-algebraic in $x$ or $y$. Now assume for the sake of contradiction that $B(z)$ is D-algebraic in $z$. We will show that this implies that there are rational functions $R_{1}$ and $R_{2}$ satisfying the equation in the theorem. 

By \eqref{eq:bc_PH_PV_nice}, the functions $h(z):=X(z)^p Y(z)^q$, $f_{1}(z):=A(z)+F$ and $f_{2}(z):=B(z)$ satisfy the conditions of Corollary \ref{cor:D-trans}, with $\gamma_{1}=\gamma_{-L-1}$ and $\gamma_{2}=\gamma_{K}$. Hence, there are meromorphic functions $a_{1},a_{2}:\mathbb{C}\to\mathbb{C}\cup\{\infty\}$ satisfying 
\begin{align*}
X(z)^p Y(z)^q&=a_{1}(z)+a_{2}(z),\\
a_{1}(z)&=a_{1}(z+\pi)=a_{1}(z+\pi\tau)=a_{1}(\gamma_{-L-1}-z),\\
a_{2}(z)&=a_{2}(z+\pi)=a_{2}(z+\pi\tau)=a_{2}(\gamma_{K}-z).
\end{align*}
Recall that $M=K+L+1$ is odd, so $K$ and $L$ have the same parity. If $K$ and $L$ are both even then $\gamma_{K}=\gamma+\frac{K}{2}\pi\tau$ and $\gamma_{-L-1}=-\gamma-\frac{L}{2}\pi\tau$, so $a_{1}(z)=a_{1}(-\gamma-z)$ and $a_{2}(z)=a_{2}(\gamma-z)$. Hence by Proposition \ref{prop:rationalofXorY}, $a_{1}(z)$ is a rational function of $X(z)$, while $a_{2}(z)$ is a rational function of $Y(z)$. Hence we can write
\[X(z)^p Y(z)^q=R_{1}(X(z))+R_{2}(Y(z)),\]
as required. Similarly, in the case that $K$ and $L$ are both odd $a_{1}(z)=a_{1}(\gamma-z)$ and $a_{2}(z)=a_{2}(-\gamma-z)$, so by Proposition \ref{prop:rationalofXorY}, $a_{1}(z)$ is a rational function of $Y(z)$, while $a_{2}(z)$ is a rational function of $X(z)$. Hence in this case we can still write
\[X(z)^p Y(z)^q=R_{1}(X(z))+R_{2}(Y(z)).\]
\end{proof}

\section{Conclusion and further questions}\label{sec:conlusion_and_questions}

In this article we deduced a complex analytic functional equation characterising the generating function $\Cgf(x,y;t)$ counting three quarter plane walks, and subsequently we generalised this to count walks in the $M$-quadrant cone. We deduced several results using this functional equation, although there are also several questions that we have left open, which we pose below.

The most conspicuous unanswered question relates to the nature of the generating function with respect to the third variable $t$:
\begin{Question} Is the nature of the generating function $\Cgf(x,y;t)$ always the same with respect to $t$ as with respect to $x$ and $y$?\end{Question}
Indeed from a combinatorial perspective series in $t$ are the most natural as for example, the series $\Cgf(1,1;t)$ is the generating function for all walks in the three quarter plane, whereas, while fixing $t$ is convenient for the complex analytic methods used in this article, it has little interest from a combinatorial perspective. We believe that at least certain aspects of this question can be answered using our Theorem \ref{thm:PH_PV_characterisation}, for example proving that $\Cgf(x,y;t)$ is D-algebraic in $t$ when there is a decoupling function could likely be proven along the same lines as for $\Qgf(x,y;t)$ \cite{bernardi2017counting}. The converse may also be provable in the same way as it was proven in the quarter plane \cite{dreyfus2019length,hardouin2020differentially,dreyfus2021differential}. Moreover, it may be possible to prove that certain cases are algebraic or D-finite using modular properties of the solutions as functions of $\tau$, as in \cite{elveyprice_winding_FPSAC} and \cite{elvey2020six}.

Our next two questions relate to the nature of the generating functions for fixed $t$:
\begin{Question} Are there any models along with fixed values of $t$ for which $\frac{\gamma}{\pi\tau}\notin\mathbb{Q}$ but $\Qgf(x,y;t)$ is a rational function of $x$?\end{Question}
This is the only missing ingredient from a complete characterisation of the nature of the generating functions $\Qgf_{j}(x,y;t)$ with respect to $x$, as for all $M$-quadrant cones with $M\geq 3$ odd, we showed that no such model exists (see Theorem \ref{thm:bc_D-finite_fixed_t}).

Another unanswered question is the following:
\begin{Question} Are there any models for which the presence of a decoupling function depends on the value of $t$? 
\end{Question}
Equivalently we could ask whether it is possible that $\Cgf(x,y;t)$ is D-algebraic with respect to $x$ for some values of $t$ but not others. 
The analogous question of whether the group is finite is clearly {\em yes}, as it only depends on whether the parameter $\frac{\gamma}{\pi\tau}\in\mathbb{Q}$. It seems that in most cases $\frac{\gamma}{\pi\tau}$ varies as a function of $t$ (perhaps in all non-D-finite cases), and so this parameter will typically be rational for $t$ in a dense subset of its range $\left(0,\frac{1}{P(1,1)}\right)$. Despite this we do not know of an example where the existence of a decoupling function depends on $t$.

Another natural question is whether the results in this article also hold on subtly different spaces, which could also be called $M$-quadrant cones. We make this precise below:
\begin{Definition}Partition $\mathbb{Z}^{2}$ into four quadrants
\begin{align*}
\cQ_{0}&=\{(m,n)\in\mathbb{Z}^{2}:m,n\geq0\},\\
\cQ_{1}&=\{(m,n)\in\mathbb{Z}^{2}:n\geq0>m\},\\
\cQ_{2}&=\{(m,n)\in\mathbb{Z}^{2}:0>m,n\},\\
\cQ_{3}&=\{(m,n)\in\mathbb{Z}^{2}:m\geq0>n\},\\
\end{align*}
and define the {\em alternative $M$-quadrant cone} to be the gluing of $M$ copies of these quadrants in a spiral. Say a walk in an alternative $M$-quadrant cone can only step between adjacent quadrants. More precisely, we extend the definitions of $\cQ_{k}$ to any $k\in\mathbb{Z}$ by $\cQ_{k}=\cQ_{k-4}$, then the {\em alternative $M$-quadrant cone} is the set of points $(k,(m,n))$ where $(m,n)\in\cQ_{k}$, and steps are only allowed from $(k_{1},(m_{1},n_{1}))$ to $(k_{2},(m_{2},n_{2}))$ if $k_{1}-1\leq k_{2}\leq k_{1}+1$.
\end{Definition}
\begin{Question}Is the nature of the series the same for walks in the alternative $k$-quadrant cone as in the $k$-quadrant cone?\end{Question}
For the case $k=3$ this is equivalent to the question discussed in Subsection \ref{subsec:annoying_step_forbidding}:
\begin{Question} For walks in the three-quadrant cone, does the nature of the walk change if the steps between $(0,1)$ and $(1,0)$ are forbidden?\end{Question}

Finally, perhaps the most mysterious result of this article is Corollary \ref{cor:combi}, relating to walks starting on an axis. The statement of this Corollary is purely combinatorial, yet it's proof in this article is far from combinatorial. this leads to the natural question of finding a bijection:
\begin{Question}Find a bijective proof of Corollary \ref{cor:combi}. If this can be done, is there a directly combinatorial way to prove Theorem \ref{thm:p0}?\end{Question}
 
 \noindent {\bf Acknowledgements.} We are grateful to Kilian Raschel for suggesting that we consider the special case where walks start on an axis, as this led us to some interesting results. We also thank Mireille Bousquet-M\'elou for many helpful suggestions on an early version of this article.
 
 \appendix

 \section{Parameterisation of the Kernel curve}
 \label{ap:param_thm}
 In this appendix, we describe a parameterisation of the Kernel curve, which we transform to prove Lemmas \ref{lem:param} and \ref{lem:Omega}. the parameterisation was first given by Raschel in \cite{raschel2012counting} but we follow \cite[Section 2]{dreyfus2019differential} as they prove more results relevant to us. In this section they work under the same assumption on the step set as us - that the step-set is non-singular. They also assume that the sum of the weights $w_{i,j}$ is $1$ and that $t<1$, but by linearly rescaling $t$ and the weights $w_{i,j}$ (and hence $K(x,y)$), this becomes equivalent to our assumption
 \[t<P(1,1)=\sum_{(i,j)\in S}w_{i,j},\]
 without changing the set
 \[\overline{E_{t}}=\{(x,y)\in\mathbb{P}^{1}(\mathbb{C})^{2}:K(x,y)=0\}.\]
We start with the parameterisation of $\overline{E_{t}}$ given by Dreyfus and Raschel \cite[Proposition 2.1]{dreyfus2019differential} for $x$ and (2.16) of the same article for $y$. These parameterisations involve the Weierstrass $\wp$ function
\[\wp(\omega,\omega_{1},\omega_{2}):=\frac{1}{\omega^{2}}+\sum_{(m_{1},m_{2})\in\mathbb{Z}^{2}\setminus\{(0,0)\}}\frac{1}{(\omega+m_{1}\omega_{1}+m_{2}\omega_{2})^2}-\frac{1}{(m_{1}\omega_{1}+m_{2}\omega_{2})^2}.\]
To define the parameterisation, they start by defining rational functions $A_{-1}(x)$, $A_{0}(x)$ and $A_{1}(x)$ by
\[\Pgf(x,y)=:\frac{1}{y}A_{-1}(x)+A_{0}(x)+yA_{1}(x)\]
and the degree 4 (or 3) polynomial $D(x)=x^2\left(\left(A_{0}(x)-\frac{1}{t}\right)^2-4A_{-1}(x)A_{1}(x)\right)=\sum_{j=0}^{4}\alpha_{j}x^{j}$ (see (1.1),(1.8) and (1.10)). By their Theorem 1.11, the polynomial $D(x)$ has 3 real roots $a_{1}$, $a_{2}$, $a_{3}$ satisfying $-1<a_{1}<a_{2}<1<a_{3}<\infty$. If $D(x)$ has degree $4$, it has a fourth root $a_{4}\in(a_{3},\infty)\cup(-\infty,-1)$, while in the case that $D(x)$ only has degree $3$ we take $a_{4}=\infty$. The polynomial $E(y)=\sum_{j=0}^{4}\beta_{j}y^{j}$ and its roots $b_{1}$, $b_{2}$, $b_{3}$ and $b_{4}$ are defined similarly by swapping the roles of $x$ and $y$.

The following is proposition 2.1 in \cite{dreyfus2019differential}, using the expression (2.16) for $y(\omega)$.
\begin{Proposition}\label{prop:xyom}
The curve $\overline{E_{t}}$ admits a uniformisation of the form
\[\overline{E_{t}}=\{(x(\omega),y(\omega)):\omega\in\mathbb{C}/(\mathbb{Z}\omega_{1}+\mathbb{Z}\omega_{2})\},\]
where $x(\omega)$ is given by
\[x(\omega) = \left\{ 
\begin{array}{ c lr }
 & \displaystyle a_{4}+\frac{D'(a_{4})}{\wp(\omega,\omega_{1},\omega_{2})-\frac{1}{6}D''(a_{4})},\quad& \textrm{if } a_{4}\neq\infty, \\
 & \displaystyle \frac{3\wp(\omega,\omega_{1},\omega_{2})-\alpha_{2}}{3\alpha_{3}}\quad, &\textrm{if } a_{4}=\infty,
\end{array}
\right.
\]
and $y(\omega)$ is given by
\[y(\omega) = \left\{ 
\begin{array}{ c lr }
 & \displaystyle b_{4}+\frac{E'(b_{4})}{\wp(\omega-\omega_{3}/2,\omega_{1},\omega_{2})-\frac{1}{6}E''(b_{4})},\quad& \textrm{if } b_{4}\neq\infty, \\
 & \displaystyle \frac{3\wp(\omega-\omega_{3}/2,\omega_{1},\omega_{2})-\beta_{2}}{3\beta_{3}},\quad &\textrm{if } b_{4}=\infty,
\end{array}
\right.
\]
where $\omega_{1}\in i\mathbb{R}$ and $\omega_{2},\omega_{3}\in\mathbb{R}$ are given by
\begin{align*}
\omega_{1}=i\int_{a_{3}}^{a_{4}}\frac{1}{\sqrt{-D(x)}}dx,\quad \omega_{2}=\int_{a_{4}}^{a_{1}}\frac{1}{\sqrt{D(x)}}dx,\quad
\omega_{3}=\int_{a_{4}}^{x_{4}}\frac{1}{\sqrt{D(x)}}dx,
\end{align*}
where $K(x_{4},b_{4})=0$. when the integrals above take the form $\int_{a}^{b}$ with $a>b$, they are defined as $\int_{a}^{\infty}+\int_{-\infty}^{b}$.
\end{Proposition}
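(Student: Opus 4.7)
The plan is to exhibit $\overline{E_{t}}$ explicitly as a genus-one Riemann surface and then invoke the standard Weierstrass uniformization, reading off the integral formulas for the periods from the branch structure. First, fix $t\in(0,1/P(1,1))$ and write $K(x,y)=tP(x,y)-1$ as a quadratic in $y$ with rational coefficients in $x$: solving gives
\[
y=\frac{1-tA_{0}(x)\pm\sqrt{D(x)}}{2tA_{1}(x)},
\]
so $\overline{E_{t}}$ is the hyperelliptic double cover of $\mathbb{P}^{1}(\mathbb{C})$ branched at the roots of $D$ (together with $\infty$ when $\deg D=3$). The non-singular step-set hypothesis, combined with the analysis of the discriminant in \cite[Theorem 1.11]{dreyfus2019differential}, shows that $D$ has four distinct branch points $a_{1},a_{2},a_{3},a_{4}$, so by Riemann--Hurwitz $\overline{E_{t}}$ has genus one.

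Next, I would put the defining equation $\eta^{2}=D(x)$ into Weierstrass normal form: a Möbius change of variable sending $a_{4}\to\infty$ (when $a_{4}$ is finite) turns the quartic $D$ into a cubic, and an affine normalization then produces the standard form $\eta^{2}=4u^{3}-g_{2}u-g_{3}$. This is exactly the content of the two cases for $x(\omega)$ in the statement, with $u=\wp(\omega,\omega_{1},\omega_{2})$ and the explicit rational shift recovering $x$. The periods $\omega_{1}$ and $\omega_{2}$ are then identified in the classical way, by integrating the invariant differential $dx/\sqrt{D(x)}$ around the two independent homology cycles of the double cover: the cycle linking $\{a_{3},a_{4}\}$ produces a purely imaginary period (hence $\omega_{1}\in i\mathbb{R}$), and the cycle linking $\{a_{4},a_{1}\}$ (through $\infty$) produces a real period $\omega_{2}$. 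The orientation of the cycles and the signs of $\sqrt{D(x)}$ on each sheet must be tracked carefully to recover the precise formulas stated.

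The parallel construction, performed with the roles of $x$ and $y$ exchanged, would produce a uniformization $y=y(\widetilde{\omega})$ of the same curve by a (possibly different) complex torus. Since both parameterize $\overline{E_{t}}$ and an isomorphism of genus-one curves is unique up to translation, $\widetilde{\omega}\mapsto\omega$ must be an affine isomorphism, so the resulting torus has the same lattice $\mathbb{Z}\omega_{1}+\mathbb{Z}\omega_{2}$ and $y$ is the claimed rational function of $\wp(\omega-c,\omega_{1},\omega_{2})$ for some translation constant $c$. To pin down $c$, I would synchronize at a distinguished point on the curve: the pair $(x_{4},b_{4})\in\overline{E_{t}}$, where $b_{4}$ is the corresponding branch point of $E(y)$, is a Weierstrass point for the $y$-parameterization, so it corresponds to $\widetilde{\omega}=0$; on the $\omega$-side the preimages of $x=x_{4}$ are $\pm\omega_{3}$ for $\omega_{3}=\int_{a_{4}}^{x_{4}}dx/\sqrt{D(x)}$. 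Matching the two forces $c=\omega_{3}/2$.

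The main obstacle is this last identification of the translation constant: selecting the correct branch of $\sqrt{D(x)}$ and the correct preimage of $(x_{4},b_{4})$ in the universal cover, and verifying that the chosen $y(\omega)$ is not merely equal to one of the two $y$-values over each $x$ but is consistently the correct one across both homology cycles. This is typically handled by comparing the divisors of $x(\omega)$ and $y(\omega)$ on the torus, using that a meromorphic function on an elliptic curve is determined up to a multiplicative constant by its zeros and poles, and that the zeros and poles of $x$ and $y$ are forced by the roots of $D$ and $E$ together with the locations of the $x$- and $y$-branch points; an explicit check at $\omega=0$ and $\omega=\omega_{3}/2$ then fixes the normalizing constants appearing in both formulas.
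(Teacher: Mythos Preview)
The paper does not prove this proposition at all: it is quoted verbatim as Proposition~2.1 of \cite{dreyfus2019differential} (together with their equation~(2.16) for $y(\omega)$), and no argument is given. So your proposal already goes well beyond what the paper does.

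Your sketch is a reasonable outline of how such a uniformization is established in the classical theory, and the key ingredients --- recognizing $\overline{E_{t}}$ as a genus-one double cover via the discriminant $D$, reducing to Weierstrass form by a M\"obius change sending $a_{4}\to\infty$, and reading off periods from homology integrals of $dx/\sqrt{D(x)}$ --- are all correct in spirit. The part you flag as the main obstacle, namely pinning down the shift $\omega_{3}/2$ in the $y$-parameterization, is indeed the delicate step; your idea of synchronizing at the Weierstrass point $(x_{4},b_{4})$ is the right one, but turning it into a proof requires a careful divisor computation on the torus rather than just a pointwise check, since one must verify that the proposed $y(\omega)$ agrees with the correct branch of the quadratic in $y$ globally, not merely at isolated points. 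In \cite{dreyfus2019differential} (and earlier in \cite{raschel2012counting}) this is carried out explicitly; if you wanted a self-contained proof you would need to reproduce that computation.
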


This is related to the parameterisation in Lemma \ref{lem:param} by
\begin{align*}
X(z)&=x\left(\frac{\omega_{2}}{2}+\frac{\omega_{3}}{4}-\frac{\omega_{1}}{\pi}z\right)\\
Y(z)&=y\left(\frac{\omega_{2}}{2}+\frac{\omega_{3}}{4}-\frac{\omega_{1}}{\pi}z\right)\\
\tau&=-\frac{\omega_{2}}{\omega_{1}}\\
\gamma&=-\frac{\omega_{3}\pi}{2\omega_{1}}
\end{align*}

Under these transformations, and writing $\omega=\frac{\omega_{2}}{2}+\frac{\omega_{3}}{4}-\frac{\omega_{1}}{\pi}z$, Lemma \ref{lem:param} is equivalent to the following lemma
\begin{Lemma}\label{lem:xwparam}
The numbers $\omega_{1}\in i\mathbb{R}$ and $\omega_{2},\omega_{3}\in \mathbb{R}$ satisfy $\Im(-\omega_{2}/\omega_{1})>\Im(-\omega_{3}/\omega_{1})>0$. Moreover, the meromorphic functions $x(\omega),y(\omega):\mathbb{C}\to\mathbb{C}\cup\{\infty\}$ satisfy 
\begin{enumerate}[label={\rm(\roman*)},ref={\rm(\roman*)}]
\item $K(x(\omega),y(\omega))=0$,
\item $x(\omega)=x(\omega-\omega_{1})=x(\omega+\omega_{2})=x(\omega_{2}-\omega)$,
\item $y(\omega)=y(\omega-\omega_{1})=y(\omega+\omega_{2})=y(\omega_{2}+\omega_{3}-\omega)$,
\item $|x(\frac{\omega_{2}}{2})|,|y(\frac{\omega_{2}}{2}+\frac{\omega_{3}}{2})|<1$,
\item counting with multiplicity, the functions $x(\omega)$ and $y(\omega)$ each contain two poles and two roots in each fundamental domain $\{z_{c}-r_{1}\omega_{1}+r_{2}\omega_{2}:r_{1},r_{2}\in[0,1)\}$.
\end{enumerate}
Moreover, $x(\omega)$ and $y(\omega)$ are differentially algebraic with respect to $\omega$ and $t$, while $\frac{\omega_{2}}{\omega_{1}}$ and $\frac{\omega_{3}}{\omega_{1}}$ are differentially algebraic as functions of $t$.
\end{Lemma}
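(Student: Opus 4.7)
The strategy is to import the explicit Weierstrass parameterisation recalled in Proposition \ref{prop:xyom} (which is \cite[Proposition 2.1]{dreyfus2019differential} in the authors' own notation) and then verify each of the five conditions (i)--(v), together with the period inequality, by direct calculation on that formula. Property (i), $K(x(\omega),y(\omega))=0$, is already the content of Proposition \ref{prop:xyom}, so no work is needed there. For the period inequality, one reads off from the integral definitions that $\omega_1=i\int_{a_3}^{a_4}\frac{dx}{\sqrt{-D(x)}}$ has strictly positive imaginary part (since $-D(x)>0$ on the relevant real cycle), and that $\omega_2,\omega_3\in\mathbb{R}_{>0}$. The key remaining point is $0<\omega_3<\omega_2$, which reduces to showing that $x_4$ lies strictly inside the cycle from $a_4$ to $a_1$ that defines $\omega_2$; this is proven in \cite[Section~2]{dreyfus2019differential} using that $(x_4,b_4)\in\overline{E_t}$ with $b_4$ the relevant double root of $E(y)$, and it immediately gives $\Im(-\omega_2/\omega_1)=\omega_2/|\omega_1|>\omega_3/|\omega_1|=\Im(-\omega_3/\omega_1)>0$.

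Properties (ii) and (iii) will follow purely from the elliptic symmetries of $\wp$: the function $\wp(\cdot,\omega_1,\omega_2)$ is even and has $\omega_1,\omega_2$ as a basis of periods. Hence $x(\omega-\omega_1)=x(\omega+\omega_2)=x(\omega)$ is immediate, and the reflection $x(\omega_2-\omega)=x(-\omega)=x(\omega)$ uses $\omega_2$-periodicity followed by evenness. For $y$, the argument of $\wp$ is merely shifted by $\omega_3/2$, so the same two periodicities transfer. For the final reflection in (iii), compute
\[
\wp\bigl((\omega_2+\omega_3-\omega)-\omega_3/2,\,\omega_1,\omega_2\bigr)=\wp(\omega_2+\omega_3/2-\omega,\omega_1,\omega_2)=\wp(\omega_3/2-\omega,\omega_1,\omega_2)=\wp(\omega-\omega_3/2,\omega_1,\omega_2),
\]
where the middle equality uses $\omega_2$-periodicity and the last uses evenness; this gives $y(\omega_2+\omega_3-\omega)=y(\omega)$.

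Property (v) is immediate from the explicit formula for $x(\omega)$: its poles are the two solutions in each fundamental parallelogram of $\wp(\omega,\omega_1,\omega_2)=\tfrac{1}{6}D''(a_4)$ (with the case $a_4=\infty$ handled by the double pole of $\wp$ at lattice points), and its zeros are counted similarly; the same count applies to $y$ after the shift by $\omega_3/2$. Property (iv) requires identifying the values $x(\omega_2/2)$ and $y(\omega_2/2+\omega_3/2)$ with specific real branch points of $D(x)$ and $E(y)$: evaluating the formula at the half-period $\omega_2/2$ uses $\wp(\omega_2/2,\omega_1,\omega_2)=e_2$, which puts $x(\omega_2/2)\in\{a_1,a_2\}\subset(-1,1)$; the analogous computation for $y$ puts $y(\omega_2/2+\omega_3/2)\in\{b_1,b_2\}\subset(-1,1)$. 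These real-location statements are the content of \cite[Theorem~1.11 and Section~2]{dreyfus2019differential}, which provides the bounds $|a_1|,|a_2|,|b_1|,|b_2|<1$ for $t\in(0,1/P(1,1))$. Finally, the D-algebraicity claim is standard: $\wp(\cdot,\omega_1,\omega_2)$ satisfies $(\wp')^2=4\wp^3-g_2\wp-g_3$, and both $g_2,g_3$ depend algebraically on the coefficients of $D(x)$, hence polynomially on $t$; the half-periods $\omega_1,\omega_2,\omega_3$ are given by period integrals of an algebraic form whose coefficients depend polynomially on $t$, and such period functions are well known to be D-algebraic in $t$. The main obstacle in the plan is really the inequality $\omega_2>\omega_3$ and the identification in (iv); both live inside the real structure of the kernel curve and are best handled by quoting the detailed analysis in \cite{dreyfus2019differential} rather than redoing it.
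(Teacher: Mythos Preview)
Your proposal is correct and follows essentially the same route as the paper's proof: both import Proposition~\ref{prop:xyom} for (i), derive (ii)--(iii) from the periodicity and evenness of $\wp$, obtain (v) from the fact that $\wp$ takes each value twice per fundamental domain, cite the analysis in \cite{dreyfus2019differential} for $0<\omega_3<\omega_2$ and for the identification in (iv), and appeal to standard facts for the D-algebraicity. The only minor differences are cosmetic: the paper pins down $x(\omega_2/2)=a_1$ and $y(\tfrac{\omega_2+\omega_3}{2})=b_1$ via \cite[Lemma~2.3]{dreyfus2019differential} rather than your weaker $\in\{a_1,a_2\}$ (both suffice), and it cites \cite{bernardi2017counting} explicitly for the D-algebraicity of $\wp$ and the periods rather than sketching the argument.
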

\begin{proof}
The fact that $\Im(-\omega_{2}/\omega_{1})>\Im(-\omega_{3}/\omega_{1})>0$ follows immediately from $\frac{\omega_{1}}{i}>0$ and $0<\omega_{3}<\omega_{2}$, which is proven in Lemma 2.6 in \cite{dreyfus2019differential}.

The condition (i) is part of the statement of Proposition \ref{prop:xyom}. The claims (ii) and (iii) follow from the definitions of $x(\omega)$ and $y(\omega)$ and the fact that
\[\wp(\omega,\omega_{1},\omega_{2})=\wp(\omega-\omega_{1},\omega_{1},\omega_{2})=\wp(\omega+\omega_{2},\omega_{1},\omega_{2})=\wp(-\omega,\omega_{1},\omega_{2}).\]
From Lemma 2.3 in \cite{dreyfus2019differential}, we have $x(\frac{\omega_{2}}{2})=a_{1}$ and $y(\frac{\omega_{2}+\omega_{3}}{2})=b_{1}$, proving (iv), as $|a_{1}|,|b_{1}|<1$.
Finally, (v) follows from the definitions of $x(\omega)$ and $y(\omega)$ as it is a well known property of the Weierstrass function $\wp$ that it takes each value in $\mathbb{C}\cup\infty$ exactly twice (counting with multiplicity) in each fundamental domain.

finally we will show that all terms in the expression are D-algebraic both as functions of $\omega$ and $t$. This follows from results of \cite{bernardi2017counting}, in particular their Proposition 6.7 shows that $\wp$ is D-algebraic in all of it's variables, while their Lemma 6.10 shows that $\omega_{1}$, $\omega_{2}$ and $\omega_{3}$ are even D-finite in $t$.
\end{proof}
So, we have now proven Lemma \ref{lem:param}. Similarly, Lemma \ref{lem:Omega} is equivalent to the Lemma below, with $\Omega_{s}:=\frac{\pi}{\omega_{1}}\left(\frac{\omega_{3}}{4}-\Dnn_{s}\right)$
\begin{Lemma}\label{lem:appOmega}
The complex plane can be partitioned into simply connected regions $\{\Dnn_{s}\}_{s\in\mathbb{Z}}$ satisfying
\begin{align}
\bigcup_{s\in\mathbb{Z}}\Dnn_{4s}\cup\Dnn_{4s+1}&=\{\omega\in\mathbb{C}:|y(\omega)|<1\},\label{eq:D_ysmall}\\
\bigcup_{s\in\mathbb{Z}}\Dnn_{4s-2}\cup\Dnn_{4s-1}&=\{\omega\in\mathbb{C}:|y(\omega)|\geq1\},\label{eq:D_ybig}\\
\bigcup_{s\in\mathbb{Z}}\Dnn_{4s-1}\cup\Dnn_{4s}&=\{\omega\in\mathbb{C}:|x(\omega)|<1\},\label{eq:D_xsmall}\\
\bigcup_{s\in\mathbb{Z}}\Dnn_{4s+1}\cup\Dnn_{4s+2}&=\{\omega\in\mathbb{C}:|x(\omega)|\geq1\},\label{eq:D_xbig}
\end{align}
and for $s\in\mathbb{Z}$,
\begin{align}
\omega_{1}+\Dnn_{s}&=\Dnn_{s},\label{eq:D_w1periodic}\\
(s+1)\omega_{2}+\omega_{3}-\Dnn_{2s}\cup\Dnn_{2s+1}&=\Dnn_{2s}\cup\Dnn_{2s+1}\supset \frac{(s+1)\omega_{2}+\omega_{3}}{2}+i\mathbb{R},\label{eq:D_flip_yfix}\\
(s+1)\omega_{2}-\Dnn_{2s}\cup\Dnn_{2s-1}&=\Dnn_{2s}\cup\Dnn_{2s-1}\supset \frac{(s+1)\omega_{2}}{2}+i\mathbb{R}.\label{eq:D_flip_xfix}
\end{align}
\end{Lemma}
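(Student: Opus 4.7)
The plan is to deduce the lemma from [Lemma 2.9, dreyfus2019differential], which describes the unit-modulus sets $\mathcal{C}_x := \{\omega \in \mathbb{C} : |x(\omega)| = 1\}$ and $\mathcal{C}_y := \{\omega \in \mathbb{C} : |y(\omega)| = 1\}$ under the non-singular hypothesis on $S$. After the affine identification between the two parameterizations, that lemma yields: $\mathcal{C}_x$ and $\mathcal{C}_y$ are disjoint smooth real $1$-dimensional submanifolds of $\mathbb{C}$, each $\omega_1$- and $\omega_2$-periodic, and their union disconnects $\mathbb{C}$ into simply connected open regions---exactly four per period cell of the lattice $\omega_1\mathbb{Z}+\omega_2\mathbb{Z}$, one for each sign pattern of $(|x|-1,\,|y|-1)$.

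Given this, I would label the regions as follows. By Lemma 2.3 of \cite{dreyfus2019differential}, $x(\omega_2/2)=a_1$ with $|a_1|<1$ and $y((\omega_2+\omega_3)/2)=b_1$ with $|b_1|<1$. Define $\mathcal{D}_0$ to be the (unique) region whose closure contains the real-axis segment from $\omega_2/2$ to $(\omega_2+\omega_3)/2$, so that $|x|,|y|<1$ throughout $\mathcal{D}_0$. Label the remaining regions $\{\mathcal{D}_s\}_{s\in\mathbb{Z}}$ so that $s$ increases (resp.\ decreases) as one moves rightward (resp.\ leftward) along $\mathbb{R}$ starting from $\mathcal{D}_0$. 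Because the four sign patterns cycle in a fixed pattern as one crosses successive curves of $\mathcal{C}_x\cup\mathcal{C}_y$, this labeling immediately yields \eqref{eq:D_ysmall}--\eqref{eq:D_xbig}, while the $\omega_1$-periodicity \eqref{eq:D_w1periodic} is immediate from the $\omega_1$-periodicity of $|x|$ and $|y|$.

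For the reflection identities, combine Lemma \ref{lem:xwparam}(iii) with $\omega_2$-periodicity to obtain $y(\omega)=y((s+1)\omega_2+\omega_3-\omega)$ for every $s\in\mathbb{Z}$; hence reflection about the line $((s+1)\omega_2+\omega_3)/2+\omega_1\mathbb{R}$ preserves $|y|$ and therefore fixes $\mathcal{D}_{2s}\cup\mathcal{D}_{2s+1}$ setwise, since this union is characterized purely by the sign of $|y|-1$ (which alternates with the parity of $s$). Analogously, Lemma \ref{lem:xwparam}(ii) gives $x(\omega)=x((s+1)\omega_2-\omega)$, so reflection about $(s+1)\omega_2/2+\omega_1\mathbb{R}$ preserves $|x|$ and fixes $\mathcal{D}_{2s}\cup\mathcal{D}_{2s-1}$. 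This proves \eqref{eq:D_flip_yfix} and \eqref{eq:D_flip_xfix}. The containment of the midline in each union is a direct sign check using Lemma 2.3 of \cite{dreyfus2019differential}: along $((s+1)\omega_2+\omega_3)/2+\omega_1\mathbb{R}$, the function $y$ is real and takes only values in $\{b_1,b_2\}$ (when $s$ is even) or $\{b_3,b_4\}$ (when $s$ is odd), matching the required sign of $|y|-1$; an analogous computation handles the $x$-midline.

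The main obstacle will be extracting the precise topological content of [Lemma 2.9, dreyfus2019differential]---specifically, that $\mathcal{C}_x\cup\mathcal{C}_y$ cuts each period cell into exactly four simply connected pieces, with $\mathcal{C}_x\cap\mathcal{C}_y=\emptyset$ and each curve non-null-homotopic on the quotient torus. This input rests on the non-singularity of $S$ (so that the unit circles in the $x$- and $y$-planes avoid the branch values of the parameterization) together with analysis of the level sets of $|\wp|$. Once this is in hand, the remainder is symmetry bookkeeping, easily tracked on a picture of the fundamental strip with its interlacing $x$- and $y$-symmetry lines at $\ldots,0,\omega_3/2,\omega_2/2,(\omega_2+\omega_3)/2,\omega_2,\ldots$ along the real axis.
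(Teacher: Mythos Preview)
Your overall strategy coincides with the paper's: both arguments quote Lemma~2.9 of \cite{dreyfus2019differential} to obtain the four boundary curves $\tilde{\Gamma}_x^{\pm},\tilde{\Gamma}_y^{\pm}$, label the resulting strips, and then verify the symmetries. However, two steps in your write-up do not go through as stated.

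First, the reflection argument. You write that $\omega\mapsto(s+1)\omega_2+\omega_3-\omega$ ``preserves $|y|$ and therefore fixes $\mathcal{D}_{2s}\cup\mathcal{D}_{2s+1}$ setwise, since this union is characterized purely by the sign of $|y|-1$.'' But the sign of $|y|-1$ characterizes the whole union $\bigcup_{s'}\mathcal{D}_{4s'}\cup\mathcal{D}_{4s'+1}$, not a single component. A map preserving $\{|y|<1\}$ merely permutes its connected components; you must still pin down which component is hit. The paper does this with a single-point check: it first records that $\omega_2/2\in\tilde{\mathcal{D}}_x$ (since $x(\omega_2/2)=a_1$ with $|a_1|<1$), so $\tilde{\mathcal{D}}_x$ and $\omega_2-\tilde{\mathcal{D}}_x$ are components of $\{|x|<1\}$ sharing the point $\omega_2/2$, hence equal. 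The analogous check $(\omega_2+\omega_3)/2\in\tilde{\mathcal{D}}_y$ is less immediate and the paper spends a short paragraph on it. (Incidentally, $\omega\mapsto c-\omega$ is a point reflection with unique fixed point $c/2$, not a reflection in the line $c/2+i\mathbb{R}$; the line reflection is $\omega\mapsto c-\bar\omega$.)

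Second, the midline containment. You assert that on $((s+1)\omega_2+\omega_3)/2+i\mathbb{R}$ the function $y$ ``takes only values in $\{b_1,b_2\}$''. This is false: $y$ is real on that line but sweeps out an interval, not two points. One could argue the interval is $[b_1,b_2]\subset(-1,1)$ by analysing $\wp$ on half-period lines, but that needs to be said. The paper avoids this entirely: having already shown that $\omega\mapsto c-\omega$ swaps the two boundary curves of the strip, it notes that the genuine line reflection $\omega\mapsto c-\bar\omega$ (conjugation composed with the previous map) also swaps them, so the fixed line of that reflection is trapped between the two boundaries.
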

\begin{proof}
In \cite{dreyfus2019differential}, the Authors define $\Dnn_{x}=\{(x,y)\in\overline{E_{t}}:|x|<1\}$, $\Dnn_{y}=\{(x,y)\in \overline{E_{t}}:|y|<1\}$ and $\Dnn=\Dnn_{x}\cup\Dnn_{y}$. Moreover in Lemma 2.8 they show that these sets are connected.

Next, writing $\Lambda(\omega):=(x(\omega),y(\omega))$, as the homeomorphism from $\mathbb{C}/(\mathbb{Z}\omega_{1}+\mathbb{Z}\omega_{2})$ to $\overline{E_{t}}$ and $\tilde{\Lambda}$ being the corresponding function with domain $\mathbb{C}$, they define $\tilde{\Dnn}$ to be a connected component of the set $\tilde{\Lambda}^{-1}(\Dnn).$ Their only other condition on $\tilde{\Dnn}$ is that it intersects the fundamental parallelogram $\omega_{1}[0,1)+\omega_{2}[0,1)$. Since we know $\frac{\omega_{2}}{2}$ is in this parallelogram and $|x(\frac{\omega_{2}}{2})|<1$, we may assume that $\tilde{\Dnn}$ is the connected component containing $\frac{\omega_{2}}{2}$. They also define connected components $\tilde{\Dnn}_{x}$ and $\tilde{\Dnn}_{y}$ of $\tilde{\Lambda}^{-1}(\Dnn_{x})$ and $\tilde{\Lambda}^{-1}(\Dnn_{y})$, respectively, satisfying \[\tilde{\Dnn}=\tilde{\Dnn}_{x}\cup\tilde{\Dnn}_{y}.\]
Finally they define non-intersecting infinite paths $\tilde{\Gamma}^{+}_{x},\tilde{\Gamma}^{-}_{x},\tilde{\Gamma}^{+}_{y},\tilde{\Gamma}^{+}_{y}$, and they prove the following in and above Lemma  2.9:
\begin{itemize}
\item For $\omega\in\tilde{\Gamma}^{+}_{x}$, we have $|x(\omega)|=1$ and $|y(\omega)|>1$ while for $\omega\in\tilde{\Gamma}^{-}_{x}$, we have $|x(\omega)|=1$ and $|y(\omega)|<1$
\item For $\omega\in\tilde{\Gamma}^{+}_{y}$, we have $|y(\omega)|=1$ and $|x(\omega)|>1$ while for $\omega\in\tilde{\Gamma}^{-}_{y}$, we have $|y(\omega)|=1$ and $|x(\omega)|<1$
\item The paths $\tilde{\Gamma}^{+}_{x}$ and $\tilde{\Gamma}^{-}_{x}$ are $\omega_{1}$-periodic and do not cross the vertical line through $\frac{\omega_{2}}{2}$.
\item The paths $\tilde{\Gamma}^{+}_{y}$ and $\tilde{\Gamma}^{-}_{y}$ are $\omega_{1}$-periodic and do not cross the vertical line through $\frac{\omega_{2}+\omega_{3}}{2}$.
\item The domain $\tilde{\Dnn}$ is delimited by a left boundary $\tilde{\Gamma}^{+}_{x}$ and a right boundary $\tilde{\Gamma}^{+}_{y}$ and it contains $\tilde{\Gamma}^{-1}_{x}$ and $\tilde{\Gamma}^{-1}_{y}$.
\item The domain $\tilde{\Dnn}_{x}$ is delimited by $\tilde{\Gamma}^{+}_{x}$ and $\tilde{\Gamma}^{-}_{x}$, while the domain $\tilde{\Dnn}_{y}$ is delimited by $\tilde{\Gamma}^{-}_{y}$ and $\tilde{\Gamma}^{+}_{y}$.
\end{itemize}
We know that $\tilde{\Dnn}$ and $\tilde{\Dnn}+\omega_{2}$ do not intersect, as this would contradict the claim that $\tilde{\Dnn}$ is a connected component of $\tilde{\Lambda}^{-1}(\Dnn)$. This implies that $\tilde{\Gamma}^{+}_{x}+\omega_{2}$ is to the right of $\tilde{\Gamma}^{+}_{y}$. Moreover, the fact that $\tilde{\Dnn}$ is connected implies that $\tilde{\Gamma}^{-}_{x}$ is to the right of $\tilde{\Gamma}^{-}_{y}$. So the lines $\tilde{\Gamma}^{+}_{x}$, $\tilde{\Gamma}^{-}_{y}$, $\tilde{\Gamma}^{-}_{x}$, $\tilde{\Gamma}^{+}_{y}$ and $\tilde{\Gamma}^{+}_{x}+\omega_{2}$ are in that order from left to right. finally we can define the sets $\Dnn_{s}$. For $s\in\mathbb{Z}$, we define
\begin{itemize}
\item $\Dnn_{4s-1}=\tilde{\Dnn}_{x}\setminus\tilde{\Dnn}_{y}+s\omega_{2}$, which is delimited by a left boundary $\tilde{\Gamma}^{+}_{x}+s\omega_{2}$ (not included in $\Dnn_{4s-1}$) and a right boundary $\tilde{\Gamma}^{-}_{y}+s\omega_{2}$ (included in $\Dnn_{4s-1}$),
\item $\Dnn_{4s}=\tilde{\Dnn}_{x}\cap\tilde{\Dnn}_{y}+s\omega_{2}$, which is delimited by a left boundary $\tilde{\Gamma}^{-}_{y}+s\omega_{2}$ and a right boundary $\tilde{\Gamma}^{-}_{x}+s\omega_{2}$,
\item $\Dnn_{4s+1}=\tilde{\Dnn}_{y}\setminus\tilde{\Dnn}_{x}+s\omega_{2}$, which is delimited by a left boundary $\tilde{\Gamma}^{-}_{x}+s\omega_{2}$ (included in $\Dnn_{4s+1}$) and a right boundary $\tilde{\Gamma}^{+}_{y}+s\omega_{2}$,
\item $\Dnn_{4s+2}$ is the closed set delimited by a left boundary $\tilde{\Gamma}^{+}_{y}+s\omega_{2}$ and a right boundary $\tilde{\Gamma}^{+}_{x}+(s+1)\omega_{2}$.
\end{itemize}
It is clear from these definitions that these sets partition $\mathbb{C}$. Moreover, since the paths $\Gamma_{x}^{\pm}$ and $\Gamma_{y}^{\pm}$ are $\omega_{1}$-periodic, we know that $\tilde{\Dnn}_{y}$ is also $\omega_{1}$-periodic, that is, \eqref{eq:D_w1periodic} holds.

Now, to prove equations \eqref{eq:D_ysmall}-\eqref{eq:D_xbig}, 
note that since $\Dnn_{y}$ is connected, we must have $\tilde{\Lambda}(\tilde{\Dnn}_{y})=\Dnn_{y}$, so
\[\tilde{\Lambda}^{-1}(\Dnn_{y})=\tilde{\Dnn}_{y}+\mathbb{Z}\omega_{1}+\mathbb{Z}\omega_{2},\]
since $\Lambda$ is a bijection from $\mathbb{C}/(\mathbb{Z}\omega_{1}+\mathbb{Z}\omega_{2})$ to $\overline{E_{t}}$. Hence, using \eqref{eq:D_w1periodic}, we have
\[\tilde{\Lambda}^{-1}(\Dnn_{y})=\tilde{\Dnn}_{y}+\mathbb{Z}\omega_{2}.\]
This equation is equivalent to \eqref{eq:D_ysmall}. Moreover, \eqref{eq:D_ybig} follows by taking the complement of both sides in \eqref{eq:D_ysmall}. The equations \eqref{eq:D_xsmall} and \eqref{eq:D_xbig} follow similarly from considering $\tilde{\Lambda}^{-1}(\Dnn_{x})$.

The only remaining equations to prove are \eqref{eq:D_flip_yfix} and \eqref{eq:D_flip_xfix}. Recall that $\tilde{\Dnn}_{x}=\Dnn_{-1}\cup\Dnn_{0}$ is a connected component of $\tilde{\Lambda}^{-1}(\Dnn_{x})=\{\omega\in\mathbb{C}:|x(\omega)|<1\}$. Then since $x(\omega)=x(-\omega)=x(\omega_{2}-\omega)$, the set $\omega_{2}-\tilde{\Dnn}_{x}$ is also a connected component of $\tilde{\Lambda}^{-1}(\Dnn_{x})$. Moreover, $\tilde{\Dnn}_{x}$ and $\omega_{2}-\tilde{\Dnn}_{x}$ both contain $\frac{\omega_{2}}{2}$, so they must be the same set, that is $\tilde{\Dnn}_{x}=\omega_{2}-\tilde{\Dnn}_{x}$, so $\tilde{\Gamma}^{+}_{x}=\omega_{2}-\tilde{\Gamma}^{-}_{x}$. This implies that the transformation $\omega\to (2s+1)\omega_{2}-\omega$ swaps the boundaries $\tilde{\Gamma}_{x}^{+}+s\omega_{2}$ and $\tilde{\Gamma}_{x}^{-}+s\omega_{2}$ of $\Dnn_{4s}\cup\Dnn_{4s-1}$, while the transformation $\omega\to 2s\omega_{2}-\omega$ swaps the boundaries $\tilde{\Gamma}_{x}^{-}+(s-1)\omega_{2}$ and $\tilde{\Gamma}_{x}^{+}+s\omega_{2}$ of $\Dnn_{4s}\cup\Dnn_{4s-1}$. Together these imply the first part of \eqref{eq:D_flip_xfix}. Now we will show that $\Dnn_{2s}\cup\Dnn_{2s-1}$ contains the line $\frac{(s+1)\omega_{2}}{2}+i\mathbb{R}$. Consider the reflection in this line, which is given by $\omega\mapsto(s+1)\omega_{2}-\overline{\omega}$. We know that the transformation $\omega\mapsto(s+1)\omega_{2}-\omega$ swaps the boundaries of $\Dnn_{2s}\cup\Dnn_{2s-1}$, so by symmetry in the real line, the reflection $\omega\mapsto(s+1)\omega_{2}-\overline{\omega}$ also swaps the boundaries of $\Dnn_{2s}\cup\Dnn_{2s-1}$. Hence the line $\frac{(s+1)\omega_{2}}{2}+i\mathbb{R}$ at the centre of this reflection must be contained in $\Dnn_{2s}\cup\Dnn_{2s-1}$. 
This completes the proof of \eqref{eq:D_flip_xfix}. To prove \eqref{eq:D_flip_yfix} similarly, we just need to show that $\frac{\omega_{2}+\omega_{3}}{2}\in \tilde{\Dnn}_{y}$. Note that $\frac{\omega_{2}}{2}<\frac{\omega_{2}+\omega_{3}}{2}<\omega_{2}$. We know that $\frac{\omega_{2}}{2}\in\Dnn_{-1}\cup\Dnn_{0}$. Moreover, $2\omega_{2}-\Gamma^{-}_{x}=(\omega_{2}+\Gamma^{+}_{x})$, hence $\omega_{2}$ must lie between the lines $\Gamma^{-}_{x}=(\omega_{2}+\Gamma^{+}_{x})$. So, both $\frac{\omega_{2}}{2}$ and $\omega_{2}$ lie between $\Gamma^{+}_{x}$ and $\Gamma^{+}_{x}+\omega_{2}$, hence $\frac{\omega_{2}+\omega_{3}}{2}$ also lies between these lines, that is $\frac{\omega_{2}+\omega_{3}}{2}\in \Dnn_{-1}\cup\Dnn_{0}\cup\Dnn_{1}\cup\Dnn_{2}$. Now, we know from Lemma \ref{lem:xwparam} that $|y(\frac{\omega_{2}+\omega_{3}}{2})|<1$, so $\frac{\omega_{2}+\omega_{3}}{2}\in \Dnn_{0}\cup\Dnn_{1}=\Dnn_{y}$, as required.
\end{proof}

\section{Nature of analytic functions of $X(z)$}\label{ap:nature_analytic}
In this appendix we assume that $f:\mathbb{C}\to\mathbb{C}\cup\{\infty\}$ is a meromorphic function, with $\pi$ as a period, $\Lambda\subset\mathbb{C}$ is a set with non-empty interior and $A:X(\Lambda)\to\mathbb{C}\cup\{\infty\}$ is a meromorphic function satisfying $A(X(z))=f(z)$ for $z\in\Lambda$. We define properties $X$-rational, $X$-algebraic, $X$-D-finite and $X$-D-algebraic and in each of the four cases we show that the function $f(z)$ has the property $X$-$\mathcal{P}$ if and only if $A(x)$ has the property $\mathcal{P}$. Importantly, the properties $X$-$\mathcal{P}$ do not depend on the set $\Lambda$. Similarly, we assume $g:\mathbb{C}\to\mathbb{C}\cup\{\infty\}$ is a meromorphic function with period $\pi$ and $B:Y(\Lambda)\to\mathbb{C}\cup\{\infty\}$ is a meromorphic function satisfying $B(Y(z))=g(z)$ for $z\in\Lambda$. We then define properties $Y$-$\mathcal{P}$ such that $g(z)$ has the property $Y$-$\mathcal{P}$ if and only if $B(y)$ has the property $\mathcal{P}$. In fact, as we will see if $\mathcal{P}$ is one of the properties {\em algebraic}, {\em D-finite} or {\em D-algebraic} then the properties $X$-$\mathcal{P}$ and $Y$-$\mathcal{P}$ are the same. The results in this appendix are usually applied in the article in cases where $\Lambda=\Omega_{j}$ for some $j$, in which case $f(z)$ and $g(z)$ automatically have $\pi$ as a period, as this property is inherited from $X$ and $Y$.

\begin{Definition}\label{def:X-rat}
We say that the function $f:\mathbb{C}\to\mathbb{C}\cup\{\infty\}$ with period $\pi$ is {\em $X$-rational} if it satisfies
\[f(z)=f(z+\pi\tau)=f(-\gamma-z),\]
for all $z\in\mathbb{C}$. We say that the function $g:\mathbb{C}\to\mathbb{C}\cup\{\infty\}$ is {\em $Y$-rational} if it satisfies
\[f(z)=f(z+\pi\tau)=f(\gamma-z).\]
\end{Definition}

\begin{Proposition}\label{prop:rationalofXorY}The function $A(x)$ is rational if and only if $f(z)=A(X(z))$ is $X$-rational. Similarly, $B(y)$ is rational if and only if $g(z)=B(Y(z))$ is $Y$-rational.\end{Proposition}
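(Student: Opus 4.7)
The backward direction is immediate: if $A$ is rational, then $f(z) = A(X(z))$ inherits the identities $f(z) = f(z+\pi) = f(z+\pi\tau) = f(-\gamma-z)$ directly from the corresponding invariances of $X$ given in Lemma~\ref{lem:param}, so $f$ is $X$-rational in the sense of Definition~\ref{def:X-rat}. The analogous computation using $Y(z) = Y(z+\pi\tau) = Y(\gamma-z)$ handles the case of $B$ and $g$.

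For the forward direction, the plan is to exploit the fact that $X$ realises the quotient of the torus $\mathbb{T} := \mathbb{C}/(\pi\mathbb{Z} + \pi\tau\mathbb{Z})$ by the involution $\sigma(z) := -\gamma - z$. Suppose $f$ is $X$-rational. Since $f$ admits $\pi$ and $\pi\tau$ as periods, it descends to a meromorphic function on $\mathbb{T}$, and likewise $X$ descends to a meromorphic function on $\mathbb{T}$. By Lemma~\ref{lem:param}(v), $X$ has exactly two poles in each fundamental domain, so the induced map $X: \mathbb{T} \to \mathbb{P}^1$ has degree $2$. Because $X \circ \sigma = X$, this involution is the unique nontrivial deck transformation of this double cover.

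Now apply the Galois correspondence for the degree-$2$ extension of function fields $\mathbb{C}(X) \subset \mathcal{M}(\mathbb{T})$: its Galois group is $\langle\sigma\rangle$, and its fixed subfield is $\mathbb{C}(X)$. Since $f \in \mathcal{M}(\mathbb{T})$ satisfies $f \circ \sigma = f$, we conclude that $f = \widetilde{A}(X)$ for some rational function $\widetilde{A}$. To transfer this back to the hypothesis, note that on the interior of $\Lambda$ we have $A(X(z)) = f(z) = \widetilde{A}(X(z))$. By the open mapping theorem applied to $X$, the set $X(\Lambda)$ has non-empty interior, so $A$ and $\widetilde{A}$ agree on an open subset of $X(\Lambda)$ and therefore, by the identity theorem, agree as meromorphic functions on $X(\Lambda)$. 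The proof for $B$ and $Y$ is identical after replacing $\sigma$ by $z \mapsto \gamma - z$ and appealing to the corresponding properties of $Y$.

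The only point requiring genuine care is verifying that $\sigma$ really is the full deck group of $X: \mathbb{T} \to \mathbb{P}^1$, i.e.\ that $X$ has degree $2$ rather than $1$; this is supplied by the pole count in Lemma~\ref{lem:param}(v). Everything else is a routine packaging of the degree-$2$ cover viewpoint.
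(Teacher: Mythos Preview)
Your argument is correct and is essentially a Galois-theoretic repackaging of the paper's approach. The paper observes that, via the parameterisation of Appendix~\ref{ap:param_thm}, the statement reduces to the classical fact that an even elliptic function is rational in the Weierstrass $\wp$-function with the same periods; your version bypasses the reduction to $\wp$ by working directly with the degree-$2$ cover $X:\mathbb{T}\to\mathbb{P}^1$ and invoking the Galois correspondence for the quadratic extension $\mathbb{C}(X)\subset\mathcal{M}(\mathbb{T})$. The two are the same fact viewed from slightly different angles, and your formulation has the small advantage of not needing to name $\wp$ at all.

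One minor point worth making explicit in the backward direction: you write that ``$f(z)=A(X(z))$ inherits the identities \dots\ directly from the corresponding invariances of $X$'', but the hypothesis only gives $f(z)=A(X(z))$ on $\Lambda$. The paper spells out the missing step: when $A$ is rational, $A\circ X$ is globally meromorphic and agrees with the globally meromorphic $f$ on a set with non-empty interior, so by analytic continuation $f=A\circ X$ on all of $\mathbb{C}$, and only then do the invariances of $X$ transfer to $f$. This is routine, but since Definition~\ref{def:X-rat} requires the identities to hold for all $z\in\mathbb{C}$, it should be stated.
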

\begin{proof}
If $A$ is a rational function then by meromorphic extension we must have $f(z)=A(X(z))$ for all $z\in\mathbb{C}$, so $f(z)$ is $X$-rational due to the fact that
\[X(z)=X(z+\pi)=X(z+\pi\tau)=X(-\gamma-z).\]

For the converse, consider the transformation of $\wp$ that defines $X(z)$ and $Y(z)$ given in Appendix \ref{ap:param_thm}. Under this transformation, this Proposition is equivalent to the classical result that any even elliptic function is a rational function of the Weierstrass function $\wp$ with the same periods (see e.g., \cite[page 44]{akhiezer1990elements}). Alternatively one could prove this directly by constructing a rational function of $X(z)$ with poles at the same points and of the same nature as those of $f(z)$, thereby proving that $f(z)$ is this rational function of $X(z)$.
\end{proof}
\textbf{Remark:} An equivalent argument shows that a function $h(z)$ is a rational function of $W(z)$ (see Definition \ref{def:W}) if and only if \[h(z)=h(z+\pi)=h(\pi\tau-\gamma-z)=h(2(\pi\tau-\gamma)+z).\]

\begin{Definition}\label{def:X-alg}
We say that the function $f:\mathbb{C}\to\mathbb{C}\cup\{\infty\}$ with period $\pi$ is {\em $X$-algebraic} or equivalently {\em $Y$-algebraic} if it has $m\pi\tau$ as a period for some positive integer $m$.
\end{Definition}

\begin{Proposition}\label{prop:algebraicofXorY}The function $A(x)$ is algebraic if and only if $f(z)=A(X(z))$ is $X$-algebraic.\end{Proposition}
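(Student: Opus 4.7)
My plan is to exploit the fact that if $f(z) = A(X(z))$ is $X$-algebraic, then $f$ and $X$ are both elliptic functions with respect to the same enlarged lattice, and the function field of a complex torus has transcendence degree one over $\mathbb{C}$.

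First I would set up the lattice. By hypothesis $f$ has $\pi$ and $m\pi\tau$ as periods for some positive integer $m$. The function $X(z)$ has $\pi$ and $\pi\tau$ as periods (Lemma \ref{lem:param}), so a fortiori $\pi$ and $m\pi\tau$ as well. Put $L = \pi\mathbb{Z} + m\pi\tau\mathbb{Z}$. Both $f$ and $X$ descend to meromorphic functions on the compact Riemann surface $T = \mathbb{C}/L$. Moreover $X$ is non-constant on $T$ (it is non-constant on $\mathbb{C}$), so $X: T \to \mathbb{P}^1(\mathbb{C})$ is a branched cover of finite degree (in fact degree $2m$, since $X$ has two poles per fundamental domain of the original lattice).

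Next, I would invoke the standard fact that the field $\mathcal{M}(T)$ of meromorphic functions on the compact Riemann surface $T$ has transcendence degree one over $\mathbb{C}$, and in particular the subfield $\mathbb{C}(X) \subset \mathcal{M}(T)$ is of finite index (equal to the degree of $X$). Since $f \in \mathcal{M}(T)$, it satisfies a non-trivial polynomial relation over $\mathbb{C}(X)$; clearing denominators yields
\begin{equation}
\sum_{k=0}^{N} P_k(X(z))\, f(z)^k = 0 \qquad \text{for all } z \in \mathbb{C},
\label{eq:algebraicrel}
\end{equation}
with $P_0,\ldots,P_N \in \mathbb{C}[x]$, $P_N \not\equiv 0$. (One can, if desired, be explicit and construct \eqref{eq:algebraicrel} by taking the elementary symmetric functions of $f(z_1),\ldots,f(z_{2m})$ over the $2m$ preimages of a generic $x$ under $X|_T$, since these symmetric functions are well-defined meromorphic functions of $x \in \mathbb{P}^1$, hence rational in $x$.)

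Finally, I would transfer the relation to $A$. For $z \in \Lambda$ we have $f(z) = A(X(z))$, so substituting into \eqref{eq:algebraicrel} gives
\[
\sum_{k=0}^{N} P_k(X(z))\, A(X(z))^k = 0, \qquad z \in \Lambda.
\]
Since $\Lambda$ has non-empty interior and $X$ is meromorphic and non-constant on $\mathbb{C}$, the open mapping theorem implies that $X(\Lambda)$ contains a non-empty open subset $U \subset X(\Lambda) \subset \mathbb{C}$. Thus $\sum_{k=0}^N P_k(x) A(x)^k = 0$ for all $x \in U$, which is a non-trivial polynomial relation for $A$ over $\mathbb{C}(x)$; so $A(x)$ is algebraic. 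The analogous argument with $Y$ in place of $X$ gives the $Y$-version.

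The only real subtlety is the existence of the polynomial relation \eqref{eq:algebraicrel}, i.e.\ the finiteness of $[\mathcal{M}(T):\mathbb{C}(X)]$; this is standard (every meromorphic function on a compact Riemann surface is algebraic over $\mathbb{C}(h)$ for any non-constant $h$), but if one prefers a hands-on proof it can be obtained by the symmetric-function construction sketched above, noting that the coefficients are bounded meromorphic functions on $\mathbb{P}^1$ and hence rational.
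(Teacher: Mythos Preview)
Your proof is correct and follows the same route as the paper: both invoke the classical result that two elliptic functions with the same period lattice (here $\pi\mathbb{Z}+m\pi\tau\mathbb{Z}$) are algebraically dependent over $\mathbb{C}$, then transfer the relation $P(f(z),X(z))=0$ to $A$ via $f=A\circ X$ on $\Lambda$. You spell out the compact-Riemann-surface justification and the open-mapping step more carefully than the paper, which dispatches this direction in a single sentence.
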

\begin{proof}
If $f(z)$ is $X$-algebraic then there is a positive integer $m$, such that $f(z)$ and $X(z)$ share the periods $\pi$ and $m\pi\tau$. Hence $A(x)$ is algebraic due to the classical result that any two elliptic functions with the same periods are related by some non-trivial polynomial equation $P(f(z),X(z))=0$.

For the converse, assume that $A(x)$ is algebraic. Then there is some non-polynomial $P$ satisfying $P(A(x),x)=0$, and hence $P(f(z),X(z))=0$ for $z\in\Lambda$, and by meromorphic extension we must have $P(f(z),X(z))=0$ for all $z\in\mathbb{C}$. There can only be finitely many values $z_{c}\in[0,\pi)$ satisfying $P(a,X(z_c))=0$ for all $a$, and for all the rest, there can only be finitely many values of $a$ satisfying $P(a,X(z_c))=0$, so $f(z)$ takes only finitely many values for $z\in z_c+\pi\tau\mathbb{Z}$. Hence for each $z_{c}$ there are some $m_{1},m_{2}\in\mathbb{Z}$ with $m_{1}\neq m_{2}$ satisfying $f(z_{c}+m_{1}\pi\tau)=f(z_{c}+m_{2}\pi\tau)$. Moreover, since there are only countably many choices for $m_{1},m_{2}$, there must be infinitely many values $z_{c}$ corresponding to the same pair $m_{1},m_{2}$. Hence $f(z+m_{1}\pi\tau)-f(z+m_{2}\pi\tau)=0$ for infinitely many values $z\in[0,\pi)$, and hence infinitely many different values $X(z)$. Moreover, both  $f(z+m_{1}\pi\tau)$ and $f(z+m_{2}\pi\tau)$ are algebraic functions of $X(z)$ (as the both satisfy $P(f(z+m_{j}\pi\tau),X(z))=0$), so $f(z+m_{1}\pi\tau)-f(z+m_{2}\pi\tau)$ is an algebraic function of $X(z)$ with infinitely many roots, so it must be the $0$ function. Hence $f(z+m_{1}\pi\tau)=f(z+m_{2}\pi\tau)$, so setting $m=m_{1}-m_{2}$ yields the desired result.
\end{proof}
\textbf{Remark:} Similarly $B(y)$ is algebraic if and only if $g(z)=B(Y(z))$ satisfies the exact same condition.

\begin{Definition}\label{def:X-D-finite}
We say that the function $f:\mathbb{C}\to\mathbb{C}\cup\{\infty\}$ with period $\pi$ is {\em $X$-D-finite} or equivalently {\em $Y$-D-finite} if it satisfies an equation of the form
\[S(z)+\sum_{j=0}^{n} f^{(j)}(z)S_{j}(z)=0,\]
where $S(z)$ and each $S_{j}(z)$ is an elliptic function with periods $\pi$ and $\pi\tau$, and $S_{j}(z)$ is not the zero function.
\end{Definition}

We will show that this is also equivalent to the following, seemingly weaker property: 

\begin{Definition}\label{def:X-D-finite-weak}
We say that the function $f:\mathbb{C}\to\mathbb{C}\cup\{\infty\}$ is weakly {\em $X$-D-finite} if it satisfies an equation of the form
\[S(z)+\sum_{j=0}^{n} f^{(j)}(z)S_{j}(z)=0,\]
where $S(z)$ and each $S_{j}(z)$ is an elliptic function with periods $\pi$ and $m\pi\tau$, for some $m\in\mathbb{Z}\setminus\{0\}$ and $S_{j}(z)$ is not the zero function.
\end{Definition}

\begin{Proposition}\label{prop:D-finiteofXorY}The following are equivalent:
\begin{itemize}
\item The function $A(x)$ is D-finite
\item The function $f(z)$ satisfying $f(z)=A(X(z))$ is $X$-D-finite
\item The function $f(z)$ satisfying $f(z)=A(X(z))$ is weakly $X$-D-finite
\end{itemize}
\end{Proposition}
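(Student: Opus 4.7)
The implication $X$-D-finite $\Rightarrow$ weakly $X$-D-finite is immediate by taking $m = 1$ in Definition \ref{def:X-D-finite-weak}. For the remaining two directions, my approach is to use Fa\`a di Bruno's formula to translate between derivatives of $A$ evaluated at $X(z)$ and derivatives of $f(z)$, combined with a standard field-theoretic descent argument for the hard direction.

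For $A$ D-finite $\Rightarrow$ $f$ is $X$-D-finite, I would substitute $x = X(z)$ into a nontrivial rational ODE $\sum_{j=0}^{n} R_{j}(x) A^{(j)}(x) = 0$. By inversion of Fa\`a di Bruno's formula, each $A^{(j)}(X(z))$ can be expressed as a combination of $f^{(k)}(z)$ for $k \leq j$ with coefficients that are polynomials in $X'(z), X''(z), \ldots$ divided by a power of $X'(z)$. Since every derivative $X^{(\ell)}(z)$ is elliptic with periods $\pi$ and $\pi\tau$, multiplying through by $X'(z)^{n}$ to clear denominators produces a linear ODE for $f$ with elliptic coefficients of those periods. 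The resulting coefficient of $f^{(n)}(z)$ is precisely $R_{n}(X(z))$, which is nonzero, so the ODE is nontrivial and $f$ is $X$-D-finite.

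The main content is the converse: weakly $X$-D-finite $\Rightarrow$ $A$ is D-finite. Let $K_{1}$ be the field of elliptic functions with periods $\pi$ and $\pi\tau$, and $K_{m}$ those with periods $\pi$ and $m\pi\tau$. Then $K_{1} = \mathbb{C}(X(z))[X'(z)]$ is a degree-$2$ extension of $\mathbb{C}(X(z))$ because $(X'(z))^{2}$ is a polynomial in $X(z)$ with simple roots, and $[K_{m}:K_{1}] = m$ because $K_{1}$ is the fixed field of the cyclic group generated by $z \mapsto z + \pi\tau$ acting on $K_{m}$. Given the weak equation $S(z) + \sum_{j=0}^{n} S_{j}(z) f^{(j)}(z) = 0$ with $S, S_{j} \in K_{m}$, I would first expand each $f^{(j)}(z)$ via Fa\`a di Bruno in terms of $A^{(k)}(X(z))$ for $k \leq j$ with coefficients in $K_{1}$, to obtain a relation $\tilde{S}(z) + \sum_{k=0}^{n} \tilde{T}_{k}(z) A^{(k)}(X(z)) = 0$ with $\tilde{S}, \tilde{T}_{k} \in K_{m}$ and $\tilde{T}_{n}$ a nonzero scalar multiple of $S_{n}$. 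Differentiating this relation and using it to eliminate $A^{(n)}(X(z))$ would show inductively that every $A^{(k)}(X(z))$ lies in the $K_{m}$-span of $\{1, A(X(z)), \ldots, A^{(n-1)}(X(z))\}$. Since $[K_{m}:K_{1}]$ is finite, the $K_{1}$-span is also finite-dimensional, forcing a nontrivial $K_{1}$-linear relation $c(z) + \sum_{k=0}^{N} c_{k}(z) A^{(k)}(X(z)) = 0$ for some $N$.

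Finally, writing $c_{k}(z) = p_{k}(X(z)) + q_{k}(X(z)) X'(z)$ and $c(z) = p(X(z)) + q(X(z)) X'(z)$ using the basis $\{1, X'(z)\}$ of $K_{1}$ over $\mathbb{C}(X(z))$ splits the relation as $\Phi(X(z)) + X'(z)\Psi(X(z)) = 0$, where $\Phi(x) = p(x) + \sum_{k} p_{k}(x) A^{(k)}(x)$ and analogously for $\Psi$. Applying the involution $z \mapsto -\gamma - z$, which fixes $X(z)$ but negates $X'(z)$, and combining with the original identity yields $\Phi \equiv 0$ and $\Psi \equiv 0$ on $X(\mathbb{C})$; since some $c_{k}$ is nonzero, at least one of $\Phi, \Psi$ gives a nontrivial rational linear ODE (possibly inhomogeneous, which can be homogenised by differentiation) for $A(x)$, proving $A$ is D-finite. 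The hard part will be carefully tracking nontriviality through the two successive descents $K_{m} \to K_{1} \to \mathbb{C}(X(z))$; the role of the involution $z \mapsto -\gamma - z$ in separating the $1$- and $X'(z)$-components over $\mathbb{C}(X(z))$ closely mirrors its use in the proof of Proposition \ref{prop:rationalofXorY}.
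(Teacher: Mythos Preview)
Your argument is correct, and for the two easy directions it matches the paper. For the hard direction (weakly $X$-D-finite $\Rightarrow$ $A$ D-finite) you take a genuinely different route from the paper. The paper, after the same Fa\`a di Bruno rewriting to obtain $S(z)+\sum_j U_j(z)\,A^{(j)}(X(z))=0$ with $S,U_j\in K_m$, simply picks a local inverse of $X$ on a small disk $\Delta$ and observes via Proposition~\ref{prop:algebraicofXorY} that the resulting coefficients $\Sgf(x)=S(X^{-1}(x))$ and $\Ugf_j(x)=U_j(X^{-1}(x))$ are \emph{algebraic} in $x$; it then cites the classical fact (Poincar\'e, see \cite{Poincare1884,Schmidt75}) that a linear ODE with algebraic coefficients can be replaced by one with polynomial coefficients. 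Your two-step descent $K_m\to K_1\to \mathbb{C}(X(z))$ via finite index and the involution $z\mapsto-\gamma-z$ is effectively a hands-on proof of the special case of that classical result needed here. What you gain is self-containment (no appeal to an external theorem); what the paper gains is brevity. One small remark: your claim that $(X'(z))^2$ is a polynomial in $X(z)$ with simple roots is stronger than you need and not quite what is going on here ($X$ is a M\"obius transform of $\wp$, so $(X')^2$ is a rational function of $X$); the fact $[K_1:\mathbb{C}(X(z))]=2$ follows directly from $X'(-\gamma-z)=-X'(z)\neq X'(z)$ together with Proposition~\ref{prop:rationalofXorY}, which is exactly how you use the involution at the end anyway.
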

\begin{proof}
Clearly if $f(z)$ is $X$-D-finite then it is weakly $X$-D-finite. We will now assume that $f(z)$ is weakly $X$-D-finite and prove that $A(x)$ is D-finite. We have
\[S(z)+\sum_{j=0}^{n} f^{(j)}(z)S_{j}(z)=0.\]
Considering this equation for $z\in\Lambda$, where $f(z)=A(X(z))$, we can apply the chain rule to write
\[f^{(k)}(z)=\left(\frac{d}{dz}\right)^{k}A(X(z))=\sum_{j=0}^{k}A^{(j)}(X(z))T_{j}(z),\]
where each $T_{j}(z)$ is a rational function of $X(z)$ and its derivatives, with $T_{k}(z)=X'(z)^{k}\neq0$. In particular, each $T_{j}(z)$ is an elliptic function with periods $\pi$ and $\pi\tau$, so we have a new equation
\[S(z)+\sum_{j=0}^{n} A^{(j)}(X(z))U_{j}(z)=0,\]
where $S(z)$ and each $U_{j}(z)$ is an elliptic function with periods $\pi$ and $m\pi\tau$, and $U_{n}(z)=S_{n}(z)T_{n}(z)\neq0$. 

Now, consider a neighbourhood $\Delta\subset\Lambda$ on which $X:\Delta\to\mathbb{C}$ is injective, so $X^{-1}:X(\Delta)\to\Delta$ is well defined. Then for $x\in X(\Delta)$, define $\Ugf_{j}(x):=U_{j}(X^{-1}(x))$ and $\Sgf(x):=S(X^{-1}(x))$. Since $U_{j}$ and $S$ have $\pi$ and $m\pi\tau$ as periods, the functions $\Sgf(x)$ and $\Ugf_{j}(x)$ are algebraic by Proposition \ref{prop:algebraicofXorY}. Now for $x\in\Delta$, we have
\[\Sgf(x)+\sum_{j=0}^{n} A^{(j)}(x)\Ugf_{j}(x)=0,\]
so $A(x)$ satisfies a non-trivial linear differential equation with coefficients algebraic in $x$. It is a classical result that the existence of such an equation implies the existence of a linear differential equation (see, for example, \cite{Poincare1884,Schmidt75}), so $A(x)$ is D-finite.

For the final implication, we assume that $A(x)$ is D-finite and we will show that $f(z)$ is $X$-D-finite. We can write 
\[\sum_{j=0}^{n}\Ugf_{j}(x)\left(\frac{d}{dx}\right)^{j}A(x)=0,\]
where each $\Ugf_{j}(x)$ is a polynomial, with $\Ugf_{n}$ non-zero. Substituting $x\to X(z)$ for $z\in\Lambda$ yields
\[\sum_{j=0}^{m}\Ugf_{j}(X(z))\left(\frac{1}{X'(z)}\frac{d}{dz}\right)^{j}f(z)=0.\]
Moreover, since the expression on the left hand side is meromorphic on $\mathbb{C}$, the equation must hold on all of $\mathbb{C}$. Using the product rule yields an equation of the form described in Definition \ref{def:X-D-finite}, so $f(z)$ is $X$-D-finite.
\end{proof}
\textbf{Remark:} Similarly $B(y)$ is D-finite if and only if $g(z)=B(Y(z))$ is $Y$-D-finite.

Below we prove a simple consequence of the Lemma above regarding the possible positions of poles of such a function $f(z)$. This will be useful for proving that certain series are not D-finite.

\begin{Lemma}\label{lem:D-fini_z}
If $f(z)$ is $X$-D-finite, then its poles fall into only finitely many classes $z_{c}+\pi\mathbb{Z}+\pi\tau\mathbb{Z}$.
\end{Lemma}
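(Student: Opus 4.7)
The plan is to exploit the linear ODE defining $X$-D-finiteness and the fact that its coefficients are elliptic on the lattice $\pi\mathbb{Z}+\pi\tau\mathbb{Z}$. By Definition \ref{def:X-D-finite}, since $f(z)$ is $X$-D-finite, there exist elliptic functions $S(z), S_0(z), \ldots, S_n(z)$ with periods $\pi$ and $\pi\tau$, with $S_n(z) \not\equiv 0$, such that
\[
S(z) + \sum_{j=0}^{n} f^{(j)}(z)\, S_j(z) = 0.
\]
Dividing through by $S_n(z)$, I obtain an equation of the form
\[
T(z) + \sum_{j=0}^{n-1} f^{(j)}(z)\, T_j(z) + f^{(n)}(z) = 0,
\]
where $T(z) = S(z)/S_n(z)$ and $T_j(z) = S_j(z)/S_n(z)$ are meromorphic functions on $\mathbb{C}$ that are still elliptic with periods $\pi$ and $\pi\tau$.

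Let $\Sigma \subset \mathbb{C}$ denote the union of the pole sets of $T, T_0, \ldots, T_{n-1}$. Each of these is an elliptic function with periods $\pi$ and $\pi\tau$, hence has only finitely many poles in any fundamental domain of the lattice $\pi\mathbb{Z}+\pi\tau\mathbb{Z}$. Consequently $\Sigma$ is a $\pi\mathbb{Z}+\pi\tau\mathbb{Z}$-invariant subset of $\mathbb{C}$ that decomposes into only finitely many classes $z_c + \pi\mathbb{Z}+\pi\tau\mathbb{Z}$.

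The key claim is that every pole of $f$ lies in $\Sigma$. Indeed, fix any point $z_0 \notin \Sigma$; all the coefficients $T, T_0, \ldots, T_{n-1}$ are holomorphic in a neighborhood $U$ of $z_0$, so the linear ODE is regular at $z_0$. By the standard existence theorem for linear ODEs at regular points, every solution of this ODE on $U$ is holomorphic on $U$. Since $f$ is meromorphic on $\mathbb{C}$ and satisfies the ODE, its restriction to $U$ must coincide with one such holomorphic solution, hence $f$ is holomorphic at $z_0$. Therefore every pole of $f$ lies in $\Sigma$, which lies in finitely many classes mod $\pi\mathbb{Z}+\pi\tau\mathbb{Z}$, as claimed.

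The only mildly delicate step is the last: one must invoke the local analytic theory of linear ODEs at regular points to conclude that a given meromorphic solution cannot acquire new poles outside the singular locus of the equation. This is standard, and I expect no real obstacle; the rest of the argument is essentially bookkeeping about poles of elliptic functions.
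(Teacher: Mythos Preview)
Your proof is correct and takes essentially the same approach as the paper. The paper argues directly by pole-order comparison---if $f$ has a pole of order $k$ at a point that is neither a zero of $S_n$ nor a pole of any $S_j$ or $S$, then $f^{(n)}S_n$ has a pole of order $k+n$ there while all other terms have poles of strictly smaller order, contradicting that the sum vanishes---which is exactly the elementary content of the ODE-regularity statement you invoke after dividing through by $S_n$.
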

\begin{proof}
From Proposition \ref{prop:D-finiteofXorY} we can write
\[S(z)+\sum_{j=0}^{n} f^{(j)}(z)S_{j}(z)=0,\]
where $S(z)$ and each $S_{j}(z)$ is an elliptic function with periods $\pi$ and $\pi\tau$, and $S_{j}(z)$ is not the zero function.

We will start by showing that any pole $z_{c}$ of $f(z)$ is either a root of $S_{n}(z)$ or a pole of one of the functions $S_{j}(z)$ or $S(z)$. Let $z_{c}$ be a pole of $f(z)$ and let $k$ be the order of the pole $z_{c}$ of $f(z)$. If $z_{c}$ is not a root or pole of $S_{n}(z)$ then $f^{(n)}(z)S_{n}(z)$ has a pole at $z=z_{c}$ of order $k+n$. If, in addition, $z_{c}$ is not a pole of any of the functions $S_{j}(z)$, then each term $f^{(j)}(z)S_{j}(z)$ for $j<n$ has a pole at $z=z_c$ of order at most $k+j<k+n$. Hence the entire sum has a pole of order $k+n$ at $z=z_c$, which is impossible as the sum is $0$.

Hence, any pole $z_{c}$ of $f(z)$ is either a root of $S_{n}(z)$ or a pole of one of the functions $S_{j}(z)$ or $S(z)$. But each of these functions is an elliptic function with periods $\pi$ and $\pi\tau$, so the poles and roots fall into finitely many classes $z_{c}+\pi\mathbb{Z}+\pi\tau\mathbb{Z}$. Hence, the poles of $f(z)$ fall into only finitely many such classes.
\end{proof}

\begin{Definition}\label{def:X-D-alg}
We say that the function $f:\mathbb{C}\to\mathbb{C}\cup\{\infty\}$ is {\em $X$-D-algebraic} or equivalently {\em $Y$-D-algebraic} if it is D-algebraic.
\end{Definition}

\begin{Proposition}\label{prop:D-algebraicofXorY}The function $A(x)$ is D-algebraic if and only if $f(z)=A(X(z))$ is a D-algebraic function of $z$, that is, if and only if $f(z)$ is $X$-D-algebraic.\end{Proposition}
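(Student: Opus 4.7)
The plan is to adapt the transcendence-degree framework implicit in the proof of Proposition \ref{prop:D-finiteofXorY} to the polynomial (rather than linear) setting. The key technical tool is the chain rule: for every $k$, the derivative $f^{(k)}(z)$ is a polynomial in $A^{(0)}(X(z)),\ldots,A^{(k)}(X(z))$ with coefficients in $\mathbb{Z}[X'(z),\ldots,X^{(k)}(z)]$, and its leading coefficient in $A^{(k)}(X(z))$ is $X'(z)^k$; inverting this triangular system expresses each $A^{(j)}(X(z))$ as a rational function of $f,f',\ldots,f^{(j)},X',\ldots,X^{(j)}$. By Lemma \ref{lem:param}, $X(z)$ is D-algebraic in $z$, so the differential field
\[\mathbb{C}(z)\langle X(z)\rangle:=\mathbb{C}(z,X(z),X'(z),X''(z),\ldots)\]
has finite transcendence degree over $\mathbb{C}(z)$.

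For the forward direction, suppose $A(x)$ is D-algebraic in $x$, so $\mathbb{C}(x,A(x),A'(x),\ldots)$ has finite transcendence degree over $\mathbb{C}(x)$. Substituting $x=X(z)$ shows that $\mathbb{C}(X(z),A(X(z)),A'(X(z)),\ldots)$ has finite transcendence degree over $\mathbb{C}(X(z))$. Combined with the finite transcendence degree of $\mathbb{C}(z)\langle X(z)\rangle$ over $\mathbb{C}(z)$, the field
\[L:=\mathbb{C}(z)\langle X(z)\rangle\bigl(A(X(z)),A'(X(z)),A''(X(z)),\ldots\bigr)\]
has finite transcendence degree over $\mathbb{C}(z)$. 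The chain rule places every $f^{(k)}(z)$ inside $L$, so $\mathbb{C}(z,f,f',\ldots)\subseteq L$ also has finite transcendence degree over $\mathbb{C}(z)$, i.e., $f(z)$ is D-algebraic in $z$.

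For the converse, suppose $f(z)$ is D-algebraic in $z$. Inverting the chain rule places each meromorphic function $A^{(j)}(X(z))$ inside the field $L':=\mathbb{C}(z)\langle X(z)\rangle(f,f',f'',\ldots)$, which has finite transcendence degree over $\mathbb{C}(z)$ by the D-algebraicity of $X$ and of $f$. Since $X(z)$ is transcendental over $\mathbb{C}$, the transcendence degree of $L'$ over $\mathbb{C}(X(z))$ is also finite. Therefore the meromorphic functions $A(X(z)),A'(X(z)),A''(X(z)),\ldots$ are algebraically dependent over $\mathbb{C}(X(z))$, yielding a nontrivial polynomial identity
\[\tilde P\bigl(X(z),A(X(z)),A'(X(z)),\ldots,A^{(k)}(X(z))\bigr)=0\]
with $\tilde P\in\mathbb{C}[x,y_0,\ldots,y_k]$ nonzero. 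Since $X(z)$ is non-constant, its image contains an open subset of $\mathbb{C}$, so setting $x=X(z)$ yields $\tilde P(x,A(x),A'(x),\ldots,A^{(k)}(x))=0$ first on this open subset and then, by the identity theorem, on the whole domain of $A$. Thus $A(x)$ is D-algebraic in $x$.

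The main subtlety lies in the converse: starting from an algebraic dependence of $A(X(z)),A'(X(z)),\ldots$ as meromorphic functions of $z$ over $\mathbb{C}(X(z))$, one must descend to a genuine polynomial differential equation for $A$ as a function of $x$. This step uses the surjectivity of $X(z)$ onto an open subset of $\mathbb{C}$ together with the identity theorem, and is the point at which both the non-constancy of $X$ and the meromorphy of $A$ play an essential role.
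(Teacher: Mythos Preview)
Your argument is correct, but it takes a noticeably different route from the paper. The paper dispatches both directions in one line by invoking the standard closure properties of D-algebraic functions under composition and compositional inversion: once one knows that $X(z)$ is D-algebraic (which the paper proves directly from the fact that $X$ and $X'$ are elliptic with the same periods, hence satisfy a polynomial relation $P(X,X')=0$), the forward direction is ``$A$ D-algebraic and $X$ D-algebraic $\Rightarrow$ $A\circ X$ D-algebraic'', and the converse is ``$f$ D-algebraic and $X^{-1}$ D-algebraic $\Rightarrow$ $A=f\circ X^{-1}$ D-algebraic''.

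What you do instead is unpack these closure properties explicitly via transcendence degrees and the Fa\`a di Bruno chain rule. This is more self-contained (no black-box appeal to closure under composition/inversion) and makes the descent step in the converse direction visible: you show that the $A^{(j)}(X(z))$ all lie in a field of finite transcendence degree over $\mathbb{C}(X(z))$, extract a polynomial relation, and then use the open mapping theorem plus the identity theorem to pull it back to a differential-algebraic equation for $A(x)$. The paper's approach is shorter and more in the spirit of the rest of Appendix~\ref{ap:nature_analytic}; yours has the advantage of not relying on closure results that, while standard, are sometimes stated without careful hypotheses in the literature.
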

\begin{proof}
Since D-algebraic functions are closed under function composition and taking inverses, it suffices to prove that $X(z)$ is a D-algebraic function. Indeed $X'(z)$ and $X(z)$ are elliptic functions with the same periods, so they are related by some non-trivial polynomial equation $P(X(z),X'(z))=0$. This is a non-trivial differential algebraic equation satisfied by $X(z)$.
\end{proof}
\textbf{Remark:} Similarly $B(y)$ is D-algebraic if and only if $g(z)=B(Y(z))$ is differentially algebraic.

Finally we have the main result of this appendix:
\begin{Proposition}\label{prop:PandXP}
If $\mathcal{P}$ is one of the properties {\em rational}, {\em algebraic}, {\em D-finite} or {\em D-algebraic}, then $A(x)$ satisfies the property $\mathcal{P}$ if and only if $f(z)=A(X(z))$ satisfies the property $X$-$\mathcal{P}$. 
\end{Proposition}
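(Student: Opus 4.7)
The plan is to observe that Proposition \ref{prop:PandXP} is essentially a unified restatement of the four case-by-case equivalences already established in Propositions \ref{prop:rationalofXorY}--\ref{prop:D-algebraicofXorY}, so the proof will proceed by case analysis on $\mathcal{P}$ and invoke the corresponding preceding proposition in each case.

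First I would handle the case $\mathcal{P}=\text{rational}$. By Definition \ref{def:X-rat}, $f(z)$ is $X$-rational if $f(z)=f(z+\pi\tau)=f(-\gamma-z)$ on $\mathbb{C}$; by Proposition \ref{prop:rationalofXorY}, this holds if and only if $A(x)$ is a rational function of $x$. For $\mathcal{P}=\text{algebraic}$, Definition \ref{def:X-alg} states that $f$ is $X$-algebraic iff $m\pi\tau$ is a period of $f$ for some positive integer $m$; Proposition \ref{prop:algebraicofXorY} (together with the remark following it) gives the claimed equivalence with algebraicity of $A(x)$. For $\mathcal{P}=\text{D-finite}$, Definition \ref{def:X-D-finite} coincides exactly with the notion shown by Proposition \ref{prop:D-finiteofXorY} to be equivalent to $A(x)$ being D-finite. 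Finally, for $\mathcal{P}=\text{D-algebraic}$, the property $X$-D-algebraic is simply D-algebraic as a function of $z$ by Definition \ref{def:X-D-alg}, and Proposition \ref{prop:D-algebraicofXorY} establishes the equivalence with D-algebraicity of $A(x)$.

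The only subtle point worth addressing explicitly is the passage from the local identity $f(z)=A(X(z))$ on $\Lambda$ to a global identification used to transfer the property between $f$ and $A$. In each of the preceding propositions, one implicitly uses meromorphic extension: since $\Lambda$ has non-empty interior and both sides of $f(z)=A(X(z))$ are meromorphic on open sets of $\mathbb{C}$, the identity propagates via analytic continuation wherever $A$ itself extends meromorphically. For each property $\mathcal{P}$, the algebraic/differential relation satisfied by $A$ on $X(\Lambda)$ automatically produces such a meromorphic extension of $A$ (for example, by solving the defining polynomial or linear ODE with meromorphic coefficients), so the hypothesis $f=A\circ X$ on $\Lambda$ suffices to invoke each of the earlier propositions without modification. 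Since each case is handled by a direct reference to a previously proven result, I do not expect any real obstacle; the proposition is best regarded as a packaging statement rather than a new result.
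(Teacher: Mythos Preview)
Your proposal is correct and matches the paper's own proof exactly: the paper simply states that the result is a combination of Propositions \ref{prop:rationalofXorY}, \ref{prop:algebraicofXorY}, \ref{prop:D-finiteofXorY} and \ref{prop:D-algebraicofXorY}. Your additional remarks on meromorphic extension are fine but not needed, since those issues are already absorbed into the earlier propositions.
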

\begin{proof}The result is a combination of Propositions \ref{prop:rationalofXorY}, \ref{prop:algebraicofXorY}, \ref{prop:D-finiteofXorY} and \ref{prop:D-algebraicofXorY}.
\end{proof}

\begin{Proposition} \label{prop:XP_closure_properties}Let $\mathcal{P}$ be one of the properties {\em Algebraic}, {\em D-finite} or {\em D-Algebraic}. If $f,g:\mathbb{C}\to\mathbb{C}\cup\{\infty\}$ are functions, each with $\pi$ as a period, which have the property $X$-$\mathcal{P}$ then the functions $f(z)g(z)$, $f(z)+g(z)$ and $f(c-z)$ also have the property $X$-$\mathcal{P}$ for any $c\in\mathbb{C}$. 
\end{Proposition}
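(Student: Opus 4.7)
The plan is to treat each of the three properties separately, observing that in all cases closure under $f(z) \mapsto f(c-z)$ is easy because the defining data of the property is invariant under pullback by an affine map (more precisely, pullback by $z \mapsto c-z$ preserves the field of elliptic functions with periods $\pi$ and $\pi\tau$, since each such elliptic function $S$ satisfies $S(c-(z+\pi)) = S(c-z)$ and $S(c-(z+\pi\tau)) = S(c-z)$). The bulk of the work is then closure under sums and products.

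For $X$-algebraic (Definition \ref{def:X-alg}), suppose $f$ has $m_1 \pi\tau$ as a period and $g$ has $m_2 \pi\tau$ as a period. Then $f+g$ and $fg$ retain $\pi$ as a period, and they both have $\mathrm{lcm}(m_1,m_2)\pi\tau$ as a period. For $f(c-z)$, I observe that $f(c-(z+m_1\pi\tau)) = f((c-m_1\pi\tau)-z) = f(c-z)$, and similarly for $\pi$, so $m_1\pi\tau$ is a period of $f(c-z)$. For $X$-D-algebraic (Definition \ref{def:X-D-alg}), this is standard differential algebra: the class of D-algebraic meromorphic functions on $\mathbb{C}$ is closed under sums, products, and composition with affine maps in $z$.

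For $X$-D-finite, which is the main case, the plan is to reformulate Definition \ref{def:X-D-finite} as a finite-dimensionality statement. Let $\mathbb{K}$ denote the field of elliptic functions with periods $\pi$ and $\pi\tau$. I will first show that $f$ is $X$-D-finite if and only if the $\mathbb{K}$-submodule $M_f$ of meromorphic functions on $\mathbb{C}$ generated by $1, f, f', f'', \ldots$ is finite-dimensional over $\mathbb{K}$. One direction is immediate by writing $f^{(n)} = -S/S_n - \sum_{j<n} (S_j/S_n) f^{(j)}$ and inducting; the converse follows by expressing $f^{(n)}$ as a $\mathbb{K}$-linear combination of $1, f, \ldots, f^{(n-1)}$, where $n = \dim_{\mathbb{K}} M_f$. (Including the constant $1$ among the generators handles the inhomogeneous term $S$ in the definition.) With this characterisation in hand, closure under sums is automatic since $M_{f+g} \subseteq M_f + M_g$, and closure under products follows from the inclusion $M_{fg} \subseteq \mathrm{span}_{\mathbb{K}}\{f^{(i)} g^{(j)} : i,j \geq 0\}$, whose dimension is at most $\dim_{\mathbb{K}} M_f \cdot \dim_{\mathbb{K}} M_g$, via the product rule.

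For the substitution $f(z) \mapsto \tilde f(z) := f(c-z)$, the derivatives satisfy $\tilde f^{(j)}(z) = (-1)^j f^{(j)}(c-z)$, so $M_{\tilde f}$ is the pullback of $M_f$ by $z \mapsto c-z$. Since this pullback sends $\mathbb{K}$ bijectively to $\mathbb{K}$ (as noted above), $M_{\tilde f}$ is again a finite-dimensional $\mathbb{K}$-vector space, of the same dimension as $M_f$, so $\tilde f$ is $X$-D-finite. The only mild obstacle is correctly formulating the finite-dimensional module characterisation so as to accommodate the inhomogeneous term $S$ in Definition \ref{def:X-D-finite}; the fix is simply to include $1$ in the list of generators so that everything reduces to a homogeneous statement.
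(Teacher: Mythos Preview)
Your proof is correct. The route for sums and products, however, differs from the paper's.

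The paper handles closure under $+$ and $\times$ uniformly for all three properties by invoking Proposition~\ref{prop:PandXP}: pick an open set $\Delta$ on which $X$ is injective, set $f_1(x)=f(X^{-1}(x))$ and $g_1(x)=g(X^{-1}(x))$, observe these have property $\mathcal{P}$, use the classical fact that $\mathcal{P}$ is closed under sums and products, and then pull the conclusion back to $f+g$ and $fg$ via Proposition~\ref{prop:PandXP} again. This avoids any separate work for the D-finite case.

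You instead argue directly from the definitions. For $X$-algebraic and $X$-D-algebraic this is immediate, and for $X$-D-finite you supply a genuine argument: reformulate Definition~\ref{def:X-D-finite} as finite-dimensionality over the differential field $\mathbb{K}$ of $\pi,\pi\tau$-elliptic functions of the $\mathbb{K}$-span of $1,f,f',\ldots$, and then use $M_{f+g}\subseteq M_f+M_g$ and $M_{fg}\subseteq\mathrm{span}_{\mathbb{K}}\{f^{(i)}g^{(j)}\}$. This is entirely valid (the key point, which you implicitly use, is that $\mathbb{K}$ is closed under $\tfrac{d}{dz}$, so the induction showing $M_f$ is spanned by $1,f,\ldots,f^{(n-1)}$ goes through). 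The advantage of your approach is that it is self-contained and does not pass through the correspondence with functions of $x$; the paper's approach is shorter and treats all three $\mathcal{P}$ at once.

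For the substitution $z\mapsto c-z$, your argument and the paper's coincide: both verify directly from each definition that the defining data transforms appropriately under pullback (the paper writes out the transformed ODE explicitly in the D-finite case, while you phrase it as the pullback inducing a $\mathbb{K}$-semilinear isomorphism on $M_f$, which amounts to the same thing).
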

\begin{proof}
We first prove that $X$-$\mathcal{P}$ is closed under addition and multiplication, that is that $f(z)g(z)$ and $f(z)+g(z)$ satisfy the property $X$-$\mathcal{P}$. As we will show this is due to the fact that the property $\mathcal{P}$ is closed under addition and multiplication.

Let $\Delta\in\mathbb{C}$ be a non-empty open set on which $X$ is injective, then define $f_{1},g_{1}:X(\Delta)\to\mathbb{C}\cup\infty$ by $f_{1}(x)=f(X^{-1}(x))$ and $g_{1}(x)=g(X^{-1}(x))$, where we take the inverse $X^{-1}(x)\in\Delta$. Then from Proposition \ref{prop:PandXP}, our assumption that $f$ and $g$ satisfy $X$-$\mathcal{P}$ implies that $f_{1}$ and $g_{1}$ satisfy $\mathcal{P}$. Then since $\mathcal{P}$ is closed under addition and multiplication of functions, this implies that $f_{1}(x)+g_{1}(x)$ and $f_{1}(x)g_{1}(x)$ satisfy $\mathcal{P}$. Then since $f(z)+g(z)=f_{1}(X(z))+g_{1}(X(z))$ and $f(z)g(z)=f_{1}(X(z))g_{1}(X(z))$ for $z\in\Delta$, it follows from Proposition \ref{prop:PandXP} that these functions satisfy $X$-$\mathcal{P}$.

Finally we will show that $f(c-z)$ satisfies $X$-$\mathcal{P}$. This follows directly from the definition of $X$-$\mathcal{P}$, so we do it separately for each property $\mathcal{P}$. If $f(z)$ is $X$-algebraic it has $m\pi\tau$ as a period for some integer $m$, so $f(c-z)$ also has $m\pi\tau$ as a period, which means it is $X$-algebraic, as required. If $f(z)$ is $X$-D-algebraic then it is D-algebraic, so $f(c-z)$ is also D-algebraic, which means it is $X$-D-algebraic, as required. finally, if $f(z)$ is $X$-D-finite, then it satisfies an equation of the form
\[S(z)+\sum_{j=0}^{n} f^{(j)}(z)S_{j}(z)=0,\]
which means $\hat{f}(z)=f(c-z)$ satisfies
\[S(c-z)+\sum_{j=0}^{n} (-1)^j\hat{f}^{(j)}(z)S_{j}(c-z)=0,\]
which has the same form. So $f(c-z)$ is $X$-D-finite, as required.
\end{proof}

\section{Group of the walk}\label{ap:group}
In this appendix we will discuss the {\em group of the walk}, which was introduced in \cite{bousquet2010walks} and is a variant on a group defined on analytic functions, previously introduced in the study of random walks in the quadrant \cite{fayolle1999random,malyshev1972analytical}. In fact the original analytic version of this group is closer to the version we use in Section \ref{sec:finite_group}.

As in Appendix \ref{ap:param_thm}, we define Laurent polynomials $A_{-1}(x)$, $A_{0}(x)$, $A_{1}(x)$, $B_{-1}(x)$, $B_{0}(x)$, $B_{1}(x)$. by
\[\Pgf(x,y)=\frac{1}{y}A_{-1}(x)+A_{0}(x)+yA_{1}(x)=\frac{1}{x}B_{-1}(x)+B_{0}(x)+xB_{1}(x).\]
Then we define transformations $\psi$ and $\varphi$ by
\[\psi(x,y)=\left(x,\frac{A_{-1}(x)}{A_{1}(x)y}\right)~~~~~~~~~\text{and}~~~~~~~~~\varphi(x,y)=\left(\frac{B_{-1}(y)}{B_{1}(y)x},y\right),\]
as then
\[\Pgf(x,y)=\Pgf(\psi(x,y))=\Pgf(\varphi(x,y)),\]
so the Kernel $K(x,y)=t\Pgf(x,y)-1$ satisfies similar equations
\[K(x,y)=K(\psi(x,y))=K(\varphi(x,y)).\]
\begin{Definition}
The {\em group of the walk} is defined as the group generated by the transformations $\psi$ and $\varphi$.
\end{Definition}
It has been shown that for unweighted walks in the quarter plane, the group is finite if and only if the generating function $\Qgf(x,y;t)$ is D-finite \cite{bousquet2010walks,bostan2010complete,fayolle2010holonomy}. Under the parameterisation $(x,y)\to (X(z),Y(z))$, the transformations $\psi$ and $\varphi$ are equivalent to $z\to -\gamma-z$ and $z\to \gamma-z$ respectively, in the sense that
\begin{align}
\label{eq:psi_z_transform}\psi(X(z),Y(z))&=(X(-\gamma-z),Y(-\gamma-z)),\\
\label{eq:phi_z_transform}\varphi(X(z),Y(z))&=(X(\gamma-z),Y(\gamma-z)).
\end{align}
To prove, for example, the first of these equations, it suffices to observe that $Y(-\gamma-z)$ is the unique point other than $Y(z)$ satisfying $K(X(z),Y(-\gamma-z))=0$, noting that the only cases where $Y(-\gamma-z)=Y(z)$ are those in which $y=Y(z)$ is a double root of $K(X(z),y)$ and $\gamma-z=z$, in which case $\psi(X(z),Y(z))=(X(z),Y(z))=(X(\gamma-z),Y(\gamma-z))$.

This allows us to prove the following proposition:
\begin{Proposition}\label{prop:finite_group_fixed_t} For $n\in\mathbb{N}$, we have $\frac{2n\gamma}{\pi\tau}\in\mathbb{Z}$ if and only if the transformation $(\psi\circ\varphi)^{n}$ fixes every element $(x,y)\in \overline{E_{t}}$.
\end{Proposition}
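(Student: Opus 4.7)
The plan is to compute the action of $(\psi\circ\varphi)^n$ on parameterised points $(X(z),Y(z))\in\overline{E_t}$, and translate the fixing condition into a statement about periods of $X$ and $Y$.

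First, I would combine \eqref{eq:psi_z_transform} and \eqref{eq:phi_z_transform} to compute
\[
(\psi\circ\varphi)(X(z),Y(z))=\psi(X(\gamma-z),Y(\gamma-z))=(X(-\gamma-(\gamma-z)),Y(-\gamma-(\gamma-z)))=(X(z-2\gamma),Y(z-2\gamma)),
\]
and then iterate to obtain
\[
(\psi\circ\varphi)^{n}(X(z),Y(z))=(X(z-2n\gamma),Y(z-2n\gamma)).
\]
By the parameterisation \eqref{eq:Eparam}, $(\psi\circ\varphi)^{n}$ fixes every point of $\overline{E_t}$ if and only if the identity $(X(z-2n\gamma),Y(z-2n\gamma))=(X(z),Y(z))$ holds for all $z\in\mathbb{C}$, i.e.\ if and only if $2n\gamma$ is a common period of $X$ and $Y$.

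Next, I would identify the period lattice. By Lemma~\ref{lem:param}, both $X$ and $Y$ are meromorphic with $\pi$ and $\pi\tau$ as periods and, counted with multiplicity, each takes every value in $\mathbb{C}\cup\{\infty\}$ exactly twice per fundamental parallelogram. Consequently the full period lattice of the pair $(X,Y)$ (equivalently, the lattice of common periods) is exactly $\pi\mathbb{Z}+\pi\tau\mathbb{Z}$: any additional common period would force $(X,Y)$ to be non-injective on a strictly smaller fundamental domain, contradicting the two-to-one property. Hence $2n\gamma$ is a common period of $X$ and $Y$ if and only if $2n\gamma\in\pi\mathbb{Z}+\pi\tau\mathbb{Z}$.

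Finally, I would use the purely imaginary nature of the relevant quantities to reduce the lattice condition to a rationality condition on a single ratio. Since by Lemma~\ref{lem:param} we have $\gamma,\pi\tau\in i\mathbb{R}$ while $\pi\in\mathbb{R}$, writing $2n\gamma=a\pi+b\pi\tau$ with $a,b\in\mathbb{Z}$ and taking real parts forces $a=0$, so $2n\gamma\in\pi\mathbb{Z}+\pi\tau\mathbb{Z}$ if and only if $2n\gamma\in\pi\tau\mathbb{Z}$, which is precisely the condition $\frac{2n\gamma}{\pi\tau}\in\mathbb{Z}$. Combining the three steps above yields the claimed equivalence. The only step with any subtlety is the identification of the full common period lattice; everything else is a direct computation with the parameterisation.
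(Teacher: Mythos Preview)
Your proof is correct and follows essentially the same route as the paper's: compute $(\psi\circ\varphi)^n(X(z),Y(z))=(X(z-2n\gamma),Y(z-2n\gamma))$ via \eqref{eq:psi_z_transform}--\eqref{eq:phi_z_transform}, reduce the fixing condition to $2n\gamma$ being a period, and then use that $\gamma,\pi\tau\in i\mathbb{R}$ to kill the $\pi\mathbb{Z}$ component. The only difference is cosmetic: the paper uses just $X$ (rather than the pair $(X,Y)$) to conclude $2n\gamma\in\pi\mathbb{Z}+\pi\tau\mathbb{Z}$, and does not spell out the order-two argument for why this is the full period lattice, whereas you do.
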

\begin{proof} Applying equations \eqref{eq:psi_z_transform} and \eqref{eq:phi_z_transform}, we see that
\[(\psi\circ\varphi)^{n}(X(z),Y(z))=(X(z-2n\gamma),Y(z-2n\gamma)).\]
Hence, if $\frac{2n\gamma}{\pi\tau}\in\mathbb{Z}$, then $(\psi\circ\varphi)^{n}$ fixes every element $(X(z),Y(z))$, and hence every element of $\overline{E_{t}}=\{(X(z),Y(z)):z\in\mathbb{C}\}$.

Conversely, if $(\psi\circ\varphi)^{n}$ fixes every point $(X(z),Y(z))$ in $\overline{E_{t}}$, then $X(z)=X(z-2n\gamma)$ for all $z$, so $2n\gamma$ is a period of $X$. Hence $2n\gamma\in \pi\tau\mathbb{Z}+\pi\mathbb{Z}$. More precisely, since $\pi\tau$ and $\gamma$ are both purely imaginary, this implies  $2n\gamma\in \pi\tau\mathbb{Z}$.\end{proof}

\begin{Proposition}\label{prop:finite_group_general_t} The group of the walk is finite if and only if $\frac{\gamma}{\pi\tau}\in\mathbb{Q}$ for all $t\in\left(0,\frac{1}{P(1,1)}\right)$.
\end{Proposition}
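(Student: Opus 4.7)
The plan is to reduce the proposition to the fixed-$t$ statement in Proposition~\ref{prop:finite_group_fixed_t} by exploiting the fact that $\psi$ and $\varphi$ are involutions that do not depend on $t$. Since $\psi^{2}=\varphi^{2}=\mathrm{id}$, the group generated by $\psi$ and $\varphi$ is dihedral (possibly infinite), and it is finite if and only if the rotation $\psi\circ\varphi$ has finite order as a birational map of $\mathbb{C}^{2}$. So both directions amount to comparing the order of $\psi\circ\varphi$ to the rationality of $\tfrac{\gamma}{\pi\tau}$.

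For the forward direction ($\Rightarrow$), if the group is finite then there exists an integer $n\geq 1$, independent of $t$, with $(\psi\circ\varphi)^{n}=\mathrm{id}$ as a rational map. In particular $(\psi\circ\varphi)^{n}$ fixes every point of $\overline{E_{t}}$ for every admissible $t$, so Proposition~\ref{prop:finite_group_fixed_t} gives $\tfrac{2n\gamma}{\pi\tau}\in\mathbb{Z}$ for every $t\in\left(0,\tfrac{1}{P(1,1)}\right)$, which yields $\tfrac{\gamma}{\pi\tau}\in\mathbb{Q}$ for every such $t$.

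For the reverse direction ($\Leftarrow$), suppose $\tfrac{\gamma}{\pi\tau}\in\mathbb{Q}$ for every $t$ in the interval. Since $\gamma,\pi\tau\in i\mathbb{R}$ with $\Im(\pi\tau)>0$, the ratio $\tfrac{\gamma}{\pi\tau}$ is a real-valued function of $t$; moreover $\gamma(t)$ and $\tau(t)$ are differentially algebraic (hence, in particular, continuous) on the interval by Lemma~\ref{lem:param}. A continuous function on a connected interval that takes only rational values must be constant by the intermediate value theorem. Hence there is a single fixed rational $\tfrac{p}{q}=\tfrac{\gamma}{\pi\tau}$ with $q>0$ valid for all $t$, so that $\tfrac{2q\gamma}{\pi\tau}=2p\in\mathbb{Z}$ for every $t$. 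Applying Proposition~\ref{prop:finite_group_fixed_t} again, $(\psi\circ\varphi)^{q}$ fixes every point of $\overline{E_{t}}$ for every admissible $t$.

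The remaining step, which is the main technical point, is to promote this pointwise statement to the equality $(\psi\circ\varphi)^{q}=\mathrm{id}$ as a rational map of $\mathbb{C}^{2}$. Since $\psi$ and $\varphi$ are $t$-independent, so is $(\psi\circ\varphi)^{q}$. For each $(x_{0},y_{0})\in\mathbb{C}^{2}$ with $P(x_{0},y_{0})\neq 0$, the equation $K(x_{0},y_{0};t)=tP(x_{0},y_{0})-1=0$ has the unique solution $t=1/P(x_{0},y_{0})$, and for a Zariski-dense set of such $(x_{0},y_{0})$ this value of $t$ lies in $\left(0,\tfrac{1}{P(1,1)}\right)$. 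Thus the union $\bigcup_{t}\overline{E_{t}}$ contains a Zariski-dense subset of $\mathbb{C}^{2}$, and the rational map $(\psi\circ\varphi)^{q}$ agrees with the identity on this subset, hence everywhere. Therefore $\psi\circ\varphi$ has order dividing $q$ and the group is a finite dihedral group, completing the proof.
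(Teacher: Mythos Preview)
Your forward direction matches the paper's exactly. For the reverse direction, your route differs from the paper's and is worth contrasting.

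The paper does not use continuity of $\gamma/\pi\tau$. Instead, for each $t$ it picks some $n_t$ with $\frac{2n_t\gamma}{\pi\tau}\in\mathbb{Z}$, then observes that since there are uncountably many $t$ and only countably many integers, some fixed $N$ equals $n_t$ for infinitely many values of $t$. For that $N$ and each fixed $y$, the equation $(\psi\circ\varphi)^{N}(x,y)=(x,y)$ is a pair of polynomial identities in $x$ which holds for infinitely many $x$ (coming from the infinitely many curves $\overline{E_t}$), hence for all $x$. Your argument replaces this pigeonhole-plus-polynomial step by (a) using continuity to pin down a single denominator $q$ valid for every $t$, and (b) invoking Zariski density of $\bigcup_t \overline{E_t}$ to upgrade the pointwise identity to an identity of rational maps. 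Both arguments are valid; yours is arguably cleaner in that it produces one $q$ working for every $t$ rather than merely infinitely many, while the paper's avoids any analytic input about how $\gamma$ and $\tau$ vary with $t$.

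One remark on your justification: the parenthetical ``differentially algebraic (hence, in particular, continuous)'' is not a valid implication in general. What you actually need is that $\gamma(t)$ and $\tau(t)$ are continuous on the interval, which follows not from D-algebraicity per se but from the explicit integral formulas for $\omega_1,\omega_2,\omega_3$ given in Appendix~\ref{ap:param_thm} (indeed these are analytic in $t$). With that correction, your argument stands.
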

\begin{proof}
If the group of the walk is finite, then let $n$ be the order of the element $(\psi\circ\varphi)$. By Proposition \ref{prop:finite_group_fixed_t}, this implies that $\frac{2n\gamma}{\pi\tau}\in\mathbb{Z}$ for all $t$, so $\frac{\gamma}{\pi\tau}\in\mathbb{Q}$ for all $t$.

Conversely, if $\frac{\gamma}{\pi\tau}\in\mathbb{Q}$ for all $t\in\left(0,\frac{1}{P(1,1)}\right)$, then there is some integer $n_{t}$ for each $t$ satisfying $\frac{2n_{t}\gamma}{\pi\tau}\in\mathbb{Z}$. So by Proposition \ref{prop:finite_group_fixed_t}, we have $(\psi\circ\varphi)^{n_{t}}(x,y)=(x,y)$ for $(x,y)\in\overline{E_{t}}$. Now, since There is an integer $n_{t}$ for each of uncountably many values $t$, there must be some integer $N$ such that $N=n_{t}$ for infinitely many different values $t$. This implies that for each $y$, we have $(\psi\circ\varphi)^{N}(x,y)=(x,y)$ for infinitely many values $x$. But, for fixed $y$, the equation $(\psi\circ\varphi)^{N}(x,y)=(x,y)$ is two polynomial equations of $x$, so if it holds for infinitely many values of $x$ it must hold for all $x$. Hence $(\psi\circ\varphi)^{N}(x,y)=(x,y)$ for all $x,y\in\mathbb{C}$, so the group is finite.
\end{proof}


\section{Differential transcendence criteria}\label{ap:D-trans}
The purpose of this appendix is to present the following proposition, which is a restatement of results in \cite{dreyfus2018nature}. We will use this to prove that Certain generating functions are not differentially algebraic in the cases where the group of the walk is infinite and there is not a decoupling function. 
\begin{Proposition}\label{prop:D-trans_poles}
Let $\omega_{1},\omega_{2},\omega_{3}\in\mathbb{C}$ be linearly independent (over $\mathbb{Z}$) and let $f,b:\mathbb{C}\to\mathbb{C}\cup\{\infty\}$ be meromorphic functions satisfying $b(\omega+\omega_{1})=b(\omega)$, $b(\omega+\omega_{2})=b(\omega)$ and $f(\omega+\omega_{3})-f(\omega)=b(\omega)$.
If $f$ is differentially algebraic then for any pole $q_{0}$ of $b$, the function
\[\sum_{i=1}^{k}b(\omega+n_{i}\omega_{3})\]
does not have a pole at $q_{0}$, where $q_{0}+n_{1}\omega_{3},q_{0}+n_{2}\omega_{3},\ldots, q_{0}+n_{k}\omega_{3}$ are the poles of $b$ in $q_{0}+\omega_{3}\mathbb{Z}$.
\end{Proposition}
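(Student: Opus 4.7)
The plan is to deduce the proposition from a standard telescoping criterion for the first-order inhomogeneous $\sigma$-difference equation $\sigma f - f = b$, where $\sigma$ denotes translation by $\omega_3$. First I would set up the algebraic framework: let $K$ denote the field of meromorphic functions on $\mathbb{C}$ with periods $\omega_1$ and $\omega_2$. Translation by $\omega_3$ preserves both periods, so $\sigma$ restricts to an automorphism of $K$, we have $b \in K$, and the derivation $\partial = d/d\omega$ commutes with $\sigma$. This places us precisely inside the $\sigma\partial$-field setup of \cite{dreyfus2018nature}.

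The first main step is to apply the central theorem of \cite{dreyfus2018nature}, a consequence of Hardouin-Singer difference Galois theory for Picard-Vessiot extensions attached to first-order inhomogeneous $\sigma$-difference equations. Because $f$ solves $\sigma f - f = b$ and is differentially algebraic (over $\mathbb{C}$, hence over $K$), this theorem should produce a nonzero linear differential operator $L \in \mathbb{C}[\partial]$ with constant coefficients and a function $g \in K$ satisfying
\[
L(b) \;=\; \sigma(g) - g.
\]
Let $\{q_0 + n_i\omega_3\}_{i=1}^k$ enumerate the poles of $b$ in $q_0 + \omega_3 \mathbb{Z}$; by $\mathbb{Z}$-linear independence of $\omega_1,\omega_2,\omega_3$, these represent pairwise distinct cosets modulo $\omega_1\mathbb{Z} + \omega_2\mathbb{Z}$. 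Since $L$ has constant coefficients and nonzero leading term, $b$ and $L(b)$ have identical sets of pole positions (with only the orders possibly differing), so $\{n_i\}$ is also the set of pole indices of $L(b)$ in the orbit $q_0 + \omega_3 \mathbb{Z}$.

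The second main step is a direct principal-part computation applied to $L(b) = \sigma g - g$. Set $\Psi(\omega) := \sum_{i=1}^k L(b)(\omega + n_i\omega_3)$ and track its principal part at $\omega = q_0$. For each $n \in \mathbb{Z}$, the principal part of $L(b)$ at $q_0 + n\omega_3$, re-centred at $q_0$, equals the principal part of $g$ at $q_0 + (n+1)\omega_3$ minus the principal part of $g$ at $q_0 + n\omega_3$, both re-centred at $q_0$. Summing over $n$ (equivalently over $\{n_i\}$, since for $n$ outside that set both relevant principal parts of $L(b)$ vanish by construction) yields a telescoping sum over the finite pole support of $g$ in $q_0 + \omega_3\mathbb{Z}$; hence the total principal part at $q_0$ cancels and $\Psi$ is regular at $q_0$.

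The final step is to descend from $L(\Phi)$ to $\Phi(\omega) := \sum_{i=1}^k b(\omega + n_i\omega_3)$. Because $L$ has constant coefficients, it commutes with translation, so $L(\Phi) = \Psi$ is regular at $q_0$. If $\Phi$ itself had a pole of order $m \geq 1$ at $q_0$, then $L(\Phi)$ would have a pole of order $m + \deg L \geq 1$ at $q_0$, a contradiction; hence $\Phi$ is regular at $q_0$, as desired. The main obstacle I anticipate is not the telescoping but the first step: extracting the existence of $L$ and $g \in K$ in exactly this form from the Galois-theoretic machinery of \cite{dreyfus2018nature}, since it is precisely the $\omega_1,\omega_2$-periodicity of $g$ (hence finiteness of its pole support modulo the lattice) that powers the cancellation in the second step.
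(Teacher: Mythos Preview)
Your proposal is correct and follows essentially the same route as the paper: both invoke \cite[Proposition~3.6]{dreyfus2018nature} to produce the constant-coefficient operator $L\in\mathbb{C}[\partial]$ and the elliptic $g$ with $L(b)=\sigma(g)-g$. The only difference is cosmetic: the paper then cites \cite[Proposition~B.2]{dreyfus2018nature} for the pole-cancellation conclusion, whereas you write out that telescoping principal-part argument explicitly (together with the easy descent from $L(\Phi)$ to $\Phi$), which is precisely the content of the relevant implication in Proposition~B.2.
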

\begin{proof}
We will describe how this follows from Proposition 3.6 and Proposition B.2 from \cite{dreyfus2018nature}. We start by describing the application of  \cite[Proposition 3.6]{dreyfus2018nature}. In this proposition, the authors consider the triple $(\mathbb{C}(\overline{E_{t}}),\delta,\tau)$, as a sub-field of $(\mathcal{M}(\mathbb{C}),\frac{d}{d\omega},\tau:\omega\to\omega+\omega_{3})$, where $\overline{E_{t}}$ is an arbitrary elliptic curve and $\mathcal{M}(\mathbb{C})$ denotes the meromorphic functions on $\mathbb{C}$. In particular, $\mathbb{C}(\overline{E_{t}})$ corresponds to the elements of $\mathcal{M}(\mathbb{C})$ that are fixed by two specified independent periods - we take these to be $\omega_{1}$ and $\omega_{2}$. With this specification, \cite[Proposition 3.6]{dreyfus2018nature} states that under precisely the conditions of this Proposition, 
there is a function $g\in \mathcal{M}(\mathbb{C})$ satisfying
$g(\omega)=g(\omega+\omega_{1})=g(\omega+\omega_{2})$, an integer $n\geq0$ and $c_{0},c_{1},\ldots,c_{n-1}\in\mathbb{C}$ satisfying
\[\left(\frac{d}{d\omega}\right)^{n}b(\omega)+c_{n-1}\left(\frac{d}{d\omega}\right)^{n-1}b(\omega)+\cdots+c_{1}\left(\frac{d}{d\omega}\right)b(\omega)+c_{0}b(\omega)=g(\omega+\omega_{3})-g(\omega).\]
In other words, there is a non-zero linear operator $L\in\mathbb{C}[\frac{d}{d\omega}]$ satisfying $L(b)=\tau(g)-g$. This means that $b$ and $g$ satisfy condition $(1)$ of \cite[Proposition B.2]{dreyfus2018nature}, and so they also satisfy condition $(2)$, which is equivalent to the following:
For any pole $q_{0}$ of $b$, the function
\[\sum_{i=1}^{k}b(\omega+n_{i}\omega_{3})\]
does not have a pole at $q_{0}$, where $q_{0}+n_{1}\omega_{3},q_{0}+n_{2}\omega_{3},\ldots, q_{0}+n_{k}\omega_{3}$ are the poles of $b$ in $q_{0}+\omega_{3}\mathbb{Z}$.
\end{proof}

We transform this into the following corollary, which allows us to quickly prove that when certain functions are D-algebraic in infinite group cases there must be a decoupling function.

\begin{Corollary}\label{cor:D-trans}
Let $\tau,\gamma_{1},\gamma_{2}\in\mathbb{C}$ such that $\pi,\pi\tau,\gamma_{2}-\gamma_{1}$ are linearly independent (over $\mathbb{Z}$) and let $f_{1},f_{2},h:\mathbb{C}\to\mathbb{C}\cup\{\infty\}$ be meromorphic functions satisfying
\begin{align*}
h(z)&=f_{1}(z)+f_{2}(z)\\
h(z)&=h(z+\pi)=h(z+\pi\tau)\\
f_{1}(z)&=f_{1}(z+\pi)=f_{1}(\gamma_{1}-z)\\
f_{2}(z)&=f_{2}(z+\pi)=f_{2}(\gamma_{2}-z).
\end{align*}
If $f_{2}$ is differentially algebraic there are meromorphic functions $a_{1},a_{2}:\mathbb{C}\to\mathbb{C}\cup\{\infty\}$ satisfying 
\begin{align*}
h(z)&=a_{1}(z)+a_{2}(z)\\
a_{1}(z)&=a_{1}(z+\pi)=a_{1}(z+\pi\tau)=a_{1}(\gamma_{1}-z)\\
a_{2}(z)&=a_{2}(z+\pi)=a_{2}(z+\pi\tau)=a_{2}(\gamma_{2}-z).
\end{align*}
\end{Corollary}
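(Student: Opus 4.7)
\textbf{Proof plan for Corollary \ref{cor:D-trans}.} The plan is to first reduce the corollary to a telescoping problem on a torus, then invoke Proposition \ref{prop:D-trans_poles}. I will seek $a_1,a_2$ of the shape $a_1 = f_1+c$ and $a_2 = f_2-c$ for a meromorphic function $c$ with periods $\pi$ and $\gamma_2-\gamma_1$ satisfying both $c(\gamma_1-z)=c(z)$ and the difference equation $c(z+\pi\tau)-c(z) = -\phi(z)$, where
\[
\phi(z) \;:=\; f_1(z+\pi\tau)-f_1(z).
\]
A direct computation (using $h(z+\pi\tau)=h(z)$) gives $\phi(z) = -(f_2(z+\pi\tau)-f_2(z))$, so the two representations of $\phi$ are simultaneously available. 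With this choice of $c$, the decomposition $h=a_1+a_2$ is automatic, and each of the four periodicity/symmetry conditions on $a_1,a_2$ follows by a one-line verification; in particular $a_1$ acquires period $\pi\tau$ because $c(z+\pi\tau)-c(z)$ exactly cancels $f_1(z+\pi\tau)-f_1(z)$.

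Next I would establish the structural properties of $\phi$ needed to invoke Proposition \ref{prop:D-trans_poles}. From the $\pi$-periodicity of $f_1$ (equivalently $f_2$), $\phi$ has period $\pi$. Applying the reflection $z\mapsto \gamma_1-z-\pi\tau$ and using $f_1(\gamma_1-z)=f_1(z)$ gives $\phi(\gamma_1-z-\pi\tau)=-\phi(z)$; the analogous computation using $f_2(\gamma_2-z)=f_2(z)$ and the second representation of $\phi$ gives $\phi(\gamma_2-z-\pi\tau)=-\phi(z)$. Comparing the two identities shows $\phi(z+(\gamma_2-\gamma_1))=\phi(z)$, so $\phi$ descends to a meromorphic function on the torus
\[
T := \mathbb{C}\big/\bigl(\pi\mathbb{Z}+(\gamma_2-\gamma_1)\mathbb{Z}\bigr).
\]
I then apply Proposition \ref{prop:D-trans_poles} with $\omega_1=\pi$, $\omega_2=\gamma_2-\gamma_1$, $\omega_3=\pi\tau$, $b=-\phi$, $f=f_2$: the three $\omega_i$ are $\mathbb{Z}$-independent by hypothesis, the defining identity $f_2(\omega+\omega_3)-f_2(\omega)=b(\omega)$ holds by construction, and $f_2$ is assumed D-algebraic. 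The proposition therefore yields the orbit-residue vanishing condition: for every pole $q_0$ of $b$ on $T$, the sum of $b$ along the $\pi\tau$-orbit of poles is finite at $q_0$.

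This residue-vanishing condition is the classical obstruction to solving $c(z+\pi\tau)-c(z)=b(z)$ by a function $c$ meromorphic on $\mathbb{C}$ and periodic in $\pi$ and $\gamma_2-\gamma_1$; once it holds, such a $c$ can be produced by a standard theta-function / residue-matching construction on the torus $T$ (reading each orbit-sum as a principal part and solving orbit by orbit). Given such a $c$, I symmetrise by replacing it with
\[
\tilde c(z) \;=\; \tfrac12\bigl(c(z)+c(\gamma_1-z)\bigr).
\]
The identity $b(\gamma_1-z-\pi\tau)=-b(z)$ derived above (equivalent to $\phi(\gamma_1-z-\pi\tau)=-\phi(z)$) ensures $\tilde c(z+\pi\tau)-\tilde c(z)=b(z)$ is still satisfied; moreover $\tilde c$ still has periods $\pi$ and $\gamma_2-\gamma_1$, and $\tilde c(\gamma_1-z)=\tilde c(z)$ by construction, which, combined with period $\gamma_2-\gamma_1$, automatically gives $\tilde c(\gamma_2-z)=\tilde c(z)$. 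Setting $a_1=f_1+\tilde c$, $a_2=f_2-\tilde c$ finishes the proof.

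\textbf{Main obstacle.} The step I expect to be most delicate is turning the necessary condition furnished by Proposition \ref{prop:D-trans_poles} into an actual construction of $c$: the proposition as stated yields only that some differential operator $L$ applied to $b$ telescopes, whereas I need $b$ itself to telescope on the torus. This is a genuine statement about meromorphic telescoping on an elliptic curve and requires either appealing to a stronger form of the result in \cite{dreyfus2018nature} (the orbit-residue criterion is indeed equivalent to telescopability) or carrying out the theta-function residue construction by hand. Everything else is bookkeeping of symmetries and periods, for which the key algebraic fact is the independence of $\pi,\pi\tau,\gamma_2-\gamma_1$ that makes the torus $T$ and the translation $z\mapsto z+\pi\tau$ behave as an honest $q$-difference pair.
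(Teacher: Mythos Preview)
Your plan is correct and follows the same three-beat strategy as the paper: (1) apply Proposition~\ref{prop:D-trans_poles} to a suitable difference $b$, (2) use the resulting orbit-residue vanishing to build an explicit anti-telescope via theta/partial-fraction data, (3) symmetrise to recover the required reflection. The verifications of the symmetries you sketch all go through exactly as you say.

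The one genuine difference from the paper is your choice of lattice versus shift. The paper takes $\omega_1=\pi$, $\omega_2=\pi\tau$ (so the torus is the ``original'' one on which $h$ lives) and $\omega_3=\gamma_2-\gamma_1$; its $b$ is $h(z)-h(\gamma_1-z)=f_2(z+\omega_3)-f_2(z)$. You swap the roles, taking $\omega_2=\gamma_2-\gamma_1$ and $\omega_3=\pi\tau$, with $b(z)=f_2(z+\pi\tau)-f_2(z)$. Both choices satisfy the hypotheses of Proposition~\ref{prop:D-trans_poles}, and both lead to an equivalent construction; the paper's choice has the mild convenience that the theta function $\th(z,\tau)$ used in the anti-telescope is the same $\tau$ already present in the statement, whereas yours would use $\th\bigl(z,(\gamma_2-\gamma_1)/\pi\bigr)$.

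On your ``main obstacle'': the paper does exactly the explicit theta construction you anticipate. It writes $b$ (up to an additive constant) as a finite sum $\sum s_{n,k,j}A_j(z-q_k-n\omega_3)$ of shifted logarithmic-derivative functions $A_j(z)=\frac{(-1)^{j-1}}{(j-1)!}\bigl(\frac{d}{dz}\bigr)^{j-1}\frac{\th'(z,\tau)}{\th(z,\tau)}$, then uses the orbit-sum vanishing $\sum_n s_{n,k,j}=0$ from Proposition~\ref{prop:D-trans_poles} to define an explicit $p$ (a partial telescoping sum) with $p(z+\omega_3)-p(z)=-\tilde b(z)$ and with the right (quasi-)periods. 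The symmetrisation step is then the same as yours, with $\gamma_2$ in place of your $\gamma_1$. So what you flag as delicate is precisely the content the paper spells out, and your reduction is sound.
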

\begin{proof}
Define $b(z)=h(z)-h(\gamma_{1}-z)$. Then
\[b(z)=f_{1}(z)+f_{2}(z)-f_{1}(\gamma_{1}-z)-f_{2}(\gamma_{1}-z)=f_{2}(z)-f_{2}(\gamma_{2}-\gamma_{1}+z).\]
Hence setting $f:=f_{2}$, we have exactly the conditions of Proposition \ref{prop:D-trans_poles}, with $\omega_{1}=\pi$, $\omega_{2}=\pi\tau$ and $\omega_{3}=\gamma_{2}-\gamma_{1}$. Hence, we have the result that for any pole $q_{0}$ of $b(z)$, the function 
\[\sum_{i=1}^{k}b(z+n_{i}\omega_{3})\]
does not have a pole at $q_{0}$, where the sum includes all integers $n_{i}$ such that $q_{0}+n_{i}\omega_{3}$ is a pole of $b$ (including $n_{i}=0$). In fact, this implies that for $N$ sufficiently large the sum
\[\sum_{n=-N}^{N}b(z+n\omega_{3})\]
does not have a pole at $q_{0}$. In order to construct functions $a_{1}$ and $a_{2}$, we will determine an explicit, general form of $b$ using the Jacobi theta function $\th(z,\tau)$. More precisely, we will use the function
\[A_{1}(z)\equiv A(z):=\frac{\th'(z,\tau)}{\th(z,\tau)},\]
which satisfies $A(z)=A(z+\pi)=A(z+\pi\tau)+2i$ and the only poles of $A(z)$ are the points of the lattice $\pi\mathbb{Z}+\pi\tau\mathbb{Z}$, and at each such point $q$ we have
\[A(z)\sim\frac{1}{z-q}.\]
Now for $k>1$, we define
\[A_{k}(z):=\frac{(-1)^{k-1}}{(k-1)!}\left(\frac{d}{dz}\right)^{k-1}A(z),\]
which satisfies $A_{k}(z)=A_{k}(z+\pi)=A_{k}(z+\pi\tau)$, and it's only poles are the points $q\in \pi\mathbb{Z}+\pi\tau\mathbb{Z}$, where it satisfies
\[A_{k}(z)\sim(z-q)^{-k}+O(1).\]

Since $b$ is an elliptic function (with periods $\pi$ and $\pi\tau$) it must have only finitely many poles in any fundamental domain. Hence, there are only finitely many sets $q+\pi\mathbb{Z}+\pi\tau\mathbb{Z}+\omega_{3}\mathbb{Z}$, where $q$ is a pole of $b$. Let $\{q_{0},q_{1},\ldots,q_{K}\}$ be a maximal set of poles of $b$ such that each $q_{k}+\pi\mathbb{Z}+pi\tau\mathbb{Z}+\omega_{3}\mathbb{Z}$ is a different set, and let $N$ be sufficiently large that
\[\sum_{n=-N}^{N}b(z+n\omega_{3})\]
does not have a pole at any of the $K+1$ points $q_{k}$ (a priori there is a different $N$ for each $q_{k}$, but we take the maximum of these). Now for each $k\leq K$ and $n\in[-N,N]$, let 
\[b(z)=s_{n,k,J}(z-q_{k}-n\omega_{3})^{-J}+s_{n,k,J-1}(z-q_{k}-n\omega_{3})^{-J+1}+\cdots+s_{n,k,1}(z-q_{k}-n\omega_{3})^{-1}+O(1),\]
as $z\to q_{k}+n\omega_{3}$. As for $N$, we choose $J$ sufficiently large for all $k$, $n$. Then the result of Proposition \ref{prop:D-trans_poles} is that for each fixed $k$ and $j$, we have
\[\sum_{n=-N}^{N}s_{n,k,j}=0.\]
Now, by our construction, the function
\[\tilde{b}(z):=\sum_{k=0}^{K}\sum_{n=-N}^{N}\sum_{j=1}^{J}s_{n,k,j}A_{j}(z-q_{k}-n\omega_{3})\]
has poles at exactly the same points as $g$, and they are of the same nature, so $b(z)-\tilde{b}(z)$ has no poles. Moreover $\tilde{b}(z+\pi)=\tilde{b}(z)$ and
\[\tilde{b}(z+\pi\tau)-\tilde{b}(z)=\sum_{k=0}^{K}\sum_{n=-N}^{N}\sum_{j=1}^{J}s_{n,k,j}\left(A_{j}(z+\pi\tau-q_{k}-n\omega_{3})-A_{j}(z-q_{k}-n\omega_{3})\right)=\sum_{k=0}^{K}\sum_{n=-N}^{N}-2is_{n,k,1}=0.\]
Hence $b(z)-\tilde{b}(z)$ is an elliptic function with no poles, so it must be constant. Hence for some $c\in\mathbb{C}$ we have $b(z)=\tilde{b}(z)+c.$
Now that we have a explicit form for $b(z)$, we are almost ready to construct $a_{1}$ and $a_{2}$. First we define an auxiliary function
\[p(z):=\sum_{k=0}^{K}\sum_{n=-N}^{N}\sum_{j=1}^{J}\sum_{m=-N}^{n}s_{n,k,j}A_{j}(z-q_{k}-m\omega_{3}),\]
as then
\begin{align*}p(z+\omega_{3})-p(z)&=\sum_{k=0}^{K}\sum_{n=-N}^{N}\sum_{j=1}^{J}\left(\sum_{m=-N}^{n}s_{n,k,j}A_{j}(z-q_{k}-(m-1)\omega_{3})-\sum_{m=-N}^{n}s_{n,k,j}A_{j}(z-q_{k}-m\omega_{3})\right)\\
&=\sum_{k=0}^{K}\sum_{n=-N}^{N}\sum_{j=1}^{J}\left(s_{n,k,j}A_{j}(z-q_{k}+(N+1)\omega_{3})-s_{n,k,j}A_{j}(z-q_{k}-n\omega_{3})\right)\\
&=-\sum_{k=0}^{K}\sum_{n=-N}^{N}\sum_{j=1}^{J}s_{n,k,j}A_{j}(z-q_{k}-n\omega_{3})=-\tilde{b}(z).\end{align*}
Moreover, by the construction, $p(z+\pi)-p(z)=0$ and $p(z+\pi\tau)-p(z)=-2id$ for some constant $d\in\mathbb{C}$ .
Finally, we define $a_{2}(z)=\frac{p(z)+p(\gamma_{2}-z)}{2}$ and $a_{1}(z)=h(z)-a_{2}(z)$. Then $a_{2}(z)=a_{2}(\gamma_{2}-z)=a_{2}(z+\pi)=a_{2}(z+\pi\tau)$ and $h(z)=a_{1}(z)+a_{2}(z)$, so $a_{1}(z)=a_{1}(z+\pi)=a_{1}(z+\pi\tau)$. Hence, it suffices to show that $a_{1}(z)=a_{1}(\gamma_{1}-z)$. Indeed
\begin{align*}a_{1}(\gamma_{1}-z)-a_{1}(z)&=h(\gamma_{1}-z)-a_{2}(\gamma_{1}-z)-h(z)+a_{2}(z),\\
&=-b(z)+\frac{p(z)+p(\gamma_{2}-z)-p(\gamma_{1}-z)-p(\gamma_{2}-\gamma_{1}+z)}{2},\\
&=-b(z)+\frac{p(z)+p(\omega_{3}+\gamma_{1}-z)-p(\gamma_{1}-z)-p(\omega_{3}+z)}{2},\\
&=-b(z)+\frac{\tilde{b}(z)-\tilde{b}(\gamma_{1}-z)}{2},\\
&=\frac{-b(z)-b(\gamma_{1}-z)}{2}=\frac{-h(z)+h(\gamma_{1}-z)-h(\gamma_{1}-z)+h(z)}{2}=0.\end{align*}

\end{proof}

\bibliographystyle{plain}
\bibliography{biblio}

\begin{thebibliography}{10}

\bibitem{akhiezer1990elements}
Naum~Ilyich Akhiezer.
\newblock {\em Elements of the theory of elliptic functions}, volume~79.
\newblock American Mathematical Soc., 1990.

\bibitem{bernardi2017counting}
Olivier Bernardi, Mireille Bousquet-M{\'e}lou, and Kilian Raschel.
\newblock Counting quadrant walks via {T}utte's invariant method.
\newblock {\em Combinatorial Theory}, 1:\#3, 2021.

\bibitem{bostan2010complete}
Alin Bostan and Manuel Kauers.
\newblock The complete generating function for {G}essel walks is algebraic.
\newblock {\em Proceedings of the American Mathematical Society},
  138(9):3063--3078, 2010.

\bibitem{bousquet2001walks}
Mireille Bousquet-M{\'e}lou.
\newblock Walks on the slit plane: other approaches.
\newblock {\em Advances in Applied Mathematics}, 27(2-3):243--288, 2001.

\bibitem{bousquet2016square}
Mireille Bousquet-M{\'e}lou.
\newblock Square lattice walks avoiding a quadrant.
\newblock {\em Journal of Combinatorial Theory, Series A}, 144:37--79, 2016.

\bibitem{bousquet21+}
Mireille Bousquet-M\'elou.
\newblock Enumeration of three-quadrant walks via invariants: some diagonally
  symmetric models.
\newblock {\em arXiv preprint arXiv:2112.05776}, 2021.

\bibitem{bousquet2010walks}
Mireille Bousquet-M{\'e}lou and Marni Mishna.
\newblock Walks with small steps in the quarter plane.
\newblock {\em Contemp. Math}, 520:1--40, 2010.

\bibitem{bousquet2002walks}
Mireille Bousquet-M{\'e}lou and Gilles Schaeffer.
\newblock Walks on the slit plane.
\newblock {\em Probability Theory and Related Fields}, 124(3):305--344, 2002.

\bibitem{bousquet_wallner}
Mireille Bousquet-M\'elou and Michael Wallner.
\newblock More models of walks avoiding a quadrant.
\newblock {\em arXiv preprint arXiv:2110.07633}, 2021.

\bibitem{budd2017winding}
Timothy Budd.
\newblock Winding of simple walks on the square lattice.
\newblock {\em J. Combin. Theory Ser. A}, 172:105191, 59, 2020.

\bibitem{dreyfus2021differential}
Thomas Dreyfus.
\newblock Differential algebraic generating series of weighted walks in the
  quarter plane.
\newblock {\em arXiv preprint arXiv:2104.05505}, 2021.

\bibitem{dreyfus2019length}
Thomas Dreyfus and Charlotte Hardouin.
\newblock Length derivative of the generating series of walks confined in the
  quarter plane.
\newblock {\em arXiv preprint arXiv:1902.10558}, 2019.

\bibitem{dreyfus2018nature}
Thomas Dreyfus, Charlotte Hardouin, Julien Roques, and Michael Singer.
\newblock On the nature of the generating series of walks in the quarter plane.
\newblock {\em Inventiones mathematicae}, 213(1):139--203, 2018.

\bibitem{dreyfus2020walks}
Thomas Dreyfus, Charlotte Hardouin, Julien Roques, and Michael~F Singer.
\newblock Walks in the quarter plane: genus zero case.
\newblock {\em Journal of Combinatorial Theory, Series A}, 174:105251, 2020.

\bibitem{dreyfus2019differential}
Thomas Dreyfus and Kilian Raschel.
\newblock Differential transcendence \& algebraicity criteria for the series
  counting weighted quadrant walks.
\newblock {\em Publications Math{\'e}matiques de Besan{\c{c}}on-Alg{\`e}bre et
  Th{\'e}orie des Nombres}, pages 41--80, 2019.

\bibitem{dreyfus2020nature}
Thomas Dreyfus and Am{\'e}lie Trotignon.
\newblock On the nature of four models of symmetric walks avoiding a quadrant.
\newblock {\em Annals of Combinatorics}, pages 1--28, 2021.

\bibitem{elveyprice_winding_FPSAC}
Andrew Elvey~Price.
\newblock Counting lattice walks by winding angle.
\newblock In {\em Proceedings of the 32nd {C}onference on {F}ormal {P}ower
  {S}eries and {A}lgebraic {C}ombinatorics}, 2020.

\bibitem{elvey2020six}
Andrew Elvey~Price and Paul Zinn-Justin.
\newblock The six-vertex model on random planar maps revisited.
\newblock {\em arXiv preprint arXiv:2007.07928}, 2020.

\bibitem{fayolle1979two}
Guy Fayolle and Roudolf Iasnogorodski.
\newblock Two coupled processors: the reduction to a riemann-hilbert problem.
\newblock {\em Zeitschrift f{\"u}r Wahrscheinlichkeitstheorie und verwandte
  Gebiete}, 47(3):325--351, 1979.

\bibitem{fayolle1999random}
Guy Fayolle, Roudolf Iasnogorodski, and Vadim Malyshev.
\newblock {\em Random walks in the quarter-plane}, volume~40 of {\em
  Applications of Mathematics (New York)}.
\newblock Springer-Verlag, Berlin, 1999.

\bibitem{fayolle2010holonomy}
Guy Fayolle and Kilian Raschel.
\newblock On the holonomy or algebraicity of generating functions counting
  lattice walks in the quarter-plane.
\newblock {\em Markov Processes and Related Fields}, 16(3):485--496, 2010.

\bibitem{hardouin2020differentially}
Charlotte Hardouin and Michael~F Singer.
\newblock On differentially algebraic generating series for walks in the
  quarter plane.
\newblock {\em Selecta Mathematica}, 27(5):1--49, 2021.

\bibitem{kurkova2012functions}
Irina Kurkova and Kilian Raschel.
\newblock On the functions counting walks with small steps in the quarter
  plane.
\newblock {\em Publications math{\'e}matiques de l'IH{\'E}S}, 116(1):69--114,
  2012.

\bibitem{malyshev1972analytical}
Vadim~Aleksandrovich Malyshev.
\newblock An analytical method in the theory of two-dimensional positive random
  walks.
\newblock {\em Siberian Mathematical Journal}, 13(6):917--929, 1972.

\bibitem{melczer2014singularity}
Stephen Melczer and Marni Mishna.
\newblock Singularity analysis via the iterated kernel method.
\newblock {\em Combinatorics, Probability and Computing}, 23(5):861--888, 2014.

\bibitem{mishna2009two}
Marni Mishna and Andrew Rechnitzer.
\newblock Two non-holonomic lattice walks in the quarter plane.
\newblock {\em Theoretical Computer Science}, 410(38-40):3616--3630, 2009.

\bibitem{mustapha2019non}
Sami Mustapha.
\newblock Non-{D}-finite walks in a three-quadrant cone.
\newblock {\em Annals of Combinatorics}, 23(1):143--158, 2019.

\bibitem{Poincare1884}
Henri Poincar{\'e}.
\newblock M{\'e}moire sur les fonctions z{\'e}ta fuchsiennes.
\newblock {\em Acta mathematica}, 5:209--278, 1884.

\bibitem{raschel2012counting}
Kilian Raschel.
\newblock Counting walks in a quadrant: a unified approach via boundary value
  problems.
\newblock {\em Journal of the European Mathematical Society}, 14(3):749--777,
  2012.

\bibitem{raschel2018walks}
Kilian Raschel and Am{\'e}lie Trotignon.
\newblock On walks avoiding a quadrant.
\newblock {\em Electron. J. Combin.}, 26(3):Paper 3, 31, 2019.

\bibitem{rubey2004transcendence}
Martin Rubey.
\newblock Transcendence of generating functions of walks on the slit plane.
\newblock In {\em Mathematics and Computer Science III}, pages 49--58.
  Springer, 2004.

\bibitem{Schmidt75}
W.~M. Schmidt.
\newblock Rational approximation to solutions of linear differential equations
  with algebraic coefficients.
\newblock {\em Proceedings of the American Mathematical Society},
  53(2):285--289, 1975.

\end{thebibliography}

\end{document}